\documentclass[final,oneside,notitlepage,11pt]{article}         


 \usepackage[T1]{fontenc}              
 \usepackage[utf8]{inputenc}                          
 \usepackage{amsmath, amssymb}          
\usepackage{hyperref}  
\usepackage[left=3cm,right=3cm,top=3cm]{geometry}
\usepackage{amsthm} 
 \usepackage{icomma}                   
 \usepackage{enumerate}                
 \usepackage[dvipsnames]{xcolor}                    
 \usepackage{setspace}                  
 \usepackage{needspace}                
 \usepackage{url}                      
 \usepackage{mathrsfs}                 
 \usepackage{graphicx}
\usepackage{cancel}   
\usepackage[all, cmtip]{xy}
\usepackage[capitalise]{cleveref}
\usepackage{soul}
\usepackage{euscript}

 \usepackage{tikz}                     
 \usepackage{slashed}
 \usepackage{tikz-cd}                     
 \usepackage{verbatim}

\usepackage{todonotes}

\crefname{equation}{}{}

\def\tocsection#1{\section*{#1}\addcontentsline{toc}{section}{#1}}

\newcounter{comments}
\newenvironment{displaycomment}{\begin{list}{}{\rightmargin=1cm\leftmargin=1cm}\item\sf\begin{small}}{\end{small}\end{list}}


 \numberwithin{equation}{section}

\usepackage[
  backend=biber,
  style=numeric,
  doi=false,
  url=false,
  isbn=false,  
  eprint=true,
  sortcites=true
]{biblatex}
\addbibresource{bibfile.bib}

\hypersetup{
    linktocpage = true,
    colorlinks,
    linkcolor=[RGB]{0,0,120},
    citecolor=[RGB]{0,0,120},
    urlcolor=[RGB]{0,0,120}
}

\def\pdfdaten{
  \hypersetup{
    pdftitle = {\@title},
    pdfauthor = {\@author},
    pdfkeywords = {\@keywords},    
    bookmarksopen = true,
    bookmarksopenlevel = 1
  }}   
 
\usepackage{changepage}

\usepackage[normalem]{ulem}


\usepackage{amsthm}

\numberwithin{equation}{section}

\theoremstyle{plain}  
\newtheorem{theorem}{Theorem}[section]
\newtheorem{proposition}[theorem]{Proposition}
\newtheorem{lemma}[theorem]{Lemma}
\newtheorem{corollary}[theorem]{Corollary}

\theoremstyle{definition}  
\newtheorem{definition}[theorem]{Definition}
\newtheorem{example}[theorem]{Example}

\theoremstyle{remark}  
\newtheorem{remark}[theorem]{Remark}



\DeclareMathOperator*{\Der}{\mathfrak{Der}}

\newcommand{\AUT}{\mathrm{AUT}}
\newcommand{\Ad}{\mathrm{Ad}}
\newcommand{\ad}{\mathrm{ad}}
\newcommand{\R}{\mathbb{R}}
\newcommand{\N}{\mathbb{N}}

\newcommand{\Z}{\mathbb{Z}}
\newcommand{\C}{\mathbb{C}}

\newcommand{\tr}{\mathrm{tr}}

\newcommand{\Out}{\mathrm{Out}}

\newcommand{\Lin}{\mathrm{Lin}}

\newcommand{\id}{\mathrm{id}}

\newcommand{\Hom}{\mathrm{Hom}}

\newcommand{\Aut}{\mathrm{Aut}}

\newcommand{\Spin}{\mathrm{Spin}}
\newcommand{\String}{\mathrm{String}}

\newcommand{\Alt}{\mathrm{Alt}}
\newcommand{\Sym}{\mathrm{Sym}}

\newcommand{\grpd}{\text{\textsc{grpd}}}

\newcommand{\U}{\mathrm{U}}

\newcommand{\anti}{\mathrm{a}}
\newcommand{\sym}{\mathrm{s}}
\newcommand{\lact}{\triangleright}
\newcommand{\ract}{\triangleleft}

\def\KL{\mathrm{KL}}
\def\Adj{\mathrm{\mathrm{Adj}}}

\def\T{\mathbb{T}}

\def\ev{\mathrm{ev}}
\def\quot#1{``#1''}
\def\ADJ{\EuScript{A}\mathrm{dj}}
\def\cw{\mathrm{cw}}
\def\CM{\EuScript{C}\mathrm{r}\EuScript{M}\mathrm{od}}
\def\cm{\mathfrak{CrMod}}
\def\BUT{\EuScript{B}\mathrm{ut}}
\def\but{\mathfrak{But}}
\def\grpd{\EuScript{G}\mathrm{rpd}}
\def\AUT{\EuScript{A}\mathrm{ut}}

\title{Classification of adjustments on central crossed modules}
\author{ Matthias Ludewig \& Konrad Waldorf} 
\date{}

\usepackage{color}
\usepackage{amstext} 

\begin{document} 

\maketitle

\begin{abstract}
Adjustments are additional structures on crossed modules of Lie groups, serving as a tool in higher gauge theory to circumvent the fake flatness of connections on 2-bundles. In this article, we investigate the existence and classification of adjustments, as well as their covariance under weak equivalences. Our  approach is based on a differentiation/integration correspondence with an infinitesimal version of adjustments on the associated crossed module of Lie algebras, which we then study using Lie algebra techniques. Our main result is that infinitesimal adjustments exist if and only if the Kassel-Loday class of the crossed module lies in the image of the (Lie algebraic) Chern-Weil homomorphism. 
\end{abstract}


\setlength{\parskip}{0ex}
\tableofcontents
\setlength{\parskip}{1ex}
 
\sloppy 

\section{Introduction}

Crossed modules of Lie groups provide a  convenient model for Lie 2-groups, which are the analogues of Lie groups in categorified differential geometry. Lie 2-groups  naturally arise in theoretical physics - first in string theory and, more recently, in condensed matter physics. 
Pushing the analogy with ordinary gauge theory as far as considering  connections on categorified principal bundles reveals a crucial insight: categorification introduces a novel feature absent in ordinary gauge theory. Namely, in order to define consistent notions of parallel transport along surfaces -- and holonomy around them -- a condition called \quot{fake-flatness} must be imposed \cite{schreiber2, Schreiber2011, Schreiber2016}. 
However, in other contexts, fake-flatness becomes an undesirable constraint \cite{Fiorenza2012,Sati2009,Sati2012, Sati2009, Rist2022, Kim, Saemann2020, Kim2020}.

Avoiding fake-flatness requires an \quot{adjustment} of the  theory. This was noticed by Sati, Schreiber, and Stasheff \cite{Sati2009,Sati2012}, who introduced  \quot{transgression elements} for $L_{\infty}$-algebras of string extension type. Fiorenza, Schreiber, and Stasheff later revisited this idea in \cite{Fiorenza2012}, referring to them as \quot{Chern-Simons elements}. Sämann, together with Kim and Schmidt, restricted the framework to Lie 2-algebras and simultaneously extended it to Lie 2-\emph{groups} \cite{Saemann2020, Kim2020}, using the term \emph{adjustment}. Recently, Tellez-Dominguez \cite{TellezDominguez2023} proposed a more specific and simplified definition, which nevertheless encompasses most known examples.

Crossed modules may or may not allow adjustments, and when they do, there may be different choices.  The goal of this article is to explore the existence and the classification of adjustments on a subclass of crossed modules of Lie groups, so called \emph{central} ones.

In order to delve into  some details, let  $\Gamma =(H \stackrel{t}{\to} G \stackrel{\alpha}{\to} \Aut(H))$  be a central crossed module of Lie groups, with (possibly infinite-dimensional) Lie groups $G$ and $H$, and corresponding Lie algebras $\mathfrak{g}$ and $\mathfrak{h}$, respectively. An adjustment on $\Gamma$ is a map
\begin{equation*}
\kappa: G \times \mathfrak{g} \to \mathfrak{h}
\end{equation*} 
satisfying a number of conditions, including a non-linear one that does not allow $\kappa$ to be identically zero;  see \cref{definition-adjustment}.

Our main tool for studying adjustments on a crossed module of Lie groups $\Gamma$ is the induced crossed module $\mathfrak{G}=(\mathfrak{h} \stackrel{t_{*}}\to \mathfrak{g} \stackrel{\alpha_{*}}\to \mathfrak{der}(\mathfrak{h}))$ of Lie algebras, obtained by differentiating all structure of $\Gamma$, as well as a corresponding notion of infinitesimal adjustment
\begin{equation*}
\kappa_*: \mathfrak{g} \times \mathfrak{g} \to \mathfrak{h}
\end{equation*} 
obtained by differentiating $\kappa$ in its first argument, see \cref{DefInfAdj}. Thus, we study the sets $\Adj(\Gamma)$ of all adjustments on $\Gamma$ and $\Adj(\mathfrak{G})$ of all infinitesimal adjustments on $\mathfrak{G}$. 

Moreover, we consider the \quot{homotopy} Lie algebras \[
\mathfrak{a} := \mathrm{ker}(t_{*})\subset \mathfrak{h}, \qquad \text{and} \qquad \mathfrak{f} := \mathfrak{g}/t_{*}\mathfrak{h}
\]
 of $\mathfrak{G}$. 
A \emph{section} of $\mathfrak{G}$ is a linear section $s:\mathfrak{f} \to \mathfrak{g}$  against the projection $\mathfrak{g} \to \mathfrak{f}$. 
Such a section selects   subsets $\Adj^{s}(\Gamma) \subset \Adj(\Gamma)$ and  $\Adj^{s}(\mathfrak{G}) \subset \Adj(\mathfrak{G})$ of \emph{adapted} (infinitesimal) adjustments which turned out to be  important    \cite{TellezDominguez2023}. 
Our first main result relates (adapted) adjustments on $\Gamma$ to (adapted) infinitesimal adjustments on $\mathfrak{G}$.

\begin{theorem}
\label{theorem-1}
Let   $\Gamma =(H \stackrel{t}{\to} G \stackrel{\alpha}{\to} \Aut(H))$   be a central crossed module of Lie groups, let $\mathfrak{G}$ be the corresponding crossed module of Lie algebras, and let $s$ be any section of $\mathfrak{G}$. Differentiation constitutes maps
\begin{equation*}
\Adj(\Gamma) \to \Adj(\mathfrak{G})
\quad\text{ and }\quad
\Adj^{s}(\Gamma) \to \Adj^{s}(\mathfrak{G})\text{, }
\end{equation*}
which are injective when $G$ is connected, and bijections when $G$ is connected and simply-connected and $H$ is connected.
\end{theorem}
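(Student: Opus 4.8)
The plan is to reformulate the linear part of an adjustment as a group cocycle, reduce injectivity and surjectivity to the standard integration theory for such cocycles, and treat the non-linear condition of \cref{definition-adjustment} separately. For fixed $Y \in \mathfrak{g}$ the assignment $g \mapsto \kappa(g,Y)$ should satisfy a twisted cocycle identity in the group variable --- the group-level shadow of one of the equations of \cref{definition-adjustment} --- of the schematic form $\kappa(g_1 g_2, Y) = \kappa(g_1, Y) + \alpha_{g_1}\kappa(g_2, \Ad_{g_1^{-1}} Y)$. Equivalently, $c(g) := \kappa(g,-) \in \Hom(\mathfrak{g}, \mathfrak{h})$ is a smooth $1$-cocycle of $G$ with values in the module $V = \Hom(\mathfrak{g}, \mathfrak{h})$ carrying the action $(g \cdot \phi)(Y) = \alpha_g \phi(\Ad_{g^{-1}} Y)$, subject to the remaining conditions of an adjustment. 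For well-definedness of the differentiation map, I would differentiate each defining equation of \cref{definition-adjustment} in the group variable at the identity: the cocycle identity yields the Lie-algebra cocycle condition for $\kappa_*$, the equivariance equations yield their infinitesimal forms, and the non-linear normalization differentiates to exactly the condition of \cref{DefInfAdj} that forbids $\kappa_* = 0$. The adapted condition is a pointwise statement along the section $s$ and is manifestly preserved, giving both maps $\Adj(\Gamma) \to \Adj(\mathfrak{G})$ and $\Adj^s(\Gamma) \to \Adj^s(\mathfrak{G})$.

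For injectivity when $G$ is connected, suppose $\kappa_* = 0$. Fixing $X, Y$ and setting $\gamma(t) = \kappa(\exp(tX), Y)$, differentiation of the cocycle identity with $g_1 = \exp(sX)$ and $g_2 = \exp(tX)$ in $t$ at $0$ gives $\gamma'(s) = \alpha_{\exp(sX)}\kappa_*(X, \Ad_{\exp(-sX)} Y) = 0$; since $\kappa(e,Y) = 0$ (the cocycle identity at $g_1 = g_2 = e$), we get $\kappa(\exp(tX), Y) = 0$ for all $t, X, Y$. As $G$ is connected, every element is a finite product of exponentials, and the cocycle identity propagates the vanishing to all of $G$, so $\kappa = 0$.

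For surjectivity when $G$ is connected and simply-connected and $H$ is connected, I would integrate. Given $\kappa_* \in \Adj(\mathfrak{G})$, the map $X \mapsto \kappa_*(X,-) \in V$ is a Lie-algebra $1$-cocycle, and the cocycle identity prescribes the differential of the sought $c$ via the $V$-valued $1$-form $\theta$ determined by $\theta_g(X^L) = g \cdot \kappa_*(X,-)$, where $X^L$ is the left-invariant vector field generated by $X$. A direct computation shows $\theta$ is closed precisely because $\kappa_*$ satisfies the Lie-algebra cocycle condition of \cref{DefInfAdj}; since $V$ is a vector space and $G$ is simply-connected, $\theta$ has a unique smooth primitive $c$ with $c(e) = 0$ --- this is the degree-one integration of crossed homomorphisms, an isomorphism precisely for connected, simply-connected $G$. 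Setting $\kappa(g,Y) = c(g)(Y)$ produces a smooth map that recovers $\kappa_*$ on differentiation and satisfies the cocycle identity by a standard argument (both sides share a differential and agree at $e$, and $G$ is connected). It then remains to verify the remaining equations of \cref{definition-adjustment}, and the inherited adapted normalization when $\kappa_*$ is adapted.

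The step I expect to be the genuine obstacle is this last verification within surjectivity, and in particular the non-linear condition. Integration handles the linear cocycle structure cleanly through simple-connectedness, but the non-linear equation is not of cocycle type, so I cannot simply invoke the integration theorem for it; instead I would show the integrated $\kappa$ satisfies it by differentiating along paths, using that it holds at $e$ together with its infinitesimal counterpart carried by $\kappa_*$, and then invoking connectedness. Several of the remaining conditions are phrased through the $H$-action (via $\alpha$ and $t$), so upgrading their infinitesimal, $\mathfrak{der}(\mathfrak{h})$-level validity to genuine $H$-level identities is exactly where connectedness of $H$ --- alongside that of $G$ --- is required. Coordinating all these conditions simultaneously for the single integrated $\kappa$ is where the care lies.
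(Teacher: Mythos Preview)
Your overall strategy---recast condition \eqref{adjustment-condition-1} as a crossed homomorphism (group 1-cocycle) condition, then use the standard differentiation/integration correspondence for such cocycles---is exactly what the paper does. However, two points deserve correction.

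First, your ``schematic'' cocycle identity is wrong: there is no $\alpha_{g_1}$ acting on the $\mathfrak{h}$-values. Condition \eqref{adjustment-condition-1} reads
\[
\kappa(g_1g_2,X)=\kappa(g_1,\Ad_{g_2}X)+\kappa(g_2,X),
\]
so $\tilde\kappa(g):=\kappa(g,-)$ is a crossed homomorphism $G\to\Lin(\mathfrak g,\mathfrak h)$ for the \emph{right} action $(\phi\cdot g)(X)=\phi(\Ad_gX)$, i.e.\ pure precomposition with $\Ad$ and no twist by $\alpha$. With this corrected action your injectivity argument and the integration step go through as you describe.

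Second, the part you flag as the ``genuine obstacle''---verifying \eqref{adjustment-condition-2} and \eqref{adjustment-condition-3} for the integrated $\kappa$---is handled in the paper by the very same mechanism, not by an ad hoc path argument. The key observation is that for fixed second variable, both sides of \eqref{adjustment-condition-2} are crossed homomorphisms $H\to\Lin(\mathfrak g,\mathfrak h)$ (this is where connectedness of $H$ enters), and both sides of \eqref{adjustment-condition-3} are crossed homomorphisms $G\to\Lin(\mathfrak h,\mathfrak h)$. Since their differentials at the identity agree by \eqref{EtaIdentities-1} and \eqref{EtaIdentities-2} respectively, uniqueness of crossed homomorphisms on connected domains finishes the job. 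There is no separate ``non-linear'' difficulty: the inhomogeneous right-hand sides are themselves crossed homomorphisms, so the same integration lemma applies uniformly.
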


\cref{theorem-1} is stated in the main text as \cref{integration-of-adjustments}.
It allows us to reduce the classification of (adapted) adjustments on $\Gamma$ to the classification of (adapted) infinitesimal adjustments on $\mathfrak{G}$, at least under the connectedness assumptions stated in \cref{theorem-1}.

Our next result provides a complete classification of infinitesimal adjustments. 
For preparation, we recall that any central crossed module $\mathfrak{G}$ of Lie algebras has a  \emph{Kassel-Loday class} $\mathrm{KL}(\mathfrak{G}) \in H^3(\mathfrak{f}, \mathfrak{a})$, which classifies crossed modules of Lie algebras (with fixed homotopy Lie algebras $\mathfrak{f}$ and $\mathfrak{a}$) up to weak equivalence  \cite{Kassel1982}. 
Moreover, we let $T(\mathfrak{f}, V)$ be the vector space of continuous bilinear forms $\eta:\mathfrak{f} \times \mathfrak{f} \to V$ satisfying the condition
\begin{equation*}
\eta([X, Y], Z) + \eta(Y, [X, Z]) = \eta(X, [Y, Z])
\end{equation*}
for all $X, Y, Z\in \mathfrak{f}$.
Finally, we denote by $\Sym^2(\mathfrak{f}, \mathfrak{a})^{\mathrm{ad}} $ the space of symmetric $\mathrm{Ad}$-invariant bilinear forms on $\mathfrak{f}$ with values in $\mathfrak{a}$.

\begin{theorem}
\label{theorem-2}
Let $\mathfrak{G}$ be a central crossed module of Lie algebras,  with a section  $s$. 
\begin{enumerate}
\item 
The following are equivalent:
\begin{enumerate}[(i)]

\item 
$\mathfrak{G}$ admits infinitesimal adjustments.

\item
$\mathfrak{G}$ admits infinitesimal adjustments adapted to  $s$.

\item
The Kassel-Loday class $\KL(\mathfrak{G})$ of $\mathfrak{G}$ lies in the image of the Chern-Weil homomorphism
\begin{equation*}
\cw: \Sym^2(\mathfrak{f}, \mathfrak{a})^{\mathrm{ad}} \to H^3(\mathfrak{f}, \mathfrak{a}).
\end{equation*} 

\end{enumerate}

\item
$\Adj(\mathfrak{G})$ is an affine space over $T(\mathfrak{f}, \mathfrak{h})$, and $\Adj^{s}(\mathfrak{G})$ is an affine space over $T(\mathfrak{f}, \mathfrak{a})$.

\item
There is a canonical map
\begin{equation}
\label{CanonicalMap}
\KL^{\mathrm{adj}} : \Adj^{s}(\mathfrak{G}) \to \Sym^2(\mathfrak{f}, \mathfrak{a})^{\mathrm{ad}},
\end{equation}
called the \emph{adjusted Kassel-Loday class}, that assigns to each adapted adjustment a preimage of the Kassel-Loday class under the Chern-Weil homomorphism:
\[
\bigl[\cw\bigl(\KL^{\mathrm{adj}}(\mathfrak{G},\eta)\bigr)\bigr] = \KL(\mathfrak{G}). 
\]

The fibre of the map \eqref{CanonicalMap} over $B\in \Sym^2(\mathfrak{f}, \mathfrak{a})^{\mathrm{ad}}$ is an affine space for $\Alt^2_{\mathrm{cl}}(\mathfrak{f}, \mathfrak{a}) \subset T(\mathfrak{f}, \mathfrak{a})$ if $[\cw (B)]=\KL(\mathfrak{G})$, and it is empty else.

\item
If $\mathfrak{G}$ is weakly equivalent to another crossed module $\mathfrak{G}'$, and $s'$ is a section in $\mathfrak{G}'$, then there exists a bijection 
\begin{equation*}
\Adj^{s}(\mathfrak{G}) \cong   \Adj^{s'}(\mathfrak{G}')\text{.}
\end{equation*}

\end{enumerate}
\end{theorem}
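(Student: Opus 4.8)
The plan is to read the defining relations of an infinitesimal adjustment in \cref{DefInfAdj} as an inhomogeneous linear system for the unknown bilinear map $\kappa_*\colon\mathfrak g\times\mathfrak g\to\mathfrak h$, and to derive the whole theorem from the affine-linear nature of that system together with one structural computation relating its solution spaces to Chevalley--Eilenberg data of $\mathfrak f$. Throughout, cohomology is taken with coefficients in $\mathfrak a$ equipped with the trivial $\mathfrak f$-action, consistent with the action-free shape of the condition defining $T(\mathfrak f,V)$. Since every relation in \cref{DefInfAdj} is affine-linear in $\kappa_*$, the set $\Adj(\mathfrak G)$ is, once non-empty, a torsor over the solution space $L$ of the associated homogeneous system, and $\Adj^s(\mathfrak G)$ a torsor over its adapted refinement $L^s$. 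The content of Part~(2) is then to identify $L\cong T(\mathfrak f,\mathfrak h)$ and $L^s\cong T(\mathfrak f,\mathfrak a)$: I expect the homogeneous relations to force a difference $\delta$ of two adjustments to vanish as soon as one argument lies in $t_*\mathfrak h$, so that $\delta$ descends to a bilinear form on $\mathfrak f$, and to reduce the surviving relation to exactly the derivation identity $\delta([X,Y],Z)+\delta(Y,[X,Z])=\delta(X,[Y,Z])$ cutting out $T$; adaptedness forces the values into $\mathfrak a=\ker t_*$.

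The single algebraic input underlying Part~(3) is the exact sequence
\[
0\longrightarrow \Alt^2_{\mathrm{cl}}(\mathfrak f,\mathfrak a)\longrightarrow T(\mathfrak f,\mathfrak a)\xrightarrow{\ \mathrm{sym}\ }\Sym^2(\mathfrak f,\mathfrak a)^{\mathrm{ad}}\xrightarrow{\ \cw\ }H^3(\mathfrak f,\mathfrak a),
\]
where $\mathrm{sym}$ is symmetrisation. Its two non-formal parts are a direct computation: the symmetric part of any $\eta\in T(\mathfrak f,\mathfrak a)$ is automatically $\mathrm{Ad}$-invariant (expand $\mathrm{sym}(\eta)([X,Y],Z)+\mathrm{sym}(\eta)(Y,[X,Z])$ and apply the $T$-identity twice), and an alternating $\eta$ lies in $T$ precisely when it is closed, since for alternating forms the $T$-identity is term for term the Chevalley--Eilenberg $2$-cocycle condition; hence $\ker(\mathrm{sym})=\Alt^2_{\mathrm{cl}}(\mathfrak f,\mathfrak a)$.

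Now fix the section $s$ and let $\beta(X,Y)\in\mathfrak h$ lift the curvature $[sX,sY]-s[X,Y]\in t_*\mathfrak h$; its Jacobiator is an $\mathfrak a$-valued $3$-cocycle representing $\KL(\mathfrak G)$. For an adapted adjustment $\eta$ I would set $\KL^{\mathrm{adj}}(\mathfrak G,\eta):=\mathrm{sym}$ of the $\mathfrak a$-valued form $X,Y\mapsto\eta(sX,sY)$; the adjustment axioms should translate precisely into its $\mathrm{Ad}$-invariance and into a Chern--Simons primitive exhibiting $\cw(\KL^{\mathrm{adj}}(\mathfrak G,\eta))-\KL(\mathfrak G)$ as exact, which is the displayed identity of Part~(3) and yields (i)$\Rightarrow$(iii). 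As (ii)$\Rightarrow$(i) is trivial, the remaining and decisive implication is (iii)$\Rightarrow$(ii): given $B\in\Sym^2(\mathfrak f,\mathfrak a)^{\mathrm{ad}}$ with $\cw(B)=\KL(\mathfrak G)$, I would choose a primitive trivialising $\cw(B)-\KL(\mathfrak G)$ and assemble from $B$, $\beta$ and this primitive an explicit candidate $\kappa_*$, then verify every relation of \cref{DefInfAdj}. I expect this existence step to be the main obstacle of the whole theorem: it is the only point where the invariance of $B$, the cocycle identities and the Peiffer relations of the crossed module must be made to conspire, rather than being bookkept by linear algebra.

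With these pieces in place the rest is formal. The map $\KL^{\mathrm{adj}}$ is affine with linear part $\mathrm{sym}$, so by the kernel computation each non-empty fibre is a torsor over $\Alt^2_{\mathrm{cl}}(\mathfrak f,\mathfrak a)$; the construction in (iii)$\Rightarrow$(ii) shows $\KL^{\mathrm{adj}}$ surjects onto $\cw^{-1}(\KL(\mathfrak G))$, whereas the identity $\cw(\KL^{\mathrm{adj}}(\mathfrak G,\eta))=\KL(\mathfrak G)$ forces fibres over $B$ with $[\cw(B)]\neq\KL(\mathfrak G)$ to be empty, proving Part~(3). Finally, for Part~(4) I would use that $\mathfrak f$, $\mathfrak a$ and $\KL(\mathfrak G)$ are invariants of weak equivalence (the latter by Kassel--Loday), so that a weak equivalence $\mathfrak G\simeq\mathfrak G'$ with compatible sections matches all the classifying data above; transporting adjustments along the equivalence --- equivalently, identifying the two descriptions as affine spaces over $T(\mathfrak f,\mathfrak a)$ fibred over $\cw^{-1}(\KL)$ with fibres over $\Alt^2_{\mathrm{cl}}(\mathfrak f,\mathfrak a)$ --- yields the bijection $\Adj^s(\mathfrak G)\cong\Adj^{s'}(\mathfrak G')$.
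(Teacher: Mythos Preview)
Your overall strategy matches the paper's: the affine structure of the adjustment equations, the exact sequence
\[
0\to\Alt^2_{\mathrm{cl}}(\mathfrak f,\mathfrak a)\to T(\mathfrak f,\mathfrak a)\to\Sym^2(\mathfrak f,\mathfrak a)^{\ad}\xrightarrow{\cw}H^3(\mathfrak f,\mathfrak a),
\]
and the identification of $\KL^{\mathrm{adj}}$ as (minus) the symmetric part are exactly what the paper uses (its \cref{decomposition-lemma}, sequence~\eqref{FunnyFourTermSequence}, and \cref{SymmetricPart}). The difference lies in how the ``hard step'' (iii)$\Rightarrow$(ii) is executed. You propose to build a candidate from the section $s$, its curvature $\beta\colon\mathfrak f\times\mathfrak f\to\mathfrak h$, and a primitive, extending to $\mathfrak g\times\mathfrak g$ piece by piece along the decomposition $\mathfrak g=s(\mathfrak f)\oplus t_*\mathfrak h$. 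The paper instead introduces a single explicit bilinear form $\omega_u\in\Alt^2(\mathfrak g,\mathfrak h)$ built from a half-splitting $u$ (equation~\eqref{omegaU}), proves directly that $\omega_u$ already satisfies the inhomogeneous identities \eqref{EtaIdentities-1}--\eqref{EtaIdentities-2} (\cref{LemmaAdjustmentIdentities2}), and then sets $\eta=\omega_u+p^*\xi-p^*B$. This packaging avoids the case analysis your approach would require and makes the verification of \eqref{EtaIdentities-0} a one-line application of \cref{decomposition-lemma}: the two representatives $\delta\omega_u$ and $\cw(B)$ of $\KL(\mathfrak G)$ differ by $\delta(p^*\xi)$ precisely when the candidate lies in $T(\mathfrak g,\mathfrak h)$. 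Your route would work, but the paper's $\omega_u$ is the clean device that turns the ``main obstacle'' you flag into a short computation.

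For Part~(4) your argument (both sides are torsors over $T(\mathfrak f,\mathfrak a)$, non-empty iff $\KL$ lies in the image of $\cw$, and weak equivalences preserve $\mathfrak f$, $\mathfrak a$, $\KL$) does yield \emph{some} bijection and suffices for the statement as written. The paper proves more: given an invertible butterfly $\mathfrak k\colon\mathfrak G\to\mathfrak G'$ with a neat section $q$, it constructs an explicit affine map $\Adj^q(\mathfrak k)\colon\Adj^{s}(\mathfrak G)\to\Adj^{s'}(\mathfrak G')$ (Section~\ref{covariance-under-butterflies}) and shows it is affine along the isomorphism $T(\mathfrak f,\mathfrak a)\cong T(\mathfrak f',\mathfrak a')$ induced by $\mathfrak k$, hence bijective. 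This extra control is what later upgrades the assignment $\mathfrak G\mapsto\ADJ(\mathfrak G)$ to a $2$-functor.
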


This is obtained in the main text as a combination of \cref{existence-of-infinitesimal-adjustments,affine-space-structure-on-adjustments,bijectivity-adjustment-transfer}.
We remark that \cref{theorem-2} provides a new way to compute the Kassel-Loday class of (certain) crossed modules: choose an adapted adjustment $\eta$ and compute the image of its adjusted Kassel-Loday class under the Chern-Weil homomorphism.  
This is conceptually similar to how the Chern classes of a vector bundle may be computed using Chern-Weil theory.

The classification of sets of adjustments provided by \cref{theorem-2} can be improved by considering the \emph{groupoid} $\ADJ(\Gamma)$ of adjustments, where the objects are pairs $(s, \kappa)$ of a section $s$ and an adjustment $\kappa$ adapted to $s$, and, analogously, the groupoid $\ADJ(\mathfrak{G})$ of infinitesimal adjustments, with pairs $(s, \eta)$ of a section $s$ and an infinitesimal adjustment adapted to $s$. This organization in groupoids follows Tellez-Dominguez \cite{TellezDominguez2023}. 
Our previous results find the following reformulation, which are stated in the main text in \cref{homotopy-groups-of-ADJ}.

\begin{theorem}
\label{theorem-3}
Let $\Gamma =(H \stackrel{t}{\to} G \stackrel{\alpha}{\to} \Aut(H))$ be a central crossed module of Lie groups, and $\mathfrak{G}$ be the corresponding crossed module of Lie algebras, with homotopy Lie algebras $\mathfrak{f}$ and $\mathfrak{a}$.
\begin{enumerate}

\item 
Differentiation is a faithful functor $\ADJ(\Gamma) \to \ADJ(\mathfrak{G})$; it is full if $G$ is connected, and it is essentially surjective if $G$ is connected and simply connected  and   $H$ is connected. 

\item
The adjusted Kassel-Loday class induces a well-defined
 map
\begin{equation*}
\pi_0\ADJ(\mathfrak{G}) \to \Sym^2(\mathfrak{f}, \mathfrak{a})^{\mathrm{ad}},\quad (s, \eta) \mapsto \KL^{\mathrm{adj}}(\mathfrak{G}, \eta)
\end{equation*}
on the set of isomorphism classes of objects, 
whose fibre over $B\in \Sym^2(\mathfrak{f}, \mathfrak{a})^{\mathrm{ad}}$ is an affine space over $H^2(\mathfrak{f}, \mathfrak{a})$ if $\cw (B)=\KL(\mathfrak{G})$ and  empty else.  

\item
The group $\pi_1\ADJ(\mathfrak{G})$ of automorphisms (of any object) is 
\begin{equation*}
\pi_1\ADJ(\mathfrak{G})\cong H^1(\mathfrak{f}, \mathfrak{a})\text{.}
\end{equation*}

\end{enumerate}
Moreover, the assignment of the groupoid $\ADJ(\mathfrak{G})$ of adjustments to a crossed module $\mathfrak{G}$ of Lie algebras extends to a strict 2-functor
\begin{equation*}
\ADJ: \cm^{\mathrm{sec}} \to \grpd\text{.}
\end{equation*}
In particular, weakly equivalent crossed modules have equivalent groupoids of adjustments. 
\end{theorem}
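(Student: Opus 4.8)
The plan is to read all four claims as groupoid-level enhancements of \cref{theorem-1} and \cref{theorem-2}; the single genuinely new ingredient is an explicit description of the morphisms of $\ADJ(\Gamma)$ and $\ADJ(\mathfrak{G})$ together with the way differentiation, the adjusted Kassel--Loday class, and section-changes interact with them. Once these morphisms are pinned down, every item becomes either a transcription of an earlier set-level result or a standard computation with the Chevalley--Eilenberg complex of $\mathfrak{f}$ with coefficients in $\mathfrak{a}$.

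For item (1), I would define the differentiation functor $D\colon \ADJ(\Gamma)\to\ADJ(\mathfrak{G})$ on objects by $(s,\kappa)\mapsto(s,\kappa_*)$: the section is a purely Lie-algebraic datum and is left untouched, while $\kappa_*$ is the infinitesimal adjustment of \cref{theorem-1}, and the fact that $\kappa_*$ is adapted to $s$ whenever $\kappa$ is, is exactly the second map asserted there. On a morphism I differentiate its underlying group-valued gauge datum and verify that the result satisfies the infinitesimal morphism axioms and that $D$ preserves composition and identities strictly. Faithfulness is then the injectivity of $D$ on each hom-set, which I would deduce from the rigidity of the (suitably based) morphism datum under differentiation. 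Fullness under $G$ connected follows by integrating an infinitesimal morphism to a unique group-level one, the usual integration of a map out of a connected Lie group; essential surjectivity under the stronger hypotheses is immediate from the bijectivity clause of \cref{theorem-1}, since then every object $(s,\eta)$ has $\eta=\kappa_*$ for an adapted $\kappa$ and hence already lies in the image of $D$.

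For items (2) and (3), I would use the affine descriptions of \cref{theorem-2}. Each object $(s,\eta)$ carries the adjusted class $\KL^{\mathrm{adj}}(\mathfrak{G},\eta)\in\Sym^2(\mathfrak{f},\mathfrak{a})^{\mathrm{ad}}$; to obtain a map out of $\pi_0$ I must check it is constant on morphisms, which I verify from the defining formula by noting that a morphism changes $\eta$ only by a term that is killed by the symmetrization producing $\KL^{\mathrm{adj}}$. For the fibre over $B$ with $\cw(B)=\KL(\mathfrak{G})$: at a fixed section the set-level fibre is a torsor over $\Alt^2_{\mathrm{cl}}(\mathfrak{f},\mathfrak{a})=Z^2$ by \cref{theorem-2}(3); passing to isomorphism classes quotients this by the subgroup realized by morphisms, which I claim is exactly the coboundaries $B^2$, so that the quotient is $Z^2/B^2=H^2(\mathfrak{f},\mathfrak{a})$, with varying $s$ introducing nothing new since a change of section is itself a morphism; when $\cw(B)\neq\KL(\mathfrak{G})$ the fibre is empty by \cref{theorem-2}(3). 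Likewise, a morphism $(s,\eta)\to(s,\eta)$ corresponds under the same dictionary to a closed $\mathfrak{a}$-valued $1$-cochain modulo those acting trivially, i.e. an element of $Z^1/B^1=H^1(\mathfrak{f},\mathfrak{a})$, with composition matching addition; as this group is independent of the chosen object, the automorphism group of \emph{any} object is $H^1(\mathfrak{f},\mathfrak{a})$, giving item (3).

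Finally, for the ``Moreover'' clause I would extend $\mathfrak{G}\mapsto\ADJ(\mathfrak{G})$ to a strict $2$-functor on $\cm^{\mathrm{sec}}$: a $1$-morphism of sectioned crossed modules induces a functor between groupoids of adjustments by transporting the adjustment data along it, and a $2$-morphism induces a natural transformation, with strict preservation of composites and units read off from functoriality of this transport. A weak equivalence is then sent to an equivalence because the induced functor is a bijection on the $\pi_0$-fibres and an isomorphism on $\pi_1$ by items (2) and (3) — equivalently, this is the groupoid enhancement of the bijection $\Adj^{s}(\mathfrak{G})\cong\Adj^{s'}(\mathfrak{G}')$ of \cref{theorem-2}(4) — yielding the stated equivalence of groupoids of adjustments. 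The hard part will be pinning down the morphisms of $\ADJ(\mathfrak{G})$ and proving that their action on the affine spaces of \cref{theorem-2} implements precisely the passage from cocycles to cohomology, i.e. that section-changes and gauge transformations produce exactly the coboundaries $B^1$ and $B^2$; concretely, I expect the crux to be building a clean equivalence between $\ADJ(\mathfrak{G})$ and a truncation of the Chevalley--Eilenberg complex of $\mathfrak{f}$ with values in $\mathfrak{a}$, after which items (2)--(3) and strict $2$-functoriality are bookkeeping and item (1) follows from \cref{theorem-1} together with Lie-group integration.
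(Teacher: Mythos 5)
Your skeleton for items (1)--(3) is the paper's: item (1) is the groupoid reformulation of \cref{integration-of-adjustments}, and items (2)--(3) are proved (\cref{homotopy-groups-of-ADJ}) by exactly the Chevalley--Eilenberg bookkeeping you outline ($Z^2$ acts on each set-level fibre, morphisms realize $B^2$, automorphisms are closed $\mathfrak{a}$-valued $1$-cochains). But the one thing you defer --- ``pinning down the morphisms'' --- is not a detail; it is where the content sits, and your provisional guesses go wrong in a way that matters. In the paper, a morphism $(s,\kappa)\to(s',\kappa')$ of $\ADJ(\Gamma)$ is a \emph{linear map} $\phi:\mathfrak{f}\to\mathfrak{h}$ with $s-s'=t_*\phi$ and $\kappa'(g,X)-\kappa(g,X)=\phi\, p_*\bigl(\Ad_g(X)-X\bigr)$; that is, the morphism datum is Lie-algebraic even at group level, and the differentiation functor is the \emph{identity} on it. That is precisely why faithfulness holds with no hypothesis on $G$, while connectedness of $G$ is what fullness needs (both sides of the displayed condition are crossed homomorphisms in $g$, determined by their differentials on a connected group). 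Your version --- morphisms of $\ADJ(\Gamma)$ as ``group-valued gauge data'' which $D$ differentiates, with faithfulness deduced from ``rigidity under differentiation'' --- cannot deliver the unconditional faithfulness the theorem asserts: differentiating genuinely group-level data is lossy exactly when $G$ is disconnected. Also, your phrase ``closed $1$-cochain modulo those acting trivially'' for $\pi_1$ is off (nothing is quotiented; the automorphisms \emph{are} $Z^1$, which equals $H^1$ only because $B^1=0$ for trivial coefficients), though this is harmless.

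The second, larger gap is the ``Moreover'' clause. You reduce it to ``transporting the adjustment data along'' a $1$-morphism of $\cm^{\mathrm{sec}}$ and cite the set-level bijection of \cref{theorem-2}; but that bijection is itself constructed from choices (a neat section $q$ of the butterfly, half splittings $u_i$), and the assertion to be proved is precisely that these choices can be organized \emph{coherently and strictly}. The paper spends \cref{covariance-under-butterflies,functors-from-butterflies} on this: well-definedness of $\Adj^{q}(\mathfrak{k})$ independently of the half splittings (\cref{butterfly-map-depends-only-on-idempotent}), its extension to morphisms (\cref{lemma-5-5}), natural isomorphisms comparing different sections $q$ (\cref{natural-iso-for-two-sections}), and --- the crux for \emph{strict} rather than pseudo-functoriality --- an explicit section $\tilde q$ on the composite butterfly for which $\Adj^{\tilde q}(\mathfrak{k}'\circ\mathfrak{k})=\Adj^{q'}(\mathfrak{k}')\circ\Adj^{q}(\mathfrak{k})$ holds on the nose (\cref{composition-of-butterflies-and-neat-sections}, \cref{2-functor}). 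None of this can be ``read off from functoriality of the transport'', because the transport is not canonical. Your fallback criterion (bijective on $\pi_0$ and isomorphisms on automorphism groups imply equivalence of groupoids) is valid, but it presupposes the functor, which is the object to be constructed.
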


Above,  $\cm^{\mathrm{sec}}$ denotes the bicategory of central crossed modules of Lie algebras and weak equivalences (realized as butterflies with a selected section), and $\grpd$ denotes the bicategory of groupoids.
Performing the Grothendieck construction with the functor $\ADJ: \cm^{\mathrm{sec}} \to \grpd$ provides systematically  a bicategory $\cm^{\mathrm{adj}}$ of central crossed modules equipped with adjustments, and adjustment-preserving butterflies. 
We denote by 
\[
\cm^{\mathrm{adj}}(\mathfrak{f}, \mathfrak{a}) \subset \cm^{\mathrm{adj}}\qquad \text{ and} \qquad \cm(\mathfrak{f}, \mathfrak{a})\subset \cm
\]
 the full subcategories of crossed modules with fixed homotopy Lie algebras $\mathfrak{f}$ and $\mathfrak{a}$.
Our final result classifies these bicategories.

\begin{theorem}
\label{Thm1-4}
The adjusted Kassel-Loday class induces a well-defined bijective map $\mathrm{KL}^{\mathrm{adj}}$ in the top row of a commutative diagram
\begin{equation*}
\xymatrix@C=3em{\pi_0\cm^{\mathrm{adj}}(\mathfrak{f}, \mathfrak{a}) \ar[d] \ar[r]^-{\KL^{\mathrm{adj}}} & \Sym^2(\mathfrak{f}, \mathfrak{a})^{\ad} \ar[d]^{\mathrm{cw}} \\ \pi_0\cm(\mathfrak{f}, \mathfrak{a}) \ar[r]_-{\KL} & H^3(\mathfrak{f}, \mathfrak{a})\text{.} }
\end{equation*}
In particular, adjusted crossed modules are classified up to weak equivalence by their adjusted Kassel-Loday class. 
\end{theorem}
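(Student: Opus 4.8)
The plan is to reduce the statement to the fibrewise analysis already carried out in \cref{theorem-2,theorem-3}, using the Grothendieck-construction description of $\cm^{\mathrm{adj}}$. First I would check that $\KL^{\mathrm{adj}}$ is well defined on $\pi_0\cm^{\mathrm{adj}}(\mathfrak{f},\mathfrak{a})$. Since every morphism in $\cm^{\mathrm{sec}}(\mathfrak{f},\mathfrak{a})$ induces the identity on the homotopy Lie algebras, and the adjusted Kassel-Loday class takes values in the \emph{fixed} vector space $\Sym^2(\mathfrak{f},\mathfrak{a})^{\ad}$, naturality of the construction shows it is invariant under weak equivalences; invariance under isomorphisms inside a single groupoid $\ADJ(\mathfrak{G})$ is exactly the well-definedness asserted in \cref{theorem-3}. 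Commutativity of the square is then immediate: the left vertical map forgets the adjustment, the right vertical map is $\cw$, and the identity $\cw(\KL^{\mathrm{adj}}(\mathfrak{G},\eta))=\KL(\mathfrak{G})$ is part of \cref{theorem-2}. The \quot{in particular} follows formally once bijectivity of the top row is established.

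Next I would unwind $\pi_0$ of the Grothendieck construction. For the $2$-functor $\ADJ:\cm^{\mathrm{sec}}(\mathfrak{f},\mathfrak{a})\to\grpd$ one has the standard identification
\[
\pi_0\cm^{\mathrm{adj}}(\mathfrak{f},\mathfrak{a})\;\cong\;\coprod_{[\mathfrak{G}]\in\pi_0\cm(\mathfrak{f},\mathfrak{a})}\pi_0\ADJ(\mathfrak{G})\big/\Aut(\mathfrak{G}),
\]
where $\Aut(\mathfrak{G})$ is the group of weak self-equivalences of $\mathfrak{G}$ modulo $2$-isomorphism, acting on isomorphism classes of adapted adjustments. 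Because $\KL:\pi_0\cm(\mathfrak{f},\mathfrak{a})\to H^3(\mathfrak{f},\mathfrak{a})$ is a bijection \cite{Kassel1982}, and, by \cref{theorem-3}, the image of $\KL^{\mathrm{adj}}:\pi_0\ADJ(\mathfrak{G})\to\Sym^2(\mathfrak{f},\mathfrak{a})^{\ad}$ is precisely the fibre $\cw^{-1}(\KL(\mathfrak{G}))$, bijectivity of the top map reduces to showing, for each $\mathfrak{G}$, that $\KL^{\mathrm{adj}}$ descends to a bijection $\pi_0\ADJ(\mathfrak{G})/\Aut(\mathfrak{G})\xrightarrow{\ \cong\ }\cw^{-1}(\KL(\mathfrak{G}))$. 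Summing over $[\mathfrak{G}]$, and noting that a crossed module with $\KL(\mathfrak{G})\notin\im(\cw)$ carries no adjustments (\cref{theorem-2}), exactly matching $\cw^{-1}(\KL(\mathfrak{G}))=\emptyset$, then assembles these fibrewise bijections into the global bijection $\pi_0\cm^{\mathrm{adj}}(\mathfrak{f},\mathfrak{a})\cong\Sym^2(\mathfrak{f},\mathfrak{a})^{\ad}$. Note that two adjusted crossed modules with the same $B$ automatically have underlying crossed modules with $\KL=\cw(B)$, hence lie in the \emph{same} summand of the coproduct, so the only identifications left to produce are those within a fixed $\mathfrak{G}$.

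The fibrewise statement, and hence the whole theorem, rests on a single input which I expect to be the main obstacle: that $\Aut(\mathfrak{G})$ acts on each fibre of $\KL^{\mathrm{adj}}$ transitively. By \cref{theorem-3} these fibres are torsors over $H^2(\mathfrak{f},\mathfrak{a})$, and one knows independently that $\Aut(\mathfrak{G})\cong H^2(\mathfrak{f},\mathfrak{a})$ -- this is the $\pi_1$ of the moduli groupoid $\cm^{\mathrm{sec}}(\mathfrak{f},\mathfrak{a})$, complementing $\pi_0\cong H^3(\mathfrak{f},\mathfrak{a})$ (Kassel-Loday) and the $2$-automorphism group $H^1(\mathfrak{f},\mathfrak{a})$. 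The crux is to show that transferring an adapted adjustment along a weak self-equivalence representing a class $c\in H^2(\mathfrak{f},\mathfrak{a})$ translates its position in the fibre by exactly $c$; equivalently, that the $\Aut(\mathfrak{G})$-action is identified with the torsor translation. I would prove this at the level of cochains: present a weak automorphism by a butterfly datum determined by a $2$-cocycle, present the affine structure of \cref{theorem-3} through the same complex computing $H^{\bullet}(\mathfrak{f},\mathfrak{a})$, and check that the induced transfer on adjustment cochains is addition of that cocycle. Granting this transitivity, $\KL^{\mathrm{adj}}$ is injective on $\pi_0\ADJ(\mathfrak{G})/\Aut(\mathfrak{G})$ -- two adjustments with the same class lie in one fibre, hence in one $\Aut(\mathfrak{G})$-orbit, hence become isomorphic in $\cm^{\mathrm{adj}}$ -- and it is surjective onto $\cw^{-1}(\KL(\mathfrak{G}))$ by \cref{theorem-3}, yielding precisely the fibrewise bijection required above.
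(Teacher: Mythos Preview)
Your proposal is correct and lands on the same concrete computation as the paper, but the framing differs. The paper argues more directly: commutativity and surjectivity come from \cref{existence-of-infinitesimal-adjustments}; for injectivity, given two objects with the same $B$, one first uses the Kassel--Loday bijection to produce an invertible butterfly $\mathfrak{G}_1\to\mathfrak{G}_2$, transfers $\eta_1$ across to an adjustment $\eta_2'$ on $\mathfrak{G}_2$ (which still has symmetric part $B$ by \cref{affine-linear-map}), and then invokes a short lemma (\cref{LemmaMorphismsInCrMod}) constructing, for each closed $\xi\in\Alt^2_{\mathrm{cl}}(\mathfrak{f},\mathfrak{a})$, an explicit butterfly $\mathfrak{k}_\xi:\mathfrak{G}\to\mathfrak{G}$ with $\Adj^{q_\xi}(\mathfrak{k}_\xi)(\eta)=\eta+p^*\xi$; applying this with $p^*\xi=\eta_2-\eta_2'$ finishes. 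Your Grothendieck-decomposition route is more structural: you identify $\pi_0\cm^{\mathrm{adj}}(\mathfrak{f},\mathfrak{a})$ as a coproduct of orbit spaces, invoke $\Aut(\mathfrak{G})\cong H^2(\mathfrak{f},\mathfrak{a})$ from \cref{ThmClassificationOfButterflies}, and then must verify that this group acts on the $H^2$-torsor fibres of $\KL^{\mathrm{adj}}$ by translation. That last verification \emph{is} precisely the content of \cref{LemmaMorphismsInCrMod}(a), so both proofs bottom out in the same cochain-level check. Your approach has the advantage of making the role of $H^2(\mathfrak{f},\mathfrak{a})$ transparent; the paper's has the advantage of avoiding the general $\pi_0$-of-Grothendieck formalism and simply writing down the required isomorphism.
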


\noindent
This is proved in the main text as \cref{adjusted-KL}.  
Here, the bottom horizontal map is the previously mentioned classification of crossed modules of Lie algebras by their Kassel-Loday class \cite{Kassel1982}.

For each pair $\mathfrak{f}$, $\mathfrak{a}$ of finite-dimensional Lie algebras with $\mathfrak{a}$ abelian, and each $B \in \Sym^2(\mathfrak{f}, \mathfrak{a})^{\ad}$, we give in \cref{SectionConstructionCrossedModule1} a canonical construction of a crossed module $\mathfrak{G}_B$ of Lie algebras together with an  infinitesimal adjustment $\eta_{B,s}$ on $\mathfrak{G}_B$, adapted to any section $s$, such that $\KL^{\mathrm{adj}}(\mathfrak{G}_B, \eta_{B,s}) = B$.
In \cref{SectionConstructionCrossedModule2} we lift this construction to a crossed module $\Gamma_B$ of Lie groups and an adjustment on $\Gamma_B$. 
This construction is a generalization of the construction of the string group in \cite{LudewigWaldorf2Group}.
In particular,  the string group admits an adjustment, and our classification shows that this adjustment is unique up to unique isomorphism (see \cref{string-2-group}).

As further examples, we discuss adjustments on product crossed modules (\cref{Products}), on categorical tori (\cref{ex:TD}) and on automorphism 2-groups of algebras (\cref{ex:AUTA}).

\paragraph{Acknowledgements.}

ML acknowledges support of SFB 1085 ``Higher invariants'', funded by  the German Research Foundation (DFG). KW was supported  by the DFG under project code WA 3300/5-1.

\section{Adjustments}

We define adjustments on crossed modules of Lie groups and infinitesimal adjustments on crossed modules of Lie algebras, and explore the relation between them under differentiation. 

\subsection{Adjustments on central crossed modules of Lie groups}

Let $\Gamma=(H \stackrel{t}{\to} G \stackrel{\alpha}{\to} \Aut(H))$ be a  crossed module of (possibly infinite-dimensional) Lie groups. 
We assume it to be \emph{central}, meaning that the action $\alpha$ is trivial on $A :=\pi_1(\Gamma):= \ker(t) \subseteq H$.
We also assume it to be \emph{smoothly separable}.
This means that  the quotients $F :=\pi_0(\Gamma):= G/t(H)$ and $H/A$ have  Lie group structures, such that the projections $p:G \to F$ and $H \to H/A$  have smooth local sections, and the map $t: H/A \to t(H)$ is a diffeomorphism \cite[Def. II.1, Def. III.1]{Neeb2005}.
In particular, $\mathfrak{a} \subseteq \mathfrak{h}$ is topologically complemented.
Smooth separability is always satisfied when the groups involved are finite-dimensional. 
A smoothly separable crossed module can  be seen as a   categorical Lie group extension 
\begin{equation*}
1\to BA \to \Gamma \to F_{dis}\to 1\text{,}
\end{equation*}
and centrality means that this extension is central.  

For each $h \in H$, we define a smooth map $\tilde{\alpha}_h : G \to H$ by the formula
\begin{equation}
\label{alpha-tilde}
\tilde{\alpha}_h(g) := h^{-1} \alpha_g(h);
\end{equation}
this map appears in the definition of adjustments (see below).

A    crossed module $\mathfrak{G}=(\mathfrak{h} \stackrel{t_*}{\to} \mathfrak{g} \stackrel{\alpha_*}{\to} \Der(\mathfrak{h)})$ of  Lie algebras consists of Lie algebras  $\mathfrak{g}$ and $\mathfrak{h}$, a Lie algebra homomorphism $t_*:\mathfrak{h} \to \mathfrak{g}$, and an action $\alpha_*$ of   $\mathfrak{g}$ on $\mathfrak{h}$ by derivations such that  $t$ is $\mathfrak{g}$-equivariant with respect to the adjoint action of $\mathfrak{g}$ on itself, and  the infinitesimal version 
\begin{equation}
\label{InfinitesimalPeiffer}
\alpha_*(t_*(x), y) = [x, y]
\end{equation}
of the Peiffer identity holds  \cite[Def. A.1]{Kassel1982}. 
All Lie algebras may be infinite-dimensional, and are assumed to carry locally convex topologies such that the Lie brackets and all other structure maps are continuous.
A crossed module of Lie algebras is called  \emph{central} if the action $\alpha$ of $\mathfrak{g}$ on $\mathfrak{h}$ restricts to the trivial action on $\pi_1(\mathfrak{G})=\mathfrak{a}:=\mathrm{ker}(t_{*})$. 
We also form the quotient Lie algebra $\pi_0(\mathfrak{G})=\mathfrak{f} := \mathfrak{g}/t_*(\mathfrak{h})$, and we say that $\mathfrak{G}$ is smoothly separable if $\mathfrak{a} \subseteq \mathfrak{h}$ and $t_* \mathfrak{h} \subseteq \mathfrak{g}$ are complemented subspaces.
In particular, the projection $\mathfrak{g} \to \mathfrak{f}$ admits a section $s: \mathfrak{f} \to \mathfrak{g}$.

Applying the Lie algebra functor to a (central, smoothly separable) crossed module $\Gamma$ of Lie groups yields a (central, smoothly separable) crossed module of Lie algebras, in such a way that $\mathfrak{a}$ and  $\mathfrak{f}$ are the Lie algebras of $A$ and $F$, respectively. 
In the following, we assume that all crossed modules of Lie groups and Lie algebras are smoothly separable, without explicit mentioning.

Associated to a crossed module  $\mathfrak{G}$ of Lie algebras is the following four-term exact sequence of Lie algebras, 
\begin{equation}
\label{four-term-sequence}
\begin{tikzcd}
0 \ar[r] & \mathfrak{a} \ar[r, "\iota"] & \mathfrak{h} \ar[r, "t"] & \mathfrak{g} \ar[r, "p"] & \mathfrak{f} \ar[r] & 0.
\end{tikzcd}
\end{equation}
A \emph{section} of $\mathfrak{G}$ is a linear map $s: \mathfrak{f} \to \mathfrak{g}$ such that $p s=\id_{\mathfrak{f}}$. 
This is the same information as the vector space complement $s(\mathfrak{f})$ of $t(\mathfrak{h})\subset \mathfrak{g}$ and thus the same information as the idempotent projection 
\begin{equation}
\label{RhoForS}
\rho_s :=\id_{\mathfrak{g}}-s p: \mathfrak{g} \to \mathfrak{g}
\end{equation}
onto $t(\mathfrak{h})$.

As described in the introduction, the notion of an adjustment underwent several developments \cite{Fiorenza2012,Sati2009,Sati2012,Saemann2020, Kim2020, Rist2022, Kim, TellezDominguez2023}, from which we synthesized the following definition (suitable for central crossed modules).

\begin{definition}
\label{definition-adjustment}
Let   $\Gamma=(H \stackrel{t}{\to} G \stackrel{\alpha}{\to} \Aut(H))$ be a central crossed module of Lie groups with associated crossed module $\smash{\mathfrak{G}=(\mathfrak{h} \stackrel{t_{*}}\to\mathfrak{g} \stackrel{\alpha_{*}}\to \Der(\mathfrak{h}))}$ of Lie algebras. An \emph{adjustment} of $\Gamma$  is a map
\[
\kappa : G \times \mathfrak{g} \longrightarrow \mathfrak{h}
\]
that is linear and continuous in $\mathfrak{g}$, smooth in $G$, 
and satisfies the following conditions:
\begin{align}
\label{adjustment-condition-1}
\kappa(g_1g_2, X) &= \kappa\big(g_1, \Ad_{g_2}(X)\big) + \kappa(g_2, X) 
\\
\label{adjustment-condition-2}
\kappa\big(t(h), X\big) &= (\tilde{\alpha}_{h^{-1}})_* (X)
\\
\label{adjustment-condition-3}
\kappa(g, t_{*}x) &= \alpha_g(x) - x
\end{align}
for all $g, g_1, g_2\in G$, $h\in H$, $X\in \mathfrak{g}$, and $x\in \mathfrak{h}$.
We say that $\kappa$ is \emph{adapted} to a section $s$ of $\mathfrak{G}$ if it additionally satisfies
\[
t_* \kappa(g, X) = \rho_s\big(\Ad_g(X) - X\big) 
\]
for all $g\in G$ and $X\in \mathfrak{g}$.
Here $\rho_s$ is the idempotent \eqref{RhoForS} associated to the section $s$.
\end{definition}

We denote by $\Adj(\Gamma)$ the set of adjustments on $\Gamma$, and by $\Adj^{s}(\Gamma)$ the subset of adjustments that are adapted to a given section $s$. In \cite{Rist2022}, adapted adjustments are called \emph{special}.

\begin{remark}
We restrict our attention  to \emph{central} crossed modules, because non-central crossed modules $\Gamma$ do not admit adjustments in the above sense (at least when $G$ is  connected). 
To see this, assume that $\Gamma$ admits an adjustment $\kappa$, and suppose $h, h'\in H$ with $t(h)=t(h')$. 
Thus, there exists $a\in A$ such that $h'=ha$. 
Then, 
\begin{align*}
-\mathrm{Ad}_{h}((\tilde\alpha_{h})_{*}(v))
&=\kappa(t(h)^{-1}, v) 
=\kappa(t(h')^{-1}, v) 
=-\mathrm{Ad}_{h'}((\tilde\alpha_{h'})_{*}(v))
=-\mathrm{Ad}_{ha}((\tilde\alpha_{ha})_{*}(v)).
\end{align*}
We recall that $A\subset H$ is always central, and note that 
\begin{equation*}
(\tilde\alpha_{ha})_{*}=\mathrm{   Ad}_{a}^{-1} \circ (\tilde\alpha_{h} )_{*}+(\tilde\alpha_{a})_{*}= (\tilde\alpha_{h} )_{*}+(\tilde\alpha_{a})_{*}\text{.}
\end{equation*}
This implies 
\begin{equation}
\label{necessary-condition-for-adjustments}
(\tilde\alpha_{a})_{*}(v)=0
\end{equation}
for all $v\in \mathfrak{g}$ and all $a\in \pi_1\Gamma$. 
Condition \eqref{necessary-condition-for-adjustments} is a necessary condition for the existence of adjustments, and it is satisfied when $\Gamma$ is central.  
Conversely, we assume that \eqref{necessary-condition-for-adjustments} holds and  consider for each $a\in A$ the map $G \to H: g \mapsto \tilde\alpha_a(g)$. 
\eqref{necessary-condition-for-adjustments} implies that it is locally constant, and hence constant on the identity component $G_0$. Its value at $g=1$ is $1$, and so $\alpha(g, a)=a$ for all $g\in G_0$ and $a\in A$. 
If $G$ is connected, this shows that $\Gamma$ is central. 
\end{remark}

\subsection{Infinitesimal adjustments and their integration}
\label{SectionInfinitesimalAdjustments}

We consider a central crossed module $\mathfrak{G}=(\mathfrak{h} \stackrel{t_*}{\to} \mathfrak{g} \stackrel{\alpha_*}{\to} \Der(\mathfrak{h}))$ of Lie algebras.

\begin{definition}
\label{DefInfAdj}
An \emph{infinitesimal adjustment} for $\mathfrak{G}$ is a continuous bilinear map 
\begin{equation*}
\eta:\mathfrak{g} \times \mathfrak{g} \to  \mathfrak{h}
\end{equation*}
satisfying the  conditions
\begin{align}
\label{EtaIdentities-0}
\eta([X, Y], Z) + \eta(Y, [X, Z]) &= \eta(X, [Y, Z])
\\
\label{EtaIdentities-1}
\eta(t_*x, Y) &= -\alpha_*(Y, x)
\\
\label{EtaIdentities-2}
\eta(X, t_*y) &= \alpha_*(X, y)
\end{align}
for all $X, Y, Z \in \mathfrak{g}$ and all $x, y\in \mathfrak{h}$.
If $s$ is a section of $\mathfrak{G}$, an infinitesimal adjustment $\eta$ is called \emph{adapted to $s$} if it satisfies
\begin{equation}
\label{AdaptedEquationKappastar}
  t_* \eta(X, Y) = \rho_s([X, Y]), 
\end{equation}
where $\rho_s$ is the idempotent \eqref{RhoForS} associated to the section $s$.
\end{definition}

We denote by $\Adj(\mathfrak{G})$ the  set of all infinitesimal adjustments on $\mathfrak{G}$, and by $\Adj^{s}(\mathfrak{G})$ the set of all infinitesimal adjustments on $\mathfrak{G}$ that are adapted to a section $s$.


\begin{lemma}
Let   $\Gamma=(H \stackrel{t}{\to} G \stackrel{\alpha}{\to} \Aut(H))$ be a central crossed module of Lie groups with associated crossed module of Lie algebras $\mathfrak{G}$.
 For any adjustment $\kappa$ on $\Gamma$, the bilinear map
\[
  \kappa_* : \mathfrak{g} \times \mathfrak{g} \to \mathfrak{h}
\]
obtained by differentiating the first entry of $\kappa$ at the unit element of $G$ is an infinitesimal adjustment on $\mathfrak{G}$. 
Moreover, if $\kappa$ is adapted to a section $s$, then $\kappa_*$ is also adapted to $s$.
\end{lemma}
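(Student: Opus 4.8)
The plan is to derive each defining property of an infinitesimal adjustment, namely \eqref{EtaIdentities-0}--\eqref{EtaIdentities-2} together with the adapted condition \eqref{AdaptedEquationKappastar}, by differentiating the corresponding group-level condition \eqref{adjustment-condition-1}--\eqref{adjustment-condition-3} (resp.\ the adapted condition of \cref{definition-adjustment}) in the first slot at the unit of $G$. Before doing so, I would record that setting $g_1=g_2=1$ in \eqref{adjustment-condition-1} forces $\kappa(1,X)=0$ for all $X$, so that the directional derivative $\kappa_*(Y,X):=\frac{\dd}{\dd s}\big|_{s=0}\kappa(\exp(sY),X)$ is well defined. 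It is linear in $Y$ by construction and in $X$ by hypothesis on $\kappa$, and continuous, hence a continuous bilinear map $\mathfrak{g}\times\mathfrak{g}\to\mathfrak{h}$ as required.

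The conditions \eqref{EtaIdentities-2} and \eqref{AdaptedEquationKappastar} are then immediate first-order computations. Differentiating \eqref{adjustment-condition-3}, in the form $\kappa(g,t_*y)=(\alpha_g)_*(y)-y$, in the direction $X$ and using $\frac{\dd}{\dd s}\big|_{s=0}(\alpha_{\exp(sX)})_*=\alpha_*(X,\cdot)$ yields $\kappa_*(X,t_*y)=\alpha_*(X,y)$, which is \eqref{EtaIdentities-2}. Likewise, differentiating the adapted condition $t_*\kappa(g,Y)=\rho_s(\Ad_g Y-Y)$ in the direction $X$, using $\frac{\dd}{\dd s}\big|_{s=0}\Ad_{\exp(sX)}=\ad_X$ together with the continuity and linearity of $t_*$ and $\rho_s$, gives $t_*\kappa_*(X,Y)=\rho_s([X,Y])$, which is \eqref{AdaptedEquationKappastar}; this also settles the adapted-case assertion.

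For \eqref{EtaIdentities-1} I would differentiate \eqref{adjustment-condition-2}. Using that $t\colon H\to G$ is a homomorphism, so $t(\exp(sx))=\exp(s\,t_*x)$, the left-hand side differentiates to $\kappa_*(t_*x,Y)$, while the right-hand side becomes $\frac{\dd}{\dd s}\big|_{s=0}(\tilde\alpha_{\exp(-sx)})_*(Y)$. The slightly delicate point is to evaluate this derivative of the family $h\mapsto(\tilde\alpha_{h^{-1}})_*$: writing $\tilde\alpha_{h^{-1}}(g)=h\,\alpha_g(h^{-1})$ and differentiating in both $s$ (through $h=\exp(sx)$) and the $G$-direction $Y$, the fundamental vector field generated by $Y$ vanishes at $1\in H$, so the left-translation terms drop out and one is left with $-\alpha_*(Y,x)$. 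The sign can be cross-checked against the Peiffer identity \eqref{InfinitesimalPeiffer}: both \eqref{EtaIdentities-1} and \eqref{EtaIdentities-2} must produce $\kappa_*(t_*x,t_*y)=[x,y]$, which fixes the minus sign in \eqref{EtaIdentities-1}.

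The main obstacle is \eqref{EtaIdentities-0}, the only relation that is genuinely second order. I would handle it by reading \eqref{adjustment-condition-1} as the statement that $\beta\colon G\to\Hom(\mathfrak{g},\mathfrak{h})$, $\beta(g):=\kappa(g,\cdot)$, is a $1$-cocycle (crossed homomorphism) for the $G$-module $\Hom(\mathfrak{g},\mathfrak{h})$ on which $G$ acts by precomposition with $\Ad$. Since differentiating \eqref{adjustment-condition-1} in $g_1$ alone merely reproduces \eqref{adjustment-condition-1} and is therefore circular, the key input is the standard fact that the differential at the unit of a group $1$-cocycle is a Lie-algebra $1$-cocycle. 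Its differential is $\beta_*(Y)=\kappa_*(Y,\cdot)$, and the induced infinitesimal action of $\mathfrak{g}$ on $\Hom(\mathfrak{g},\mathfrak{h})$ is given by composition with $\ad$; inserting this into the Lie-algebra cocycle identity and evaluating on a third argument $Z$ yields exactly \eqref{EtaIdentities-0}. Equivalently, and avoiding any appeal to the cocycle-differentiation lemma, the same identity follows by differentiating \eqref{adjustment-condition-1} in both $g_1$ and $g_2$ at the unit and antisymmetrizing, so that the symmetric second-order (Hessian) contributions cancel and only the bracket terms survive. Together with the computations above, this shows $\kappa_*\in\Adj(\mathfrak{G})$, and $\kappa_*\in\Adj^s(\mathfrak{G})$ whenever $\kappa$ is adapted to $s$.
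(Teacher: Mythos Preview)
Your proof is correct. For \eqref{EtaIdentities-1}, \eqref{EtaIdentities-2}, and the adaptedness condition you proceed exactly as the paper does (indeed, the paper omits the adaptedness verification, which you supply). The genuine difference is in how \eqref{EtaIdentities-0} is obtained.

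You read \eqref{adjustment-condition-1} as saying that $g\mapsto\kappa(g,\cdot)$ is a crossed homomorphism $G\to\Lin(\mathfrak g,\mathfrak h)$ and then invoke the standard fact that its differential is a Lie-algebra $1$-cocycle, which unwinds to \eqref{EtaIdentities-0}. The paper instead manipulates \eqref{adjustment-condition-1} concretely: applying it twice yields
\[
\kappa(gg'g^{-1},\Ad_g Y)=\kappa(g,\Ad_{g'}Y)+\kappa(g',Y)+\kappa(g^{-1},\Ad_g Y),
\]
and differentiating first in $g'$ (giving $\kappa_*(\Ad_g X,\Ad_g Y)=\kappa(g,[X,Y])+\kappa_*(X,Y)$) and then in $g$ produces \eqref{EtaIdentities-0}. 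Your route is more conceptual and in fact anticipates the crossed-homomorphism viewpoint that the paper only introduces later, in the proof of the integration theorem; the paper's route is self-contained and avoids citing the cocycle-differentiation lemma, at the cost of a slightly ad hoc conjugation identity. Your alternative antisymmetrization argument is also valid and is essentially the direct proof of that lemma in this special case.
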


\begin{proof}
Conditions \eqref{EtaIdentities-1} \& \eqref{EtaIdentities-2} are obtained from differentiating \eqref{adjustment-condition-2} \& \eqref{adjustment-condition-3}. 
To obtain condition \eqref{EtaIdentities-0}, first observe that applying the cocycle condition for $\kappa$ twice yields the identity 
\begin{equation}
\label{ConjugationIdentity}     
\begin{aligned}
\kappa\big(g g' g^{-1} , \Ad_g(Y)\big) 
&= \kappa(g g', Y) + \kappa\big(g^{-1}, \Ad_g(Y)\big)
\\
&= \kappa\big(g, \Ad_{g'}(Y)\big) + \kappa(g', Y) + \kappa\big(g^{-1}, \Ad_g(Y)\big).
\end{aligned}
\end{equation}
Inserting $g' = e^{tX}$ and
differentiating
at $t=0$ yields
\begin{equation*}
\kappa_*\big(\Ad_g(X), \Ad_g(Y)\big) = \kappa\big(g, [X, Y]\big) + \kappa_*(X, Y) .
\end{equation*}  
Finally, setting $g = e^{tZ}$ and differentiating at $t=0$, we obtain that $\kappa_*$ satisfies  \eqref{EtaIdentities-0}. 
\end{proof}

Under certain conditions, infinitesimal adjustments  can be ``integrated'' to obtain adjustments on the given crossed module of Lie groups.

\begin{theorem}
\label{integration-of-adjustments}
Let   $\Gamma=(H \stackrel{t}{\to} G \stackrel{\alpha}{\to} \Aut(H))$ be a central crossed module of Lie groups with associated crossed module of Lie algebras $\mathfrak{G}$. 
Suppose that $G$ is a locally exponential Lie group.
If $G$ is connected, then two adjustments $\kappa$ and $\kappa'$ on $\Gamma$ agree if and only if the corresponding infinitesimal adjustments $\kappa_*$ and $\kappa_*'$ on $\mathfrak{G}$ agree. 
Conversely, if $\eta$ is any infinitesimal adjustment on $\mathfrak{G}$, $G$ is additionally simply connected and $H$ is connected, then there exists a (necessarily unique) adjustment $\kappa$ on $\Gamma$ with $\kappa_* = \eta$.
\end{theorem}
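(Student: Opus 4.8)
The plan is to prove the two halves separately: first the injectivity (agreement of infinitesimal data forces agreement of the adjustments), then the existence of an integral, reusing the first argument for its uniqueness.

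For injectivity, set $\delta := \kappa - \kappa'$. Both summands satisfy the cocycle identity \eqref{adjustment-condition-1}, hence so does $\delta$, and by hypothesis $\delta_* = \kappa_* - \kappa_*' = 0$. I would first show $\delta$ vanishes along one-parameter subgroups: applying \eqref{adjustment-condition-1} to $e^{sZ}e^{tZ} = e^{(s+t)Z}$ gives, for $u(t) := \delta(e^{tZ}, -) \in \Hom(\mathfrak{g},\mathfrak{h})$, the relation $u(s+t) = u(s)\circ \Ad_{e^{tZ}} + u(t)$; differentiating in $s$ at $s=0$ yields $u'(t) = u'(0)\circ \Ad_{e^{tZ}}$. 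Here $u'(0) = \delta_*(Z,-) = 0$ and $u(0) = \delta(e,-) = 0$ (evaluate \eqref{adjustment-condition-1} at $g_1=g_2=e$), so $u\equiv 0$. Then $\{g : \delta(g,-) = 0\}$ is a subgroup of $G$ by \eqref{adjustment-condition-1}, it contains $\exp(\mathfrak{g})$, and since $G$ is connected and locally exponential it is generated by $\exp(\mathfrak{g})$; hence $\delta \equiv 0$.

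For the existence half, the key reformulation is that an adjustment, restricted to condition \eqref{adjustment-condition-1}, is a group $1$-cocycle. Writing $V := \Hom(\mathfrak{g},\mathfrak{h})$ for the locally convex space of continuous linear maps, I make $V$ a smooth $G$-module via $g\cdot\phi := \phi\circ\Ad_{g^{-1}}$; then $\hat\kappa(g) := \kappa(g^{-1},-)$ turns \eqref{adjustment-condition-1} into the standard identity $\hat\kappa(g_1g_2) = \hat\kappa(g_1) + g_1\cdot\hat\kappa(g_2)$. A direct computation identifies the associated Lie-algebra $1$-cocycle $c:\mathfrak{g}\to V$ as $c(Z) = -\eta(Z,-)$, its cocycle condition being exactly \eqref{EtaIdentities-0}. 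Equivalently, $Z\mapsto (Z, c(Z))$ is a continuous Lie algebra homomorphism $\mathfrak{g}\to \mathfrak{g}\ltimes V$, with $V$ abelian and acted on infinitesimally by $Z\cdot\phi = -\phi\circ\ad_Z$. Since $G$ is simply connected and locally exponential (hence regular enough for Lie's second theorem in the locally convex setting), this homomorphism integrates to a smooth group homomorphism $G\to G\ltimes V$ whose second component is the desired cocycle $\hat\kappa$; setting $\kappa(g,X) := \hat\kappa(g^{-1})(X)$ produces a map that is smooth in $G$, continuous linear in $\mathfrak{g}$, satisfies \eqref{adjustment-condition-1}, and has $\kappa_* = \eta$.

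It remains to verify \eqref{adjustment-condition-2} and \eqref{adjustment-condition-3}, which the construction does not build in. For \eqref{adjustment-condition-3} I fix $x\in\mathfrak{h}$ and set $\Phi_x(g) := \kappa(g, t_*x) - (\alpha_g(x) - x)$. Differentiating along $g e^{tZ}$ via \eqref{adjustment-condition-1}, the equivariance $[Z, t_*x] = t_*\alpha_*(Z,x)$, and \eqref{EtaIdentities-2}, I expect the closed linear system $\tfrac{d}{dt}\big|_{0}\Phi_x(g e^{tZ}) = -\Phi_{\alpha_*(Z,x)}(g)$; with $\Phi_x(e)=0$ this forces vanishing along one-parameter subgroups, and $\{g : \Phi_x(g)=0\ \forall x\}$ is a subgroup (using $\Ad_{g}t_*x = t_*\alpha_g(x)$), hence all of connected $G$. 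For \eqref{adjustment-condition-2} I compare $P(h) := \kappa(t(h),-)$ and $Q(h) := (\tilde\alpha_{h^{-1}})_*$ as maps $H\to V$; both vanish at $h=e$. Computing the derivative along $h e^{\tau x}$ — on the $P$-side via \eqref{adjustment-condition-1} together with the now-established \eqref{adjustment-condition-3} and $\alpha_{t(h)} = \Ad_h$ on $\mathfrak{h}$, on the $Q$-side via the cocycle identity for $\tilde\alpha$ and the infinitesimal identity \eqref{EtaIdentities-1} — should show that both have the same differential $-\Ad_h\alpha_*(-,x)$ at every $h$. Since $x\mapsto \tfrac{d}{d\tau}\big|_0 h e^{\tau x}$ is the isomorphism $T_eL_h:\mathfrak{h}\to T_hH$, its image spans $T_hH$, so $d(P-Q)\equiv 0$ and $P=Q$ on the connected group $H$; this is the only place connectedness of $H$ enters. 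Uniqueness of the integral is the injectivity already proved.

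The main obstacle is analytic and lies in the integration step: Lie's second theorem must be invoked for an infinite-dimensional, merely locally convex Lie group, which is precisely where the hypotheses \emph{locally exponential} and \emph{simply connected} are indispensable, and where one must ensure that $\mathfrak{g}\ltimes V$ integrates to a Lie group and that the resulting homomorphism is genuinely smooth. The verification of \eqref{adjustment-condition-2} is the main bookkeeping hurdle, as it requires carefully matching the $G$-cocycle behavior of $\kappa$ against the conjugation behavior of $\tilde\alpha$ in $H$, and depends on \eqref{adjustment-condition-3} having been secured first.
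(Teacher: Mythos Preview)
Your proposal is correct and follows essentially the same route as the paper. Both recognize that condition \eqref{adjustment-condition-1} makes $g\mapsto\kappa(g,-)$ a crossed homomorphism (equivalently a group 1-cocycle) $G\to\Lin(\mathfrak{g},\mathfrak{h})$, and both reduce existence/uniqueness to Lie's second theorem for the homomorphism into the semidirect product $\mathfrak{g}\ltimes\Lin(\mathfrak{g},\mathfrak{h})$. The only difference is in packaging: the paper phrases everything in the language of crossed homomorphisms and, for \eqref{adjustment-condition-2} and \eqref{adjustment-condition-3}, simply observes that both sides are themselves crossed homomorphisms (from $H$, respectively $G$, into $\Lin(\mathfrak{g},\mathfrak{h})$ or $\Lin(\mathfrak{h},\mathfrak{h})$) with the same differential at the identity, so they agree on a connected domain. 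You instead unpack these abstract appeals into explicit ODEs along one-parameter subgroups and pointwise derivative computations. Your version is slightly more labor-intensive but has the advantage of not needing to verify the crossed-homomorphism identity for $h\mapsto(\tilde\alpha_{h^{-1}})_*$, which takes a small side computation; the paper's version is cleaner once that identity is in hand.
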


The proof uses the notion of crossed homomorphisms, which we briefly recall.
Let $G$ be a Lie group and let $L$ be a Lie group with a right action of $G$. 
We assume $L$ to be abelian for simplicity.
 Recall that a \emph{crossed homomorphism} from $G$ to $L$ is a smooth map $\varphi : G \to L$ such that
 \[
 \varphi(gh) = \varphi(g) \cdot h + \varphi(h)
 \]
 holds for all $g, h \in G$.
 The differential of $\varphi$ at the identity is a \emph{crossed homomorphism of Lie algebras} $\varphi_* : \mathfrak{g} \to \mathfrak{l}$, meaning that it satisfies
 \[
   \varphi_*([X, Y]) =  \varphi_*(X) \cdot Y - \varphi_*(Y) \cdot X,
 \]
 where $\mathfrak{g}$ acts on $\mathfrak{l}$ by differentiating the action of $G$ on $L$.
 %
 %
Crossed homomorphism of Lie groups may be identified with Lie group homomorphism $G \to G \ltimes L$ whose second component is the identity.
 %
 %
 In particular,  usual  statements for Lie group homomorphisms also hold for crossed homomorphisms:
if $\varphi, \psi : G \to L$ are two crossed homomorphisms whose differentials agree at the identity and $G$ is connected, then $\varphi = \psi$ (this needs that $G$ is a locally exponential Lie group).
 And conversely, if $G$ is additionally simply connected, then any crossed homomorphism of Lie algebras integrates (uniquely) to a crossed homomorphism of Lie groups.

\begin{proof}[Proof of \cref{integration-of-adjustments}]
Condition \eqref{adjustment-condition-1} for adjustments can be reformulated to say that every adjustment $\kappa$ yields a crossed homomorphism $\tilde{\kappa} : G \to \Lin(\mathfrak{g}, \mathfrak{h})$ (the space of continuous linear maps $\mathfrak{g} \to \mathfrak{h}$) by setting $\tilde{\kappa}(g)(X) = \kappa(g, X)$, where $G$ acts on $\Lin(\mathfrak{g}, \mathfrak{h})$ from the right by pre-composition with the adjoint action.
Then, the corresponding infinitesimal adjustment is just its differential at $1\in G$. 
Thus, the first claim follows from the fact that crossed homomorphisms on a connected and locally exponential domain $G$ are determined by their induced Lie algebra homomorphisms.  

Conversely, if $\eta$ is an infinitesimal adjustment and $\tilde{\eta}: \mathfrak{g} \to \Lin(\mathfrak{g}, \mathfrak{h})$ is defined by $\tilde{\eta}(X)(Y) = \eta(X, Y)$, then the identity \eqref{EtaIdentities-0} may be rewritten as
\[
\tilde{\eta}([X, Y]) = \tilde{\eta}(X) \circ \ad_Y - \tilde{\eta}(Y) \circ \ad_X. 
\]
In other words, an infinitesimal adjustment yields a crossed homomorphism of Lie algebras $\mathfrak{g} \to \Lin(\mathfrak{g}, \mathfrak{h})$, where $\mathfrak{g}$ acts on $\Lin(\mathfrak{g}, \mathfrak{h})$ from the right by post-composing with the adjoint action.
If now $G$ is connected and simply connected (as well as locally exponential), there exists a unique crossed homomorphism $\tilde{\kappa} : G \to \Lin(\mathfrak{g}, \mathfrak{h})$ of Lie groups with differential $\tilde\eta$, and it corresponds to a map $\kappa : G \times \mathfrak{g} \to \mathfrak{h}$ satisfying condition \eqref{adjustment-condition-1} for adjustments.

We need to check that $\kappa$ satisfies conditions \eqref{adjustment-condition-2} 
and \eqref{adjustment-condition-3}.
To see condition \eqref{adjustment-condition-2}, we observe that both sides of this equation are crossed homomorphisms of Lie groups $H \to \Lin(\mathfrak{g}, \mathfrak{h})$, and \eqref{adjustment-condition-2} differentiates to \eqref{EtaIdentities-1} (where both sides are crossed homomorphisms $\mathfrak{h} \to \Lin(\mathfrak{g}, \mathfrak{h})$ of Lie algebras).
Since crossed homomorphisms of Lie groups on a connected Lie group are determined by their differentials at the identity, condition \eqref{adjustment-condition-2} follows from \eqref{EtaIdentities-1}, provided that $H$ is connected.
Condition \eqref{adjustment-condition-3} is checked similarly, but does not need any further connectedness requirements.
\end{proof}

One may also give an explicit formula for the integration of an infinitesimal adjustment.
To this end, we use the differential equation 
\begin{equation}
\label{ODEkappa}
\frac{d}{ds} \kappa(e^{sX}, Y) = \kappa\big(e^{sX}, [X, Y]\big) + \kappa_*(X, Y)
\end{equation}
satisfied by any adjustment $\kappa$ and its corresponding infinitesimal adjustment $\kappa_{*}$ (this follows directly from differentiating \eqref{adjustment-condition-1}).
%
%
Passing to higher derivatives, one can prove the estimates necessary to show that $s \mapsto \kappa(e^{sX}, Y)$ is  an analytic function, and that in a neighborhood of the identity, the adjustment is given by the formula
\[
\kappa(e^{X}, Y) 
= \sum_{n=1}^\infty \frac{1}{n!} \sum_{k=0}^{n-1} \kappa_*\big(X, \ad_X^k(Y)\big).
\]

\begin{remark}
\label{RemarkTrivialG}
In certain cases, the connectedness assumptions from \cref{integration-of-adjustments} may be achieved by replacing the crossed module with a weakly equivalent one.
Indeed, if $\smash{\Gamma=(H \stackrel{}{\to} G \stackrel{}{\to} \Aut(H))}$ is a central crossed module of Lie groups 
with $F:=\pi_0\Gamma$ connected, then 
there exists a new crossed module $\smash{\tilde{\Gamma}=(\tilde H \stackrel{}{\to} \tilde G \stackrel{}{\to} \Aut(\tilde H))}$ with $\tilde{G}$ the  universal cover of the identity component of $G$ and  $\tilde{H} := \tilde{G} \times_G H$, together with a strict morphism $\tilde{\Gamma} \to \Gamma$ which induces an identities on $A := \pi_1\Gamma$ and $F$; hence, a weak equivalence of crossed modules.
The group $\tilde{G}$ is then connected and simply connected. 
However, $\pi_0(\tilde{H})$ is an extension of $\pi_1(F)$ by $\pi_0(A)$, and hence may be non-trivial, still not meeting the assumption of  \cref{integration-of-adjustments} on $H$ unless these groups vanish.
\end{remark}

\section{Classification of central crossed modules of Lie algebras}

\label{kassel-loday}

We review the classification of crossed modules of Lie algebras obtained by Kassel and Loday  \cite{Kassel1982}, in a slightly modified form, restricting it to \emph{central} crossed modules and using butterflies as a model for weak equivalences. 

\subsection{The Kassel-Loday class}

\label{kassel-loday-class}

Let $\mathfrak{g}$ be a locally convex Lie algebra and let $V$ be a locally convex vector space.
We  consider the complex $\mathrm{Alt}^*(\mathfrak{g}, V)$ of continuous alternating multi-linear maps on $\mathfrak{g}$ taking values in $V$, equipped with the Chevalley-Eilenberg differential
\[
\delta \omega(X_1, \dots, X_{p+1}) = \sum_{i < j} (-1)^{i+j} \omega([X_i, X_j], X_1, \dots, \widehat{X_i}, \dots, \widehat{X_j}, \dots, X_{p+1}), 
\]
where the hat indicates that the argument is omitted.
The corresponding cohomology groups are denoted by $H^k(\mathfrak{g}, V)$.

A \emph{splitting} of a crossed module $\mathfrak{G}=(\mathfrak{h} \stackrel{t_*}{\to} \mathfrak{g} \stackrel{\alpha_*}{\to} \Der(\mathfrak{h)})$ of Lie algebras is a linear map $u: \mathfrak{g} \to \mathfrak{h}$ such that
\begin{equation}
\label{splitting}
  t_* ut_* = t_*  \qquad \text{and} \qquad ut_* u = u.
\end{equation}
A splitting is the same datum as the choice of vector space complements of $\mathfrak{a} \subseteq \mathfrak{h}$ and $t_*(\mathfrak{h}) \subseteq \mathfrak{g}$. 
A \emph{half splitting} is a linear map $u:\mathfrak{g} \to \mathfrak{h}$ that satisfies only the first condition in \eqref{splitting}.
Every (half) splitting determines an idempotent linear map $\rho_u:\mathfrak{g} \to \mathfrak{g}$ with image $t(\mathfrak{h})$, by 
\[
\rho_u:=t_* u.
\] 
Conversely, for every such idempotent $\rho$ there exists a splitting $u$ with $\rho=\rho_u$.  

As explained before, idempotents of $\mathfrak{g}$ with image $t(\mathfrak{h})$ are the same as sections  $s: \mathfrak{f} \to \mathfrak{g}$ against the projection $p: \mathfrak{g} \to \mathfrak{f}$, the relation being $\rho_s + s p =\id_{\mathfrak{g}}$. If $u$ is a splitting or half splitting, we denote the corresponding section by $s_u$. 

Summarizing, we have  maps
\begin{equation*}
\xymatrix@C=0.6em{& \text{half splittings} \ar@{->>}[rd]\\\text{Splittings} \ar@{->>}[rr] \ar@{^(->}[ur] &  & \text{Sections} \ar@{=}[r] & \text{Idempotents}
}
\end{equation*}
with the indicated injectivity and surjectivity behaviour.

We consider  a central  crossed module $\mathfrak{G}=(\mathfrak{h} \stackrel{t_*}{\to} \mathfrak{g} \stackrel{\alpha_*}{\to} \Der(\mathfrak{h}))$ of Lie algebras, and a half splitting $u$.
We let $\rho := t_*u$ be the corresponding idempotent  and set $\rho^\perp := \id-\rho$.
We moreover define
\begin{equation}
\label{omegaU}
\omega_u(X, Y) := \alpha_*\big(X, u(Y)\big) - \alpha_*\big(Y, u(X)\big) - \big[u(X), u(Y)\big] + u\big([\rho^\perp(X), \rho^\perp(Y)]\big), 
\end{equation}
an element of $\Alt^2 (\mathfrak{g}, \mathfrak{h})$.

\begin{lemma}
\label{LemmaAdjustmentIdentities2}
$\omega_u$ satisfies the identities \eqref{EtaIdentities-1} and \eqref{EtaIdentities-2} of an infinitesimal adjustment.
\end{lemma}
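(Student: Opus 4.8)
The plan is to verify the two identities \eqref{EtaIdentities-1} and \eqref{EtaIdentities-2} by direct substitution into the defining formula \eqref{omegaU}, exploiting three structural facts. First, the half-splitting condition $t_* u t_* = t_*$ gives $\rho(t_* x) = t_* u t_* x = t_* x$, so $\rho^\perp(t_* x) = 0$; hence the final term $u([\rho^\perp(\cdot), \rho^\perp(\cdot)])$ of \eqref{omegaU} vanishes as soon as one of its arguments lies in $t_*(\mathfrak{h})$. Second, the same condition yields $t_*(u(t_* x) - x) = 0$, so $u(t_* x) = x + a$ for some $a \in \mathfrak{a} = \ker(t_*)$. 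Third, I would record two consequences of the bracket structure: the Peiffer identity \eqref{InfinitesimalPeiffer} shows that $\mathfrak{a}$ is central in $\mathfrak{h}$, since for $a \in \mathfrak{a}$ and $z \in \mathfrak{h}$ we have $[a, z] = \alpha_*(t_* a, z) = \alpha_*(0, z) = 0$, while centrality of $\mathfrak{G}$ gives $\alpha_*(Y, a) = 0$ for all $Y \in \mathfrak{g}$ and $a \in \mathfrak{a}$.

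With these facts in hand the computation is short. For \eqref{EtaIdentities-1} I would substitute $X = t_* x$: the last term of \eqref{omegaU} drops out by the first observation, and replacing $u(t_* x)$ by $x + a$ turns $\alpha_*(Y, u(t_* x))$ into $\alpha_*(Y, x)$, the $a$-contribution dying by centrality of $\mathfrak{G}$. The Peiffer identity rewrites $\alpha_*(t_* x, u(Y)) = [x, u(Y)]$, and likewise $[u(t_* x), u(Y)] = [x, u(Y)]$, the $a$-contribution dying because $\mathfrak{a}$ is central in $\mathfrak{h}$. The first and third terms then cancel, leaving $\omega_u(t_* x, Y) = -\alpha_*(Y, x)$. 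For \eqref{EtaIdentities-2} I would substitute $Y = t_* y$ and proceed identically: $\alpha_*(X, u(t_* y)) = \alpha_*(X, y)$, $\alpha_*(t_* y, u(X)) = [y, u(X)]$, and $[u(X), u(t_* y)] = [u(X), y]$. The last two combine as $-[y, u(X)] - [u(X), y] = 0$, leaving $\omega_u(X, t_* y) = \alpha_*(X, y)$.

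Since the verification is purely computational, there is no serious obstacle; the only point requiring care is that $u$ is merely a half splitting, so $u \circ t_*$ need not be the identity on $\mathfrak{h}$, and one must carry along the correction $a = u(t_* x) - x \in \mathfrak{a}$. The whole argument hinges on showing that every occurrence of this correction is annihilated — either by centrality of $\mathfrak{G}$ (for terms $\alpha_*(\cdot, a)$) or by the centrality of $\mathfrak{a}$ inside $\mathfrak{h}$ forced by the Peiffer identity (for terms $[a, \cdot]$). This is precisely where the hypothesis that $\mathfrak{G}$ is central enters.
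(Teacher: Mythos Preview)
Your proof is correct and follows essentially the same approach as the paper: substitute $t_*x$ (resp.\ $t_*y$) into the defining formula \eqref{omegaU}, observe that $\rho^\perp$ kills $t_*\mathfrak{h}$ so the last term drops, and then use the Peiffer identity together with the fact that $u(t_*x)-x\in\mathfrak{a}$ is annihilated both by $\alpha_*(\cdot,-)$ (centrality of $\mathfrak{G}$) and by $[\cdot,-]$ (centrality of $\mathfrak{a}$ in $\mathfrak{h}$). The only cosmetic difference is that the paper verifies \eqref{EtaIdentities-2} explicitly and then obtains \eqref{EtaIdentities-1} by anti-symmetry of $\omega_u$, whereas you carry out both substitutions directly.
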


\begin{proof}
Since $\rho^\perp(t_*y) = 0$, the last term cancels and we have
\begin{align*}
\omega_u(X, t_*y) &= \alpha_*\big(X, u(t_{*}y)\big) - \alpha_*\big(t_*y, u(X)\big) - \big[u(X), u(t_*y)\big]
\\
&=\alpha_*\big(X, u(t_*y) - y\big) + \alpha_*(X, y)  - \big[y, u(X)\big] + \big[u(t_*y), u(X)\big]
\\
&=\alpha_*(X, y). 
\end{align*}
Here, we used twice that the difference between $y$ and $u(t_{*}y)$ lies in the  ideal $\mathfrak{a} \subset \mathfrak{h}$, on which $\mathfrak{g}$  acts trivially because our crossed module is central.
The second identity follows from the first by exchanging $X$ and $y$ by $Y$ and $x$ and using anti-symmetry of $\omega_u$.
%
\end{proof}

\begin{lemma}
\label{differential-and-pullback}
Suppose $\mathfrak{g}'$ is another Lie algebra, $\phi: \mathfrak{g}' \to \mathfrak{g}$ is a linear map, and $\lambda\in \Alt^2(\mathfrak{g}', \mathfrak{h})$ such that
\begin{equation*}
\big[\phi(X), \phi(Y)\big] - \phi([X, Y]) =t\lambda(X, Y)\text{.}
\end{equation*} 
Then, 
\begin{equation*}
\phi^{*}(\delta\omega_{u})= \delta(\phi^{*}\omega_{u})+\gamma_{\phi, \lambda}
\end{equation*}
holds for
\begin{equation*}
\gamma_{\phi, \lambda}(X, Y, Z):=\alpha\big(\phi(X), \lambda(Y, Z)\big)+\alpha\big(\phi(Y), \lambda(Z, X)\big)+\alpha\big(\phi(Z), \lambda(X, Y)\big)\text{.}
\end{equation*}
\end{lemma}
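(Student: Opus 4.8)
The plan is to compare the two sides evaluated on an arbitrary triple, exploiting that both $\phi^*(\delta\omega_u)$ and $\delta(\phi^*\omega_u)$ are assembled from the Chevalley--Eilenberg differential, whose only ingredient sensitive to the Lie algebra structure is the bracket. Since $\phi$ fails to be a Lie algebra homomorphism precisely by $t\lambda$, the entire discrepancy between the two sides will be governed by $\lambda$, and the remaining task is to recognize that discrepancy as $\gamma_{\phi,\lambda}$.

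First I would expand both expressions on a triple $(X,Y,Z)$ in $\mathfrak{g}'$. Using the stated formula for $\delta$ on a $2$-form, one has
\[
\phi^*(\delta\omega_u)(X,Y,Z) = (\delta\omega_u)\big(\phi(X),\phi(Y),\phi(Z)\big),
\]
where every bracket is the bracket of $\mathfrak{g}$ applied to $\phi$-images, while in $\delta(\phi^*\omega_u)(X,Y,Z)$ the brackets are those of $\mathfrak{g}'$ precomposed with $\phi$. Subtracting the two and using linearity of $\omega_u$ in its first slot, the corresponding summands combine: each reduces to an entry of the form $\omega_u\big([\phi(-),\phi(-)] - \phi([-,-]),\,\phi(-)\big)$, and all remaining contributions cancel termwise.

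Next I would substitute the hypothesis $[\phi(X),\phi(Y)] - \phi([X,Y]) = t\lambda(X,Y)$, so that the difference becomes a cyclic sum of terms $\omega_u\big(t\lambda(-,-),\phi(-)\big)$. At this stage the precise shape \eqref{omegaU} of $\omega_u$ enters only through \cref{LemmaAdjustmentIdentities2}: the identity \eqref{EtaIdentities-1}, namely $\omega_u(t_*x,Y) = -\alpha_*(Y,x)$, rewrites each such term, the leading minus sign in $\delta$ producing a net plus, as $\alpha_*\big(\phi(-),\lambda(-,-)\big)$. Assembling the three cyclic contributions then yields
\[
\alpha_*\big(\phi(X),\lambda(Y,Z)\big) + \alpha_*\big(\phi(Y),\lambda(Z,X)\big) + \alpha_*\big(\phi(Z),\lambda(X,Y)\big) = \gamma_{\phi,\lambda}(X,Y,Z),
\]
which is the claimed identity.

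The argument is essentially bookkeeping, so I do not expect a genuine obstacle; the only point requiring care is tracking signs and the cyclic permutations, so that the three defect terms align with the three summands of $\gamma_{\phi,\lambda}$ instead of cancelling. Two sanity checks are worth recording: when $\phi$ is a genuine Lie algebra homomorphism (so $\lambda = 0$) the identity collapses to the standard naturality $\phi^*\delta = \delta\phi^*$; and the computation invokes $\omega_u$ only via \eqref{EtaIdentities-1}, so it would apply verbatim to any $2$-form satisfying that adjustment identity.
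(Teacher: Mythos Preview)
Your proposal is correct and follows essentially the same approach as the paper: expand $\phi^*(\delta\omega_u)$ via the Chevalley--Eilenberg formula, substitute $[\phi(X),\phi(Y)]=\phi([X,Y])+t\lambda(X,Y)$, split off $\delta(\phi^*\omega_u)$, and then apply \cref{LemmaAdjustmentIdentities2} (specifically $\omega_u(t_*x,Y)=-\alpha_*(Y,x)$) to the residual terms. Your closing observation that only identity \eqref{EtaIdentities-1} is used is accurate and slightly sharpens what the paper makes explicit.
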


\begin{proof} We calculate
\begin{align*}
\phi^{*}(\delta\omega_{u_2})(X, Y, Z) 
&=\delta\omega_{u_2}\big(\phi(X), \phi(Y), \phi(Z)\big)
\\
&= -\omega_{u_2}\big([\phi(X), \phi(Y)], \phi(Z)\big)+\omega_{u_2}\big([\phi(X), \phi(Z)], \phi(Y)\big)
\\
&\hspace{0.46cm} -\omega_{u_2}\big([\phi(Y), \phi(Z)], \phi(X)\big)
\\
&= -\omega_{u_2}\big(\phi([X, Y])+t_2(\lambda(X, Y)), \phi(Z)\big)
\\
&\hspace{0.46cm}+\omega_{u_2}\big(\phi([X, Z])+t_2(\lambda(X, Z)), \phi(Y)\big)
\\
&\hspace{0.46cm}-\omega_{u_2}\big(\phi([Y, Z])+t_2(\lambda(Y, Z)), \phi(X)\big)
\\
&= \delta(\phi^{*}\omega_{u_2})(X, Y, Z)
-\omega_{u_2}\big(t_2(\lambda(X, Y)), \phi(Z)\big)
\\
&\hspace{0.46cm} +\omega_{u_2}\big(t_2(\lambda(X, Z)), \phi(Y)\big)-\omega_{u_2}\big(t_2(\lambda(Y, Z)), \phi(X)\big)\text{.}
\end{align*}
Applying \cref{LemmaAdjustmentIdentities2} yields the claimed identity.
\end{proof}

\begin{lemma}
\label{LemmaOmegaUadapted}
We have $t_* \omega_u = -t_* \delta u$, or explicitly
\[
t_*\omega_u(X, Y) = t_*u([X, Y]) = \rho([X, Y]).
\]      
\end{lemma}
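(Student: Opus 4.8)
The plan is to apply $t_*$ directly to the four summands in the definition \eqref{omegaU} of $\omega_u$ and simplify each one using the structural identities of a crossed module of Lie algebras. The relevant facts are that $t_*$ is $\mathfrak{g}$-equivariant, i.e.\ $t_*\alpha_*(X, x) = [X, t_* x]$, that it is a homomorphism of Lie algebras, and that $\rho = t_* u$ by definition. Applying these term by term, the first two summands give $t_*\alpha_*(X, u(Y)) - t_*\alpha_*(Y, u(X)) = [X, \rho(Y)] - [Y, \rho(X)]$, the third gives $-t_*[u(X), u(Y)] = -[\rho(X), \rho(Y)]$, and the fourth gives $t_* u([\rho^\perp(X), \rho^\perp(Y)]) = \rho([\rho^\perp(X), \rho^\perp(Y)])$, so that
\begin{equation*}
t_* \omega_u(X, Y) = [X, \rho(Y)] - [Y, \rho(X)] - [\rho(X), \rho(Y)] + \rho([\rho^\perp(X), \rho^\perp(Y)]).
\end{equation*}

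The key structural input I would then invoke is that $t_*(\mathfrak{h})$ is an \emph{ideal} of $\mathfrak{g}$: this is immediate from equivariance, since $[X, t_* x] = t_*\alpha_*(X, x) \in t_*(\mathfrak{h})$. As $t_*(\mathfrak{h}) = \im \rho$, it follows that $\rho$ restricts to the identity on any bracket having at least one argument in $\im\rho$. Using this, I would rewrite the display above by decomposing $X = \rho(X) + \rho^\perp(X)$ and likewise $Y$: the brackets $[X,\rho(Y)]$ and $[Y,\rho(X)]$ expand, and after collecting and cancelling one obtains
\begin{equation*}
t_* \omega_u(X, Y) = [\rho(X), \rho(Y)] + [\rho^\perp(X), \rho(Y)] + [\rho(X), \rho^\perp(Y)] + \rho([\rho^\perp(X), \rho^\perp(Y)]).
\end{equation*}

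On the other hand, expanding $[X, Y] = [\rho(X) + \rho^\perp(X),\, \rho(Y) + \rho^\perp(Y)]$ and applying $\rho$, the three brackets involving at least one $\rho$-component lie in the ideal $t_*(\mathfrak{h})$, so $\rho$ fixes them, while the last bracket contributes $\rho([\rho^\perp(X), \rho^\perp(Y)])$; this produces exactly the same four terms, whence $t_*\omega_u(X, Y) = \rho([X, Y]) = t_* u([X, Y])$. Finally, the identity $t_*\omega_u = -t_*\delta u$ is then just a reformulation, since for the one-cochain $u$ the Chevalley--Eilenberg differential of the excerpt reads $\delta u(X, Y) = -u([X, Y])$. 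The computation is essentially bookkeeping with the decomposition $\id = \rho + \rho^\perp$; the only point requiring genuine care -- and the step I would flag as the crux -- is recognizing that $t_*(\mathfrak{h})$ is an ideal, which is precisely what allows $\rho$ to absorb the mixed brackets and makes both sides collapse to the same expression.
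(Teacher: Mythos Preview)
Your proof is correct and follows essentially the same approach as the paper: both compute $t_*\omega_u$ termwise to reach $[X,\rho(Y)] - [Y,\rho(X)] - [\rho(X),\rho(Y)] + \rho([\rho^\perp(X),\rho^\perp(Y)])$, and both use that $t_*\mathfrak{h}$ is an ideal (so $\rho$ fixes any bracket with an argument in $\im\rho$) to identify this with $\rho([X,Y])$. The only difference is cosmetic: the paper factors $\rho$ out of the first three terms directly and observes the bracketed sum collapses to $[X,Y]$, whereas you expand both sides via $X = \rho(X) + \rho^\perp(X)$ and match the resulting four terms.
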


\begin{proof}
Since $t_*$ is a Lie algebra homomorphism that intertwines $\alpha_*$ with the commutator, we have
\begin{align*}
t_*\omega_u(X, Y)
&= [X, \rho(Y)] - [Y, \rho(X)] - [\rho(X), \rho(Y)] + \rho [\rho^\perp(X), \rho^\perp(Y)]
\\
&= \rho \Big([X, \rho(Y)] + [\rho(X), Y] - [\rho(X), \rho(Y)] + [\rho^\perp(X), \rho^\perp(Y)]\Big)
\\
&= \rho([X, Y]).
\end{align*}
Here in the second step, we used that the first three terms are all contained in $t_*\mathfrak{h}$, where $\rho$ acts as the identity.
\end{proof}

\begin{lemma}
\label{LemmaDeltaOmegaDescends}
The cocycle 
 $\delta \omega_u\in \Alt^3 _{\mathrm{cl}}(\mathfrak{g}, \mathfrak{h})$ vanishes on $t_* \mathfrak{h}$ and is $\mathfrak{a}$-valued.
\end{lemma}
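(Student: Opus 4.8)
The statement bundles three assertions: that $\delta\omega_u$ is closed, that it is $\mathfrak{a}$-valued, and that it vanishes on $t_*\mathfrak{h}$. Closedness is automatic, since $\delta^2=0$. The plan is to dispatch the remaining two assertions by direct computation, leaning on the structural identities already recorded for $\omega_u$.

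For the $\mathfrak{a}$-valued claim, i.e.\ that $\delta\omega_u$ takes values in $\mathfrak{a}=\ker t_*$, I would use that the Chevalley--Eilenberg differential here is computed with trivial coefficients, so the continuous linear map $t_*$ commutes with $\delta$; hence $t_*\delta\omega_u=\delta(t_*\omega_u)$. By \cref{LemmaOmegaUadapted} we have $t_*\omega_u=\rho\circ[-,-]$, and expanding the three-term differential gives $\delta(\rho\circ[-,-])(X,Y,Z)=-\rho\bigl([[X,Y],Z]-[[X,Z],Y]+[[Y,Z],X]\bigr)$. The bracketed expression is exactly the Jacobi combination, so it vanishes; thus $t_*\delta\omega_u=0$ and $\delta\omega_u$ is $\mathfrak{a}$-valued.

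For the vanishing on $t_*\mathfrak{h}$, antisymmetry of $\delta\omega_u$ reduces matters to checking $\delta\omega_u(t_*x,Y,Z)=0$ for $x\in\mathfrak{h}$ and $Y,Z\in\mathfrak{g}$. Writing out the three terms of $\delta$, I would first invoke the $\mathfrak{g}$-equivariance of $t_*$ to rewrite $[t_*x,Y]=-t_*\alpha_*(Y,x)$ and $[t_*x,Z]=-t_*\alpha_*(Z,x)$; this in particular keeps these brackets inside $t_*\mathfrak{h}$. The adjustment identities \eqref{EtaIdentities-1} and \eqref{EtaIdentities-2} from \cref{LemmaAdjustmentIdentities2} then turn each of the three terms into a nested $\alpha_*$-expression, yielding $\delta\omega_u(t_*x,Y,Z)=\alpha_*(Y,\alpha_*(Z,x))-\alpha_*(Z,\alpha_*(Y,x))-\alpha_*([Y,Z],x)$. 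Because $\alpha_*$ is an action by derivations, hence a Lie algebra representation of $\mathfrak{g}$ on $\mathfrak{h}$, the representation identity $\alpha_*([Y,Z],x)=\alpha_*(Y,\alpha_*(Z,x))-\alpha_*(Z,\alpha_*(Y,x))$ makes this sum vanish identically.

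The computations are essentially mechanical once \cref{LemmaAdjustmentIdentities2,LemmaOmegaUadapted} are in hand, so I do not expect a genuine obstacle. The points demanding care are structural rather than hard: applying the adjustment identities to terms such as $\omega_u([t_*x,Y],Z)$ relies on $[t_*x,Y]\in t_*\mathfrak{h}$, which is precisely $\mathfrak{g}$-equivariance of $t_*$, while centrality of the crossed module is what legitimizes \cref{LemmaAdjustmentIdentities2} to begin with. The sign bookkeeping in the three-term expansion of $\delta$ is the only other place where an error could slip in.
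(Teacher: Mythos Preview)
Your proposal is correct and follows essentially the same approach as the paper. For the $\mathfrak{a}$-valued claim both you and the paper use $t_*\delta\omega_u=\delta(t_*\omega_u)$ together with \cref{LemmaOmegaUadapted}; your phrasing via the Jacobi identity and the paper's via $\delta^2=0$ are the same computation. For the vanishing on $t_*\mathfrak{h}$, you take a slightly cleaner route by invoking the adjustment identities of \cref{LemmaAdjustmentIdentities2} directly on the three summands of $\delta\omega_u$, whereas the paper expands $\omega_u$ from its explicit formula \eqref{omegaU} into twelve terms before simplifying; both reductions land on the same representation identity $\alpha_*([Y,Z],x)=\alpha_*(Y,\alpha_*(Z,x))-\alpha_*(Z,\alpha_*(Y,x))$.
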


\noindent
Here, we mean that $\delta \omega_u$ vanishes as soon as \emph{one} of the three entries is contained in $t_*\mathfrak{h}$.

\begin{proof}
We have
\begin{equation}
\label{DeltaOfOmegaU}
\begin{aligned}
\delta\omega_u(X, Y, Z)
= &-\omega_u([X, Y], Z) + \omega_u([X, Z], Y) - \omega([Y, Z], X)
\\
= &-\alpha_*\big([X, Y], u(Z)\big) + \alpha_*\big(Z, u([X, Y])\big) + \big[u([X, Y]), u(Z)\big]
\\
 &+ \alpha_*\big([X, Z], u(Y)\big) - \alpha_*\big(Y, u([X, Z])\big) - \big[u([X, Z]), u(Y)\big]
\\
&- \alpha_*\big([Y, Z], u(X)\big) + \alpha_*\big(X, u([Y, Z])\big) + \big[u([Y, Z]), u(X)\big]
\\
& - u\big(\big[\rho^\perp([X, Y]), \rho^\perp(Z)\big]\big) + u\big(\big[\rho^\perp([X, Z]), \rho^\perp(Y)\big]\big) 
\\
&- u\big(\big[\rho^\perp([Y, Z]), \rho^\perp(X)\big]\big).
\end{aligned}
\end{equation}
Setting $X = t_*x$, the last three terms vanish, as in each case, $\rho^\perp$ is applied to an element of $t_*\mathfrak{h}$ (for the first two of these three terms, we use that $t_*\mathfrak{h}$ is an ideal).
Hence we get
\begin{align*}
\delta\omega_u(t_*x, Y, Z)
= &-\alpha_*\big([t_*x, Y], u(Z)\big) + \alpha_*\big(Z, u([t_*x, Y])\big) + \big[u([t_*x, Y]), u(Z)\big]
\\
 &+ \alpha_*\big([t_*x, Z], u(Y)\big) - \alpha_*\big(Y, u([t_*x, Z])\big) - \big[u([t_*x, Z]), u(Y)\big]
\\
&- \alpha_*\big([Y, Z], u(t_*x)\big) + \alpha_*\big(t_*x, u([Y, Z])\big) + \big[u([Y, Z]), u(t_*x)\big]
\\
= &~~~~\cancel{\big[\alpha_*(Y, x), u(Z)\big]} - \alpha_*\big(Z, \alpha_*(Y, x)\big) - \cancel{\big[u([Y, t_*x]), u(Z)\big]}
\\
 &- \xcancel{\big[\alpha_*(Z, x), u(Y)\big]} + \alpha_*\big(Y, \alpha_*(Z, x)\big) + \xcancel{\big[u([Z, t_*x]), u(Y)\big]}
\\
&- \alpha_*\big([Y, Z], x\big) + \bcancel{\big[x, u([Y, Z])\big]} - \bcancel{\big[u(t_*x), u([Y, Z])\big]}
\\
= & ~~~\, \alpha_*\big(Y, \alpha_*(Z, x)\big) - \alpha_*\big(Z, \alpha_*(Y, x)\big) - \alpha_*\big([Y, Z], x\big)
\\
= & ~0. 
\end{align*}
Here, in the second step, we used that the differences $u([X, t_*y]) - \alpha_*(X, y)$, $u([t_*x, Y]) + \alpha_*(Y, x)$ and $u(t_*x) - x$ all lie in $\mathfrak{a}$, on which the action is trivial.
By anti-symmetry, $\delta \omega_u(X, Y, Z)$ also vanishes when one of $Y$ and $Z$ lies in $t_*\mathfrak{h}$.

To see that $\delta\omega_u$ is $\mathfrak{a}$-valued, we calculate, using \cref{LemmaOmegaUadapted}, 
\begin{equation*}
t_*\delta\omega_u(X, Y) = \delta t_*\omega_u(X, Y) = -\delta \delta \rho([X, Y]) = 0.
\qedhere
\end{equation*}
%
%
\end{proof}

By \cref{LemmaDeltaOmegaDescends}, $\delta \omega_u$ descends to $\mathfrak{f}$, i.e., there exists a unique 3-cocycle $C_u\in\Alt^3_{\mathrm{cl}}(\mathfrak{f}, \mathfrak{a})$ such that $\delta \omega_u = p^*C_u$. 
It  defines a cohomology class
\[
  [C_u] \in H^3(\mathfrak{f}, \mathfrak{a}).
\]

\begin{lemma}
\label{LemmaChangeSplitting}
If $u, u': \mathfrak{g} \to \mathfrak{h}$ are half-splittings, then the difference $\omega_{u'}-\omega_{u}$  descends to $\mathfrak{f}$. Moreover, we have $[C_u]=[C_{u'}]$. 
\end{lemma}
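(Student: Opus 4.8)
The plan is to reduce the equality of cohomology classes to a statement about coboundaries, the only genuine subtlety being a mismatch of coefficient spaces at the very end.

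First I would establish that $\omega_{u'} - \omega_u$ descends. By \cref{LemmaAdjustmentIdentities2}, both $\omega_u$ and $\omega_{u'}$ satisfy the identities \eqref{EtaIdentities-1} and \eqref{EtaIdentities-2}, whose right-hand sides $-\alpha_*(Y, x)$ and $\alpha_*(X, y)$ are independent of the chosen half splitting. Hence $\omega_{u'} - \omega_u$ vanishes as soon as one of its two entries lies in $t_*\mathfrak{h} = \ker p$, so it factors through $p$ and yields a form $\beta \in \Alt^2(\mathfrak{f}, \mathfrak{h})$ with $\omega_{u'} - \omega_u = p^*\beta$. This is the first assertion.

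Applying $\delta$ and using $\delta\omega_u = p^*C_u$, $\delta\omega_{u'} = p^*C_{u'}$, together with the facts that $p^*$ commutes with $\delta$ (as $p$ is a homomorphism) and is injective (as $p$ is surjective), I would obtain $\delta\beta = C_{u'} - C_u$ in $\Alt^3(\mathfrak{f}, \mathfrak{h})$. This immediately gives $[C_u] = [C_{u'}]$ in the $\mathfrak{h}$-valued complex, but \emph{not} yet in $H^3(\mathfrak{f}, \mathfrak{a})$, because $\beta$ is only $\mathfrak{h}$-valued while $C_{u'} - C_u$ is $\mathfrak{a}$-valued. Repairing this coefficient mismatch is the main obstacle.

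To resolve it, I would correct $\beta$ by a coboundary so as to make it $\mathfrak{a}$-valued while preserving $\delta\beta$. Composing with $t_*$ and invoking \cref{LemmaOmegaUadapted} gives $t_*(\omega_{u'} - \omega_u)(X, Y) = (\rho_{u'} - \rho_u)([X, Y])$, so the idempotent difference $\xi := \rho_{u'} - \rho_u$ controls $t_*\beta$. Since $\rho_{u'}$ and $\rho_u$ both restrict to the identity on $t_*\mathfrak{h}$, the map $\xi$ takes values in $t_*\mathfrak{h}$ and vanishes on $t_*\mathfrak{h}$, hence descends to a linear map $\bar\xi : \mathfrak{f} \to t_*\mathfrak{h}$. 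A direct computation of the Chevalley–Eilenberg differential then shows $t_*\beta = \delta(-\bar\xi)$; that is, $t_*\beta$ is exact in $\Alt^2(\mathfrak{f}, t_*\mathfrak{h})$. Finally, since $\mathfrak{a} \subseteq \mathfrak{h}$ is complemented, I would lift $-\bar\xi$ to some $\tilde\nu \in \Alt^1(\mathfrak{f}, \mathfrak{h})$ with $t_*\tilde\nu = -\bar\xi$ and set $\beta' := \beta - \delta\tilde\nu$. Then $t_*\beta' = t_*\beta - \delta(t_*\tilde\nu) = 0$, so $\beta'$ is $\mathfrak{a}$-valued, whereas $\delta\beta' = \delta\beta = C_{u'} - C_u$ is unchanged. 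Thus $C_{u'} - C_u$ is a coboundary already in the $\mathfrak{a}$-valued complex, giving $[C_u] = [C_{u'}]$ in $H^3(\mathfrak{f}, \mathfrak{a})$, as desired.
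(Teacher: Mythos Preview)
Your proof is correct and follows essentially the same strategy as the paper: show that $\omega_{u'}-\omega_u$ descends to $\mathfrak{f}$, observe that the resulting $\mathfrak{h}$-valued 2-form has the right coboundary but the wrong coefficients, and then repair this by subtracting an exact term so that the corrected form is $\mathfrak{a}$-valued. Two minor differences are worth noting: for the descent you invoke \cref{LemmaAdjustmentIdentities2} directly, which is cleaner than the paper's explicit computation with $v=u'-u$; and for the correction term the paper writes down an explicit candidate $\delta(ut_*v)$ (using one of the given half splittings as a lift), whereas you argue abstractly that $t_*\beta=\delta(-\bar\xi)$ is exact in $t_*\mathfrak{h}$-coefficients and then choose any lift $\tilde\nu$ via the complementedness of $\mathfrak{a}\subset\mathfrak{h}$. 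Both executions yield the same $\theta_{u,u'}\in\Alt^2(\mathfrak{f},\mathfrak{a})$ up to an $\mathfrak{a}$-valued exact term.
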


\begin{proof}
Let $u, u' : \mathfrak{g} \to \mathfrak{h}$ be two half splittings and write $v := u' - u$.
Since $t_* u' t_* = t_*$, we see that $t_* v t_* = 0$.
Therefore, $v$ is $\mathfrak{a}$-valued on $t_* \mathfrak{h}$.
Then, 
\begin{align*}
\omega_{u'}(X, Y) - \omega_{u}(X, Y)
=~& \alpha_*\big(X, v(Y)\big) - \alpha_*\big(Y, v(X)\big)
\\
& - \big[v(X), u(Y)\big] - \big[u(X), v(Y)\big]
- [v(X), v(Y)]
\\
&+ u'\big([(\rho')^\perp(X), (\rho')^\perp(Y)]\big) - u\big([\rho^\perp(X), \rho^\perp(Y)]\big).
\end{align*}
The last two terms vanish on $t_*\mathfrak{h}$.
Taking $X = t_* x$, the vector $v(X) = v(t_*x)$ is $\mathfrak{a}$-valued, hence the second, third and fifth term vanish as well.
We get
\begin{equation*}
\omega_{u'}(t_*x, Y) - \omega_{u}(t_*x, Y) = \big[x, v(Y)\big] - \big[ut_*x, v(Y)\big] = \big[x - ut_*x, v(Y)\big]= 0, 
\end{equation*}
again using that $x - ut_*x \in \mathfrak{a}$.
That also $\omega_{u'}(X, t_*y) - \omega_{u}(X, t_*y) =0$ follows from  anti-symmetry.
We conclude that the difference $\omega_{u'} - \omega_u$ vanishes on $t_*\mathfrak{h}$ and hence descends to $\mathfrak{f}$, showing the first claim.

We remark that the difference $\omega_{u'} - \omega_u$  is, however, not necessarily $\mathfrak{a}$-valued.
We claim that $\omega_{u'} - \omega_u + \delta(ut_*v)$ is $\mathfrak{a}$-valued and also vanishes on $t_*\mathfrak{h}$.
Indeed, $u t_*v$ vanishes on $t_*\mathfrak{h}$ as $t_*vt_* = 0$, and
\[
t_*\big(\omega_{u'} - \omega_u + \delta(ut_*v)\big) = - t_*\delta u' - t_*\delta u + tut_* \delta v = 0, 
\]
hence the form is indeed $\mathfrak{a}$-valued.
We conclude that there exists $\theta_{u, u'} \in \Alt(\mathfrak{f}, \mathfrak{a})$ such that 
\[
p^*\theta_{u, u'} = \omega_{u'} - \omega_u + \delta(ut_* v).
\]
With this definition, we have
\[
p^* \delta \theta_{u, u'} = \delta(p^* \theta_{u, u'})  = \delta\omega_{u'} - \delta\omega_u, 
\]
hence
\[
\delta \theta_{u, u'} = C_{u'} - C_u, 
\]
so that $C_{u}$ and $C_{u'}$ define the same cohomology class.
\end{proof}

By \cref{LemmaChangeSplitting}, the cohomology class $[C_u]$ only depends on the crossed module $\mathfrak{G}$ of Lie algebras.
The cocycle $C_u$ was considered by Kassel-Loday in the appendix of \cite{Kassel1982}  (however, our form $\omega_u$ differs from theirs by the exact term $\delta u$).

\begin{definition}
We call
\[
\KL(\mathfrak{G}):=[C_u]\in H^3(\mathfrak{f}, \mathfrak{a})
\]
the \emph{Kassel-Loday class} of $\mathfrak{G}$.
\end{definition}

\subsection{Invariance under butterflies}

In this section we re-examine the existing result that the Kassel-Loday class of a crossed module of Lie algebras is invariant under weak equivalences, which we realize here as butterflies (see \cref{butterflies}).

We consider two central crossed modules  $\mathfrak{G}_i=(\mathfrak{h}_i \stackrel{t_i}{\to} \mathfrak{g}_i \stackrel{\alpha_i}{\to} \Der(\mathfrak{h}_i))$ of Lie algebras, for $i=1, 2$, equipped with half splittings $u_i: \mathfrak{g}_i \to \mathfrak{h}_i$ inducing the corresponding 2-cochains $\omega_{u_i}\in \Alt^2(\mathfrak{g}_i, \mathfrak{h}_i)$, and a butterfly $\mathfrak{k}:\mathfrak{G}_1\to \mathfrak{G}_2$,   
\begin{equation}
\begin{aligned}
\xymatrix{\mathfrak{h}_1 \ar[dd]_{t_1} \ar[dr]^{i_1} && \mathfrak{h}_2 \ar[dd]^{t_2}\ar[dl]_{i_2} \\ & \mathfrak{k} \ar[dl]^{r_1}\ar[dr]_{r_2} \\ \mathfrak{g}_1 && \mathfrak{g}_2\text{. }}
\end{aligned}
\end{equation}
We choose a section $q: \mathfrak{g}_1 \to \mathfrak{k}$ against $r_1$, obtaining the linear maps $\phi_q := r_2q: \mathfrak{g}_1 \to \mathfrak{g}_2$ and $f_q: \mathfrak{h}_1 \to \mathfrak{h}_2$, as well as the cochain $\lambda_q \in \Alt^2(\mathfrak{g}_1,\mathfrak{h}_2)$, as described in detail in \cref{butterflies}.
Since $\phi_q$ is  not a Lie algebra homomorphism, the Chevalley-Eilenberg differential $\delta$ does not commute with pullback along $\phi_q$.
Instead, we have the following lemma.

\begin{lemma}
\label{calc-butterfly-1}
The following equality of elements of $\Alt^3(\mathfrak{g}_1, \mathfrak{h}_2)$ holds:
\begin{equation*}
\phi_q^{*}(\delta\omega_{u_2})= \delta(\phi_q^{*}\omega_{u_2})-\delta \lambda_q\text{.}
\end{equation*}
\end{lemma}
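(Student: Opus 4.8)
The plan is to reduce the statement to \cref{differential-and-pullback}, applied with $\mathfrak{g}' = \mathfrak{g}_1$, the linear map $\phi = \phi_q$, the target crossed module $\mathfrak{G}_2$ together with its half splitting $u_2$, and the cochain $\lambda = \lambda_q$. First I would check that the hypothesis of that lemma is met, namely that $\lambda_q$ measures the failure of $\phi_q$ to be a Lie algebra homomorphism:
\[
[\phi_q(X), \phi_q(Y)] - \phi_q([X, Y]) = t_2\,\lambda_q(X, Y).
\]
This follows directly from the defining property of $\lambda_q$ coming from the butterfly data of \cref{butterflies}, namely $i_2\lambda_q(X, Y) = [q(X), q(Y)] - q([X, Y])$ in $\mathfrak{k}$: applying the Lie algebra homomorphism $r_2$ and using $r_2 q = \phi_q$ together with $r_2 i_2 = t_2$ yields exactly the displayed identity.

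With the hypothesis verified, \cref{differential-and-pullback} gives
\[
\phi_q^{*}(\delta\omega_{u_2}) = \delta(\phi_q^{*}\omega_{u_2}) + \gamma_{\phi_q, \lambda_q},
\]
so the entire lemma comes down to the purely algebraic identity $\gamma_{\phi_q, \lambda_q} = -\delta\lambda_q$. Here $\delta\lambda_q$ is the Chevalley-Eilenberg differential of the $\mathfrak{h}_2$-valued $2$-cochain $\lambda_q$ on $\mathfrak{g}_1$, taken with respect to the trivial module structure, so that $\delta\lambda_q(X, Y, Z) = -\lambda_q([X, Y], Z) + \lambda_q([X, Z], Y) - \lambda_q([Y, Z], X)$, while $\gamma_{\phi_q, \lambda_q}(X,Y,Z) = \alpha_2(\phi_q(X), \lambda_q(Y, Z)) + \alpha_2(\phi_q(Y), \lambda_q(Z, X)) + \alpha_2(\phi_q(Z), \lambda_q(X, Y))$. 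Establishing the identity therefore amounts to relating the cyclic family of terms $\lambda_q([\,\cdot\,,\,\cdot\,],\,\cdot\,)$ to the cyclic family $\alpha_2(\phi_q(\cdot), \lambda_q(\cdot,\cdot))$.

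The heart of the argument, and the step I expect to be the main obstacle, is extracting this relation from the Jacobi identity in $\mathfrak{k}$. I would expand the vanishing Jacobiator $[[q(X), q(Y)], q(Z)] + [[q(Y), q(Z)], q(X)] + [[q(Z), q(X)], q(Y)] = 0$ by repeatedly substituting $[q(X), q(Y)] = q([X, Y]) + i_2\lambda_q(X, Y)$. The terms of the form $q([[X,Y],Z])$ sum to zero by the Jacobi identity in $\mathfrak{g}_1$; the mixed terms $[i_2\lambda_q(X, Y), q(Z)]$ are rewritten via the $i_2$-equivariance built into the butterfly, $[q(Z), i_2(x)] = i_2(\alpha_2(\phi_q(Z), x))$, becoming $-i_2\,\alpha_2(\phi_q(Z), \lambda_q(X, Y))$; and the terms $i_2\lambda_q([X,Y], Z)$ remain. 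Applying the injectivity of $i_2$, the vanishing Jacobiator becomes precisely $\gamma_{\phi_q, \lambda_q}(X, Y, Z) = \lambda_q([X, Y], Z) + \lambda_q([Y, Z], X) + \lambda_q([Z, X], Y)$, and comparing with the expansion of $\delta\lambda_q$ above, using antisymmetry of $\lambda_q$ to match the cyclic orderings (so that $\lambda_q([X,Z],Y) = -\lambda_q([Z,X],Y)$), yields $\gamma_{\phi_q, \lambda_q} = -\delta\lambda_q$ and hence the claim. The only delicate point is the bookkeeping of signs and cyclic orderings.
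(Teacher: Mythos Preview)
Your proposal is correct and follows exactly the paper's approach: apply \cref{differential-and-pullback} (whose hypothesis is \eqref{tofLambda}) and then identify $\gamma_{\phi_q,\lambda_q}$ with $-\delta\lambda_q$. The only difference is that the paper cites this last identity as the already-established equation \eqref{cyclic-identity} from the appendix, whereas you rederive it inline via the Jacobi identity in $\mathfrak{k}$---which is precisely how \eqref{cyclic-identity} is proved there.
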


\begin{proof}
By \cref{differential-and-pullback} we have 
\begin{equation*}
\phi_q^{*}(\delta\omega_{u_2})= \delta(\phi_q^{*}\omega_{u_2})+\gamma_{\phi_q, \lambda_q}\text{.}
\end{equation*} 
The expression for $\gamma_{\phi_q, \lambda_q}$ obtained in  \cref{differential-and-pullback} coincides with the one for $-\delta\lambda_q$, as computed in  \cref{cyclic-identity}. This shows the claim.
\end{proof}

Next we consider the 2-cochain $R'_q\in \Alt^2(\mathfrak{g}_1, \mathfrak{h}_2)$ defined by
\begin{equation}
\label{cochain-of-butterfly}
R_q' := \phi_q^* \omega_{u_2} - f_q(\omega_{u_1}) - \lambda_q, 
\end{equation}
obtaining by \cref{calc-butterfly-1} 
\begin{equation*}
\delta R'_q = \phi_q^{*}(\delta\omega_{u_2})-f_q(\delta\omega_{u_1})\text{.} 
\end{equation*}
%
We note that $R'_q$ descends to $\mathfrak{f}_1$, as it is skew-symmetric and \cref{LemmaAdjustmentIdentities2} (together with \cref{PhiFIntertinest,exchange-of-actions}) implies
\begin{align*}
R'_q\big(t_1(y), X\big) 
&= \omega_{u_2}\big(\phi_q(t_1(y)), \phi_q(X)\big) -f_q\big(\omega_{u_1}(t_1(y), X)\big)-\lambda_q\big(t_1(y), X\big) 
\\&= -\alpha_2\big(\phi_q(X), f_q(y)\big) +f_q\big(\alpha_1(X, y)\big)-\lambda_q\big(t_1(y), X\big)
\\&=0\text{.} 
\end{align*}
We denote the descended bilinear form by $R_q\in \Alt^2(\mathfrak{f}_1, \mathfrak{h}_2)$. 
Thus, we obtain an equality
\begin{equation*}
\delta {R_q}= (\pi_0\mathfrak{k)}^{*}{C}_{u_2}-(\pi_1\mathfrak{k)}_{*}{C}_{u_1}.
\end{equation*}
Setting $\tilde R_q :=(\id-u_2t_2)(R_q)$ and using that both $(\pi_1\mathfrak{k})_*C_{u_1}$ and $C_{u_2}$ take values in $\mathfrak{a}_2$, we finally obtain
\begin{equation*}
\delta {\tilde R_q} = (\pi_0\mathfrak{k)}^{*}{C}_{u_2}-(\pi_1\mathfrak{k)}_{*}{C}_{u_1}\text{.} 
\end{equation*}
This shows the following result.

\begin{proposition}
\label{invariance-of-KL}
The Kassel-Loday classes of  two central crossed modules of Lie algebras related by a butterfly $\mathfrak{k}:\mathfrak{G}_1 \to \mathfrak{G}_2$ satisfy the following pull-push-relation:
\begin{equation*}
(\pi_0\mathfrak{k})^{*}\KL(\mathfrak{G}_2) = (\pi_1\mathfrak{k})_{*}\KL(\mathfrak{G}_1)\text{.}
\end{equation*}
\end{proposition}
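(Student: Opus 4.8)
The plan is to exhibit an explicit $\mathfrak{a}_2$-valued $2$-cochain on $\mathfrak{f}_1$ whose Chevalley--Eilenberg coboundary equals the difference $(\pi_0\mathfrak{k})^{*}C_{u_2}-(\pi_1\mathfrak{k})_{*}C_{u_1}$; since the Kassel--Loday classes are by definition $[C_{u_i}]\in H^3(\mathfrak{f}_i,\mathfrak{a}_i)$, the existence of such a transgressing cochain immediately yields the asserted pull-push relation in $H^3(\mathfrak{f}_1,\mathfrak{a}_2)$. Recall that $\delta\omega_{u_i}=p_i^{*}C_{u_i}$ for the projections $p_i:\mathfrak{g}_i\to\mathfrak{f}_i$, so the two defining cocycles are encoded one level up on $\mathfrak{g}_i$; the real work is therefore to compare $\omega_{u_1}$ and $\omega_{u_2}$ across the butterfly and then to descend.

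To transport data across the butterfly I would fix a linear section $q:\mathfrak{g}_1\to\mathfrak{k}$ of $r_1$, producing the linear maps $\phi_q=r_2q:\mathfrak{g}_1\to\mathfrak{g}_2$ and $f_q:\mathfrak{h}_1\to\mathfrak{h}_2$ together with the defect cochain $\lambda_q\in\Alt^2(\mathfrak{g}_1,\mathfrak{h}_2)$ measuring the failure $[\phi_q(X),\phi_q(Y)]-\phi_q([X,Y])=t_2\lambda_q(X,Y)$ of $\phi_q$ to be a homomorphism. The natural candidate for the comparison is $R'_q:=\phi_q^{*}\omega_{u_2}-f_q(\omega_{u_1})-\lambda_q\in\Alt^2(\mathfrak{g}_1,\mathfrak{h}_2)$. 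Because $\phi_q$ is only linear, $\delta$ does not commute with $\phi_q^{*}$; this is precisely what \cref{calc-butterfly-1} repairs, giving $\phi_q^{*}(\delta\omega_{u_2})=\delta(\phi_q^{*}\omega_{u_2})-\delta\lambda_q$ and hence $\delta R'_q=\phi_q^{*}(\delta\omega_{u_2})-f_q(\delta\omega_{u_1})$. Using \cref{LemmaAdjustmentIdentities2} together with the compatibility of the butterfly maps with the two actions, one checks that $R'_q$ vanishes whenever one entry lies in $t_1\mathfrak{h}_1$, so it descends to $R_q\in\Alt^2(\mathfrak{f}_1,\mathfrak{h}_2)$ with $\delta R_q=(\pi_0\mathfrak{k})^{*}C_{u_2}-(\pi_1\mathfrak{k})_{*}C_{u_1}$, where I use $p_2\phi_q=(\pi_0\mathfrak{k})p_1$ and $f_q|_{\mathfrak{a}_1}=\pi_1\mathfrak{k}$ to identify the descended right-hand side.

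It remains to make the transgressing cochain $\mathfrak{a}_2$-valued, as the class lives in $H^3(\mathfrak{f}_1,\mathfrak{a}_2)$ while $R_q$ is only $\mathfrak{h}_2$-valued. For this I would apply the projection $\id-u_2t_2$ onto $\mathfrak{a}_2$, setting $\tilde R_q:=(\id-u_2t_2)(R_q)$; since $\id-u_2t_2$ fixes $\mathfrak{a}_2$ and both $(\pi_0\mathfrak{k})^{*}C_{u_2}$ and $(\pi_1\mathfrak{k})_{*}C_{u_1}$ are already $\mathfrak{a}_2$-valued, the right-hand side is unaffected, yielding $\delta\tilde R_q=(\pi_0\mathfrak{k})^{*}C_{u_2}-(\pi_1\mathfrak{k})_{*}C_{u_1}$ with $\tilde R_q\in\Alt^2(\mathfrak{f}_1,\mathfrak{a}_2)$ — exactly the statement that the two cocycles are cohomologous. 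I expect the principal obstacle to be the non-homomorphicity of $\phi_q$, which both forces the introduction of the correction term $\lambda_q$ (so that $\delta R'_q$ closes up correctly) and complicates the descent step: verifying that $R'_q$ annihilates the entries from $t_1\mathfrak{h}_1$ demands a careful interplay of the adjustment identities in \cref{LemmaAdjustmentIdentities2} with the intertwining relations satisfied by $\phi_q$, $f_q$ and $\lambda_q$, and one must track that the descended cocycle is genuinely $(\pi_0\mathfrak{k})^{*}C_{u_2}-(\pi_1\mathfrak{k})_{*}C_{u_1}$ and not merely a cohomologous representative.
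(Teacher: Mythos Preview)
Your proposal is correct and follows essentially the same route as the paper: you construct the same cochain $R'_q=\phi_q^{*}\omega_{u_2}-f_q(\omega_{u_1})-\lambda_q$, use \cref{calc-butterfly-1} to compute $\delta R'_q$, descend to $\mathfrak{f}_1$ via \cref{LemmaAdjustmentIdentities2} and the butterfly intertwining relations, and then project with $\id-u_2t_2$ to land in $\mathfrak{a}_2$. The paper carries out exactly these steps in the same order, so there is nothing to add.
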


\begin{remark}
\label{splittings-assumption}
Given a butterfly $\mathfrak{k}:\mathfrak{G}_1 \to \mathfrak{G}_2$, and sections $s_i: \mathfrak{f}_i \to \mathfrak{g}_i$ in both crossed modules, then a section $q$ in $\mathfrak{k}$ is called \emph{neat} if the corresponding idempotents $\rho_{s_i}$ satisfy
\begin{equation}
\label{Neatness}
\rho_{s_2}\phi_q=\phi_q \rho_{s_1}\text{.}      
\end{equation}
Neatness can always be achieved: for some section $q$, we have $p_2(s_2(\pi_0\mathfrak{k})-\phi_q s_1)=0$, and thus there exists a linear map $f: \mathfrak{f}_1\to \mathfrak{h}_2$ such that $s_2(\pi_0\mathfrak{k} )= \phi_q s_1 + t_2f$. Then, $q' := q+i_2 fp_1$ is a neat section.  
In the situation above, we may assume that $q$ is neat with respect to the sections $s_1=s_{u_1}$ and $s_2=s_{u_2}$ induced by the chosen half splittings. 
Then 
we have
\begin{align*}
&\hspace{-1em}t_2(R_q(X, Y))
\\&=t_2\omega_{u_2}(\phi_q(X), \phi_q(Y))  -t_2f_q(\omega_{u_1}(X, Y))-t_2\lambda_q(X, Y)
\\&=t_2 u_2([\phi_q(X), \phi_q(Y)]) - \phi_q(t_1(\omega_{u_1}(X, Y)))-t_2u_2t_2\lambda_q(X, Y) 
& & \text{\cref{LemmaOmegaUadapted},~\eqref{PhiFIntertinest}}
\\&=t_2 u_2\big([\phi_q(X), \phi_q(Y)]-t_2\lambda_q(X, Y)\big)-\phi_q( t_1(\omega_{u_1}(X, Y))) 
\\&=t_2 u_2(\phi_q([X, Y]))-\phi_q( t_1u_1([X, Y])) 
& & \text{from \eqref{tofLambda}}
\\&=t_2 u_2(\phi_q([X, Y])- t_2u_2(\phi_q([X, Y]))
& & \text{from \eqref{Neatness}}
\\&=0 
\end{align*}
so that $R_q$ is automatically $\mathfrak{a}_2$-valued, and  $\tilde R_q=R_q$.
\end{remark}

\subsection{The classification of crossed modules}
\label{classification-result}

We may now fix Lie algebras $\mathfrak{f}$ and $\mathfrak{a}$,  and  consider  central crossed modules $\mathfrak{G}$ of Lie algebras together with fixed isomorphisms $\pi_0\mathfrak{G}\cong\mathfrak{f}$ and $\pi_1\mathfrak{G}\cong\mathfrak{a}$. Moreover, we consider only those butterflies $\mathfrak{k}:\mathfrak{G}_1 \to \mathfrak{G}_2$ that induce -- under the fixed isomorphisms -- the identities on $\mathfrak{f}$ and $\mathfrak{a}$. 
Together with all 2-morphisms between butterflies, this yields a bicategory $\cm(\mathfrak{f}, \mathfrak{a})$. We remark that all 1-morphisms in this bicategory are invertible, by \cref{invertibility-of-butterflies}. 

The Kassel-Loday classes of crossed modules in $\cm(\mathfrak{f}, \mathfrak{a})$ can  be identified canonically with classes in $H^3(\mathfrak{f}, \mathfrak{a})$; under this identification, 
  \cref{invariance-of-KL} shows  that isomorphic crossed modules have the same Kassel-Loday class. Thus, the Kassel-Loday class establishes a well-defined map
\begin{equation*}
\mathrm{KL}: \pi_0\cm(\mathfrak{f}, \mathfrak{a}) \to H^3(\mathfrak{f}, \mathfrak{a})\text{.}
\end{equation*} 

The following result has been proved in \cite{Kassel1982} (not using butterflies but a different model for the localization at weak equivalences). 

\begin{theorem}
\label{classification-of-crossed-modules}
$\mathrm{KL}$ is a bijection
\begin{equation*}
\pi_0 \cm(\mathfrak{f}, \mathfrak{a}) \cong H^3(\mathfrak{f}, \mathfrak{a})\text{.}
\end{equation*}
\end{theorem}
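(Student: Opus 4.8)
The plan is to prove that $\KL: \pi_0\cm(\mathfrak{f}, \mathfrak{a}) \to H^3(\mathfrak{f}, \mathfrak{a})$ is a bijection by establishing surjectivity and injectivity separately, both via explicit constructions of crossed modules from cocycles.

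\textbf{Surjectivity.} Given a class in $H^3(\mathfrak{f}, \mathfrak{a})$, represented by a closed cocycle $C\in \Alt^3_{\mathrm{cl}}(\mathfrak{f}, \mathfrak{a})$, I would construct a central crossed module $\mathfrak{G}$ whose Kassel-Loday class is $[C]$. The natural construction takes $\mathfrak{h} := \mathfrak{a} \oplus t_*\mathfrak{h}$ with the quotient $t_*\mathfrak{h}$ identified as a complement, and builds $\mathfrak{g}$ as a central extension of $\mathfrak{f}$ assembled from the cocycle data. Concretely, one sets $\mathfrak{g} := \mathfrak{f} \oplus \mathfrak{a}$ (or a suitable abelian-extension form) and $\mathfrak{h} := \mathfrak{a}$, with $t_* = 0$, and defines the bracket on $\mathfrak{g}$ by a 2-cocycle whose Chevalley–Eilenberg differential encodes $C$; the action $\alpha_*$ is then read off so that the Peiffer identity \eqref{InfinitesimalPeiffer} holds. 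The point is to choose a half splitting $u$ for which the formula \eqref{omegaU} for $\omega_u$ yields $\delta\omega_u = p^*C$, so that by definition $\KL(\mathfrak{G}) = [C]$. I would verify that this $\mathfrak{G}$ is a genuine central crossed module (checking $\mathfrak{g}$-equivariance of $t_*$, the Peiffer identity, and that the action restricts trivially on $\mathfrak{a}$) and that its homotopy Lie algebras are the prescribed $\mathfrak{f}$ and $\mathfrak{a}$ under the fixed isomorphisms.

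\textbf{Injectivity.} Here I would show that two crossed modules $\mathfrak{G}_1, \mathfrak{G}_2$ in $\cm(\mathfrak{f}, \mathfrak{a})$ with $\KL(\mathfrak{G}_1) = \KL(\mathfrak{G}_2)$ are related by a butterfly inducing the identities on $\mathfrak{f}$ and $\mathfrak{a}$. Using that the classes agree, there is a 2-cochain $\theta \in \Alt^2(\mathfrak{f}, \mathfrak{a})$ with $C_{u_2} - C_{u_1} = \delta\theta$ after choosing half splittings $u_i$. The task is to promote this cohomological relation into an explicit butterfly $\mathfrak{k}: \mathfrak{G}_1 \to \mathfrak{G}_2$; I would build the middle Lie algebra $\mathfrak{k}$ as an appropriate fibre product or pullback of $\mathfrak{g}_1$ and $\mathfrak{g}_2$ along their projections to $\mathfrak{f}$, twisted by $\theta$, and define the maps $i_1, i_2, r_1, r_2$ so that the butterfly axioms hold and so that the induced maps on $\pi_0, \pi_1$ are the identities. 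The calculations surrounding \cref{invariance-of-KL} and \cref{splittings-assumption}, which relate $R_q$ and $\tilde R_q$ to the difference of cocycles, should run in reverse to confirm that this butterfly indeed exists precisely when the classes coincide.

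\textbf{Main obstacle.} I expect the hard part to be injectivity — specifically, the explicit construction of the butterfly realizing a cohomology between the Kassel–Loday cocycles, together with verifying that all the butterfly structure maps are well-defined continuous Lie algebra homomorphisms in the locally convex setting and that the bracket on the middle term $\mathfrak{k}$ satisfies the Jacobi identity. The bookkeeping that the induced maps on homotopy Lie algebras are genuinely the identities (not merely isomorphisms) is delicate. Since \cref{classification-of-crossed-modules} is attributed to Kassel–Loday and the excerpt explicitly notes they used a different model for localization, I anticipate the actual proof will either invoke their result directly after reconciling the butterfly model with theirs (via the already-established \cref{invariance-of-KL}), or give a streamlined butterfly-based argument; in either case the reconciliation between the two localization models is the conceptual crux.
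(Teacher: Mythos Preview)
Your surjectivity construction has a genuine gap. If you take $\mathfrak{h}=\mathfrak{a}$ with $t_*=0$, then $\pi_0(\mathfrak{G})=\mathfrak{g}$, so your choice $\mathfrak{g}=\mathfrak{f}\oplus\mathfrak{a}$ forces $\pi_0(\mathfrak{G})\neq\mathfrak{f}$ unless $\mathfrak{a}=0$. If instead you let $t_*$ be the inclusion $\mathfrak{a}\hookrightarrow\mathfrak{f}\oplus\mathfrak{a}$, then $\pi_1(\mathfrak{G})=0$. Either way the homotopy Lie algebras are wrong. More fundamentally, with $\mathfrak{h}=\mathfrak{a}$ and $t_*=0$ every half splitting is $u=0$, hence $\omega_u=0$ and the Kassel--Loday class vanishes; a ``2-cocycle whose differential encodes $C$'' does not exist for a nontrivial class $[C]$. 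Realizing an arbitrary $[C]\in H^3(\mathfrak{f},\mathfrak{a})$ genuinely requires a two-step construction: one chooses a surjection $\mathfrak{p}\twoheadrightarrow\mathfrak{f}$ with $H^3(\mathfrak{p},\mathfrak{a})=0$ (e.g.\ $\mathfrak{p}$ free), writes the pullback of $C$ as $\delta\omega$, and uses $\omega|_{\ker}$ to build a central extension $\widetilde{\ker}\to\mathfrak{p}$. The paper simply cites Kassel--Loday for this direction, as it does not depend on the butterfly model.

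Your injectivity sketch is in the right spirit but diverges from the paper's execution. Rather than forming a fibre product $\mathfrak{g}_1\times_{\mathfrak{f}}\mathfrak{g}_2$ and twisting, the paper produces explicit \emph{cocycle data} $(\phi,f,\lambda)$ for a butterfly in the sense of the appendix: with splittings $u_i$ extending sections $s_i$ and $R\in\Alt^2(\mathfrak{f},\mathfrak{a})$ satisfying $\delta R=C_{u_1}-C_{u_2}$, one sets $\phi:=s_2p_1$, $f:=\iota_2 j_1$, and
\[
\lambda:=(\iota_2)_*\bigl(p_1^*R-(j_1)_*\omega_{u_1}\bigr)+\phi^*\omega_{u_2}\,,
\]
then verifies the four butterfly-cocycle identities directly (this is where \cref{differential-and-pullback} and \cref{LemmaOmegaUadapted} are used, not \cref{invariance-of-KL} run in reverse). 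The reconstruction lemma in the appendix then yields the butterfly, and \cref{invertibility-of-butterflies} gives invertibility. Your fibre-product approach could be made to work, but the verification of the Jacobi identity and the butterfly axioms would essentially reproduce these same computations in disguise; the cocycle-data formalism packages them more cleanly.
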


The proof of surjectivity in \cref{classification-of-crossed-modules} does not use any specific model for weak equivalences, and so the original proof applies without changes. 
We re-examine in the following the proof of injectivity using butterflies.

\begin{proof}[Proof of injectivity]
Suppose we have two crossed modules of Lie algebras with the same Kassel-Loday class. Let $u_1$ and $u_2$ be  splittings, and let $C_{u_1}$ and $C_{u_2}$ be the corresponding cocycles. 
Thus, by our assumption, there exists $R\in \Alt^2(\mathfrak{f}, \mathfrak{a})$ such that 
\begin{equation*}
\delta R= C_{u_1}-C_{u_2}\text{.}
\end{equation*}
We  denote by $j_i: \mathfrak{h}_i \to \mathfrak{a}$, $i=1, 2$, the unique linear maps such that $\iota_i j_i + u_i t_i=\id_{\mathfrak{h}_i}$. 
Consider the map $\phi := s_2p_1 : \mathfrak{g}_1 \to \mathfrak{g}_2$ and
\begin{equation*}
\lambda := (\iota_2)_{*}(p_1^{*}R -(j_1)_{*}\omega_{u_1})+\phi^{*}\omega_{u_2} \in \Alt^2(\mathfrak{g}_1, \mathfrak{h}_2) \text{.}
\end{equation*}
Then, we obtain, via  \cref{LemmaOmegaUadapted}, 
\begin{equation}
\label{butterfly-1}
\begin{aligned}
t_2\lambda (X, Y) &=t_2\omega_{u_2}(\phi(X), \phi(Y))
\\&=t_2u_2([\phi(X), \phi(Y)])
\\&=[\phi(X), \phi(Y)] -s_2p_2([s_2p_1(X), s_2p_1(Y)])
\\&=[\phi(X), \phi(Y)] -s_2([\underbrace{p_2s_2}_{=\id_{\mathfrak{f}_2}}p_1(X), \underbrace{p_2s_2}_{=\id_{\mathfrak{f}_2}}p_1(Y)])
\\&=[\phi(X), \phi(Y)] -s_2([p_1(X), p_1(Y)])
\\&=[\phi(X), \phi(Y)] -\phi([X, Y])\text{.}
\end{aligned}   
\end{equation}
Moreover, we have 
\begin{equation}
\label{butterfly-2} 
\begin{aligned}
\delta\lambda &= (\iota_2)_{*}(p_1^{*}\delta R - j_1\delta\omega_{u_1})+\delta\phi^{*}\omega_{u_2}
\\&= (\iota_2)_{*}(p_1^{*}C_{u_1}-\phi^{*}p_2^{*}C_{u_2} - j_1\delta\omega_{u_1})+\delta\phi^{*}\omega_{u_2}  
\\&= -(\iota_2)_{*}j_2\phi^{*}\delta\omega_{u_2}+\delta\phi^{*}\omega_{u_2} 
\\&= -\phi^*\delta\omega_{u_2}+\delta\phi^{*}\omega_{u_2}
\\&= -\gamma_{\phi, \lambda} 
\end{aligned}
\end{equation}
where the last step is \cref{differential-and-pullback} and $\gamma_{\phi, \lambda}$ is defined there.
Finally, we define  $f := \iota_2j_1  : \mathfrak{h}_1 \to \mathfrak{h}_2$. 
We have
\begin{equation}
\label{butterfly-3}
t_2f =0=\phi t_1 
\end{equation}
and, using \cref{LemmaAdjustmentIdentities2},
\begin{equation}\label{butterfly-4}
\begin{aligned}
\alpha_2(\phi(X), f(y)) &= \alpha_2(s_2p_1(X), \iota_2j_1(y))
\\&=0
\\&=\iota_2j_1(\alpha_1(X, y))-\iota_2 j_1\omega_{u_1}(X, t_1(y))
\\&=f(\alpha_1(X, y))+\lambda(X, t_1(y)) .
\end{aligned}
\end{equation}
Identities \eqref{butterfly-1}, \eqref{butterfly-2}, \eqref{butterfly-3} \eqref{butterfly-4} allow us to apply \cref{constructing-butterflies} to the data of $(\phi, f, \lambda)$; this yields a butterfly $\mathfrak{k}:\mathfrak{G}_1 \to \mathfrak{G}_2$ with $\pi_0\mathfrak{k}$ and $\pi_1\mathfrak{k}$ identities. By \cref{invertibility-of-butterflies}, it is hence invertible. 
\end{proof}

\begin{remark}
\label{smooth-group-cohomology-classification}
Central (and smoothly separable) crossed modules of Lie \emph{groups} with fixed homotopy Lie groups $A$ and $F$ have a similar classification, by Lie group cohomology $H^3(F, A)$, via a bijection
\begin{equation*}
\CM(F, A) \cong H^3(F, A)\text{.}
\end{equation*}
Here, it is important to not use the straightforward  smooth version of group cohomology, but rather to include certain local resolutions. This can be achieved using derived functors (\quot{Segal-Mitchison}, see \cite{Lane1975a,pries2}),  using a \v Cech resolution \cite{brylinski3}, or by considering only locally smooth cochains \cite{Neeb2005}. Wagemann and Wockel set up a unified framework  and also prove the classification of crossed modules of Lie groups \cite[Thm. V.4]{Wagemann2015} claimed above; also see \cite[Lemma III.6]{Neeb2005}.
Differentiation is  a map
\begin{equation*}
H^3(F, A) \to H^3(\mathfrak{f}, \mathfrak{a})\text{,}
\end{equation*}
and produces the Kassel-Loday class of the corresponding crossed module of Lie algebras. The kernel of this differentiation map is $H^3(BF, A^{\delta})$, i.e., the singular cohomology of the classifying space of $F$ with values in $A$ (considered as a discrete abelian group)  \cite[Rem. V.13]{Wagemann2015}. 
\end{remark}

One may also compute the higher homotopy groups of the bicategory $\cm(\mathfrak{f}, \mathfrak{a})$. Since we have not found them listed in the literature, while having all necessary methods available we present them here. 

\begin{theorem}
\label{ThmClassificationOfButterflies}
The automorphism 2-group $\Aut(\mathfrak{G})$ of each $\mathfrak{G}$ in $\cm(\mathfrak{f}, \mathfrak{a})$ has
\[
\pi_0\Aut(\mathfrak{G}) = H^2(\mathfrak{f}, \mathfrak{a})\text{,}
\]
and, for any automorphism $\mathfrak{k} : \mathfrak{G} \to \mathfrak{G}$ in $\cm(\mathfrak{f}, \mathfrak{a})$, we have
\[
\Aut(\mathfrak{k}) \cong H^1(\mathfrak{f}, \mathfrak{a}).
\]
\end{theorem}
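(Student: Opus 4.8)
The plan is to compute the two homotopy groups separately, treating $\Aut(\mathfrak{k})$ first (the $H^1$ statement) and then the set $\pi_0\Aut(\mathfrak{G})$ of isomorphism classes of self-butterflies (the $H^2$ statement). Since every $1$-morphism in $\cm(\mathfrak{f},\mathfrak{a})$ is invertible (\cref{invertibility-of-butterflies}), $\Aut(\mathfrak{G})$ is a $2$-group, so the automorphism group $\Aut(\mathfrak{k})$ is abstractly the same for every object $\mathfrak{k}$; nevertheless the argument below computes it directly for an arbitrary $\mathfrak{k}$, never using that it is the identity.

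For $\Aut(\mathfrak{k})$, let $K$ denote the middle Lie algebra of the butterfly $\mathfrak{k}:\mathfrak{G}\to\mathfrak{G}$, with its four structure maps $i_1,i_2,r_1,r_2$. A $2$-automorphism is a Lie algebra automorphism $\theta:K\to K$ with $\theta i_1=i_1$, $\theta i_2=i_2$, $r_1\theta=r_1$ and $r_2\theta=r_2$. From $r_1\theta=r_1$ and exactness of the sequence $0\to\mathfrak{h}\xrightarrow{i_2}K\xrightarrow{r_1}\mathfrak{g}\to 0$ we get $\theta=\id_K+i_2\psi$ for a unique linear map $\psi:K\to\mathfrak{h}$. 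The remaining three conditions force $\psi i_1=0$, $\psi i_2=0$, and (via $r_2 i_2=t$) that $\psi$ is $\mathfrak{a}$-valued. Using that the butterfly identifies the two copies of $\mathfrak{a}$ and that $i_2(\mathfrak{a})$ is central in $K$ (here centrality of the crossed module enters), the condition that $\theta$ preserves the bracket collapses to $\psi[K,K]=0$, i.e. $\psi$ is a Lie algebra homomorphism into the abelian $\mathfrak{a}$. Since $\psi$ vanishes on $\ker r_2=i_1(\mathfrak{h})$ it descends along $r_2$ to a map $\mathfrak{g}\to\mathfrak{a}$ which, by $\psi i_2=0$, vanishes on $t(\mathfrak{h})$ and hence descends further to a homomorphism $\mathfrak{f}\to\mathfrak{a}$, that is, to a class in $H^1(\mathfrak{f},\mathfrak{a})$. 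As $i_2(\mathfrak{a})$ is central, composition of $2$-morphisms corresponds to addition of the $\psi$'s, so this assignment is a group isomorphism $\Aut(\mathfrak{k})\cong H^1(\mathfrak{f},\mathfrak{a})$.

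For $\pi_0\Aut(\mathfrak{G})$, I would use the cochain $R_q$ from the proof of \cref{invariance-of-KL}. Fixing a half splitting $u$ on $\mathfrak{G}$ and a neat section $q$ of $r_1$ (neatness being achievable as in \cref{splittings-assumption}), the construction there produces $R_q\in\Alt^2(\mathfrak{f},\mathfrak{a})$ with $\delta R_q=(\pi_0\mathfrak{k})^*C_u-(\pi_1\mathfrak{k})_*C_u$; for a self-butterfly inducing the identities the right-hand side is $C_u-C_u=0$, so $R_q$ is closed and defines $[R_q]\in H^2(\mathfrak{f},\mathfrak{a})$. The plan is then to show: (i) $[R_q]$ is independent of the neat section $q$, by a change-of-section computation parallel to \cref{LemmaChangeSplitting}, and invariant under $2$-isomorphism of butterflies, so that $\mathfrak{k}\mapsto[R_q]$ is well defined on $\pi_0\Aut(\mathfrak{G})$; (ii) it is a group homomorphism, by choosing compatible sections on the composite butterfly (a fibre product over $\mathfrak{g}$) and checking that $R$ adds; (iii) it is surjective, by feeding a given closed $R\in\Alt^2(\mathfrak{f},\mathfrak{a})$ into the data $(\phi,f,\lambda)$ of the injectivity proof of \cref{classification-of-crossed-modules} (with $\mathfrak{G}_1=\mathfrak{G}_2=\mathfrak{G}$ and identical splittings, so that $\delta R=0$ makes all hypotheses of \cref{constructing-butterflies} hold) to produce a self-butterfly with invariant $[R]$; and (iv) it is injective, by showing that $R_q=\delta\mu$ with $\mu\in\Hom(\mathfrak{f},\mathfrak{a})$ lets one correct the section $q$ by $\mu$ to build an explicit $2$-isomorphism from $\mathfrak{k}$ to the identity butterfly.

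I expect the main obstacle to be step (iv): turning a cochain-level trivialization $\mu$ of $R_q$ into an honest $2$-isomorphism of butterflies requires producing a Lie algebra isomorphism of middle objects intertwining all four legs, and verifying that the correction by $\mu$ exactly cancels the discrepancy measured by $R_q$. Closely related technical points are the careful bookkeeping of the neatness condition and the section-independence in (i), and making the composition in (ii) explicit enough to read off additivity of $R$. By contrast, surjectivity (iii) is essentially a re-run of the already-established construction, and the $H^1$-computation is self-contained.
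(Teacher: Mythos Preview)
Your $H^1$ computation is correct and a bit more direct than the paper's: the paper first normalizes any self-butterfly to one of the form $\mathfrak{k}_\xi$ built from cocycle data $(\id_{\mathfrak{g}},\id_{\mathfrak{h}},\iota_*p^*\xi)$ with $\xi\in\Alt^2_{\mathrm{cl}}(\mathfrak{f},\mathfrak{a})$, and then reads off $\Aut(\mathfrak{k}_\xi)=H^1(\mathfrak{f},\mathfrak{a})$ from an explicit Ansatz on $\mathfrak{g}\oplus\mathfrak{h}$, whereas you obtain the same answer for an arbitrary $\mathfrak{k}$ without normalization.

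For $\pi_0$, your route via $R_q$ and the paper's route via $\lambda_q$ are closely related but organized differently. The paper's key move is a \emph{normalization lemma}: for any self-butterfly $\mathfrak{k}$ inducing the identity on $\mathfrak{f}$ and $\mathfrak{a}$, one can choose a section $q$ with $\phi_q=\id_{\mathfrak{g}}$ and $f_q=\id_{\mathfrak{h}}$. With such a $q$, the definition \eqref{cochain-of-butterfly} collapses to $R'_q=-\lambda_q$, so your invariant and the paper's invariant $[\lambda_q]=[\xi]$ agree up to sign. Once normalized, the cocycle conditions force $\lambda_q=\iota_*p^*\xi$ with $\xi$ closed, and both well-definedness and injectivity become computations on $\mathfrak{g}\oplus\mathfrak{h}$ that are immediate: a $2$-isomorphism $\mathfrak{k}_\xi\cong\mathfrak{k}_{\xi'}$ is forced to be $\bigl(\begin{smallmatrix}\id&0\\ \tilde\zeta&\id\end{smallmatrix}\bigr)$ with $\tilde\zeta=\iota_*p^*\zeta$, and compatibility with the bracket gives $\xi'-\xi=-\delta\zeta$.

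Your step~(iv) is precisely where this normalization pays off. Your plan to ``correct the section by $\mu$'' will bring $R_{q}$ to zero, but that alone does not build the $2$-isomorphism: with a general neat section you still have $\phi_q\neq\id_{\mathfrak{g}}$ and $f_q\neq\id_{\mathfrak{h}}$, and producing the required $\gamma:\mathfrak{g}\to\mathfrak{h}$ satisfying all three equations of \eqref{equivalence-of-cocycle-data} with target data $(\id,\id,0)$ amounts exactly to proving the normalization lemma. So your approach is sound, but you will end up doing the paper's normalization step inside (iv); it is cleaner to do it first, after which (i), (ii) and (iv) become almost trivial and (iii) is, as you say, a re-run of \cref{constructing-butterflies}.
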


\begin{proof}
Consider an arbitrary automorphism $\mathfrak{k} : \mathfrak{G} \to \mathfrak{G}$  in the bicategory $\cm(\mathfrak{f}, \mathfrak{a})$. 
We first claim that we may choose a section $q$ such that the induced maps $\phi_q$ and $f_q$ from \eqref{phiqfq} are identities.
Indeed, since $\mathfrak{k}$ induces the identity on $\mathfrak{f}$, we have $p(\phi_q-\id_{\mathfrak{g}})=0$; hence, there exists a linear map $\gamma: \mathfrak{g} \to \mathfrak{h}$ such that $\phi_q-\id=t\gamma$. 
From \eqref{equivalence-of-cocycle-data}, the new section $q':= q-i_2\gamma$ has $\phi_{q'}=\id_{\mathfrak{g}}$ and $f_{q'} = f_q + \gamma t$.
We now construct a section $q''$ such that also $f_{q''} = \id_{\mathfrak{h}}$:
Since $\phi_{q'} = r_2 q'= \id_{\mathfrak{g}}$, we have 
\begin{align*}
t( f_{q'}-\id_{\mathfrak{h}})
&=- r_2 i_2 j i_1  - t
\\
&= - r_2(\id - q' r_1)i_1 - r_1 i_1
\\
&= - r_2 i_1 + r_2 q' r_1 i_1 - r_1 i_1 = 0.
 \end{align*}
Moreover, since $\mathfrak{k}$ induces the identity on $\mathfrak{a}$, $f_{q'} - \id_{\mathfrak{h}}$ vanishes on $\mathfrak{a}$.
Hence, there exists   a linear map $\tilde{\gamma}: \mathfrak{g} \to \mathfrak{h}$ such that $\tilde{\gamma} t =f_{q'}-\id_{\mathfrak{h}}$ and $t\tilde{\gamma}=0$. 
Now, again by \eqref{equivalence-of-cocycle-data}, the new section $q'':= q'-i_2\tilde{\gamma}$ produces $\phi_{q''}=\phi_{q'} = \id_{\mathfrak{g}}$ and $f_{q''}=\id_{\mathfrak{h}}$.

Given the claim, we may choose a section $q$ such that $\phi_q$ and $f_q$ are identities.
As, in particular, $\phi_q$ is a Lie algebra homomorphism, \eqref{tofLambda} then implies that the corresponding cocycle $\lambda \in \Alt^2(\mathfrak{g}, \mathfrak{h})$ is $\mathfrak{a}$-valued.
\cref{exchange-of-actions} implies that it vanishes on $t_*\mathfrak{h}$.
Hence $\lambda = \iota_*p^*\xi$ for some $\xi \in \Alt^2(\mathfrak{f}, \mathfrak{a})$.
Since the crossed module is central, \cref{cyclic-identity} implies that $\lambda$ (and hence also $\xi$) is closed. 

If $q$ and $q'$ are two sections such that $\phi_q = \phi_{q'} = \id_{\mathfrak{g}}$ and $f_q = f_{q'} = \id_{\mathfrak{h}}$, then by \cref{equivalence-of-cocycle-data}, their difference $\gamma= q' - q$ is $\mathfrak{a}$-valued and vanishes on $t_*\mathfrak{h}$, hence we have $\gamma = \iota_*p^* \zeta$ for some $\zeta \in \Alt^1(\mathfrak{f}, \mathfrak{a})$.
Again by \cref{equivalence-of-cocycle-data} and the fact that the crossed module is central, we have
\[
\lambda_{q'} = \lambda_q + \delta \gamma.
\]
We therefore conclude that there is a well-defined map 
\begin{equation}
\label{MapPi1}
\pi_1 \cm(\mathfrak{f}, \mathfrak{a}) \to H^2(\mathfrak{f}, \mathfrak{a}),
\qquad \mathfrak{k} \mapsto [\lambda_q],
\end{equation} 
where $q$ is any section such that $\phi_q$ and $f_q$ are identities.

Conversely, given a closed $\xi \in \Alt^2(\mathfrak{f}, \mathfrak{a})$, the triple $(\id_{\mathfrak{g}}, \id_{\mathfrak{h}}, \xi)$ constitutes cocycle data for a butterfly $k_\xi$ whose class in $H^2(\mathfrak{f}, \mathfrak{a})$ associated by \eqref{MapPi1} is precisely $[\xi]$.
Hence \eqref{MapPi1} is surjective.

To see that \eqref{MapPi1} is injective, let now $\xi, \xi' \in \Alt^2_{\mathrm{cl}}(\mathfrak{f}, \mathfrak{a})$ and let $\ell : \mathfrak{k}_{\xi} \to \mathfrak{k}_{\xi'}$ be a 2-isomorphism, given by a linear endomorphism of $\mathfrak{g} \oplus \mathfrak{h}$.
That $\ell$ intertwines the butterfly maps of $\mathfrak{k}_\xi$ and $\mathfrak{k}_{\xi'}$ yields that $\ell$ must be of the form
\[
\ell = \begin{pmatrix} \id & 0 \\ \tilde{\zeta} & \id \end{pmatrix}
\]
with $\tilde{\zeta} : \mathfrak{g} \to \mathfrak{h}$ satisfying $t\tilde{\zeta} = \tilde{\zeta} t = 0$.
In other words, $\tilde{\zeta} = \iota_*p^* \zeta$ for some $\zeta \in \Alt^1(\mathfrak{f}, \mathfrak{a})$.
That $\ell$ must intertwine the $\xi$-Lie bracket with the $\xi'$-Lie bracket on $\mathfrak{g} \oplus \mathfrak{h}$ yields the equation $\xi' - \xi = - \delta \zeta$.
Hence $\xi$ and $\xi'$ define the same class in $H^2(\mathfrak{f}, \mathfrak{a})$.
Conversely, if $\xi$ and $\xi'$ define the same class, then choosing $\zeta$ with $\xi' - \xi = - \delta \zeta$ and defining $\ell$ as above with $\tilde{\zeta} = \iota_* p^* \zeta$ yields a 2-isomorphism $\ell : \mathfrak{k}_{\xi} \to \mathfrak{k}_{\xi'}$.
Since we may modify $\zeta$ by any element of $\Alt^1(\mathfrak{f}, \mathfrak{a}) = H^1(\mathfrak{f}, \mathfrak{a})$, we get that the set of 2-isomorphisms $\mathfrak{k}_{\xi} \to \mathfrak{k}_{\xi'}$ is a torsor for this group.

In particular, setting $\xi = \xi'$ in the above discussion, we obtain the desired identification $\Aut(\mathfrak{k}_\xi) = H^1(\mathfrak{f}, \mathfrak{a})$, independent of $\xi$.
Notice here also that any butterfly is isomorphic to one of the form $\mathfrak{k}_\xi$, hence this finishes the proof.
\end{proof}

\section{Classification of adjustments}

In \cref{SectionForms} we recall  the Lie-algebraic version of the Chern-Weil homomorphism, and describe its kernel as well as its role in the classification of infinitesimal adjustments. In \cref{sec-existence-of-infinitesimal-adjustments} we use the Chern-Weil homomorphism to state and prove one of our main results: the existence of infinitesimal adjustments in dependence of the Kassel-Loday class of the crossed module. Finally, in \cref{covariance-under-butterflies}, we establish a bijection between the sets of adjustments of weakly equivalent crossed modules.

\subsection{The Chern-Weil homomorphism}
\label{SectionForms}

Let $V$ be a topological vector space and $\mathfrak{g}$ a topological Lie algebra.
We write $T(\mathfrak{g}, V)$ for the vector space of $V$-valued continuous bilinear forms $\eta$ on $\mathfrak{g}$ satisfying the condition
\begin{equation}
\label{Tcondition}
\eta([X, Y], Z) + \eta(Y, [X, Z]) = \eta(X, [Y, Z]).
\end{equation}
We have $ \Alt^2_{\mathrm{cl}}(\mathfrak{g}, V) \subset T(\mathfrak{g}, V)$; 
this follows directly from observing that for an anti-symmetric bilinear form $\eta$, the identity \eqref{Tcondition} means that $\eta$ is closed.

Moreover, we denote by $\Sym^2(\mathfrak{g}, V)^\ad$ the space of  $\Ad$-invariant symmetric bilinear forms on $\mathfrak{g}$.
Explicitly, such a bilinear form $\beta$ satisfies
\[
\beta([X, Y], Z) = \beta(X, [Y, Z]).
\]
We consider the Chern-Weil homomorphism 
\begin{equation}
\label{ChernWeilHomomorphism}
\Sym^2(\mathfrak{g}, V)^\ad \to \Alt^3_{\mathrm{cl}}(\mathfrak{g}, V), \qquad \beta \mapsto \cw(\beta)
\end{equation}
that sends an $\mathrm{Ad}$-invariant symmetric bilinear form $\beta$ to the  Lie algebra $3$-cocycle $\cw(\beta)$ given by
\begin{equation}
        \label{Associated3form}
\cw(\beta)(X, Y, Z) := \beta([X, Y], Z) = \beta(X, [Y, Z]).
\end{equation}
The following crucial lemma connects the two spaces defined above.

\begin{lemma}
\label{decomposition-lemma}
\label{EquivalentEquationTgV}   
Let $\eta$ be a continuous $V$-valued bilinear form on $\mathfrak{g}$, and let $\eta = \eta^\anti + \eta^\sym$ be its decomposition into its skew-symmetric and its symmetric part.
Then, $\eta\in T(\mathfrak{g}, V)$ if and only if $\eta^\sym$ is $\mathrm{Ad}$-invariant and  
\begin{equation}
\label{deltaetaaplusetashat}
\delta \eta^\anti + \cw({\eta^\sym}) = 0.
\end{equation}
\end{lemma}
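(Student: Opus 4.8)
The plan is to package the defining condition \eqref{Tcondition} as the vanishing of the trilinear defect
\[
T_\eta(X,Y,Z) := \eta([X,Y],Z) + \eta(Y,[X,Z]) - \eta(X,[Y,Z]),
\]
so that $\eta \in T(\mathfrak{g},V)$ is exactly $T_\eta \equiv 0$. Since $\eta \mapsto T_\eta$ is linear I would split $T_\eta = T_{\eta^\anti} + T_{\eta^\sym}$ and control the two summands through their symmetry under permutation of the three arguments. The first observation, valid for \emph{any} bilinear $\eta$, is that $T_\eta$ is automatically skew in its first two slots: the two terms carrying the bracket in the first entry cancel under $X \leftrightarrow Y$. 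Thus all content of the condition lives in the behaviour of $T_\eta$ under the remaining transposition $Y \leftrightarrow Z$, and it is precisely this that will separate the $\mathrm{Ad}$-invariance of $\eta^\sym$ from the cocycle equation for $\eta^\anti$.

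For the skew part, rewriting $\eta^\anti(Y,[X,Z]) = -\eta^\anti([X,Z],Y)$ and $\eta^\anti(X,[Y,Z]) = -\eta^\anti([Y,Z],X)$ and comparing with the Chevalley--Eilenberg formula for $\delta$ on a $2$-form yields the identity $T_{\eta^\anti} = -\delta\eta^\anti$; in particular $T_{\eta^\anti}$ is totally antisymmetric. For the symmetric part $\beta := \eta^\sym$ I would compute the $(Y,Z)$-symmetrization $T_\beta(X,Y,Z) + T_\beta(X,Z,Y)$ and find it equals $2\bigl(\beta([X,Y],Z) + \beta(Y,[X,Z])\bigr)$, which is exactly twice the infinitesimal $\mathrm{Ad}$-invariance defect of $\beta$. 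Hence $T_\beta$ is totally antisymmetric if and only if $\beta$ is $\mathrm{Ad}$-invariant. Once $\beta$ is $\mathrm{Ad}$-invariant, the associated $3$-form $\cw(\beta)(X,Y,Z) = \beta([X,Y],Z)$ from \eqref{Associated3form} is itself totally antisymmetric, and substituting this into $T_\beta(X,Y,Z) = \beta([X,Y],Z) + \beta([X,Z],Y) - \beta([Y,Z],X)$ collapses it to $T_\beta = -\cw(\beta)$.

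With these two identities the lemma follows in both directions. For the forward implication, $T_\eta \equiv 0$ forces $T_{\eta^\sym} = -T_{\eta^\anti} = \delta\eta^\anti$, which is totally antisymmetric; by the computation above this antisymmetry is equivalent to $\mathrm{Ad}$-invariance of $\eta^\sym$, and then $0 = T_\eta = -\delta\eta^\anti - \cw(\eta^\sym)$ rearranges to \eqref{deltaetaaplusetashat}. Conversely, assuming $\eta^\sym$ is $\mathrm{Ad}$-invariant and that \eqref{deltaetaaplusetashat} holds, the two identities give $T_\eta = T_{\eta^\anti} + T_{\eta^\sym} = -\delta\eta^\anti - \cw(\eta^\sym) = 0$, so $\eta \in T(\mathfrak{g},V)$.

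I do not expect a genuine obstacle here: the argument is essentially the $S_3$-decomposition of a trilinear form, and the only thing demanding care is sign tracking — getting the signs in $T_{\eta^\anti} = -\delta\eta^\anti$ and $T_{\eta^\sym} = -\cw(\eta^\sym)$ right so that they combine into $\delta\eta^\anti + \cw(\eta^\sym) = 0$ rather than a difference. The conceptual point worth isolating is that $T_\eta$ carries no fully symmetric component (it is always skew in the first two arguments), so its only two irreducible pieces, the totally antisymmetric part and the mixed-symmetry part, cleanly encode the cocycle equation and the $\mathrm{Ad}$-invariance, respectively.
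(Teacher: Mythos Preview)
Your proof is correct and follows essentially the same approach as the paper: both arguments compute the $(Y,Z)$-symmetrization of the defining defect to extract $\mathrm{Ad}$-invariance of $\eta^\sym$, and then identify the remaining totally antisymmetric part with $-\delta\eta^\anti - \cw(\eta^\sym)$. Your packaging via the trilinear defect $T_\eta$ and its $S_3$-symmetry is a slightly cleaner organization of the same direct computations the paper carries out line by line.
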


\begin{proof}
$(\Longrightarrow)$
Let $\eta \in T(\mathfrak{g}, V)$.
The symmetric part of $\eta$ is $\ad$-invariant by the calculation
\begin{align*}
&\hspace{-2em}2\big(\eta^\sym([X, Y], Z) + \eta^\sym(Y, [X, Z])\big) \\&= \eta([X, Y], Z) + \eta(Z, [X, Y]) + \eta(Y, [X, Z]) + \eta([X, Z], Y)
\\
&=  \eta(X, [Y, Z])  + \eta(X, [Z, Y]) 
\\&= 0.
\end{align*}
The anti-symmetric part $\eta^\anti$  satisfies
\begin{align*}
-2 \delta \eta^\anti(X, Y, Z) 
&= 2\big(\eta^\anti([X, Y], Z) - \eta^\anti([X, Z], Y) + \eta^\anti([Y, Z], X)\big)
\\
&=2\big(\eta^\anti([X, Y], Z) + \eta^\anti(Y, [X, Z]) - \eta^\anti(X, [Y, Z])\big)
\\
&= - \eta(Z, [X, Y]) - \eta([X, Z], Y) + \eta([Y, Z], X)
\\
&= - \eta(X, [Z, Y]) + \eta([Y, Z], X)
\\
&= \eta(X, [Y, Z]) + \eta([Y, Z], X) 
\\&= 2 \eta^\sym(X, [Y, Z]).
\end{align*}
This is the claimed equation.

$(\Longleftarrow)$ We check the relevant equation:
\begin{align*}
\eta([X, Y], Z) + \eta(Y, [X, Z]) 
&=\eta^\sym([X, Y], Z) + \eta^\sym(Y, [X, Z])+\eta^\anti([X, Y], Z) + \eta^\anti(Y, [X, Z])
\\&= \eta^\sym(X, [Y, Z]) + \eta^\sym([Y, X], Z)-\delta\eta^\anti(X, Y, Z) - \eta^\anti([Y, Z], X)
\\&= \eta^\sym(X, [Y, Z]) - \eta^\sym([X, Y], Z)+ \cw( \eta^\sym)(X, Y, Z) - \eta^\anti([Y, Z], X)
\\&= \eta^\sym(X, [Y, Z]) + \eta^\anti(X, [Y, Z])
\\&= \eta(X, [Y, Z])\text{.}
\end{align*} 
This shows that $\eta\in T(\mathfrak{g}, V)$. 
\end{proof}

We obtain an exact sequence
\begin{equation}
\label{FunnyFourTermSequence}
0\longrightarrow\Alt^2_{\mathrm{cl}} (\mathfrak{g}, V) \longrightarrow T(\mathfrak{g}, V) \longrightarrow \Sym^2(\mathfrak{g}, V)^\ad \longrightarrow H^3(\mathfrak{g}, V)
\end{equation}
where the third map sends an element of $T(\mathfrak{g}, V)$ to its symmetric part.
Clearly, an element of the kernel of this map is anti-symmetric, and Lemma~\ref{decomposition-lemma} shows that it is  closed.
The fourth map is the Chern-Weil-Homomorphism \eqref{ChernWeilHomomorphism}, whose kernel equals the image of the third map by \cref{decomposition-lemma}; also see \cite[Prop.~7.2]{NeebWagemannCurrent}.

\begin{lemma}
\label{SymmetricPart}
If $\eta$ is an infinitesimal adjustment, then there exists a unique $B \in \Sym^2(\mathfrak{f}, \mathfrak{h})^\ad$ such that
\[
\eta^\sym = -p^*B,
\]
where $p : \mathfrak{g} \to \mathfrak{f}$ is the projection.
Moreover, if $\eta$ is adapted to some section $s$, then $B$ takes values in $\mathfrak{a}$.
\end{lemma}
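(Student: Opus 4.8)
The plan is to combine the two linearity identities \eqref{EtaIdentities-1} and \eqref{EtaIdentities-2} of an infinitesimal adjustment with the decomposition lemma, \cref{decomposition-lemma}. Since $\eta$ satisfies \eqref{EtaIdentities-0} it lies in $T(\mathfrak{g}, \mathfrak{h})$, and \cref{decomposition-lemma} then tells us directly that $\eta^\sym$ is $\Ad$-invariant, i.e.\ $\eta^\sym \in \Sym^2(\mathfrak{g}, \mathfrak{h})^\ad$. Everything therefore reduces to showing that $\eta^\sym$ descends along the projection $p:\mathfrak{g} \to \mathfrak{f}$.

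The key computation is that $\eta^\sym$ vanishes as soon as one argument lies in $t_*\mathfrak{h}$: for $x \in \mathfrak{h}$, $Y \in \mathfrak{g}$,
\[
2\,\eta^\sym(t_*x, Y) = \eta(t_*x, Y) + \eta(Y, t_*x) = -\alpha_*(Y, x) + \alpha_*(Y, x) = 0
\]
by \eqref{EtaIdentities-1} and \eqref{EtaIdentities-2}, and symmetry handles the second slot. Choosing a (continuous, by smooth separability) section $s$, I would then set $B(\bar X, \bar Y) := -\eta^\sym(s\bar X, s\bar Y)$; the vanishing on $t_*\mathfrak{h}$ shows this is independent of the choice of section and satisfies $\eta^\sym = -p^*B$, with uniqueness forced by surjectivity of $p$. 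The form $B$ is symmetric because $\eta^\sym$ is, and it is $\Ad$-invariant on $\mathfrak{f}$ because $p$ is a Lie algebra homomorphism: pushing the identity $\eta^\sym([X,Y],Z) = \eta^\sym(X,[Y,Z])$ through $p$ gives $B([\bar X,\bar Y],\bar Z) = B(\bar X,[\bar Y,\bar Z])$. Hence $B \in \Sym^2(\mathfrak{f}, \mathfrak{h})^\ad$.

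For the final assertion I would symmetrize the adaptedness condition \eqref{AdaptedEquationKappastar}. Using $t_*\eta(X,Y) = \rho_s([X,Y])$ together with linearity of $\rho_s$ (so that $\rho_s([Y,X]) = -\rho_s([X,Y])$) yields
\[
t_*\eta^\sym(X,Y) = \tfrac{1}{2}\bigl(\rho_s([X,Y]) + \rho_s([Y,X])\bigr) = 0,
\]
and since $\eta^\sym = -p^*B$ this means $t_*B(\bar X,\bar Y) = 0$ for all $\bar X, \bar Y$, i.e.\ $B$ is valued in $\ker t_* = \mathfrak{a}$. There is no genuine obstacle in this argument; the only points demanding a little care are that the descent be checked in both arguments (not merely one) and that $\Ad$-invariance be verified to survive passage to the quotient — both of which follow formally from the vanishing on $t_*\mathfrak{h}$ and the homomorphism property of $p$.
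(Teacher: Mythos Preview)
Your proof is correct and follows essentially the same approach as the paper: apply \cref{decomposition-lemma} to get $\Ad$-invariance of $\eta^\sym$, use \eqref{EtaIdentities-1} and \eqref{EtaIdentities-2} to see that $\eta^\sym$ vanishes on $t_*\mathfrak{h}$, and observe that adaptedness makes $t_*\eta$ skew-symmetric so that $t_*\eta^\sym=0$. The paper's proof is simply a terser version of what you wrote; your explicit construction of $B$ via a section and the verification that $\Ad$-invariance descends are exactly the details the paper leaves implicit.
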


\begin{proof}
By \cref{decomposition-lemma}, the symmetric part $\eta^\sym$ is $\mathrm{Ad}$-invariant, and the identities \eqref{EtaIdentities-2} and \eqref{EtaIdentities-1} show that $\eta^\sym$ vanishes as soon as one argument is in the image of $t_{*}$.
%
%
This implies the claim.
The $s$-adaptedness of $\eta$ implies $t_* \eta$ is skew-symmetric, i.e., $\eta^\sym$ must take values in $\mathfrak{a}$.
\end{proof}

\begin{definition}
Let $\eta$ be an infinitesimal adjustment, adapted to some section $s$.
We call the unique $B \in \Sym^2(\mathfrak{f}, \mathfrak{a})^{\ad}$ such that $\eta^{\mathrm{s}} = - p^*B$ the \emph{adjusted Kassel-Loday class} of $\eta$ and denote it by $\KL^{\mathrm{adj}}(\mathfrak{G}, \eta)$.
\end{definition}

By the above lemma, sending an adjustment to the corresponding adjusted Kassel-Loday class  yields a commutative diagram
\begin{equation}
\label{DiagramAdjToSym}
\xymatrix{
\Adj^{s}(\mathfrak{G}) \ar[r] \ar[d]_{\KL^{\mathrm{adj}}} & \Adj(\mathfrak{G}) \ar[d]^{\KL^{\mathrm{adj}}} \\ \mathrm{Sym}^2(\mathfrak{f}, \mathfrak{a})^{\mathrm{ad}} \ar[r] & \mathrm{Sym}^2(\mathfrak{f}, \mathfrak{h})^{\mathrm{ad}}. }
\end{equation}

\begin{proposition}
\label{affine-space-structure-on-adjustments}
For any central crossed module $\mathfrak{G}$ of Lie algebras, 
$\Adj(\mathfrak{G})$ is an affine space over $T(\mathfrak{f}, \mathfrak{h})$, and for any section $s$, $\Adj^{s}(\mathfrak{G})$ is an affine space over $T(\mathfrak{f}, \mathfrak{a})$.
Moreover, the fibres of the vertical maps in the diagram \eqref{DiagramAdjToSym} are affine spaces over $\Alt^2_{\mathrm{cl}}(\mathfrak{f}, \mathfrak{a})$, respectively $\Alt^2_{\mathrm{cl}}(\mathfrak{f}, \mathfrak{h})$.
\end{proposition}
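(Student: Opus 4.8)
The plan is to realise each of the three sets as a torsor for the indicated space of bilinear forms, so that once nonempty it is a genuine affine space. The organising observation is that, among the defining conditions of an infinitesimal adjustment, the identity \eqref{EtaIdentities-0} is \emph{linear} in $\eta$, whereas \eqref{EtaIdentities-1}, \eqref{EtaIdentities-2} and the adaptedness equation \eqref{AdaptedEquationKappastar} are \emph{affine}, their right-hand sides being independent of $\eta$. Hence the difference $\zeta := \eta - \eta'$ of two infinitesimal adjustments satisfies the homogeneous system $\zeta(t_*x, Y)=0$, $\zeta(X,t_*y)=0$, together with \eqref{EtaIdentities-0}. This reduces both the affine-space claims and the fibre claims to elementary descent statements for bilinear forms along $p:\mathfrak{g}\to\mathfrak{f}$.

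First I would handle parts (1) and (2) simultaneously. Since $\zeta$ vanishes as soon as one argument lies in $t_*\mathfrak{h}$, it descends to a bilinear form $\bar\zeta$ on $\mathfrak{f}=\mathfrak{g}/t_*\mathfrak{h}$; here smooth separability is used to guarantee \emph{continuity} of $\bar\zeta$, via a continuous linear section $s$ of $p$ and the formula $\bar\zeta(A,B)=\zeta(sA,sB)$. Because $p$ is a Lie algebra homomorphism and $\zeta$ inherits \eqref{EtaIdentities-0}, pushing that identity through $p$ shows $\bar\zeta$ satisfies \eqref{Tcondition}, so $\bar\zeta\in T(\mathfrak{f},\mathfrak{h})$. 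Conversely, for any $\bar\zeta\in T(\mathfrak{f},\mathfrak{h})$ the form $\eta+p^*\bar\zeta$ is again an infinitesimal adjustment: $p^*\bar\zeta$ annihilates $t_*\mathfrak{h}$ in either slot, so \eqref{EtaIdentities-1} and \eqref{EtaIdentities-2} persist, while \eqref{EtaIdentities-0} holds for $p^*\bar\zeta$ as $\bar\zeta$ satisfies \eqref{Tcondition} and $p$ is a homomorphism. This is a free transitive action of $T(\mathfrak{f},\mathfrak{h})$ on $\Adj(\mathfrak{G})$, giving (1). For (2), adaptedness to $s$ forces $t_*\eta=\rho_s([\,\cdot,\cdot\,])=t_*\eta'$, hence $t_*\zeta=0$, so $\zeta$ and therefore $\bar\zeta$ is $\mathfrak{a}$-valued; and adding $p^*\bar\zeta$ with $\bar\zeta\in T(\mathfrak{f},\mathfrak{a})$ preserves \eqref{AdaptedEquationKappastar} because $t_*p^*\bar\zeta=0$. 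Thus $\Adj^{s}(\mathfrak{G})$ is a torsor over $T(\mathfrak{f},\mathfrak{a})$.

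For the fibre statement (3), recall from \cref{SymmetricPart} that $\KL^{\mathrm{adj}}$ records (minus) the descended symmetric part of $\eta$. Two adjustments lie in a common fibre exactly when their symmetric parts coincide, i.e. when the connecting form $\bar\zeta$ from the previous step has vanishing symmetric part; as $p^*$ is injective, this is equivalent to $\bar\zeta$ being anti-symmetric. By the observation recorded just after \eqref{Tcondition}, an anti-symmetric element of $T(\mathfrak{f},\mathfrak{a})$ is precisely a \emph{closed} $2$-form, so the connecting forms preserving a fibre are exactly $\Alt^2_{\mathrm{cl}}(\mathfrak{f},\mathfrak{a})$ in the adapted case and $\Alt^2_{\mathrm{cl}}(\mathfrak{f},\mathfrak{h})$ in the general case. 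Restricting the torsor structures from (1) and (2) to these subspaces displays each fibre as the asserted affine space.

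The calculations are routine bilinear algebra, and I expect no genuine obstacle in the algebraic steps. The one point that does require care is the \emph{continuity} of the descended form $\bar\zeta$: mere algebraic descent is not enough, which is exactly why one invokes smooth separability and the resulting topological complement of $t_*\mathfrak{h}$ in $\mathfrak{g}$. I also note that ``affine space'' is to be read in the torsor sense here — freeness and transitivity of the action are what this argument establishes directly, while the nonemptiness of the sets is the separate content of the existence result.
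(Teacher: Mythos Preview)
Your proof is correct and follows essentially the same approach as the paper: show that $T(\mathfrak{f},\mathfrak{h})$ (resp.\ $T(\mathfrak{f},\mathfrak{a})$) acts on adjustments via $\eta\mapsto\eta+p^*\beta$, and that the difference of two adjustments descends to such a $\beta$; then deduce the fibre statement from the fact that antisymmetric elements of $T$ are exactly closed $2$-forms (the paper phrases this last step as an appeal to the exact sequence \eqref{FunnyFourTermSequence}). Your additional remark on continuity of the descended form via smooth separability is a welcome clarification that the paper leaves implicit.
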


\begin{proof}
If $\eta$ is an infinitesimal adjustment, and $\beta\in T(\mathfrak{f}, \mathfrak{h})$, then $\eta':=\eta+p^{*}\beta\in T(\mathfrak{g}, \mathfrak{h})$, where $p: \mathfrak{g} \to \mathfrak{f}$ is the projection, is again an infinitesimal adjustment. 
Indeed, both $\eta$ and $p^*\beta$ satisfy the linear condition \eqref{EtaIdentities-0}, hence so does $\eta'$, while the conditions \eqref{EtaIdentities-2} \& \eqref{EtaIdentities-1} still hold as $p^*\beta = \eta'- \eta$ vanishes on $t_*\mathfrak{h}$.
If $\eta$ is adapted to $s$, and $\beta\in T(\mathfrak{f}, \mathfrak{a})$, then the adjustment $\eta':=\eta+p^{*}\beta\in T(\mathfrak{g}, \mathfrak{a})$ is adapted to $s$ as well, as $t_*\eta' = t_*\eta$. 

If $\eta$ and $\eta'$ are infinitesimal adjustments, then $\tilde{\beta}:=\eta'-\eta$ also satisfies \eqref{EtaIdentities-0}  and therefore is contained in
$T(\mathfrak{g}, \mathfrak{h})$.
Since both $\eta$ and $\eta'$ satisfy \eqref{EtaIdentities-2} and \eqref{EtaIdentities-1}, we get that $\tilde{\beta}(t_{*}x, Y)=\tilde{\beta}(X, t_{*}y)=0$, and hence $\tilde{\beta}$ descends to an element $\beta$ of $T(\mathfrak{f}, \mathfrak{h})$. 
If $\eta$ and $\eta'$ are both adapted to the same $s$, then $t_{*}\tilde{\beta}= 0$ and so $\beta$ takes values in $\mathfrak{a}$. 

The claim on the fibres follows from the  exact sequence \eqref{FunnyFourTermSequence}.
\end{proof}

\subsection{Existence of infinitesimal adjustments}

\label{sec-existence-of-infinitesimal-adjustments}

\begin{theorem}
\label{existence-of-infinitesimal-adjustments}
A central crossed module $\mathfrak{G}=(\mathfrak{h} \stackrel{t}{\to} \mathfrak{g} \stackrel{\alpha}{\to} \Der(\mathfrak{h}))$ of Lie algebras admits an infinitesimal adjustment if and only if its Kassel-Loday class $\KL(\mathfrak{G})$  lies in the image of the  Chern-Weil homomorphism
\[
\cw:  \Sym^2(\mathfrak{f}, \mathfrak{a})^\ad  \to  H^3(\mathfrak{f}, \mathfrak{a}).
\]
More precisely, any adapted adjustment  $\eta$ on $\mathfrak{G}$ with adjusted Kassel-Loday class $\KL^{\mathrm{adj}}(\mathfrak{G}, \eta)$ satisfies
\[
\bigl[\cw\bigr(\KL^{\mathrm{adj}}(\mathfrak{G}, \eta)\bigr)\bigr] = \KL(\mathfrak{G}).
\]
Conversely, given any section $s$ of $\mathfrak{G}$ and any $B \in \Sym^2(\mathfrak{f}, \mathfrak{a})^{\ad}$ such that $[\cw(B)] = \KL(\mathfrak{G})$, there exists an $s$-adapted adjustment $\eta$ such that $\KL^{\mathrm{adj}}(\mathfrak{G}, \eta) = B$.
\end{theorem}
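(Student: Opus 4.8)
The plan is to read the defining identity \eqref{EtaIdentities-0} of an infinitesimal adjustment through \cref{decomposition-lemma}, which translates it into a Chern--Weil statement about the symmetric part, and then to compare the anti-symmetric part against the Kassel--Loday form $\omega_u$, whose differential computes $\KL(\mathfrak{G})$ by \cref{LemmaDeltaOmegaDescends}. The bridge is that \emph{both} an adjustment and $\omega_u$ satisfy the mixed identities \eqref{EtaIdentities-1}--\eqref{EtaIdentities-2} (the latter by \cref{LemmaAdjustmentIdentities2}), so their difference descends to $\mathfrak{f}$.

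For the forward direction I would start with an arbitrary infinitesimal adjustment $\eta$ and split it as $\eta=\eta^\anti+\eta^\sym$. Condition \eqref{EtaIdentities-0} says precisely $\eta\in T(\mathfrak{g},\mathfrak{h})$, so \cref{decomposition-lemma} gives $\delta\eta^\anti=-\cw(\eta^\sym)$, while \cref{SymmetricPart} writes $\eta^\sym=-p^*B$ with $B\in\Sym^2(\mathfrak{f},\mathfrak{h})^\ad$; since $p$ is a Lie algebra homomorphism, $\cw$ commutes with $p^*$ and this becomes $\delta\eta^\anti=p^*\cw(B)$. Fixing a full splitting $u$ with $\rho_u=\rho_s$, the anti-symmetric forms $\eta^\anti$ and $\omega_u$ both obey \eqref{EtaIdentities-1}--\eqref{EtaIdentities-2} and hence vanish whenever one argument lies in $t_*\mathfrak{h}$, so their difference descends: $\eta^\anti-\omega_u=p^*\mu$ for a unique $\mu\in\Alt^2(\mathfrak{f},\mathfrak{h})$. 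Applying $\delta$ and using $\delta\omega_u=p^*C_u$, then cancelling the injective $p^*$, yields the key equality $\cw(B)=C_u+\delta\mu$ in $\Alt^3(\mathfrak{f},\mathfrak{h})$. Post-composing with the projection $j:\mathfrak{h}\to\mathfrak{a}$ of the splitting — which commutes with $\cw$ and $\delta$, fixes the $\mathfrak{a}$-valued cocycle $C_u$, and carries $B$ to $j_*B\in\Sym^2(\mathfrak{f},\mathfrak{a})^\ad$ — gives $\cw(j_*B)=C_u+\delta(j_*\mu)$, whence $\KL(\mathfrak{G})=[C_u]=[\cw(j_*B)]$ lies in the image of $\cw$. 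When $\eta$ is already adapted to $s$, adaptedness \eqref{AdaptedEquationKappastar} together with \cref{LemmaOmegaUadapted} forces $t_*(\eta^\anti-\omega_u)=0$, so $\mu$ is $\mathfrak{a}$-valued and $B=\KL^{\mathrm{adj}}(\mathfrak{G},\eta)$ already; the same computation then reads $[\cw(\KL^{\mathrm{adj}}(\mathfrak{G},\eta))]=\KL(\mathfrak{G})$, proving the ``more precisely'' clause.

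For the converse, given a section $s$, a full splitting $u$ with $\rho_u=\rho_s$, and $B\in\Sym^2(\mathfrak{f},\mathfrak{a})^\ad$ with $[\cw(B)]=\KL(\mathfrak{G})=[C_u]$, I would choose $\tau\in\Alt^2(\mathfrak{f},\mathfrak{a})$ with $\cw(B)-C_u=\delta\tau$ and define $\eta:=\omega_u+p^*\tau-p^*B$. Conditions \eqref{EtaIdentities-1}--\eqref{EtaIdentities-2} hold because $\omega_u$ satisfies them (\cref{LemmaAdjustmentIdentities2}) while $p^*\tau$ and $p^*B$ vanish on $t_*\mathfrak{h}$; adaptedness holds because $\tau$ and $B$ are $\mathfrak{a}$-valued and hence annihilated by $t_*$, leaving $t_*\eta=t_*\omega_u=\rho_s([\,\cdot\,,\cdot\,])$ by \cref{LemmaOmegaUadapted}. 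For \eqref{EtaIdentities-0} I would apply \cref{decomposition-lemma} in the reverse direction: the symmetric part $\eta^\sym=-p^*B$ is $\Ad$-invariant, the anti-symmetric part $\eta^\anti=\omega_u+p^*\tau$ satisfies $\delta\eta^\anti=p^*(C_u+\delta\tau)=p^*\cw(B)=-\cw(\eta^\sym)$, so equation \eqref{deltaetaaplusetashat} holds and $\eta\in T(\mathfrak{g},\mathfrak{h})$. Finally $\eta^\sym=-p^*B$ gives $\KL^{\mathrm{adj}}(\mathfrak{G},\eta)=B$ by definition, closing the argument.

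The main obstacle I anticipate is purely the bookkeeping between $\mathfrak{h}$-valued and $\mathfrak{a}$-valued data: for a general (non-adapted) $\eta$ the equality $\cw(B)=C_u+\delta\mu$ only lives in $\Alt^3(\mathfrak{f},\mathfrak{h})$, and one must pass to $\mathfrak{a}$ via $j_*$ to land in the correct group $H^3(\mathfrak{f},\mathfrak{a})$ in which the Chern--Weil map and the Kassel--Loday class are defined. Verifying that $j_*$, $p^*$, $\cw$ and $\delta$ all commute as needed, and that adaptedness is exactly what removes the $\mathfrak{h}/\mathfrak{a}$ discrepancy (making $\mu$ and $B$ automatically $\mathfrak{a}$-valued), is the delicate point; the remaining verifications of \eqref{EtaIdentities-1}--\eqref{EtaIdentities-2} and adaptedness in the converse are routine.
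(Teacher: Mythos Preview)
Your proof is correct and follows essentially the same route as the paper's. The only cosmetic difference is that the paper first forms $\beta:=\eta-\omega_u$ and then splits it into symmetric and anti-symmetric parts, whereas you split $\eta$ first and then compare $\eta^\anti$ with $\omega_u$; your $\mu$ and $B$ are exactly the paper's $\beta^\anti$ and $-\beta^\sym$ (descended to $\mathfrak{f}$), and your projection $j_*$ to $\mathfrak{a}$-values is the paper's $\id-ut_*$. One small slip in wording: you write that $\eta^\anti$ and $\omega_u$ ``obey \eqref{EtaIdentities-1}--\eqref{EtaIdentities-2} and hence vanish whenever one argument lies in $t_*\mathfrak{h}$'', but of course neither form vanishes there --- it is only their \emph{difference} that does, which is what you actually use.
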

\begin{proof}
$(\Longleftarrow)$
Assume that there exists $B \in \Sym^2(\mathfrak{f}, \mathfrak{a})^\ad$ with $[\cw(B)] = \KL(\mathfrak{G})$.
By Lemma~\ref{LemmaChangeSplitting}, this means that for any half splitting $u$, the difference $\cw(B) - C_u$ is exact. 
In other words, there exists  $\xi \in \Alt^2 (\mathfrak{f}, \mathfrak{a})$ such that 
\[
\cw(B) - C_u =  \delta \xi.
\]
Set now
\[
\eta := \omega_u + p^*\xi  - p^*B.
\]
Since $\omega_u$  satisfies the identities  \eqref{EtaIdentities-2} and \eqref{EtaIdentities-1} by Lemma~\ref{LemmaAdjustmentIdentities2}, and the difference $\eta - \omega_u$ vanishes on $t_*\mathfrak{h}$, 
$\eta$ also satisfies these identities. 
It remains to show that $\eta\in T(\mathfrak{g}, \mathfrak{h})$. 
Since $\omega_u$ and $\xi$ are skew-symmetric, we have $\eta^\sym=-p^*B$ and $\eta^\anti=\omega_u+p^*\xi$. 
By assumption, $B$ is $\mathrm{Ad}$-invariant and 
\[
\delta\eta^\anti+\cw(\eta^\sym)= \delta \omega_u +p^*\delta\xi -p^*\cw(B) = p^* (C_u - \cw(B) + \delta \xi) =0.
\]
By \cref{EquivalentEquationTgV}, this shows that $\eta\in T(\mathfrak{g}, \mathfrak{h})$.   
The infinitesimal adjustment $\eta$ is  adapted to the section $s_u$ determined by $u$, by Lemma~\ref{LemmaOmegaUadapted}. Since any section $s$ is of the form $s=s_u$ for some half splitting $u$, this proves the assertion about adaptedness.  
Finally, the equality $\KL^{\mathrm{adj}}(\mathfrak{G}, \eta) = B$ holds by construction.

$(\Longrightarrow)$
Let $\eta$ be an infinitesimal adjustment.
Choose a  half splitting $u : \mathfrak{g} \to \mathfrak{h}$ and set $\beta := \eta - \omega_u$.
Because $\eta=\beta+\omega_u \in T(\mathfrak{g}, \mathfrak{h})$, \cref{decomposition-lemma} yields that
\begin{equation}
\label{StructureEquation}
\delta \omega_u +  \delta \beta^\anti + \cw(\beta^\sym) = 0.
\end{equation}
The symmetric part $\beta^\sym=\eta^\sym$ of $\beta$ vanishes on $t_*\mathfrak{h}$ by \cref{SymmetricPart}. 
The skew-symmetric part vanishes, too, by \cref{LemmaAdjustmentIdentities2}, as
\begin{equation*}
2\beta^\anti(t_*x, Y)=2\eta^\anti(t_*x, Y)-2\omega_u(t_*x, Y)=\eta(t_*x, Y)-\eta(Y, t_*x)+2 \alpha_*(Y, x)=0
\end{equation*}
In general, $\beta^\sym$ and $\beta^\anti$ will  be $\mathfrak{h}$-valued.
We set $\tilde{\beta} := \beta - ut_*\beta$, which is now $\mathfrak{a}$-valued. 
(We remark that, if $\eta$ is adapted to the section determined by $u$, then $\tilde\beta=\beta$ and this step is unnecessary.)
Now because $\delta \omega_u$ is $\mathfrak{a}$-valued, we get
\begin{align*}
\delta \omega_u + \delta \tilde{\beta}^\anti + \cw(\tilde{\beta}^\sym) 
&= \delta \omega_u + (\id - ut) \big(\delta \beta^\anti + \cw(\beta^\sym)\big) 
\\
&= (\id - ut) \big(\delta \omega_u+ \cw({\beta^\sym}) +  \delta \beta^\anti\big) = 0
\end{align*}
using \eqref{StructureEquation}, so $\delta \omega_u + \cw({\tilde{\beta}^\sym})$ is exact. 
Since both $\delta \omega_u$ and $\cw({\tilde{\beta}^\sym})$ vanish on $t_*\mathfrak{h}$, this relation descends to $\mathfrak{f}$, hence the Kassel-Loday class lies in the image of the Chern-Weil homomorphism. If $\eta$ was adapted, without loss of generality to the section $s$ determined by $u$, then $\eta^{\mathrm{s}} = \beta = \tilde{\beta} = - \KL^{\mathrm{adj}}(\mathfrak{G}, \eta)$, so
\[
\bigl[\cw\bigl(\KL^{\mathrm{adj}}(\mathfrak{G}, \eta)\bigr)\bigr] = [\delta\omega_u] = \KL(\mathfrak{G}),
\]
as claimed.
\end{proof}

\begin{remark}
\label{construction-of-adjustments}
The proof of \cref{existence-of-infinitesimal-adjustments} reveals that, in order to construct an infinitesimal adjustment, one may choose:
\begin{enumerate}[(1)]
\item 
a half splitting $u:\mathfrak{g} \to \mathfrak{h}$, 
\item
a bilinear form $B \in \Sym^2(\mathfrak{f}, \mathfrak{a})^\ad $, 
and
\item
a cochain $\xi \in \Alt^2 (\mathfrak{f}, \mathfrak{a})$ such that $\cw(B) - C_u   = \delta \xi$
\end{enumerate}
and then set $\eta := \omega_u + p^*\xi - p^*B$; this yields an infinitesimal adjustment  adapted to $s_u$. 
\end{remark}

\begin{remark}
Passing the constructions in the proof of   \cref{existence-of-infinitesimal-adjustments}  back and forth, we see that if $\eta$ is an infinitesimal adjustment, and $u$ is any half splitting of $\mathfrak{G}$, then
\begin{equation*}
\mathrm{adpt}^{u}(\eta) :=  \eta+ut_*( \omega_u-\eta)
\end{equation*} 
is another infinitesimal adjustment and adapted to $s_u$.
 If $\eta$ was already adapted to $s_u$, then $\mathrm{adpt}^{u}(\eta)=\eta$, as one can see from \cref{LemmaOmegaUadapted}.
This shows that $\mathrm{adpt}^{u}$
is an idempotent projection in $\Adj(\mathfrak{G})$ onto $\Adj^{s_{u}}(\mathfrak{G})$.
\end{remark}

\subsection{Covariance under butterflies}

\label{covariance-under-butterflies}

From \cref{existence-of-infinitesimal-adjustments} and \cref{classification-of-crossed-modules} it is already clear that if a crossed module of Lie algebras $\mathfrak{G}_1$ admits  infinitesimal adjustments, then any other crossed module $\mathfrak{G}_2$ related to $\mathfrak{G}_1$ by an invertible butterfly also admits infinitesimal adjustments.

Here we want to establish the stronger result that given two central crossed modules of Lie algebras,  $\mathfrak{G}_i=(\mathfrak{h}_i \stackrel{t_i}{\to} \mathfrak{g}_i \stackrel{\alpha_i}{\to} \Der(\mathfrak{h}_i))$, $i=1,2$, and  any invertible butterfly $\mathfrak{k}:\mathfrak{G}_1 \to \mathfrak{G}_2$, one may construct a bijection
\begin{equation*}
\Adj^{s_1}(\mathfrak{G}_1) \to \Adj^{s_2}(\mathfrak{G}_2)\text{, }
\end{equation*}
for any choice of sections  $s_i: \mathfrak{f}_i \to \mathfrak{g}_i$ in $\mathfrak{G}_i$. 
This construction depends on the choice of a neat section $q$ of the butterfly $\mathfrak{k}$ (see \cref{splittings-assumption}).
Throughout, we write $\mathfrak{f}_i := \pi_0(\mathfrak{G}_i)$ and $\mathfrak{a}_i:= \pi_1(\mathfrak{G}_i)$, as well as $F:=\pi_1\mathfrak{k}:\mathfrak{a}_1 \to \mathfrak{a}_2$ and $\Phi := \pi_0\mathfrak{k}: \mathfrak{f}_1 \to \mathfrak{f}_2$. As before, we also denote by $f_q: \mathfrak{h}_1 \to \mathfrak{h}_2$ and $\phi_q : \mathfrak{g}_1 \to \mathfrak{g}_2$ the linear maps determined by $q$, inducing $F$ and $\Phi$, respectively.

Suppose $\eta_1$ is an infinitesimal adjustment on $\mathfrak{G}_1$ and adapted to $s_1$. We choose  half splittings $u_1$  and $u_2$ extending the given sections $s_1$ and $s_2$. 
Following the proof of \cref{existence-of-infinitesimal-adjustments}, there exists a unique $\mathfrak{a}_1$-valued bilinear form $\beta$ on $ \mathfrak{f}_1$ such that $p_1^{*}\beta := \eta_1 - \omega_{u_1}$, and we split this into $\beta=\beta^\sym+\beta^\anti$. 
Then, from \eqref{StructureEquation}, we get
\begin{equation*}
 C_{u_1}=-\cw(\beta^\sym)-\delta\beta^\anti
\end{equation*}   
in $\Alt^3(\mathfrak{f}_1, \mathfrak{a}_1)$. 
Next, we consider the cochain $R_q\in \Alt^2(\mathfrak{f}_1, \mathfrak{h}_2)$ descended from \eqref{cochain-of-butterfly}, which has values in $\mathfrak{a}_2$ by \cref{splittings-assumption} and satisfies
\begin{equation*}
\Phi^{*}{C}_{u_2} =\delta R_q+F_{*}({C}_{u_1})=-\cw(F_{*}(\beta^{\sym})) - \delta\big(F_{*}(\beta^{\anti})-R_q\big) \text{.}
\end{equation*}
Further following the  proof of \cref{existence-of-infinitesimal-adjustments}, 
\begin{equation}
\label{eta-2-definition}
\eta_2 := \omega_{u_2}+p_2^{*}(\Phi^{-1})^{*}(F_{*}({\beta})-{R_q})
\end{equation}
is an infinitesimal adjustment on $\mathfrak{G}_2$ and adapted to $s_2$.

\begin{lemma}
\label{butterfly-map-depends-only-on-idempotent}
The adjustment $\eta_2$  defined in \cref{eta-2-definition} is independent of the choices of the half splittings $u_1$ and $u_2$. 
\end{lemma}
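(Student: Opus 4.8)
The plan is to reduce the statement to two independent checks: that $\eta_2$ is unchanged when $u_1$ is replaced by another half splitting $u_1'$ inducing the same section $s_1$, and, separately, when $u_2$ is replaced by $u_2'$ inducing the same $s_2$. (The sections $s_1,s_2$ are fixed throughout, and $u_1,u_2$ are only required to extend them, so varying the $u_i$ always means varying among half splittings with a fixed induced idempotent.) The key preliminary I would establish first is that, for a central crossed module, \emph{two half splittings $u,u'$ inducing the same section differ by an $\mathfrak{a}$-valued map}. Indeed, equal induced sections means $t_* u = t_* u'$, so $v:=u'-u$ lands in $\mathfrak{a}=\ker t_*$. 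Feeding this into the difference formula from the proof of \cref{LemmaChangeSplitting} and using centrality—so that $\alpha_*(-,v(-))=0$ and $v(-)$ is central in $\mathfrak{h}$—every mixed $\alpha_*$- and bracket-term drops out, leaving
\[
\omega_{u'}(X,Y) - \omega_u(X,Y) = v\bigl([\rho^\perp(X), \rho^\perp(Y)]\bigr),
\]
which is $\mathfrak{a}$-valued. By \cref{LemmaChangeSplitting} it also descends, so there is a unique $\tau\in\Alt^2(\mathfrak{f},\mathfrak{a})$ with $\omega_{u'}-\omega_u = p^*\tau$. I would record this for both modules: $p_1^*\tau_1 = \omega_{u_1'}-\omega_{u_1}$ and $p_2^*\tau_2 = \omega_{u_2'}-\omega_{u_2}$, with $\tau_i\in\Alt^2(\mathfrak{f}_i,\mathfrak{a}_i)$.

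For the first check I would hold $u_2$ fixed and track everything in \eqref{eta-2-definition} through the replacement $u_1\mapsto u_1'$. From $p_1^*\beta=\eta_1-\omega_{u_1}$ and injectivity of $p_1^*$ one gets $\beta' = \beta - \tau_1$. In the undescended cochain $R_q' = \phi_q^*\omega_{u_2} - f_q(\omega_{u_1}) - \lambda_q$ only the middle term changes, by $-f_q(p_1^*\tau_1)$; since $f_q$ restricts to $F_*=\pi_1\mathfrak{k}$ on $\mathfrak{a}_1$ and $\tau_1$ is $\mathfrak{a}_1$-valued, this is $-p_1^*(F_*\tau_1)$, so after descent $R_q$ changes by $-F_*\tau_1$. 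Hence
\[
F_*\beta' - R_q^{\mathrm{new}} = F_*(\beta-\tau_1) - (R_q - F_*\tau_1) = F_*\beta - R_q,
\]
and since $\omega_{u_2}$ is untouched, $\eta_2$ is unchanged.

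For the second check I would hold $u_1$ fixed and track the replacement $u_2\mapsto u_2'$. Now $\beta$ is untouched while $\omega_{u_2}\mapsto \omega_{u_2}+p_2^*\tau_2$. In $R_q'$ only the first term changes, by $\phi_q^*(p_2^*\tau_2) = (p_2\phi_q)^*\tau_2 = p_1^*\Phi^*\tau_2$, using $p_2\phi_q=\Phi p_1$; after descent this gives $R_q\mapsto R_q+\Phi^*\tau_2$. Therefore $F_*\beta-R_q\mapsto (F_*\beta-R_q)-\Phi^*\tau_2$, and
\[
\eta_2^{\mathrm{new}} = \omega_{u_2}+p_2^*\tau_2 + p_2^*(\Phi^{-1})^*\bigl((F_*\beta-R_q)-\Phi^*\tau_2\bigr) = \eta_2 + p_2^*\tau_2 - p_2^*(\Phi^{-1})^*\Phi^*\tau_2 = \eta_2,
\]
the last equality using $(\Phi^{-1})^*\Phi^* = \id$. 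Combining the two checks gives the claim.

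The hard part is the preliminary observation that $\omega_{u'}-\omega_u$ is genuinely $\mathfrak{a}$-valued once the section is fixed: this is precisely where centrality is indispensable, since without it the mixed $\alpha_*$- and bracket-terms survive, $\omega_{u'}-\omega_u$ is only $\mathfrak{h}$-valued, and the cancellations above collapse. Once that is in place, the rest is routine bookkeeping, powered entirely by the two transport identities $f_q|_{\mathfrak{a}_1}=F_*$ and $p_2\phi_q=\Phi p_1$ together with $(\Phi^{-1})^*\Phi^*=\id$.
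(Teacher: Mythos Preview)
Your proof is correct and follows essentially the same approach as the paper's: track how $\beta$, $R_q$, and $\omega_{u_2}$ change under replacing the half splittings and verify that the changes cancel in \eqref{eta-2-definition}. The only differences are organizational: the paper varies $u_1$ and $u_2$ simultaneously in a single calculation rather than in two separate checks, and it obtains the $\mathfrak{a}_1$-valuedness of $\beta'-\beta$ from the fact that both $\beta$ and $\beta'$ are $\mathfrak{a}_1$-valued (via adaptedness of $\eta_1$ and \cref{LemmaOmegaUadapted}) rather than from your direct centrality computation of $\omega_{u_1'}-\omega_{u_1}$.
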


\begin{proof}
Let $u_1'$ and $u_2'$ be other half splittings inducing the same sections ${s_1}$ and ${s_2}$, respectively. If $p_1^{*}\beta = \eta_1 - \omega_{u_1}$ and $p_1^{*}\beta' = \eta_1-\omega_{u_1'}$ then $p_1^{*}(\beta'-\beta )=\omega_{u_1}-\omega_{u_1'}$.
Moreover, if $R_q'$ is the cochain determined by $u_1'$ and $u_2'$, we see from \cref{cochain-of-butterfly} that
\begin{equation*}
p_1^*R_q'=p_1^*R_q+\phi_q^{*}(\omega_{u'_2}-\omega_{u_2})-(f_q)_{*}(\omega_{u_1'}-\omega_{u_1})\text{.}
\end{equation*}
The difference $\omega_{u'_2}-\omega_{u_2}$ descends to $\mathfrak{f}_2$ by \cref{LemmaChangeSplitting}; i.e., $\omega_{u_2'}-\omega_{u_2}=p_2^{*}\tilde\omega_2$ for a unique $\tilde\omega_2\in \Alt^2(\mathfrak{f}_2,\mathfrak{h}_2)$, so that
\begin{equation*}
R_q'=R_q+\Phi^{*}\tilde\omega_2+F_{*}(\beta'-\beta)\text{.}
\end{equation*} 
Now, the adjustment $\eta_2'$ determined by $\beta'$ and $R_q'$ is
\begin{align*}
\eta_2'
&=\omega_{u'_2}+p_2^{*}(\Phi^{-1})^{*}(F_{*}{(\beta')}-{R'_q}) 
\\&= \omega_{u'_2}+p_2^{*}(\Phi^{-1})^{*}(F_{*}(\beta)-\Phi^{*}\tilde\omega_2-R_q)
\\&= \omega_{u_2}+p_2^{*}(\Phi^{-1})^{*}(F_{*}({\beta})-{R_q})
\\&=\eta_2\text{.}\qedhere
\end{align*}
\end{proof}

By \cref{butterfly-map-depends-only-on-idempotent}, we have established 
a map 
\begin{equation}
\label{covariance-under-butterflies-map}
\Adj^{q}(\mathfrak{k}):\Adj^{s_1}(\mathfrak{G}_1) \to \Adj^{s_2}(\mathfrak{G}_2)
\end{equation}
for every invertible butterfly $\mathfrak{k}:\mathfrak{G}_1 \to \mathfrak{G}_2$, arbitrary sections $s_i$ in $\mathfrak{G}_i$ and a neat section $q$ in $\mathfrak{k}$.
Since the construction of the map $\Adj^{q}(\mathfrak{k})$ is quite involved, we offer with the next result a simple way of checking if a given infinitesimal adjustment $\eta_2$ on $\mathfrak{G}_2$ is the image of $\eta_1$ under $\Adj^{q}(\mathfrak{k})$.

\begin{proposition}
\label{adjustment-respect}
Let $\mathfrak{k}:\mathfrak{G}_1 \to \mathfrak{G}_2$ be an invertible butterfly, let $s_i$ be  arbitrary sections in $\mathfrak{G}_i$ and let  $q$ be  a neat section in $\mathfrak{k}$, inducing  maps $\phi_q: \mathfrak{g}_1 \to \mathfrak{g}_2$ and $f_q: \mathfrak{h}_1 \to \mathfrak{h}_2$ and the cochain $\lambda_q\in \Alt^2(\mathfrak{g}_1,\mathfrak{h}_2)$.   Let $\eta_i \in \mathrm{\Adj}^{s_i}(\mathfrak{G}_i)$ be adapted infinitesimal adjustments. Then, 
\begin{equation*}
\eta_2=\Adj^{q}(\mathfrak{k})(\eta_1) \iff \phi_q^{*}\eta_2=(f_q)_{*}(\eta_1)+\lambda_q\text{.}
\end{equation*} 
\end{proposition}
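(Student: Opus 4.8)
The plan is to prove the forward implication by a direct computation starting from the explicit formula \eqref{eta-2-definition}, and then to deduce the reverse implication from the affine structure of $\Adj^{s_2}(\mathfrak{G}_2)$ established in \cref{affine-space-structure-on-adjustments} together with the invertibility of $\Phi = \pi_0\mathfrak{k}$.

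For the direction $(\Rightarrow)$, I would take $\eta_2 = \Adj^q(\mathfrak{k})(\eta_1)$ as given by \eqref{eta-2-definition} and compute $\phi_q^*\eta_2$ directly. The two structural facts to use are that $\phi_q$ covers $\Phi$, i.e.\ $p_2 \circ \phi_q = \Phi \circ p_1$, whence $\phi_q^* p_2^* (\Phi^{-1})^* = p_1^* \Phi^* (\Phi^{-1})^* = p_1^*$; and that $f_q$ restricts on $\mathfrak{a}_1$ to $F = \pi_1\mathfrak{k}$, so that $(f_q)_* p_1^*\beta = p_1^* F_*\beta$ because $\beta$ is $\mathfrak{a}_1$-valued. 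Writing $\eta_1 = \omega_{u_1} + p_1^*\beta$ and collecting terms, the $p_1^* F_*\beta$ contributions cancel and one is left with
\[
\phi_q^*\eta_2 - (f_q)_*(\eta_1) - \lambda_q = \bigl(\phi_q^*\omega_{u_2} - (f_q)_*\omega_{u_1} - \lambda_q\bigr) - p_1^* R_q = R'_q - p_1^* R_q .
\]
By the very definition of $R_q$ as the descent to $\mathfrak{f}_1$ of the cochain $R'_q$ from \eqref{cochain-of-butterfly} (that is, $R'_q = p_1^* R_q$), this difference vanishes, which is precisely the respect condition.

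For the direction $(\Leftarrow)$, suppose $\eta_2 \in \Adj^{s_2}(\mathfrak{G}_2)$ satisfies $\phi_q^*\eta_2 = (f_q)_*(\eta_1) + \lambda_q$. Set $\gamma := \eta_2 - \Adj^q(\mathfrak{k})(\eta_1)$. Both summands satisfy the respect condition (the second by the $(\Rightarrow)$ part just proved), so $\phi_q^*\gamma = 0$. As both are adapted to $s_2$, \cref{affine-space-structure-on-adjustments} gives $\gamma = p_2^*\bar\gamma$ for a unique $\bar\gamma \in T(\mathfrak{f}_2, \mathfrak{a}_2)$. Then $0 = \phi_q^*\gamma = p_1^*\Phi^*\bar\gamma$; since $p_1$ is surjective this forces $\Phi^*\bar\gamma = 0$, and since the butterfly is invertible, $\Phi$ is an isomorphism, so $\Phi^*$ is injective and $\bar\gamma = 0$. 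Hence $\gamma = 0$ and $\eta_2 = \Adj^q(\mathfrak{k})(\eta_1)$.

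The argument is essentially bookkeeping once the naturality of the pullback/pushforward maps along $\phi_q$, $f_q$ and $\Phi$ is written out. I expect the only point requiring care to be the two simplifications $\phi_q^* p_2^*(\Phi^{-1})^* = p_1^*$ and $(f_q)_* p_1^*\beta = p_1^* F_*\beta$, which together make the $F_*\beta$ terms cancel and collapse the whole identity to the defining descent relation $R'_q = p_1^* R_q$; beyond this I anticipate no genuine obstacle.
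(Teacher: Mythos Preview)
Your forward implication is essentially the paper's argument: both unwind the defining formula \eqref{eta-2-definition}, use $p_2\phi_q=\Phi p_1$ to simplify $\phi_q^{*}p_2^{*}(\Phi^{-1})^{*}$ to $p_1^{*}$, and reduce everything to the descent relation $R_q'=p_1^{*}R_q$ from \eqref{cochain-of-butterfly}.

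Your reverse implication is correct but genuinely different from the paper's. The paper reconstructs $\eta_2$ directly: it writes $\eta_2 = p_2^{*}s_2^{*}\eta_2 + u_2^{*}t_2^{*}\eta_2$ via the splitting $s_2p_2+t_2u_2=\id_{\mathfrak{g}_2}$, uses neatness to rewrite $s_2^{*}\phi_q^{*}$ and \eqref{EtaIdentities-2} to evaluate $u_2^{*}t_2^{*}\eta_2=-\delta u_2$, then substitutes the respect condition and the definition of $R_q$ to arrive at the formula \eqref{eta-2-definition}. Your argument instead subtracts $\Adj^{q}(\mathfrak{k})(\eta_1)$, invokes \cref{affine-space-structure-on-adjustments} to write the difference as $p_2^{*}\bar\gamma$ with $\bar\gamma\in T(\mathfrak{f}_2,\mathfrak{a}_2)$, and kills $\bar\gamma$ using surjectivity of $p_1$ and invertibility of $\Phi$. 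Your route is shorter and more conceptual, and it makes transparent exactly where the invertibility of the butterfly enters (injectivity of $\Phi^{*}$); the paper's route is more self-contained, does not appeal to \cref{affine-space-structure-on-adjustments}, and exhibits the inverse computation explicitly, which is then reused in later arguments such as \cref{lemma-5-5}.
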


\begin{proof}
We evaluate the left hand side, using the construction of the map $\Adj^{q_0}(\mathfrak{k})$. Thus, we choose splittings $u_1$ and $u_2$ extending the sections $s_1$ and $s_2$, respectively, and consider $\beta\in \Alt^2(\mathfrak{f}_1,\mathfrak{a}_1)$ defined by $p_1^{*}\beta := \eta_1 - \omega_{u_1}$. 
From \eqref{eta-2-definition} and \eqref{cochain-of-butterfly}, we get
\begin{align*}
\phi_q^*\eta_2 
&= \phi_q^*\omega_{u_2}+\phi_q^*p_2^{*}(\Phi^{-1})^{*}((f_q)_{*}({\beta})-{R_q})
\\
&= \phi_q^*\omega_{u_2} + p_1^*(f_q)_*\beta  - p_1^* R_q
\\
&= (p_1)^*(f_q)_*\beta + (f_q)_*\omega_{u_1} + \lambda_q
\\
&= (f_q)_*\eta_1 + \lambda_q.
\end{align*}
Conversely, suppose $(f_q)_{*}(\eta_1)+\lambda_q=\phi_q^{*}\eta_2$ holds. Then, we get: 
\begin{align*}
\eta_2&=p_2^{*}s_2^{*}\eta_2+u_2^{*}t_2^{*}\eta_2 && \text{since $s_2p_2+t_2u_2=\id_{\mathfrak{h}_2}$}
\\&=p_2^{*}(\Phi^{-1})^{*}s_1^{*}\phi_q^{*}\eta_2-\delta u_2 && \text{by \cref{EtaIdentities-2} and neatness }
\\&=p_2^{*}(\Phi^{-1})^{*}s_1^{*}((f_q)_{*}(\eta_1)+\lambda_q)-\delta u_2
\\&=p_2^{*}(\Phi^{-1})^{*}s_1^{*}((f_q)_{*}(p_1^{*}\beta+\omega_{u_1})+\lambda_q)-\delta u_2
\\&=p_2^{*}(\Phi^{-1})^{*}(f_q)_{*}(\beta)+p_2^{*}(\Phi^{-1})^{*}s_1^{*}(\phi_q^{*}\omega_{u_2}-p_1^{*}R_{q})-\delta u_2
\\&=p_2^{*}(\Phi^{-1})^{*}((f_q)_{*}(\beta)-R_{q})+p_2^{*}s_2^{*}\omega_{u_2}-\delta u_2 &&\text{again using neatness}
\\&=\omega_{u_2}+p_2^{*}(\Phi^{-1})^{*}((f_q)_{*}(\beta)-R_{q_0})-u_2^{*}t_2^{*}\omega_{u_2}-\delta u_2 
\\&= \Adj^{q_0}(\mathfrak{k})(\eta_1)&&\text{by \cref{LemmaAdjustmentIdentities2}.}
\end{align*}
This completes the proof. 
\end{proof}

\begin{remark}
\label{strict-intertwiners-adjustments}
\cref{adjustment-respect} applies, in particular, to strict intertwiners $(\phi, f): \mathfrak{G}_1 \to \mathfrak{G}_2$ (see \cref{strict-intertwiners}). A strict intertwiner induces a butterfly $\mathfrak{k}$ with a section $q$ inducing the given maps, $\phi=\phi_q$ and $f=f_q$, whereas $\lambda_q=0$. Thus, if the strict intertwiner is a weak equivalence, we have 
\begin{equation*}
\Adj^{q}(\mathfrak{k})(\eta_1)=\eta_2 \iff f_{*}(\eta_1) =\phi^{*}\eta_2
\end{equation*} 
 for infinitesimal adjustments   $\eta_i \in \mathrm{\Adj}^{s_i}(\mathfrak{G}_i)$ adapted to any sections $s_i$ in $\mathfrak{G}_i$ such that $q$ is neat. We remark the latter condition can be achieved by taking $s_1$ arbitrary and putting $s_2:= \phi s_1\Phi^{-1}$.  
\end{remark}

Next we recall from \cref{affine-space-structure-on-adjustments} that $\Adj^{s_1}(\mathfrak{G}_1)$ and $\Adj^{s_2}(\mathfrak{G}_2)$ are affine spaces over $T(\mathfrak{f}_1, \mathfrak{a}_1)$ and $T(\mathfrak{f}_2, \mathfrak{a}_2)$, respectively. We consider the linear isomorphism 
\begin{equation*}
\varphi_{\mathfrak{k}}: \mathrm{Lin}(\mathfrak{f}_1 \otimes \mathfrak{f}_1, \mathfrak{a}_1) \to \mathrm{Lin}(\mathfrak{f}_2 \otimes \mathfrak{f}_2, \mathfrak{a}_2)
\end{equation*}
given by $\eta \mapsto F_{*}(\Phi^{-1})^{*}\eta$, and notice that it restricts to  isomorphisms $T(\mathfrak{f}_1, \mathfrak{a}_1)\cong T(\mathfrak{f}_2, \mathfrak{a}_2)$ and $\Sym^2(\mathfrak{f}_1, \mathfrak{a}_1)^{\ad} \cong \Sym^2(\mathfrak{f}_2, \mathfrak{a}_2)^{\ad}$. 

\begin{proposition}
\label{affine-linear-map}
The map \eqref{covariance-under-butterflies-map} is affine along $\varphi_{\mathfrak{k}}$, and fits into the commutative diagram 
\begin{equation*}
\xymatrix@C=3em{\Adj^{s_1}(\mathfrak{G}_1) \ar[r]^{\Adj^{q}(\mathfrak{k})} \ar[d]_{\KL^{\mathrm{adj}}} & \Adj^{s_2}(\mathfrak{G}_2) \ar[d]^{\KL^{\mathrm{adj}}} \\ \Sym^2(\mathfrak{f}_1, \mathfrak{a}_1) \ar[r]_{\varphi_{\mathfrak{k}}} & \Sym^2(\mathfrak{f}_2, \mathfrak{a}_2)\text{.}}
\end{equation*} 
\end{proposition}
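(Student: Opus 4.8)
The plan is to read off both assertions directly from the explicit formula \eqref{eta-2-definition} for $\eta_2 = \Adj^{q}(\mathfrak{k})(\eta_1)$, exploiting two structural features of the construction: the cochain $R_q$ descended from \eqref{cochain-of-butterfly} depends only on the butterfly $\mathfrak{k}$ and the chosen half splittings $u_1, u_2$, and not on the adjustment $\eta_1$; and the three building blocks $\omega_{u_1}$, $\omega_{u_2}$ and $R_q$ are all skew-symmetric. First I would record the elementary commutation $F_{*}(\Phi^{-1})^{*} = (\Phi^{-1})^{*}F_{*}$, which holds because $F_{*}$ post-composes with $F\colon \mathfrak{a}_1 \to \mathfrak{a}_2$ on values while $(\Phi^{-1})^{*}$ pre-composes with $\Phi^{-1}$ on arguments; in particular $\varphi_{\mathfrak{k}}(\gamma) = F_{*}(\Phi^{-1})^{*}\gamma = (\Phi^{-1})^{*}F_{*}\gamma$ for every bilinear form $\gamma$.

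For the affine statement, recall from \cref{affine-space-structure-on-adjustments} that $\Adj^{s_1}(\mathfrak{G}_1)$ is affine over $T(\mathfrak{f}_1, \mathfrak{a}_1)$, so for $\gamma \in T(\mathfrak{f}_1, \mathfrak{a}_1)$ the form $\eta_1' := \eta_1 + p_1^{*}\gamma$ again lies in $\Adj^{s_1}(\mathfrak{G}_1)$. If $p_1^{*}\beta = \eta_1 - \omega_{u_1}$, then $p_1^{*}\beta' = \eta_1' - \omega_{u_1} = p_1^{*}(\beta + \gamma)$, so $\beta' = \beta + \gamma$, while $R_q$ is unchanged. Hence \eqref{eta-2-definition} gives
\[
\Adj^{q}(\mathfrak{k})(\eta_1') = \omega_{u_2} + p_2^{*}(\Phi^{-1})^{*}(F_{*}\beta' - R_q) = \Adj^{q}(\mathfrak{k})(\eta_1) + p_2^{*}(\Phi^{-1})^{*}F_{*}\gamma,
\]
and $(\Phi^{-1})^{*}F_{*}\gamma = \varphi_{\mathfrak{k}}(\gamma)$ by the commutation above. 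This is exactly the claim that $\Adj^{q}(\mathfrak{k})$ is affine with linear part $\varphi_{\mathfrak{k}}$.

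For the commutativity of the square, I would track symmetric parts, recalling that $\KL^{\mathrm{adj}}(\mathfrak{G}_i, \eta_i) = B_i$ is characterized by $\eta_i^{\sym} = -p_i^{*}B_i$. Since $\omega_{u_1}$ is skew-symmetric, $\eta_1 = \omega_{u_1} + p_1^{*}\beta$ has symmetric part $p_1^{*}\beta^{\sym}$, whence $B_1 = -\beta^{\sym}$. Likewise $\omega_{u_2}$ and $R_q$ are skew-symmetric, and $p_2^{*}$, $(\Phi^{-1})^{*}$, $F_{*}$ all preserve the symmetric/skew decomposition, so \eqref{eta-2-definition} yields $\eta_2^{\sym} = p_2^{*}(\Phi^{-1})^{*}F_{*}\beta^{\sym}$; using injectivity of $p_2^{*}$ this gives $B_2 = -(\Phi^{-1})^{*}F_{*}\beta^{\sym} = \varphi_{\mathfrak{k}}(-\beta^{\sym}) = \varphi_{\mathfrak{k}}(B_1)$, i.e.\ $\KL^{\mathrm{adj}}(\mathfrak{G}_2, \eta_2) = \varphi_{\mathfrak{k}}(\KL^{\mathrm{adj}}(\mathfrak{G}_1, \eta_1))$. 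The only genuine care needed—and the step I would treat as the main (if modest) obstacle—is the bookkeeping confirming that $R_q$ drops out of both computations and that the target/source operations $F_{*}$ and $(\Phi^{-1})^{*}$ commute and preserve symmetry type; once these are in place, both claims follow immediately from \eqref{eta-2-definition}.
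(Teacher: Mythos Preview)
Your proof is correct and essentially matches the paper's. For the commutativity of the square, your argument is identical to the paper's: both track the symmetric part through \eqref{eta-2-definition}, using that $\omega_{u_2}$ and $R_q$ are skew. For the affine statement, the paper's main text routes the computation through the characterization of \cref{adjustment-respect} (i.e.\ $(f_q)_*\eta_1 + \lambda_q = \phi_q^*\eta_2$), while you work directly from the explicit formula \eqref{eta-2-definition}; the paper in fact records your direct approach in a commented-out ``old proof'', so the two arguments are equivalent and equally short.
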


\begin{proof}
Suppose $\eta_2=\Adj^q(\mathfrak{k})(\eta_1)$; thus $(f_q)_{*}(\eta_1)+\lambda_q =\phi_q^{*}\eta_2$ by \cref{adjustment-respect}. For $\rho \in T(\mathfrak{f}_1, \mathfrak{a}_1)$, we have
\begin{equation*}
(f_q)_{*}(\eta_1 + p_1^{*}\rho)+\lambda_q= \phi_q^{*}\eta_2 + p_1^{*}(f_q)_{*}\rho=\phi_q^{*}(\eta_2+p_2^{*}(\Phi^{-1})^{*}(f_q)_{*}\rho)=\phi_q^{*}(\eta_2+\varphi_{\mathfrak{k}}(\rho))\text{;}
\end{equation*}
hence, again by \cref{adjustment-respect}, $\Adj^{q}(\mathfrak{k})(\eta_1 + p_1^{*}\rho) =\Adj^{q}(\mathfrak{k})(\eta_1)+\varphi_{\mathfrak{k}}(\rho)$. 
This shows that $\Adj^{q}(\mathfrak{k})$ is affine along $\varphi_{\mathfrak{k}}$. 
Let $B := \KL^{\mathrm{adj}}(\mathfrak{G}_1,\eta_1)$,  i.e., $\eta_1^{\sym}=-p_1^{*}B$. Thus, upon writing $p_1^{*}\beta = \eta_1 - \omega_{u_1}$, we find $p_1^{*}\beta^{\sym}=\eta_1^{\sym}$ and hence $B=-\beta^\sym$. Since $\omega_{u_2}$ and $R_q$ are skew-symmetric, 
we obtain\begin{equation*}
\eta_2^{\sym}=-p_2^{*}(\Phi^{-1})^{*}(F_{*}B)=-p_2^{*}\varphi_{\mathfrak{k}}(B)\text{.}
\end{equation*} 
This shows that $\varphi_{\mathfrak{k}}(B)=\KL^{\mathrm{adj}}(\mathfrak{G}_2,\eta_2)$.  
\end{proof}

Because $\varphi_{\mathfrak{k}}$ restricts to an isomorphism $T(\mathfrak{f}_1, \mathfrak{a}_1)\cong T(\mathfrak{f}_2, \mathfrak{a}_2)$, we obtain:

\begin{corollary}
\label{bijectivity-adjustment-transfer}
Suppose $\mathfrak{k}:\mathfrak{G}_1 \to \mathfrak{G}_2$ is an invertible butterfly. For any choice of sections $s_i$ in $\mathfrak{G}_i$ and neat section $q$ in $\mathfrak{k}$, 
the map
\begin{equation*}
\Adj^q(\mathfrak{k}): \Adj^{s_1}(\mathfrak{G}_1) \to \Adj^{s_2}(\mathfrak{G}_2)
\end{equation*}
is a bijection.
\end{corollary}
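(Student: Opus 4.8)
The plan is to derive bijectivity formally from the two structural results already in hand, without re-examining the construction of $\Adj^q(\mathfrak{k})$ itself. By \cref{affine-space-structure-on-adjustments}, the sets $\Adj^{s_1}(\mathfrak{G}_1)$ and $\Adj^{s_2}(\mathfrak{G}_2)$ are affine spaces over $T(\mathfrak{f}_1, \mathfrak{a}_1)$ and $T(\mathfrak{f}_2, \mathfrak{a}_2)$, and by \cref{affine-linear-map} the transfer map $\Adj^q(\mathfrak{k})$ is affine along the linear map $\varphi_{\mathfrak{k}}$. Since $\mathfrak{k}$ is invertible, the induced maps $F=\pi_1\mathfrak{k}$ and $\Phi=\pi_0\mathfrak{k}$ are isomorphisms, so $\varphi_{\mathfrak{k}}=F_*(\Phi^{-1})^*$ is a linear isomorphism restricting to an isomorphism $T(\mathfrak{f}_1, \mathfrak{a}_1)\cong T(\mathfrak{f}_2, \mathfrak{a}_2)$. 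The proof then reduces to the elementary fact that an affine map between affine spaces whose underlying linear map is an isomorphism is itself a bijection.

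Concretely, I would verify this directly. For injectivity, suppose $\Adj^q(\mathfrak{k})(\eta_1)=\Adj^q(\mathfrak{k})(\eta_1')$; writing $\eta_1'=\eta_1+p_1^*\rho$ for the unique $\rho\in T(\mathfrak{f}_1,\mathfrak{a}_1)$ afforded by the affine structure, affineness along $\varphi_{\mathfrak{k}}$ gives $\varphi_{\mathfrak{k}}(\rho)=0$, whence $\rho=0$ and $\eta_1=\eta_1'$. For surjectivity, given $\eta_2\in\Adj^{s_2}(\mathfrak{G}_2)$, I would fix any $\eta_1\in\Adj^{s_1}(\mathfrak{G}_1)$ and write $\eta_2=\Adj^q(\mathfrak{k})(\eta_1)+p_2^*\sigma$; choosing $\rho\in T(\mathfrak{f}_1,\mathfrak{a}_1)$ with $\varphi_{\mathfrak{k}}(\rho)=\sigma$ (possible since $\varphi_{\mathfrak{k}}$ is onto), one obtains $\Adj^q(\mathfrak{k})(\eta_1+p_1^*\rho)=\eta_2$. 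Equivalently, the inverse is the affine map along $\varphi_{\mathfrak{k}}^{-1}$ sending $\Adj^q(\mathfrak{k})(\eta_1)$ back to $\eta_1$.

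The only point that needs a word is non-emptiness, which the surjectivity step uses when fixing a base point $\eta_1$. If $\Adj^{s_1}(\mathfrak{G}_1)$ is empty, then by \cref{existence-of-infinitesimal-adjustments} the class $\KL(\mathfrak{G}_1)$ does not lie in the image of $\cw$; by \cref{invariance-of-KL} the butterfly identifies $\KL(\mathfrak{G}_1)$ and $\KL(\mathfrak{G}_2)$ compatibly with $\varphi_{\mathfrak{k}}$ and $\cw$, so $\KL(\mathfrak{G}_2)$ is likewise outside the image of $\cw$ and $\Adj^{s_2}(\mathfrak{G}_2)$ is empty as well, making the map a vacuous bijection. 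In the nonempty case the argument above applies verbatim. I expect no genuine obstacle: once \cref{affine-linear-map} is granted, the corollary is a purely formal consequence, and the only care required is this bookkeeping about possibly empty adjustment sets.
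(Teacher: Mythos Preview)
Your proof is correct and follows essentially the same approach as the paper: the paper simply states that since $\varphi_{\mathfrak{k}}$ restricts to an isomorphism $T(\mathfrak{f}_1,\mathfrak{a}_1)\cong T(\mathfrak{f}_2,\mathfrak{a}_2)$, the corollary follows from \cref{affine-linear-map}. Your additional treatment of the possibly-empty case via \cref{existence-of-infinitesimal-adjustments} and \cref{invariance-of-KL} is a welcome bit of care that the paper leaves implicit.
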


Reducing this to strict intertwiners, and using \cref{strict-intertwiners-adjustments} we obtain:
\begin{corollary}
\label{adjustemnts-and-strict-intertwiners}
Suppose $(\phi, f): \mathfrak{G}_1 \to \mathfrak{G}_2$ is a strict intertwiner and a weak equivalence, and suppose $s_1$ and $s_2$ are sections satisfying $s_2\Phi=\phi s_1$. Then, there is a unique bijection
\begin{equation*}
\Adj^{s_1}(\mathfrak{G}_1) \cong \Adj^{s_2}(\mathfrak{G}_2)
\end{equation*} 
under which infinitesimal adjustments $\eta_i$ correspond to each other if and only if $f_{*}\eta_1=\phi^{*}\eta_2$. 
\end{corollary}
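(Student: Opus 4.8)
The plan is to recognize this corollary as the special case of \cref{bijectivity-adjustment-transfer} in which the invertible butterfly comes from a strict intertwiner, and then to read off the characterizing condition and the uniqueness from \cref{adjustment-respect} and the affine structure of \cref{affine-space-structure-on-adjustments}. Concretely, I would first invoke the construction from \cref{strict-intertwiners} that presents a strict intertwiner $(\phi, f):\mathfrak{G}_1 \to \mathfrak{G}_2$ as a butterfly $\mathfrak{k}$ together with a section $q$ inducing $\phi_q = \phi$, $f_q = f$, and $\lambda_q = 0$; since $(\phi, f)$ is a weak equivalence, $\mathfrak{k}$ is invertible, so that $\Phi = \pi_0\mathfrak{k}$ and $F = \pi_1\mathfrak{k}$ are isomorphisms.

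The one genuine verification is that $q$ is neat with respect to $s_1$ and $s_2$, i.e. that it satisfies \eqref{Neatness}. Writing $\rho_{s_i} = \id - s_i p_i$ as in \eqref{RhoForS}, and using that a strict intertwiner satisfies $p_2\phi = \Phi p_1$, the identity $\rho_{s_2}\phi_q = \phi_q\rho_{s_1}$ becomes, after cancelling the common term $\phi$, the equation $s_2\Phi p_1 = \phi s_1 p_1$. Since $p_1$ is surjective, this is equivalent to $s_2\Phi = \phi s_1$, which is exactly the hypothesis. Hence $q$ is neat, and \cref{bijectivity-adjustment-transfer} provides a bijection $\Adj^q(\mathfrak{k}):\Adj^{s_1}(\mathfrak{G}_1) \to \Adj^{s_2}(\mathfrak{G}_2)$. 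Because $\lambda_q = 0$, \cref{adjustment-respect} identifies this bijection through the stated condition: $\eta_2 = \Adj^q(\mathfrak{k})(\eta_1)$ if and only if $\phi^*\eta_2 = f_*\eta_1$.

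For uniqueness, I would argue that the relation $R := \{(\eta_1, \eta_2) : \phi^*\eta_2 = f_*\eta_1\}$ is the graph of a function, so that the bijection realizing the correspondence is forced. Indeed, if $\eta_2, \eta_2' \in \Adj^{s_2}(\mathfrak{G}_2)$ both satisfy $\phi^*\eta_2 = \phi^*\eta_2' = f_*\eta_1$, then by \cref{affine-space-structure-on-adjustments} their difference is $p_2^*\beta$ for a unique $\beta \in T(\mathfrak{f}_2, \mathfrak{a}_2)$, and
\[
0 = \phi^*(\eta_2 - \eta_2') = \phi^* p_2^*\beta = (\Phi p_1)^*\beta = p_1^*\Phi^*\beta
\]
forces $\beta = 0$, since $p_1$ is surjective and $\Phi$ is an isomorphism. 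Thus each $\eta_1$ has at most one partner under $R$; as $\Adj^q(\mathfrak{k})$ already supplies a partner for every $\eta_1$, the relation $R$ coincides with the graph of $\Adj^q(\mathfrak{k})$, which is therefore the unique bijection with the asserted property.

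I do not expect a serious obstacle here: the corollary is essentially a bookkeeping reduction of \cref{bijectivity-adjustment-transfer}. The only two points requiring care are the translation of the neatness convention \eqref{Neatness} into the hypothesis $s_2\Phi = \phi s_1$ (which hinges on $p_2\phi = \Phi p_1$ and surjectivity of $p_1$), and the verification that $\phi^*$ separates points of $\Adj^{s_2}(\mathfrak{G}_2)$, which is precisely where the weak-equivalence assumption—through invertibility of $\Phi$—enters and upgrades the correspondence from a relation to a genuine bijection.
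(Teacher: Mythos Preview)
Your proposal is correct and follows essentially the same route as the paper: the corollary is presented there as an immediate consequence of \cref{bijectivity-adjustment-transfer} combined with \cref{strict-intertwiners-adjustments}, which already records that for a strict intertwiner one has $\lambda_q=0$ and hence the characterization $f_*\eta_1=\phi^*\eta_2$, with neatness amounting to $s_2\Phi=\phi s_1$. Your explicit uniqueness argument via the affine structure is a reasonable elaboration that the paper leaves implicit.
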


\begin{remark}
\label{butterfly-map-dependence-on-splitting}
The map $\Adj^{q}(\mathfrak{k})$ depends on the choice of the section $q$. Considering another section, $q' = q+i_2\gamma$, for a linear map $\gamma: \mathfrak{g}_1 \to \mathfrak{h}_2$, and the corresponding changes of $\phi_q$, $f_q$, and $\lambda_q$ described in \cref{equivalence-of-cocycle-data}, we obtain 
\begin{equation*}
R_{q'}-R_q =-\delta(d s_1)\text{.}
\end{equation*} 
This shows that
\begin{equation*}
\Adj^{q'}(\mathfrak{k})-\Adj^{q}(\mathfrak{k}) =p_2^{*}(\Phi^{-1})^{*}(- R_{q'}+R_q)=p_2^{*}(\Phi^{-1})^{*}\delta(d  s_1).
\end{equation*}
This expression will find a natural explanation in the groupoid formalism we discuss in \cref{groupoids-of-adjustments}. 
\end{remark}

We conclude with three lemmas about the compatibility of the map $\Adj^{q}(\mathfrak{k})$ with identity butterflies, morphisms between butterflies,   and the composition of butterflies, and we compute $\Adj^{q}(\mathfrak{k})$ when $\mathfrak{k}$ is induced from a strict intertwiner.

\begin{lemma}
With $q$  the canonical section of the identity butterfly, $\Adj^{q}(\id_\mathfrak{G})=\id_{\Adj^{s}(\mathfrak{G})}$. 
\end{lemma}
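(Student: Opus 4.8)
The plan is to deduce this immediately from the criterion established in \cref{adjustment-respect}, rather than unwinding the explicit formula \eqref{eta-2-definition}. The first step is to identify the cocycle data attached to the canonical section $q$ of the identity butterfly. The identity butterfly is the one induced by the identity strict intertwiner $(\id_{\mathfrak{g}}, \id_{\mathfrak{h}}) : \mathfrak{G} \to \mathfrak{G}$, so by \cref{strict-intertwiners-adjustments} its canonical section induces
\[
\phi_q = \id_{\mathfrak{g}}, \qquad f_q = \id_{\mathfrak{h}}, \qquad \lambda_q = 0,
\]
and correspondingly $\Phi = \pi_0\mathfrak{k} = \id_{\mathfrak{f}}$ and $F = \pi_1\mathfrak{k} = \id_{\mathfrak{a}}$.

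Next I would check that $q$ is neat with respect to the chosen section $s$ on both sides, so that the machinery applies. The neatness condition \eqref{Neatness} reads $\rho_s \phi_q = \phi_q \rho_s$, which holds trivially since $\phi_q = \id_{\mathfrak{g}}$; equivalently, the condition $s_2 \Phi = \phi_q s_1$ of \cref{strict-intertwiners-adjustments} becomes $s = s$. Hence $\Adj^{q}(\id_{\mathfrak{G}})$ is defined and maps $\Adj^{s}(\mathfrak{G})$ to itself.

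Finally I would invoke \cref{adjustment-respect}: for $\eta_1, \eta_2 \in \Adj^{s}(\mathfrak{G})$ one has $\eta_2 = \Adj^{q}(\id_{\mathfrak{G}})(\eta_1)$ if and only if $\phi_q^{*}\eta_2 = (f_q)_{*}(\eta_1) + \lambda_q$, which with the data above collapses to $\eta_2 = \eta_1$. Therefore $\Adj^{q}(\id_{\mathfrak{G}})(\eta_1) = \eta_1$ for every $\eta_1$, proving the claim. There is essentially no obstacle here; the only point requiring care is confirming the values of $\phi_q$, $f_q$, and $\lambda_q$ for the canonical section, but this is exactly what \cref{strict-intertwiners-adjustments} records. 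As a sanity check one may also substitute these values directly into \eqref{eta-2-definition}: with $u_1 = u_2 = u$ the descended cochain $R_q$ vanishes and $F_{*}(\beta) = \beta$, giving $\eta_2 = \omega_u + p^{*}\beta = \eta_1$, in agreement with the argument above.
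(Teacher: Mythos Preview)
Your proof is correct and follows essentially the same approach as the paper: both identify that the canonical section of the identity butterfly has $\phi_q=\id_{\mathfrak{g}}$, $f_q=\id_{\mathfrak{h}}$, $\lambda_q=0$, note neatness, and conclude via \cref{adjustment-respect}. Your additional sanity check using \eqref{eta-2-definition} is not needed but is a nice confirmation.
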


\begin{proof}
The identity butterfly  $\id_\mathfrak{G}:\mathfrak{G} \to \mathfrak{G}$ of  $\mathfrak{G}$ has the canonical section  $q: \mathfrak{g} \to \mathfrak{g} \ltimes \mathfrak{h}$,  $q(x) := (x, 0)$, which is neat with respect to an arbitrary section $s$ in $\mathfrak{G}$.
$q$ is a Lie algebra homomorphism and thus $\lambda_q=0$.
Moreover, since $\phi=\id_{\mathfrak{g}}$ and $f=\id_{\mathfrak{a}}$, \cref{adjustment-respect} shows the claim. 
\end{proof}

\begin{lemma}
\label{compatible-sections}
Let $k: \mathfrak{k} \to \mathfrak{k}'$ be a morphism between butterflies  $\mathfrak{k}, \mathfrak{k}':\mathfrak{G}_1\to \mathfrak{G}_2$, each equipped with a section $s_i$. 
We assume that $k$ is compatible with neat sections $q$ and $q'$ in the sense  that $q = q' \circ k$. Then, $\Adj^{q}(\mathfrak{k})=\Adj^{q'}(\mathfrak{k}')$.
\end{lemma}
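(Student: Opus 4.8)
The plan is to reduce the claimed equality of the two maps to the characterization of $\Adj^{q}(\mathfrak{k})$ provided by \cref{adjustment-respect}. That proposition asserts that for adapted adjustments $\eta_1 \in \Adj^{s_1}(\mathfrak{G}_1)$ and $\eta_2 \in \Adj^{s_2}(\mathfrak{G}_2)$ one has $\eta_2 = \Adj^{q}(\mathfrak{k})(\eta_1)$ if and only if $\phi_q^{*}\eta_2 = (f_q)_{*}\eta_1 + \lambda_q$, with the analogous statement holding for $\mathfrak{k}'$ and $q'$. Hence it suffices to establish the three equalities $\phi_q = \phi_{q'}$, $f_q = f_{q'}$ and $\lambda_q = \lambda_{q'}$: once these hold, the two defining conditions are literally identical, so for every $\eta_1$ the element $\eta_2 = \Adj^{q}(\mathfrak{k})(\eta_1)$ also satisfies the condition characterizing $\Adj^{q'}(\mathfrak{k}')(\eta_1)$, and the two maps coincide. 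I would note at the outset that, since we will show $\phi_q = \phi_{q'}$, neatness of $q$ with respect to $s_1, s_2$ is equivalent to neatness of $q'$, so that both $\Adj^{q}(\mathfrak{k})$ and $\Adj^{q'}(\mathfrak{k}')$ are genuinely defined as maps $\Adj^{s_1}(\mathfrak{G}_1) \to \Adj^{s_2}(\mathfrak{G}_2)$ and the statement is not vacuous.

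To prove the three equalities I would invoke that a morphism $k : \mathfrak{k} \to \mathfrak{k}'$ of butterflies is a Lie algebra isomorphism intertwining all four legs, i.e.\ $k i_1 = i_1'$, $k i_2 = i_2'$, $r_1' k = r_1$ and $r_2' k = r_2$, combined with the section compatibility of the hypothesis, which (since $k$ is invertible) I read as $q' = k \circ q$. Then $\phi_{q'} = r_2' q' = r_2' k q = r_2 q = \phi_q$ at once. For $f_q$, recall from \eqref{phiqfq} that $f_q$ is the unique map whose image under $i_2$ is the relevant combination of $i_1$ and $q t_1$; applying $k$ and using $k i_1 = i_1'$, $k i_2 = i_2'$ and $q' = k q$ gives $i_2' f_{q'} = i_2' f_q$, whence $f_{q'} = f_q$ by injectivity of $i_2'$. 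For $\lambda_q$, recall that $i_2 \lambda_q(X,Y)$ equals the curvature $[q(X), q(Y)] - q([X,Y])$ measuring the failure of $q$ to be a homomorphism; since $k$ is a Lie algebra homomorphism, $i_2' \lambda_{q'}(X,Y) = k\bigl([q(X), q(Y)] - q([X,Y])\bigr) = k i_2 \lambda_q(X,Y) = i_2' \lambda_q(X,Y)$, and injectivity of $i_2'$ again yields $\lambda_{q'} = \lambda_q$.

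I do not expect a genuine obstacle here: the three computations are entirely formal consequences of $k$ being a structure-preserving isomorphism. The only points requiring care are to pin down the precise definitions of $f_q$ and $\lambda_q$ (and their sign conventions) from \cref{butterflies}, so that the intertwining relations are applied to the correct expressions, and to confirm the variance of the section-compatibility hypothesis; because $k$ is invertible, $q' = k \circ q$ is equivalent to $q = k^{-1} q'$, so the argument is symmetric and independent of the chosen direction. With $\phi_q = \phi_{q'}$, $f_q = f_{q'}$ and $\lambda_q = \lambda_{q'}$ in hand, the conclusion $\Adj^{q}(\mathfrak{k}) = \Adj^{q'}(\mathfrak{k}')$ follows immediately from \cref{adjustment-respect}.
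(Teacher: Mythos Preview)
Your proposal is correct and follows exactly the same approach as the paper: show that the cocycle data $(\phi_q, f_q, \lambda_q)$ and $(\phi_{q'}, f_{q'}, \lambda_{q'})$ coincide, then invoke \cref{adjustment-respect}. The paper's own proof is a one-line assertion of these equalities, while you supply the routine verifications from the intertwining relations of $k$; you also correctly note that the compatibility hypothesis should be read as $q' = k \circ q$ (equivalently, $q = k^{-1} \circ q'$), which is harmless since $k$ is invertible.
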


\begin{proof}
We have $\phi=\phi'$ and $f=f'$.
Moreover, $\lambda_q=\lambda_{q'}$, and hence \cref{adjustment-respect} shows the claim.   
\end{proof}

\begin{lemma}
\label{composition-of-butterflies-and-neat-sections}
Let $\mathfrak{G}_i=(\mathfrak{h}_i \stackrel{t_{i}}{\to} \mathfrak{g}_i \stackrel{\alpha_{i}}{\to} \Der(\mathfrak{h}_i))$ be central crossed modules of Lie algebras, $i=1, 2, 3$, each equipped with a section $s_i: \mathfrak{f}_i \to \mathfrak{g}_i$.  
Let\begin{equation*}
\begin{aligned}
\xymatrix{\mathfrak{h}_1 \ar[dd]_{t_1} \ar[dr]^{i_1} && \mathfrak{h}_2 \ar[dd]^{t_2}\ar[dl]_{i_2} \\ & \mathfrak{k} \ar[dl]^{r_1}\ar[dr]_{r_2} \\ \mathfrak{g}_1 && \mathfrak{g}_2}
\end{aligned}
\quad\text{ and }\quad
\begin{aligned}
\xymatrix{\mathfrak{h}_2 \ar[dd]_{t_2} \ar[dr]^{i_2'} && \mathfrak{h}_3 \ar[dd]^{t_3}\ar[dl]_{i'_3} \\ & \mathfrak{k}' \ar[dl]^{r_2'}\ar[dr]_{r'_3} \\ \mathfrak{g}_2 && \mathfrak{g}_3}
\end{aligned}
\end{equation*}
be  invertible butterflies equipped with neat sections $q$ and $q'$, respectively. Then, the formula
\begin{equation*}
\tilde q: \mathfrak{g}_1 \to \widetilde{\mathfrak{k}}:=(\mathfrak{k} \times_{\mathfrak{g}_2} \mathfrak{k}')/\mathfrak{h}_2, \quad x_1 \mapsto [q(x_1), (q'  r_2 q)(x_1)]\text{.}
\end{equation*} 
defines a neat section of the composed butterfly $\mathfrak{k}' \circ \mathfrak{k}$, and the equality
\begin{equation*}
\Adj^{\tilde q}(\mathfrak{k}' \circ \mathfrak{k}) = \Adj^{q'}(\mathfrak{k}') \circ \Adj^{q}(\mathfrak{k})
\end{equation*}
holds.  
\end{lemma}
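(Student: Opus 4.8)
The plan is to first check that $\tilde q$ is a well-defined neat section of $\mathfrak{k}'\circ\mathfrak{k}$, then to establish the functoriality of the cocycle data $(\phi,f,\lambda)$ under composition, and finally to deduce the stated equality of maps from the characterization in \cref{adjustment-respect}. Recall that the composed butterfly $\widetilde{\mathfrak{k}}=(\mathfrak{k}\times_{\mathfrak{g}_2}\mathfrak{k}')/\mathfrak{h}_2$ has legs $\tilde r_1([k,k'])=r_1(k)$ and $\tilde r_3([k,k'])=r_3'(k')$, which are well defined because $r_1i_2=0$ and $r_3'i_2'=0$. First I would verify that the pair $(q(x_1),(q'r_2q)(x_1))$ lies in the fibre product: since $q'$ is a section against $r_2'$ we have $r_2'(q'r_2q)(x_1)=(r_2q)(x_1)=r_2(q(x_1))$, so the matching condition holds, and $\tilde r_1(\tilde q(x_1))=r_1(q(x_1))=x_1$ shows $\tilde q$ is a section. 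For neatness I record first the composition rule $\phi_{\tilde q}=\tilde r_3\tilde q=\phi_{q'}\phi_q$ for the induced maps; neatness of $\tilde q$ with respect to $s_1$ and $s_3$ then follows from that of $q$ and $q'$ via
\[
\rho_{s_3}\phi_{\tilde q}=\rho_{s_3}\phi_{q'}\phi_q=\phi_{q'}\rho_{s_2}\phi_q=\phi_{q'}\phi_q\rho_{s_1}=\phi_{\tilde q}\rho_{s_1},
\]
where the two middle equalities are the neatness relation \eqref{Neatness} for $q'$ and $q$ in turn.

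The core of the argument is the cocycle bookkeeping for the composite. Using the explicit description of $f_q$ and $\lambda_q$ attached to a section of a butterfly (\cref{butterflies}) together with the description of $\widetilde{\mathfrak{k}}$, I would establish the three functoriality formulas
\[
\phi_{\tilde q}=\phi_{q'}\phi_q,\qquad f_{\tilde q}=f_{q'}f_q,\qquad \lambda_{\tilde q}=\phi_q^{*}\lambda_{q'}+(f_{q'})_{*}\lambda_q.
\]
The first is immediate as above. The second is obtained by feeding the defining relation for $f_q$ (expressing $i_2 f_q(x)=i_1(x)-q(t_1x)$) into the analogous relation for $q'$ and reducing modulo $\mathfrak{h}_2$ in the quotient $\widetilde{\mathfrak{k}}$. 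The third comes from applying the reduction to $[\tilde q(X),\tilde q(Y)]-\tilde q([X,Y])$ and separating the contribution of $q$ failing to be a homomorphism (which appears pushed forward along $f_{q'}$) from that of $q'$ (which appears pulled back along $\phi_q$).

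Finally I would combine these with \cref{adjustment-respect}. Writing $\eta_2:=\Adj^{q}(\mathfrak{k})(\eta_1)$ and $\eta_3:=\Adj^{q'}(\mathfrak{k}')(\eta_2)$, that proposition gives $\phi_q^{*}\eta_2=(f_q)_{*}\eta_1+\lambda_q$ and $\phi_{q'}^{*}\eta_3=(f_{q'})_{*}\eta_2+\lambda_{q'}$. Pulling the second relation back along $\phi_q$ and using that pullback in the arguments commutes with the pushforward $(f_{q'})_{*}$ in the values, I obtain
\[
\phi_{\tilde q}^{*}\eta_3=\phi_q^{*}\phi_{q'}^{*}\eta_3=(f_{q'})_{*}\phi_q^{*}\eta_2+\phi_q^{*}\lambda_{q'}=(f_{q'})_{*}\big((f_q)_{*}\eta_1+\lambda_q\big)+\phi_q^{*}\lambda_{q'}=(f_{\tilde q})_{*}\eta_1+\lambda_{\tilde q},
\]
where the last equality invokes the three functoriality formulas. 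By the converse direction of \cref{adjustment-respect}, this says exactly $\eta_3=\Adj^{\tilde q}(\mathfrak{k}'\circ\mathfrak{k})(\eta_1)$, i.e. $\Adj^{\tilde q}(\mathfrak{k}'\circ\mathfrak{k})=\Adj^{q'}(\mathfrak{k}')\circ\Adj^{q}(\mathfrak{k})$. I expect the main obstacle to be the verification of the composition formula for $\lambda_{\tilde q}$: it requires careful tracking of the inclusions $i_2,i_2'$ and of the quotient relation defining $\widetilde{\mathfrak{k}}$, together with the correct signs; once the three functoriality formulas are secured, the concluding step is purely formal.
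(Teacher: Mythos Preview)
Your proposal is correct and follows essentially the same approach as the paper: both verify that $\tilde q$ is a section with $\phi_{\tilde q}=\phi_{q'}\phi_q$ (hence neat), establish the three functoriality formulas $\phi_{\tilde q}=\phi_{q'}\phi_q$, $f_{\tilde q}=f_{q'}f_q$, $\lambda_{\tilde q}=\phi_q^{*}\lambda_{q'}+(f_{q'})_{*}\lambda_q$, and then conclude via \cref{adjustment-respect}. One minor slip: the relation you quote for $f_q$ should read $i_2 f_q(x)=-i_1(x)+q(t_1x)$ (apply $i_2j+qr_1=\mathrm{id}$ to $i_1(x)$ and use $f_q=-ji_1$); the paper sidesteps this by writing down the retract $\tilde j([x,x'])=j'(x'-i_2'(j(x)))$ directly and computing $\lambda_{\tilde q}=\tilde j([\tilde q(-),\tilde q(-)])$ from it.
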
 

\begin{proof}
We remark that the retract corresponding to $\tilde q$ is
\begin{equation*}
\tilde j: (\mathfrak{k} \times_{\mathfrak{g}_2} \mathfrak{k}')/\mathfrak{h}_2 \to \mathfrak{h}_3: [x, x']\mapsto j'(x'-i_2'(j(x)))\text{.}
\end{equation*} 
The induced maps $\phi_{\tilde q}: \mathfrak{g}_1 \to \mathfrak{g}_3$ and $f_{\tilde q}: \mathfrak{h_1} \to \mathfrak{h}_3$ are the compositions of the separate ones, i.e. $\phi_{\tilde q}=\phi_{q'}  \phi_q$ and $f_{\tilde q} = f_{q'}  f_q$.
Since $q$ and $q'$ are neat, it follows that $\tilde q$ is neat, too.
We compute the cochain $\lambda_{\tilde q}$ defined in \eqref{definition-of-lambda}: 
\begin{align*}
\lambda_{\tilde q}(x_1, x_1') &= \tilde j([q_{\tilde j}(x_1), q_{\tilde j}(x_1')])
\\&= \tilde j([[q(x_1), q(x_1')], [q'r_2q(x_1), q'r_2q(x_1')]])
\\&= j'([q'r_2q(x_1), q'r_2q(x_1')]-i_2'(j([q(x_1), q(x_1')])))
\\&= \lambda_{q'}(\phi_q(x_1), \phi_q(x_1'))-f_{q'}(\lambda_q(x_1, x_1')))\text{;}
\end{align*}
thus,
\begin{equation*}
\lambda_{\tilde q} = \phi_q^{*}\lambda_{q'}+(f_{q'})_{*}\lambda_q\text{.}
\end{equation*}
Using \cref{adjustment-respect},  our assumption is that
\begin{align*}
(f_q)_{*}(\eta_1)+\lambda_q =\phi_q^{*}\eta_2
\quad\text{ and }\quad
(f_{q'})_{*}(\eta_2)+\lambda_{q'} =\phi_{q'}^{*}\eta_3\text{,}
\end{align*}
and the claim is proved by the following calculation:
\begin{equation*}
(f_{\tilde q})_{*}(\eta_1)+\lambda_{\tilde q}=f_{q'}  (f_q(\eta_1)+\lambda_q)+\phi_q^{*}\lambda_{q'}=\phi_{q}^{*}((f_{q'})_{*}\eta_2+\lambda_{q'})=\phi_q^{*}\phi_{q'}^{*}\eta_3=\phi_{\tilde q}^{*}\eta_3\text{.}
\qedhere
\end{equation*}
\end{proof}

\section{Groupoids of adjustments}

\label{groupoids-of-adjustments}

We pick up an idea of Tellez-Dominguez \cite{TellezDominguez2023} to understand pairs of sections and adapted adjustments as objects of a groupoid. We show in \cref{functors-from-butterflies} that this setting allows to interpret the covariance results of \cref{covariance-under-butterflies} as a functor on a bicategory of crossed modules. Finally, in \cref{adjusted-crossed-modules} we define and classify a bicategory of crossed modules with adjustments.   

\subsection{The groupoid of adjustments}

Let $\Gamma=(H \stackrel{t}{\to} G \stackrel{\alpha}{\to} \Aut(H))$ be a central crossed module of Lie groups, let  $\mathfrak{G}=(\mathfrak{h} \stackrel{t_*}{\to} \mathfrak{g} \stackrel{\alpha_*}{\to} \Der(\mathfrak{h}))$ be the induced crossed module of Lie algebras, and let $\mathfrak{f} := \pi_0(\mathfrak{G})$. 

\begin{definition}
The \emph{groupoid $\ADJ(\Gamma)$ of  adjustments} is the following:
its objects are pairs $(s, \kappa)$ consisting of a section $s$ in $\mathfrak{G}$ and an adjustment $\kappa$ on $\Gamma$ that is adapted to $s$.
Morphisms $(s, \kappa) \to (s', \kappa')$ are linear maps $\phi : \mathfrak{f} \to \mathfrak{h}$ such that 
\[
s-s' = t_*\phi \qquad \text{and} \qquad \kappa'(g, X) - \kappa(g, X) = \phi  p_{*}( \mathrm{Ad}_g(X)-X).
\]
Composition is the addition of linear maps. 
%
%
\end{definition}

Tellez-Dominguez defines in \cite[Def. 3.8]{TellezDominguez2023} another groupoid $\ADJ_{\mathrm{TD}}(\Gamma)$ with objects the \quot{strong adjustments}: pairs $(j, \tilde \kappa)$ of a retract $j: \mathfrak{h} \to \mathfrak{a}$ (i.e., a linear map $j$ such that $j|_{\mathfrak{a}}=\id_{\mathfrak{a}}$) and maps
\begin{equation*}
\tilde\kappa: G \times \mathfrak{g} \to \mathfrak{a}
\end{equation*} 
satisfying
\begin{align*}
\tilde \kappa(g_1g_2, X) &= \tilde \kappa\big(g_1, \Ad_{g_2}(X)\big) + \tilde \kappa(g_2, X) 
\\
\tilde \kappa\big(t(h), X\big) &= j((\tilde{\alpha}_{h^{-1}})_* X)
\\
\tilde \kappa(g, t_{*}x) &= j(\alpha_g(x) - x)\text{.}
\end{align*}
The morphisms $(j, \tilde\kappa) \to (j', \tilde\kappa')$ are linear maps $\psi: \mathfrak{g} \to \mathfrak{a}$ such that
\begin{equation*}
j'-j=\psi t_{*}
 \qquad \text{and} \qquad 
 \tilde \kappa(g, X) - \tilde \kappa'(g, X) = \psi( \mathrm{Ad}_g(X)-X)\text{.}
\end{equation*}
In order to compare
ours and Tellez-Dominguez's groupoids it is helpful to introduce a third groupoid $\smash{\widetilde{\ADJ}(\Gamma)}$. 
The objects of this third groupoid are pairs $(u, \kappa)$ where $u:\mathfrak{g} \to \mathfrak{h}$ is a splitting and $\kappa$ is an adjustment that is adapted to the section $s_u$ induced by $u$. The morphisms $(u, \kappa) \to (u', \kappa')$ are pairs $(\phi, \psi)$ of linear maps $\phi: \mathfrak{f}\to \mathfrak{h}$ and $\psi: \mathfrak{g} \to \mathfrak{a}$ such that 
\[
u'-u = \phi p_{*} - \psi \qquad \text{and} \qquad \kappa'(g, X) - \kappa(g, X) =\phi p_{*}( \mathrm{Ad}_g(X)-X).
\]
%
%
There is a span of functors between the adjustment categories defined above, 
\begin{equation*}
\ADJ_{\mathrm{TD}}(\Gamma) \leftarrow \widetilde{\ADJ}(\Gamma) \to \ADJ(\Gamma)\text{.}
\end{equation*}
given by
\begin{equation}
\label{FunctorsBetweenAdjustmentCategories}
\begin{aligned}
        \widetilde{\ADJ}(\Gamma) &\to \ADJ(\Gamma), 
        \\ (u, \kappa) &\mapsto (s_u, \kappa), 
        \\ (\phi, \psi)&\mapsto \phi  
\end{aligned}
\qquad 
\begin{aligned}
        \widetilde{\ADJ}(\Gamma) &\to \ADJ_{\mathrm{TD}}(\Gamma), 
        \\ (u, \kappa) &\mapsto (j_u, j\kappa), 
        \\ (\phi, \psi)&\mapsto \psi  
\end{aligned}
\end{equation}

\begin{proposition}
The functors in \eqref{FunctorsBetweenAdjustmentCategories} are equivalences of categories. 
In particular, the groupoids $\ADJ(\Gamma)$ and $\ADJ_{\mathrm{TD}}(\Gamma)$ are equivalent. 
\end{proposition}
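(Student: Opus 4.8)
The plan is to treat each of the two functors in \eqref{FunctorsBetweenAdjustmentCategories} separately, using the standard criterion that a functor between groupoids is an equivalence as soon as it is fully faithful and (essentially) surjective on objects. Throughout I will use the dictionary between splittings, sections and retracts recorded before the statement: a splitting $u$ is the same datum as the independent pair consisting of its section $s_u$ and its retract $j_u$, and conversely every pair $(s,j)$ of a section and a retract extends uniquely to a splitting $u$ with $s_u=s$ and $j_u=j$. The identities I will lean on are $\iota j_u+u\,t_*=\id_{\mathfrak{h}}$, $t_*u=\rho_{s_u}$, $j_u u=0$ and $j_u\iota=\id_{\mathfrak{a}}$, all immediate from this dictionary.

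For the functor $\widetilde{\ADJ}(\Gamma)\to\ADJ(\Gamma)$, surjectivity on objects is immediate: given $(s,\kappa)$, choose any splitting $u$ with $s_u=s$ (possible since splittings surject onto sections), so that $\kappa$ is adapted to $s_u$ and $(u,\kappa)\mapsto(s,\kappa)$. For full faithfulness I fix objects $(u,\kappa)$, $(u',\kappa')$ and show that $(\phi,\psi)\mapsto\phi$ is a bijection onto the morphisms $(s_u,\kappa)\to(s_{u'},\kappa')$. The condition on the adjustment is literally the same in both groupoids, so only the linear data must be matched. Injectivity is clear because $u'-u=\phi p_*-\psi$ determines $\psi$ from $\phi$. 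For surjectivity, given an $\ADJ$-morphism $\phi$ I set $\psi:=\phi p_*-(u'-u)$; then $u'-u=\phi p_*-\psi$ holds by definition, and using $t_*u=\rho_{s_u}$ together with $s_u-s_{u'}=t_*\phi$ one computes $t_*\psi=t_*\phi p_*-(\rho_{s_{u'}}-\rho_{s_u})=0$, so $\psi$ is $\mathfrak{a}$-valued and $(\phi,\psi)$ is a genuine morphism of $\widetilde{\ADJ}(\Gamma)$.

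The functor $\widetilde{\ADJ}(\Gamma)\to\ADJ_{\mathrm{TD}}(\Gamma)$ is the more substantial one. Well-definedness on objects is checked by applying the linear map $j_u$ to the three adjustment axioms \eqref{adjustment-condition-1}--\eqref{adjustment-condition-3}, which turns them into exactly the three defining conditions of a strong adjustment for $(j_u,j_u\kappa)$. For essential surjectivity I must lift a strong adjustment: given $(j,\tilde\kappa)$, I pick a section $s$, form the unique splitting $u$ with $j_u=j$ and $s_u=s$, and define
\[
\kappa(g,X):=\iota\,\tilde\kappa(g,X)+u\,\rho_s\bigl(\Ad_g(X)-X\bigr).
\]
By construction $j_u\kappa=\tilde\kappa$ (as $j_u\iota=\id$, $j_u u=0$) and $t_*\kappa=\rho_s(\Ad_g(X)-X)$ (as $t_*u=\rho_s$ is idempotent), so $\kappa$ is adapted to $s$ once it is shown to be an adjustment. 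Axiom \eqref{adjustment-condition-1} follows from the cocycle identity for $\tilde\kappa$ and the telescoping $\Ad_{g_1g_2}=\Ad_{g_1}\Ad_{g_2}$. Axioms \eqref{adjustment-condition-2} and \eqref{adjustment-condition-3} are verified through the decomposition $x=\iota j_u x+u\,t_*x$: for \eqref{adjustment-condition-2} one inserts the identity $t_*\bigl((\tilde\alpha_{h^{-1}})_*X\bigr)=\Ad_{t(h)}X-X$ (obtained by differentiating $t(h)\,g\,t(h)^{-1}g^{-1}$ in $g$) and uses that $\rho_s$ is the identity on $t_*\mathfrak{h}$; for \eqref{adjustment-condition-3} one uses $\Ad_g(t_*x)=t_*\alpha_g(x)$. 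Full faithfulness then runs as before: for fixed objects the assignment $(\phi,\psi)\mapsto\psi$ is injective because $\psi$ together with $u'-u=\phi p_*-\psi$ recovers $\phi$, and surjective because from a given $\ADJ_{\mathrm{TD}}$-morphism $\psi$ one reconstructs $\phi$ from $\phi p_*=u'-u+\psi$ (the right-hand side vanishing on $t_*\mathfrak{h}$ by the retract relation $j_{u'}-j_u=\psi t_*$), after which the remaining condition on $\kappa$ is matched by a direct computation with the splitting identities.

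I expect the main obstacle to be the essential surjectivity of the second functor, i.e.\ producing a genuine $\mathfrak{h}$-valued adjustment $\kappa$ from the $\mathfrak{a}$-valued strong adjustment $\tilde\kappa$ and checking all three adjustment axioms; the delicate points are axiom \eqref{adjustment-condition-2}, whose verification hinges on the identity $t_*\bigl((\tilde\alpha_{h^{-1}})_*X\bigr)=\Ad_{t(h)}X-X$, and the careful bookkeeping of the differing sign conventions in the morphism relations of the three groupoids. Once both functors are established as equivalences, the displayed span exhibits $\ADJ(\Gamma)$ and $\ADJ_{\mathrm{TD}}(\Gamma)$ as equivalent, which is the final assertion.
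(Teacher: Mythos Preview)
Your proof is correct and follows essentially the same approach as the paper: both show that the two functors are fully faithful and essentially surjective, using the dictionary between splittings, sections and retracts in the same way. Your lift $\kappa(g,X)=\iota\,\tilde\kappa(g,X)+u\,\rho_s(\Ad_g X-X)$ coincides with the paper's $\tilde\kappa(g,X)+u(\Ad_g X-X)$ since $u\rho_s=ut_*u=u$ for a splitting; you are simply more explicit than the paper in verifying the adjustment axioms for this lift (in particular the identity $t_*\bigl((\tilde\alpha_{h^{-1}})_*X\bigr)=\Ad_{t(h)}X-X$), whereas the paper leaves those checks to the reader.
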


\begin{proof}
The left functor in \cref{FunctorsBetweenAdjustmentCategories} is (essentially) surjective since every section $s$ is of the form $s=s_u$ for a splitting $u$. It is faithful since $\psi$ is uniquely determined by $u-u' = \phi p_{*} + \psi$. To see that it is full, suppose $\phi:\mathfrak{f}\to \mathfrak{h}$ satisfies $s_{u'}-s_u = t_*\phi$. Then, 
\begin{equation*}
t_{*}(u-u')=1-t_{*}u'-(1-t_{*}u)=(s_{u'}-s_{u}) p_{*}=t_{*}\phi p_{*}\text{.}
\end{equation*}
Thus, there exists a unique linear map $\psi: \mathfrak{g} \to \mathfrak{a}$ such that $u-u'=\phi p_{*} - \psi$, and $(\phi, \psi)$ is a preimage of $\phi$.  Hence, the left functor in \eqref{FunctorsBetweenAdjustmentCategories} is an equivalence of groupoids.

The right functor in \cref{FunctorsBetweenAdjustmentCategories} is faithful because $\phi$ is uniquely determined by $u'-u = \phi  p_{*} - \psi$, due to the surjectivity of $p_*$. 
In order to see that it is full, suppose we have objects $(u, \kappa)$ and $(u', \kappa')$ and a linear map $\psi:\mathfrak{g} \to \mathfrak{a}$  that is a morphism $(j_u, j_u\kappa) \to (j_{u'}, j_{u'}\kappa')$. 
%
%
Then, 
\begin{equation*}
(u-u')t_{*} = 1-j_u-(1-j_{u'})=j_{u'}-j_u=\psi t_{*}\text{.}
\end{equation*}
Thus, there exists a linear map $\phi:\mathfrak{f} \to \mathfrak{h}$ such that $u'-u=\phi p_{*} - \psi$. 
It satisfies
\begin{align*}
\kappa'(g, X) - \kappa(g, X) &=(j_{u'}+u't_{*})\kappa'(g, X)-(j_u+ut_{*})\kappa(g, X)
\\
&=(j_{u'}+u't_{*})\kappa'(g, X)-(j_u+ut_{*})\kappa(g, X)
\\
&=(\psi +u't_{*}u'-ut_{*}u)(\mathrm{Ad}_g(X)-X)
\\&=(\psi+u'-u)(\mathrm{Ad}_g(X)-X)
\\&=\phi p_{*}( \mathrm{Ad}_g(X)-X), 
\end{align*}
hence $(\phi, \psi)$ is indeed a morphism in $\widetilde{\ADJ}(\Gamma)$ that is sent to the morphism $\psi : (j_u, j_u \kappa) \to (j_{u'}, j_{u'}\kappa')$ under the right functor.

Finally, the right functor is (essentially) surjective: if $(j, \tilde\kappa)$ is a strong adjustment, we choose a splitting $u$ such that $j=j_u$ and  set
\begin{equation*}
\kappa(g, X) := \tilde\kappa(g, X) + u(\mathrm{Ad}_g(X)-X)\text{.}
\end{equation*}
It is straightforward to check that this is an adjustment and adapted to $s_u$; moreover, $j_u\kappa=\tilde\kappa$. 
Hence, $(u, \kappa)$ is sent to $(j, \tilde{\kappa})$ under the right functor.
\end{proof}

\subsection{The groupoid of infinitesimal adjustments}
 
 Let $\mathfrak{G}=(\mathfrak{h} \stackrel{t_*}{\to} \mathfrak{g} \stackrel{\alpha_*}{\to} \Der(\mathfrak{h}))$ be a central crossed module of Lie algebras, with $\mathfrak{f} := \pi_0(\mathfrak{G})$ and $\mathfrak{a} := \pi_1(\mathfrak{G})$.       
        
\begin{definition}
The \emph{groupoid $\ADJ(\mathfrak{G})$ of  infinitesimal adjustments} is defined as follows.
The objects are pairs $(s, \eta)$ consisting of a section $s$ and an infinitesimal  adjustment $\eta$ on $\mathfrak{G}$ that is adapted to $s$.
Morphisms $(s, \eta) \to (s', \eta')$ are linear maps $\phi : \mathfrak{f} \to \mathfrak{h}$ such that 
\[
s-s' = t_*\phi \qquad \text{and} \qquad \eta'- \eta = \delta ( p^*\phi).
\]
Composition is again the addition of linear maps.
\end{definition}

We note that \cref{integration-of-adjustments} has the following reformulation:
If $\Gamma = (H \to G \to \Aut(H))$ is a central crossed module of Lie groups and $\mathfrak{G}$ is the corresponding crossed module of Lie algebras, then differentiation yields a faithful functor 
\begin{equation*}
\ADJ(\Gamma) \to \ADJ(\mathfrak{G}).
\end{equation*}
It is full if the Lie group $G$ in the crossed module $\Gamma$  is connected, and essentially surjective if $G$ is connected and simply connected and $H$ is connected. 
The following result yields a complete classification result for the groupoid $\ADJ(\mathfrak{G})$ of  adjustments.

\begin{theorem}
\label{homotopy-groups-of-ADJ}
The set $\pi_0(\ADJ(\mathfrak{G}))$ of isomorphism classes of objects of $\ADJ(\mathfrak{G})$ admits a map 
\[
\pi_0(\ADJ(\mathfrak{G})) \longrightarrow \Sym^2(\mathfrak{f}, \mathfrak{a})^{\ad}
\]
whose fibre over $B\in \Sym^2(\mathfrak{f}, \mathfrak{a})^{\ad}$  is an affine space over $H^2(\mathfrak{f}, \mathfrak{a})$ if $[\cw(B)]=\KL(\mathfrak{G})$, and is empty else.
The automorphism group of each object $(s, \eta)$ of $\ADJ(\mathfrak{G})$ is given by
\[
\pi_1(\ADJ(\mathfrak{G}))=\Aut(s, \eta) = H^1(\mathfrak{f}, \mathfrak{a}) = \Lin(\mathfrak{f}/[\mathfrak{f}, \mathfrak{f}], \mathfrak{a}).
\]
\end{theorem}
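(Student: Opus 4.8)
The plan is to read off both homotopy groups directly from the combinatorial description of $\ADJ(\mathfrak{G})$, using that its morphisms are linear maps $\phi\colon\mathfrak{f}\to\mathfrak{h}$ and that composition is addition. I would start with the automorphism group, which is purely algebraic. An element of $\Aut(s,\eta)$ is a $\phi$ with $t_*\phi=0$ and $\delta(p^*\phi)=0$. The first condition forces $\phi$ to land in $\mathfrak{a}=\ker t_*$, so $\phi\in\Lin(\mathfrak{f},\mathfrak{a})=\Alt^1(\mathfrak{f},\mathfrak{a})$; since $p$ is a surjective Lie algebra homomorphism, $p^*$ is injective and commutes with $\delta$, so the second condition is equivalent to $\delta\phi=0$, i.e.\ to $\phi$ vanishing on $[\mathfrak{f},\mathfrak{f}]$. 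As the coefficients are trivial there are no exact $1$-forms, whence $\Aut(s,\eta)=\ker\bigl(\delta\colon\Alt^1(\mathfrak{f},\mathfrak{a})\to\Alt^2(\mathfrak{f},\mathfrak{a})\bigr)=H^1(\mathfrak{f},\mathfrak{a})=\Lin(\mathfrak{f}/[\mathfrak{f},\mathfrak{f}],\mathfrak{a})$, with group law the addition of $\phi$'s; this is manifestly independent of the chosen object.

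For the set of components I would take the map $(s,\eta)\mapsto\KL^{\mathrm{adj}}(\mathfrak{G},\eta)$. It descends to $\pi_0$ because a morphism changes $\eta$ only by $\pm\delta(p^*\phi)$, the coboundary of a $1$-form, which is skew-symmetric and hence leaves the symmetric part $\eta^\sym=-p^*\KL^{\mathrm{adj}}(\mathfrak{G},\eta)$ untouched. By \cref{existence-of-infinitesimal-adjustments}, an $s$-adapted $\eta$ with $\KL^{\mathrm{adj}}(\mathfrak{G},\eta)=B$ exists precisely when $[\cw(B)]=\KL(\mathfrak{G})$; this shows at once that the fibre over $B$ is empty unless $[\cw(B)]=\KL(\mathfrak{G})$, and nonempty otherwise.

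To identify a nonempty fibre with an affine space over $H^2(\mathfrak{f},\mathfrak{a})$, I would first reduce to a single reference section $s_0$. Given any object $(s,\eta)$ of the fibre, the difference $s-s_0$ lies in $t_*\mathfrak{h}=\ker p$ and therefore lifts, by complementedness (smooth separability), to a linear $\phi\colon\mathfrak{f}\to\mathfrak{h}$ with $t_*\phi=s-s_0$; correcting $\eta$ by the corresponding coboundary produces an $s_0$-adapted adjustment $\eta''$ with $(s,\eta)\cong(s_0,\eta'')$. Here adaptedness of $\eta''$ to $s_0$ is exactly the compatibility of the morphism equations with \eqref{AdaptedEquationKappastar}, matching $t_*\delta(p^*\phi)$ against the difference of idempotents $\rho_s-\rho_{s_0}=(s-s_0)p$, and the symmetric part, hence $\KL^{\mathrm{adj}}$, is unchanged because the correction is skew. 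Thus $\pi_0$ of the fibre equals the set of isomorphism classes of $s_0$-adapted $\eta$ with $\KL^{\mathrm{adj}}(\mathfrak{G},\eta)=B$. By \cref{affine-space-structure-on-adjustments} these form an affine space over $\Alt^2_{\mathrm{cl}}(\mathfrak{f},\mathfrak{a})$, and two of them, $(s_0,\eta)$ and $(s_0,\eta+p^*\beta)$, are isomorphic iff there is a $\phi\colon\mathfrak{f}\to\mathfrak{a}$ (any same-section morphism is automatically $\mathfrak{a}$-valued, since $t_*\phi=0$) with $p^*\beta=\pm p^*\delta\phi$, i.e.\ iff $\beta=\pm\delta\phi$ is exact. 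Quotienting the affine space over $\Alt^2_{\mathrm{cl}}(\mathfrak{f},\mathfrak{a})$ by the subspace $\delta\Alt^1(\mathfrak{f},\mathfrak{a})$ of exact forms yields an affine space over $\Alt^2_{\mathrm{cl}}(\mathfrak{f},\mathfrak{a})/\delta\Alt^1(\mathfrak{f},\mathfrak{a})=H^2(\mathfrak{f},\mathfrak{a})$, as desired.

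The step I expect to require the most care is the reduction to a fixed section: one must check that the coboundary correction really lands $\eta''$ in the set of $s_0$-adapted adjustments, which rests on correctly relating $t_*\delta(p^*\phi)$ to $\rho_s-\rho_{s_0}$, and on the existence of the linear lift $\phi$ of $s-s_0$, for which complementedness of $t_*\mathfrak{h}\subseteq\mathfrak{g}$ is used. The two remaining ingredients are bookkeeping: keeping the skew-symmetric coboundary $\delta(p^*\phi)$, which governs morphisms, cleanly separated from the symmetric part of $\eta$, which the adjusted Kassel--Loday class records, so that the affine structures over $H^1$, $H^2$, and the target $\Sym^2(\mathfrak{f},\mathfrak{a})^{\ad}$ do not interfere.
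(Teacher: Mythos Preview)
Your argument is correct and follows essentially the same strategy as the paper's proof; the ingredients (the well-definedness of the map via invariance of $\eta^\sym$ under coboundaries, the appeal to \cref{existence-of-infinitesimal-adjustments} for the empty/nonempty dichotomy, lifting the difference $s-s_0$ through $t_*$, and reading off $\Aut(s,\eta)$ from the two morphism conditions) are identical. The only organisational difference is that you first normalise to a single reference section $s_0$ and then quotient the affine space $\Adj^{s_0}(\mathfrak{G})\cap(\KL^{\mathrm{adj}})^{-1}(B)$ by exact forms, whereas the paper exhibits an action of $H^2(\mathfrak{f},\mathfrak{a})$ on the whole fibre and then checks transitivity across different sections; these are two presentations of the same computation, and your route is arguably a bit tidier. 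The step you flag as delicate is indeed the only one requiring care: note that $\rho_s-\rho_{s_0}=-(s-s_0)p$ (opposite to the sign you wrote), so when verifying that $\eta'':=\eta+\delta(p^*\phi)$ is $s_0$-adapted you should track this sign carefully against $t_*\delta(p^*\phi)(X,Y)=-t_*\phi\,p([X,Y])$; the compatibility then comes out consistently with the morphism conventions.
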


\begin{proof}
Observe that if two adapted adjustments $\eta$ and $\eta'$ are isomorphic in $\ADJ(\mathfrak{G})$, then they have the same symmetric part.
This yields the desired map
\[
\pi_0(\ADJ(\mathfrak{G})) \longrightarrow \Sym^2(\mathfrak{f}, \mathfrak{a})^{\ad}.
\]
We show that the group $H^2(\mathfrak{f}, \mathfrak{a})$ acts on the fibres of this map. 
First we already know that $\Alt^2_{\mathrm{cl}}(\mathfrak{f}, \mathfrak{a})$ acts on $\Adj^u(\mathfrak{G})$:
If $\eta$ is an $s$-adapted adjustment and $\xi \in \Alt^2_{\mathrm{cl}}(\mathfrak{f}, \mathfrak{a})$, then $\eta +  p^*\xi$ is again an $s$-adapted adjustment.
Now, if $\xi - \xi' = \delta \varphi$ for some linear map $\varphi \in \Alt^1(\mathfrak{f}, \mathfrak{a}) = \Lin(\mathfrak{f}, \mathfrak{a})$, then $\phi := \iota_* \varphi$ is an isomorphism between $(s, \eta +  p^*\xi)$ and $(s, \eta +  p^*\xi')$; hence, this action descends to an action of $H^2(\mathfrak{a}, \mathfrak{f})$ on $\pi_0(\ADJ(\mathfrak{G}))$.

We show that this action is transitive. 
So let $(s, \eta)$ and $(s', \eta')$ be two objects such that $\eta$ and $\eta'$ have the same symmetric part. 
We have to find an element of $H^2(\mathfrak{f}, \mathfrak{a})$ that sends the isomorphism class of $(s, \eta)$ to the isomorphism class of $(s', \eta')$.
By Prop.~\ref{affine-space-structure-on-adjustments}, we have $\eta - \eta' =  p^*\xi$ for some $\xi \in \Alt^2_{\mathrm{cl}}(\mathfrak{f}, \mathfrak{h})$.
On the other hand, $ p(s'-s)=0$, and so there exists $\phi:\mathfrak{f} \to \mathfrak{h}$ with $s'-s=t\phi$. Now, by adaptedness, we have
\[
t\xi p=t  p^*\xi = t_*(\eta - \eta') = \delta (\rho_s - \rho_{s'}) =- \delta( s'-s) =-t_{*}\delta\phi.
\]
We therefore obtain that the element $\xi' := \xi + \delta \phi$ takes values in $\mathfrak{a}$, hence defines a cohomology class $[\xi'] \in H^2(\mathfrak{f}, \mathfrak{a})$.
Acting by $\xi'$ sends  $(s', \eta')$ to  $(s', \eta' +  p^*\xi')$.
We claim that $\phi$ is an isomorphism between $(s, \eta)$ and $(s', \eta' +  p^*\xi')$: $s'-s=t\phi$ was already collected above, and we have 
\[
 (\eta' +  p^*\xi')-\eta =  p^*\xi' -  p^*\xi =  p^*\delta \phi.
\]
This shows that the action is transitive.

To see that the action is free, let $\xi \in \Alt^2_{\mathrm{cl}}(\mathfrak{f}, \mathfrak{a})$ and suppose that $(s, \eta)$ is isomorphic to $(s, \eta')$, where $\eta' = \eta +  p^*\xi$.
Then there exists a linear map $\phi : \mathfrak{f} \to \mathfrak{h}$ which by the first condition actually takes values in $\mathfrak{a}$ and satisfies $\delta(  p^*\phi) = \eta' - \eta =   p^*\xi$.
But this shows that $\xi$ is exact, hence is zero in $H^2(\mathfrak{f}, \mathfrak{a})$.

Finally, for each adjustment $(s, \eta)$, the group of automorphisms is given by
\[
\Aut(s, \eta) = H^1(\mathfrak{f}, \mathfrak{a}) = \Lin(\mathfrak{f}/[\mathfrak{f}, \mathfrak{f}], \mathfrak{a}).
\]
Indeed, if $\phi$ is an automorphism of $(s, \eta)$, then the first condition implies that $t_* \phi = 0$, so that $\phi$ takes values in $\mathfrak{a}$, while the second condition implies that $\phi$ is closed. 
\end{proof}


\subsection{Functors from butterflies}

\label{functors-from-butterflies}

Next, we consider two central crossed modules  $\mathfrak{G}_i=(\mathfrak{h}_i \stackrel{t_i}{\to} \mathfrak{g}_i \stackrel{\alpha_i}{\to} \Der(\mathfrak{h}_i))$ of Lie algebras, and construct a functor
\begin{equation*}
\ADJ^{q}(\mathfrak{k}):\ADJ(\mathfrak{G}_1) \to \ADJ(\mathfrak{G}_2)
\end{equation*}
associated to an invertible butterfly $\mathfrak{k}:\mathfrak{G}_1 \to \mathfrak{G}_2$ and a section $q$ of $\mathfrak{k}$.
We write again $\phi_q: \mathfrak{g}_1 \to \mathfrak{g}_2$ and $f_q: \mathfrak{h}_1 \to \mathfrak{h}_2$ for the linear maps induced by $q$,   $\mathfrak{f}_i := \pi_0(\mathfrak{G}_0)$ for the homotopy groups, and  $\Phi:  \mathfrak{f}_1 \to  \mathfrak{f}_2$ for the Lie algebra homomorphisms induced by $\phi_q$. 
Let $(s_1, \eta_1)$ be an object in $\ADJ(\mathfrak{G}_1)$. 
We define a section $s_2$ in $\mathfrak{G}_2$ by   
\[
s_2:=\phi_q s_1\Phi^{-1}.
\]
Then, $q$ is neat with respect to $s_1$ and $s_2$.
Our functor is defined on the level of objects by
\begin{equation*}
\ADJ^{q}(\mathfrak{k})(s_1, \eta_1) := (s_2, \Adj^{q}(\mathfrak{k})(\eta_1))\text{, }
\end{equation*}   
where $\Adj^{q}(\mathfrak{k}):\Adj^{s_1}(\mathfrak{G}_1) \to \Adj^{s_2}(\mathfrak{G}_2)$ is the map defined in \cref{covariance-under-butterflies}.
On the level of morphisms, suppose $\phi_1 : \mathfrak{f}_1 \to \mathfrak{h}_1$ is a linear map such that
\begin{equation*}
s_1-s_1' = t_1\phi_1 \qquad \text{and} \qquad \eta_1 '- \eta_1 = \delta ( p_1^*\phi_1)\text{, }
\end{equation*}
i.e., $\phi_1$ is a morphism from $(s_1, \eta_1)$ to $(s_1', \eta_1')$. 

\begin{lemma}
\label{lemma-5-5}
$\phi_2 := f_q  \phi_1  \Phi^{-1}$ is a morphism in $\ADJ(\mathfrak{G}_2)$ between $\ADJ^{q}(\mathfrak{k})(s_1, \eta_1)$ and $\ADJ^{q}(\mathfrak{k})(s_1', \eta_1')$. 
\end{lemma}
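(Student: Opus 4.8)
The plan is to unwind the definitions and check that $\phi_2 := f_q\,\phi_1\,\Phi^{-1}$ satisfies the two defining equations of a morphism in $\ADJ(\mathfrak{G}_2)$ from $\ADJ^{q}(\mathfrak{k})(s_1,\eta_1)=(s_2,\eta_2)$ to $\ADJ^{q}(\mathfrak{k})(s_1',\eta_1')=(s_2',\eta_2')$, where $s_2:=\phi_q s_1\Phi^{-1}$, $s_2':=\phi_q s_1'\Phi^{-1}$, $\eta_2:=\Adj^{q}(\mathfrak{k})(\eta_1)$ and $\eta_2':=\Adj^{q}(\mathfrak{k})(\eta_1')$. Both targets are genuine objects by the definition of the functor on objects, so nothing has to be re-checked about them; only the two relations $s_2-s_2'=t_2\phi_2$ and $\eta_2'-\eta_2=\delta(p_2^{*}\phi_2)$ remain. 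The first of these is a short calculation: from the morphism hypothesis $s_1-s_1'=t_1\phi_1$ and the butterfly compatibility $\phi_q t_1=t_2 f_q$ (see \cref{PhiFIntertinest}) one gets $s_2-s_2'=\phi_q(s_1-s_1')\Phi^{-1}=\phi_q t_1\phi_1\Phi^{-1}=t_2 f_q\phi_1\Phi^{-1}=t_2\phi_2$.

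For the second (adjustment) equation I would work through \cref{adjustment-respect}. Since $q$ is neat for both section pairs $(s_1,s_2)$ and $(s_1',s_2')$, that proposition characterizes $\eta_2$ and $\eta_2'$ by $\phi_q^{*}\eta_2=(f_q)_{*}\eta_1+\lambda_q$ and $\phi_q^{*}\eta_2'=(f_q)_{*}\eta_1'+\lambda_q$, with the same $\lambda_q$. Subtracting and using the morphism hypothesis $\eta_1'-\eta_1=\delta(p_1^{*}\phi_1)$ yields $\phi_q^{*}(\eta_2'-\eta_2)=(f_q)_{*}\delta(p_1^{*}\phi_1)$. I would then compute the pullback of the candidate right-hand side: because $p_2$ is a Lie algebra homomorphism, $\delta(p_2^{*}\phi_2)=p_2^{*}(\delta_{\mathfrak{f}_2}\phi_2)$ descends to $\mathfrak{f}_2$, and since $\phi_q$ induces $\Phi$ on $\pi_0$ (so $p_2\phi_q=\Phi p_1$) while $\Phi$ is a Lie algebra isomorphism (the butterfly being invertible), one obtains $\phi_q^{*}(\delta(p_2^{*}\phi_2))=p_1^{*}\Phi^{*}\delta_{\mathfrak{f}_2}\phi_2=p_1^{*}\delta_{\mathfrak{f}_1}(\phi_2\Phi)=p_1^{*}\delta_{\mathfrak{f}_1}(f_q\phi_1)=(f_q)_{*}\delta(p_1^{*}\phi_1)$. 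Hence $\phi_q^{*}\bigl(\eta_2'-\eta_2-\delta(p_2^{*}\phi_2)\bigr)=0$.

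The main obstacle is that $\phi_q$ is in general neither injective nor surjective, so $\phi_q^{*}=0$ does not by itself force the integrand to vanish. I would resolve this by observing that the integrand descends to $\mathfrak{f}_2$: the difference $\eta_2'-\eta_2$ equals $p_2^{*}\beta_2$ for some $\beta_2\in T(\mathfrak{f}_2,\mathfrak{h}_2)$ by \cref{affine-space-structure-on-adjustments}, and $\delta(p_2^{*}\phi_2)=p_2^{*}(\delta_{\mathfrak{f}_2}\phi_2)$ as noted, so $\eta_2'-\eta_2-\delta(p_2^{*}\phi_2)=p_2^{*}\gamma$ for a single form $\gamma$ on $\mathfrak{f}_2$. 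Applying $\phi_q^{*}$ gives $p_1^{*}\Phi^{*}\gamma=0$; injectivity of $p_1^{*}$ (as $p_1$ is surjective) and invertibility of $\Phi$ then force $\gamma=0$, i.e. $\eta_2'-\eta_2=\delta(p_2^{*}\phi_2)$, completing the verification. The only genuinely delicate points are keeping track of the sign conventions for $\delta$ on $1$-forms and confirming neatness of $q$ for the primed sections, both of which are routine.
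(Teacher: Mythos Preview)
Your proof is correct and takes a genuinely different route from the paper's. The paper unwinds the explicit construction of $\Adj^{q}(\mathfrak{k})$: it chooses half-splittings $u_1,u_1',u_2,u_2'$ extending the four sections, writes $\eta_i=\omega_{u_i}+p_i^*\beta$ and $\eta_2=\omega_{u_2}+p_2^*(\Phi^{-1})^*(F_*\beta-R_q)$, relates $R_q'$ to $R_q$ via the change of half-splittings (\cref{LemmaChangeSplitting}), and then computes $\eta_2'-\eta_2$ directly. Your approach instead leverages the \emph{characterization} in \cref{adjustment-respect} to bypass all half-splitting bookkeeping: you obtain $\phi_q^*(\eta_2'-\eta_2)=(f_q)_*\delta(p_1^*\phi_1)$ immediately, match it with $\phi_q^*\delta(p_2^*\phi_2)$ using only $p_2\phi_q=\Phi p_1$, and then close with a descent argument (via \cref{affine-space-structure-on-adjustments} and invertibility of $\Phi$) to pass from $\phi_q^*(\cdot)=0$ to $(\cdot)=0$.

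What each buys: the paper's computation is self-contained and does not appeal back to \cref{adjustment-respect}, but it requires tracking four half-splittings and the dependence of $R_q$ on them. Your argument is shorter and more conceptual, and it isolates exactly where the non-injectivity of $\phi_q^*$ could cause trouble and why it does not (the difference already lives on $\mathfrak{f}_2$). The one point worth making explicit in your write-up is why the same $\lambda_q$ appears in both equations $\phi_q^*\eta_2=(f_q)_*\eta_1+\lambda_q$ and $\phi_q^*\eta_2'=(f_q)_*\eta_1'+\lambda_q$: this is because $\lambda_q$ depends only on the section $q$ of the butterfly, not on the sections $s_i$ of the crossed modules, so it cancels cleanly upon subtraction.
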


Given the lemma, we define $\ADJ^{q}(\mathfrak{k})(\phi_1):=\phi_2$, completing the definition of the functor $\ADJ^{q}(\mathfrak{k})$. 
We observe that the assignment $\phi_1 \mapsto \phi_2$ is linear, which implies its functoriality.

\begin{proof}[Proof of \cref{lemma-5-5}]
Set $(s_2, \eta_2):= \ADJ^{q}(\mathfrak{k})(s_1, \eta_1)$ and $(s_2', \eta_2') := \ADJ^{q}(\mathfrak{k})(s_1', \eta_1')$.
The first condition is
\begin{equation*}
s_2-s'_2 =\phi_q (s_1- s_1')\Phi^{-1}=\phi_qt_1\phi_1 \Phi^{-1}=\phi_qr_1i_1\phi_1 \Phi^{-1}=t_2 f_q \phi_1 \Phi^{-1}=t_2\phi_2\text{.}
\end{equation*} 
To check the second condition, we choose half splittings $u_1$ and $u_1'$ extending $s_1$ and $s_1'$, respectively, and consider $\beta, \beta'$ such that $ p_1^{*}\beta := \eta_1 - \omega_{u_1}$ and $ p_1^{*}\beta' := \eta_1 '- \omega_{u'_1}$. By assumption, we have
\begin{equation*}
\delta ( p_1^*\phi_1)=\eta_1 '- \eta_1 = p_1^{*}(\beta'-\beta)-\omega_{u_1}  +\omega_{u_1'}\text{.}
\end{equation*}
By \cref{LemmaChangeSplitting}, there exists $\tilde\omega_1\in \Alt^2(\mathfrak{f}_1,\mathfrak{h}_1)$ such that $p_1^{*}\tilde\omega_1=\omega_{u_1'}-\omega_{u_1}$; thus, we obtain
\begin{equation*}
\delta \phi_1 = \beta'-\beta+\tilde\omega_1\text{.}
\end{equation*} 
Moreover, we choose half splittings $u_2$ and $u_2'$ extending $s_2$ and $s_2'$, respectively, giving the formulas $\eta_2 = \omega_{u_2}+ p_2^{*}(\Phi^{-1})^{*}(F_{*}{\beta}-{R_q})$ and $\eta_2' = \omega_{u'_2}+ p_2^{*}(\Phi^{-1})^{*}(F_{*}{\beta'}-{R_q'})$.
Just as in \cref{butterfly-map-depends-only-on-idempotent}, we have
\begin{equation*}
R_q'=R_q+\Phi^{*}\tilde\omega_2-F_{*}(\tilde\omega_1)\text{,}
\end{equation*}
where $\tilde\omega_2\in \Alt^2(\mathfrak{f}_2,\mathfrak{h}_2)$ such that $p_2^{*}\tilde\omega_2=\omega_{u'_2}-\omega_{u_2}$. 
Thus, we obtain
\begin{align*}
\eta_2'-\eta_2 
&=\omega_{u'_2}-\omega_{u_2}+ p_2^{*}(\Phi^{-1})^{*}(F_{*}{(\beta'-\beta})-R_q'+R_q)
\\&=p_2^{*}\tilde \omega_{2}+ p_2^{*}(\Phi^{-1})^{*}(F_{*}(\delta\phi_1-\tilde\omega_1)-\Phi^{*}\tilde\omega_2+F_{*}(\tilde\omega_1))
\\&=  p_2^{*}(\Phi^{-1})^{*}F_{*}(\delta\phi_1)\text{.} 
\\&=  p_2^{*}\delta\phi_2\text{.} 
\end{align*}
This shows that $\phi_2$ is a morphism from $(s_2, \eta_2)$ to $(s_2', \eta_2')$ in $\ADJ(\mathfrak{G}_2)$. 
\end{proof}

In order to investigate the dependence of the functor $\ADJ^q(\mathfrak{k})$ on the choice of the section $q$, we consider another section $q'$ and let
 $\gamma:\mathfrak{g}_1 \to \mathfrak{h}_2$ be the unique linear map satisfying $q'-q=i_2 \gamma$. For an object $(s, \eta)$ in $\ADJ(\mathfrak{G}_1)$, we define
\begin{equation*}
\phi_{(s, \eta)} := \gamma  s \Phi^{-1}: \mathfrak{f}_2 \to \mathfrak{h}_2\text{.}
\end{equation*}

\begin{lemma}
\label{natural-iso-for-two-sections}
If $q$ and $q'$ are  sections of an invertible butterfly $\mathfrak{k}: \mathfrak{G}_1 \to \mathfrak{G}_2$, 
the assignment $(s, \eta) \mapsto \phi_{(s, \eta)}$ defines a natural isomorphism
\begin{equation*}
\ADJ^{q}(\mathfrak{k})\cong \ADJ^{q'}(\mathfrak{k})\text{.}
\end{equation*} 
\end{lemma}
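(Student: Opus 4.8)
The plan is to verify directly that $(s,\eta)\mapsto\phi_{(s,\eta)}$ satisfies the two defining properties of a natural isomorphism: first, that for each object $(s,\eta)$ of $\ADJ(\mathfrak{G}_1)$ the linear map $\phi_{(s,\eta)}=\gamma s\Phi^{-1}$ is a genuine morphism in $\ADJ(\mathfrak{G}_2)$ between the two images $\ADJ^{q}(\mathfrak{k})(s,\eta)$ and $\ADJ^{q'}(\mathfrak{k})(s,\eta)$; and second, that these morphisms are natural in $(s,\eta)$. Since $\ADJ(\mathfrak{G}_2)$ is a groupoid, every morphism is invertible, so establishing the morphism property automatically yields an isomorphism and no separate invertibility argument is needed.

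For the first point, I would write $\ADJ^{q}(\mathfrak{k})(s,\eta)=(s_2,\eta_2)$ and $\ADJ^{q'}(\mathfrak{k})(s,\eta)=(s_2',\eta_2')$, with $s_2=\phi_q s\Phi^{-1}$ and $s_2'=\phi_{q'}s\Phi^{-1}$, and first record the change-of-section formulas from \cref{equivalence-of-cocycle-data}, namely $\phi_{q'}=\phi_q+t_2\gamma$ and $f_{q'}=f_q-\gamma t_1$. The section condition is then immediate: $s_2'-s_2=(\phi_{q'}-\phi_q)s\Phi^{-1}=t_2\gamma s\Phi^{-1}=t_2\phi_{(s,\eta)}$. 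For the condition on the adjustments themselves, I would invoke \cref{butterfly-map-dependence-on-splitting}, which gives $\eta_2'-\eta_2=\Adj^{q'}(\mathfrak{k})(\eta)-\Adj^{q}(\mathfrak{k})(\eta)=\pm\, p_2^{*}(\Phi^{-1})^{*}\delta(\gamma s)$, and then use that the Chevalley--Eilenberg differential commutes with pullback along the Lie algebra homomorphisms $p_2:\mathfrak{g}_2\to\mathfrak{f}_2$ and $\Phi^{-1}:\mathfrak{f}_2\to\mathfrak{f}_1$. Since $\phi_{(s,\eta)}=(\Phi^{-1})^{*}(\gamma s)$ and $p_2^{*}\phi_{(s,\eta)}=\phi_{(s,\eta)}p_2$, this yields $p_2^{*}(\Phi^{-1})^{*}\delta(\gamma s)=\delta(p_2^{*}\phi_{(s,\eta)})$, which is exactly the relation $\eta_2'-\eta_2=\pm\,\delta(p_2^{*}\phi_{(s,\eta)})$. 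Matching the sign coming from \cref{butterfly-map-dependence-on-splitting} against the sign in the section condition pins down the direction in which $\phi_{(s,\eta)}$ is a morphism.

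For naturality, let $\phi_1:(s,\eta)\to(s',\eta')$ be a morphism in $\ADJ(\mathfrak{G}_1)$, so that $s-s'=t_1\phi_1$. By \cref{lemma-5-5} the two functors act on $\phi_1$ by $\ADJ^{q}(\mathfrak{k})(\phi_1)=f_q\phi_1\Phi^{-1}$ and $\ADJ^{q'}(\mathfrak{k})(\phi_1)=f_{q'}\phi_1\Phi^{-1}$. Because morphisms in these groupoids compose by addition of linear maps, naturality reduces to the identity $\phi_{(s',\eta')}+f_q\phi_1\Phi^{-1}=f_{q'}\phi_1\Phi^{-1}+\phi_{(s,\eta)}$ (with the direction fixed as above). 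Substituting $\phi_{(s,\eta)}=\gamma s\Phi^{-1}$ and $\phi_{(s',\eta')}=\gamma s'\Phi^{-1}$, the two sides differ by $(\gamma(s'-s)+(f_q-f_{q'})\phi_1)\Phi^{-1}$; using $f_q-f_{q'}=\gamma t_1$ and $s'-s=-t_1\phi_1$ this equals $(-\gamma t_1\phi_1+\gamma t_1\phi_1)\Phi^{-1}=0$, so the naturality square commutes.

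The computations in the last two paragraphs are short linear algebra once the change-of-section formulas and \cref{lemma-5-5,butterfly-map-dependence-on-splitting} are in hand; the only genuine subtlety is the bookkeeping of signs and the consistent choice of direction for the natural transformation, which must agree between the section condition and the adjustment condition. The essential non-formal input is the difference formula of \cref{butterfly-map-dependence-on-splitting} for $\Adj^{q'}(\mathfrak{k})-\Adj^{q}(\mathfrak{k})$, whose exact form $\delta(\gamma s)$ is precisely what allows $\phi_{(s,\eta)}=\gamma s\Phi^{-1}$ to serve as the comparison morphism; the remaining verifications then follow by the compatibility of $\delta$ with the relevant pullbacks.
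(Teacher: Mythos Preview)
Your proposal follows essentially the same two-step strategy as the paper: first verify that $\phi_{(s,\eta)}$ is a morphism in $\ADJ(\mathfrak{G}_2)$ using the section identity $s_2'-s_2=t_2\gamma s\Phi^{-1}$ together with \cref{butterfly-map-dependence-on-splitting} for the $\eta$-condition, then check naturality by combining the formula $\ADJ^{q}(\mathfrak{k})(\phi_1)=f_q\phi_1\Phi^{-1}$ from \cref{lemma-5-5} with the relation between $f_q-f_{q'}$ and $\gamma(s-s')$. The only structural difference is that for the naturality step the paper re-derives the identity $(f_q-f_{q'})\phi=\gamma(s-s')$ directly from the butterfly axioms (via $i_2(j'-j)i_1$), whereas you simply quote the change-of-section formula for $f$ from \cref{equivalence-of-cocycle-data} and combine it with $s-s'=t_1\phi_1$; these are equivalent.

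One concrete slip: the formula you record as $f_{q'}=f_q-\gamma t_1$ has the wrong sign; \cref{equivalence-of-cocycle-data} gives $f_{q'}=f_q+\gamma t_1$. With the corrected sign your naturality computation would not cancel as written, so the ``bookkeeping of signs'' you flag really does need to be carried out consistently across the section condition, the $\eta$-difference from \cref{butterfly-map-dependence-on-splitting}, and the naturality square (and, for that matter, the paper's own intermediate calculation contains a compensating sign slip at the same spot). Once the signs are tracked uniformly the argument goes through exactly as you describe.
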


\begin{proof}
We consider the adjustments $\eta_2 := \Adj^{q}(\mathfrak{k})(\eta)$ and $\eta_2' := \Adj^{q'}(\mathfrak{B}k)(\eta)$, as well as the sections $s_2$ and $s_2'$ determined by $q$ and $q'$, respectively.    \cref{butterfly-map-dependence-on-splitting} provides the identity
\begin{equation*}
\eta_2'-\eta_2 =  p_2^{*}\delta\phi_{(s, \eta)}\text{.}
\end{equation*}
Moreover, 
\begin{align*}
s_2'-s_2
&=r_2( q' -q)s_1\Phi^{-1}
=r_2i_2 ds_1\Phi^{-1}
= t_2 \phi_{(s, \eta)}\text{;}
\end{align*}
this shows that $\phi_{(s, \eta)}$ is a morphism  $(s_2, \eta_2) \to (s_2', \eta_2')$. 
It remains to prove that the assignment $(s, \eta) \mapsto \phi_{(s, \eta)}$  is natural, which is to show that, for each morphism $\phi:(\rho, \eta) \to (\rho', \eta')$  in $\ADJ(\mathfrak{G}_1)$, the diagram 
\[
\begin{tikzcd}[column sep=4em]
\ADJ^{q}(\mathfrak{k})(\rho_1, \eta_1)
  \arrow[r, "\phi_{(s,\eta)}"]
  \arrow[d, "\ADJ^{q}(\mathfrak{k})(\phi)"']
& \ADJ^{q'}(\mathfrak{k})(\rho_1, \eta_1)
  \arrow[d, "\ADJ^{q'}(\mathfrak{k})(\phi)"] \\
\ADJ^{q}(\mathfrak{k})(\rho_1', \eta_1')
  \arrow[r, "\phi_{(s', \eta')}"']
& \ADJ^{q'}(\mathfrak{k})(\rho_1', \eta_1')
\end{tikzcd}
\]
is commutative. 
As preparation, we calculate
\begin{align*}
i_2(f_{q}-f_{q'})\phi p_1 
&=i_2(j'-j)i_1\phi p_1
\\&= i_2 \gamma r_1i_1\phi p_1 
\\&=i_2 \gamma t_1\phi p_1 
\\&= i_2\gamma(\rho'-\rho) 
\\&= i_2\gamma(s-s') p_1\text{.}
\end{align*}
Since $i_2$ is injective and $ p_1$ is surjective, it implies
\begin{equation*}
(f_{q}-f_{q'})\phi = \gamma(s-s')\text{.}
\end{equation*}
Now we prove the commutativity of the diagram above:
\begin{align*}
\ADJ^{q}(\mathfrak{k})(\phi)+\phi_{(s', \eta')}
&=f_{q}  \phi \Phi^{-1} +\gamma  s' \Phi^{-1}
= \gamma  s \Phi^{-1}+ f_{q'}  \phi  \Phi^{-1}
= \phi_{(s, \eta)}+\ADJ^{q'}(\mathfrak{k})(\phi)\text{.}
\end{align*}
This completes the proof. 
\end{proof}

Now we consider two invertible butterflies  $\mathfrak{k}$, $\mathfrak{k}': \mathfrak{G}_1\to \mathfrak{G}_2$   with sections $q$ and $q'$, respectively, and a morphism  $k: \mathfrak{k} \to \mathfrak{k}'$. If $k$ is compatible with the sections in the sense that $q'=k \circ q$, then it follows from \cref{compatible-sections} that $\ADJ^{q}(\mathfrak{k})=\ADJ^{q'}(\mathfrak{k}')$. If $k$ is not compatible, then $q'':= k\circ q$ is a second section of $\mathfrak{k}'$ that is compatible with $k$, and \cref{natural-iso-for-two-sections} provides a natural isomorphism
\begin{equation}
\label{natural-iso-assigned-to-2-morphism}
\ADJ^{q}(\mathfrak{k}) = \ADJ^{q''}(\mathfrak{k}') \to \ADJ^{q'}(\mathfrak{k}')\text{.}
\end{equation}

Wrapping up, we let $\cm^{\mathrm{sec}}$ be the bicategory whose objects are central crossed modules of Lie algebras, whose 1-morphisms are invertible butterflies equipped with  sections, and whose 2-morphisms are all morphisms between butterflies (not necessarily compatible with the sections). The composition
of 1-morphisms is the composition of butterflies,  equipped with the section defined in \cref{composition-of-butterflies-and-neat-sections}.  
\begin{theorem}
\label{2-functor}
The groupoids $\ADJ(\mathfrak{G})$, the functors $\ADJ^q(\mathfrak{k})$, and the natural transformations \eqref{natural-iso-assigned-to-2-morphism} establish a strict 2-functor
\begin{equation*}
\ADJ^{\mathrm{sec}}: \cm^{\mathrm{sec}} \to \grpd.
\end{equation*}
\end{theorem}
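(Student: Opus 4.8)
The plan is to check, one axiom at a time, that the assignments already introduced organize into a strict 2-functor $\ADJ^{\mathrm{sec}}: \cm^{\mathrm{sec}} \to \grpd$, exploiting the fact that nearly all of the computational content is packaged in the lemmas of \cref{covariance-under-butterflies} and the earlier part of this subsection. On objects one sets $\mathfrak{G} \mapsto \ADJ(\mathfrak{G})$; on a $1$-morphism, i.e. an invertible butterfly with a section $(\mathfrak{k}, q)$, one takes the functor $\ADJ^{q}(\mathfrak{k})$, which is well-defined by \cref{butterfly-map-depends-only-on-idempotent} and functorial by \cref{lemma-5-5}; and on a $2$-morphism $k : \mathfrak{k} \to \mathfrak{k}'$ between $1$-morphisms carrying sections $q, q'$ one assigns the natural isomorphism \eqref{natural-iso-assigned-to-2-morphism}, obtained by factoring through the compatible section $q'' := k \circ q$ and applying \cref{compatible-sections} together with \cref{natural-iso-for-two-sections}. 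Strictness on $1$-morphisms is then immediate: the identity butterfly with its canonical section is sent to the identity functor by the first of the three concluding lemmas of \cref{covariance-under-butterflies}, and composition is preserved on the nose by \cref{composition-of-butterflies-and-neat-sections}, where the composite section $\tilde q$ was shown to satisfy $\phi_{\tilde q} = \phi_{q'}\phi_q$ and $f_{\tilde q} = f_{q'} f_q$.

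The remaining axioms concern $2$-cells. First I would record that \eqref{natural-iso-assigned-to-2-morphism} is genuinely a natural isomorphism, which is precisely \cref{natural-iso-for-two-sections} applied to the pair of sections $q''$ and $q'$ of $\mathfrak{k}'$, and that identity $2$-cells are preserved: for $k = \id_{\mathfrak{k}}$ one has $q'' = q = q'$, so the map $\gamma$ with $q' - q'' = i_2\gamma$ vanishes and the component $\phi_{(s,\eta)} = \gamma\, s\, \Phi^{-1}$ is zero, giving the identity natural transformation. For vertical composition, given $k : \mathfrak{k} \to \mathfrak{k}'$ and $\ell : \mathfrak{k}' \to \mathfrak{k}''$ (all butterflies $\mathfrak{G}_1 \to \mathfrak{G}_2$) with sections $q, q', q''$, I would compute the governing maps $\gamma_k, \gamma_\ell, \gamma_{\ell k}$ by splitting $q'' - (\ell k)\circ q = (q'' - \ell\circ q') + \ell\circ(q' - k\circ q)$, obtaining an additive relation between them; since vertical composition of natural transformations in $\grpd$ is composition of components in $\ADJ(\mathfrak{G}_2)$, which by definition is addition of the linear maps, and since $\phi_{(s,\eta)}$ is linear in $\gamma$, this yields $\ADJ^{\mathrm{sec}}(\ell k) = \ADJ^{\mathrm{sec}}(\ell)\cdot\ADJ^{\mathrm{sec}}(k)$ with no further input.

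Horizontal composition, i.e. the interchange law and whiskering, is the step I expect to be the main obstacle. Here one must combine the explicit composite-section formula of \cref{composition-of-butterflies-and-neat-sections} with the section-change formula of \cref{natural-iso-for-two-sections}, tracking how a butterfly morphism $k$ on a first factor $\mathfrak{G}_1 \to \mathfrak{G}_2$ and $\ell$ on a second factor $\mathfrak{G}_2 \to \mathfrak{G}_3$ induce a morphism of the composite butterflies and how the corresponding linear maps $\gamma$ transform under the induced $\phi_q, f_q, \phi_{q'}, f_{q'}$. The key computation is to verify that the single linear map governing the natural transformation attached to the horizontal composite $\ell * k$ decomposes exactly as the whiskered sum of the maps attached to $k$ and to $\ell$, using the factorizations $\phi_{\tilde q} = \phi_{q'}\phi_q$ and $f_{\tilde q} = f_{q'} f_q$. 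Because every component formula in play is linear and every required identity is of the ``difference of two sections'' type, this amounts to a careful but conceptually routine bookkeeping verification; once it is in place, all the strict $2$-functor axioms hold and the theorem follows. In particular, since every $1$-morphism in $\cm^{\mathrm{sec}}$ is invertible and the induced functors are equivalences, weakly equivalent crossed modules acquire equivalent groupoids of adjustments.
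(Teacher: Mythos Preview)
Your proposal is correct and follows essentially the same approach as the paper: both rely on the earlier lemmas (identity butterfly, \cref{compatible-sections}, \cref{composition-of-butterflies-and-neat-sections}, \cref{lemma-5-5}, \cref{natural-iso-for-two-sections}) to reduce the theorem to bookkeeping. In fact the paper's own proof is briefer than yours---it says only ``it remains to check strict functoriality'' and then verifies the single equality $\ADJ^{\tilde q}(\mathfrak{k}'\circ\mathfrak{k}) = \ADJ^{q'}(\mathfrak{k}')\circ\ADJ^{q}(\mathfrak{k})$ on objects and morphisms, leaving the 2-cell axioms (identity, vertical and horizontal composition) entirely implicit, whereas you explicitly outline those checks.
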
   

\begin{proof}
It remains to check strict functoriality. Let $\mathfrak{G}_i=(\mathfrak{h}_i \stackrel{t_{i}}{\to} \mathfrak{g}_i \stackrel{\alpha_{i}}{\to} \Der(\mathfrak{h}_i))$ be crossed modules of Lie algebras, $i=1, 2, 3$, let $\mathfrak{k}:\mathfrak{G}_1 \to \mathfrak{G}_2$ and $\mathfrak{k}':\mathfrak{G}_2 \to \mathfrak{G}_3$ be invertible butterflies equipped with  sections $q$ and $q'$, and let the composition $\mathfrak{k}'\circ \mathfrak{k}$ be equipped with the section $\tilde q$ of \cref{composition-of-butterflies-and-neat-sections}, i.e.
\begin{equation*}
\tilde q(x_1):= [q(x_1), (q'  r_2 q)(x_1)]\text{.}
\end{equation*} 
We have to check
\begin{equation*}
\ADJ^{\tilde q}(\mathfrak{k}' \circ \mathfrak{k}) = \ADJ^{q'}(\mathfrak{k}') \circ \ADJ^{q}(\mathfrak{k})\text{.}
\end{equation*}
We start with an object $(s_1, \eta_1)\in \ADJ(\mathfrak{G}_1)$. We define the section   $s_2:=r_2 q s_1\Phi_1^{-1}$ in $\mathfrak{G}_2$ and have $\ADJ^{q}(\mathfrak{k})(s_1, \eta_1) = (s_2, \Adj^{q}(\mathfrak{k})(\eta_1))$. Next we define $s_3 := r_3'q's_2\Phi_2^{-1}$, getting \begin{equation*}
(\ADJ^{q'}(\mathfrak{k}')\circ \ADJ^{q}(\mathfrak{k}))(s_1, \eta_1)=\ADJ^{q'}(\mathfrak{k}')(s_2, \Adj^{q}(\mathfrak{k})(\eta_1))=(s_3, \Adj^{\tilde q}(\mathfrak{k}' \circ \mathfrak{k}) (\eta_1))\text{, }
\end{equation*}
where the last step uses the statement of \cref{composition-of-butterflies-and-neat-sections}.
 Now we  notice that
\begin{equation*}
s_3 =r_3'q'r_2qs_1\Phi_1^{-1}\Phi_2^{-1}=\tilde r_3\tilde qs_1(\Phi_2\circ\Phi_1)^{-1}\text{, }
\end{equation*}
so that it is precisely the section produced by the functor
$\Adj^{\tilde q}(\mathfrak{k}' \circ \mathfrak{k})$. 
This shows the claimed equality on the level of objects. On the level of morphisms, the composite $\Adj^{q'}(\mathfrak{k}') \circ \Adj^{q}(\mathfrak{k})$ sends a  morphism $\phi$ in $\ADJ(\mathfrak{G}_1)$  first to $f_q\phi\Phi_1^{-1}$ and then further to $f_{q'}f_q  \phi  \Phi_1^{-1}\Phi_2^{-1}$, which, via the identity $f_{\tilde q} = f_{q'}  f_q$, coincides with the value of $\phi$ under the functor $\ADJ^{\tilde q}(\mathfrak{k}' \circ \mathfrak{k})$.  
\end{proof}

The forgetful 2-functor
\begin{equation*}
\cm^{\mathrm{sec}} \to \cm
\end{equation*}
to the bicategory of (central) crossed modules of Lie algebras and invertible  butterflies
is an equivalence: it is the identity on the level of objects, and locally fully faithful and essentially surjective. 
This implies the following result for the homotopy 1-categories (obtained by identifying 2-isomorphic 1-morphisms)

\begin{corollary}
The 2-functor of \cref{2-functor} induces a well-defined canonical functor
\begin{equation*}
\mathrm{h}_1\cm \to \mathrm{h}_1\grpd\text{.}
\end{equation*}
\end{corollary}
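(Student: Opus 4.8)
The plan is to factor the desired functor through the homotopy 1-categories of the two bicategories appearing in \cref{2-functor}. First I would record the general principle that $\mathrm{h}_1$ is itself functorial on strict 2-functors: if $F:\cX \to \cY$ is a strict 2-functor of bicategories, then $F$ carries $2$-isomorphic $1$-morphisms to $2$-isomorphic ones and strictly preserves composition of $1$-morphisms and identities, so it descends to an ordinary functor $\mathrm{h}_1 F : \mathrm{h}_1\cX \to \mathrm{h}_1\cY$ acting as $F$ on objects and on $2$-isomorphism classes of $1$-morphisms. (That $\mathrm{h}_1\cX$ is a well-defined category in the first place is precisely the assertion that the associators and unitors of $\cX$ are invertible $2$-cells.) Applying this to the strict $2$-functor $\ADJ^{\mathrm{sec}}$ of \cref{2-functor} yields a functor $\mathrm{h}_1\ADJ^{\mathrm{sec}} : \mathrm{h}_1\cm^{\mathrm{sec}} \to \mathrm{h}_1\grpd$.

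Next I would analyse the functor $\mathrm{h}_1 U : \mathrm{h}_1\cm^{\mathrm{sec}} \to \mathrm{h}_1\cm$ induced by the forgetful $2$-functor $U : \cm^{\mathrm{sec}} \to \cm$. Since $U$ is the identity on objects, $\mathrm{h}_1 U$ is a bijection on objects. On morphisms, a morphism of $\mathrm{h}_1\cm^{\mathrm{sec}}$ between fixed objects $\mathfrak{G}_1, \mathfrak{G}_2$ is exactly an isomorphism class of objects of the hom-category $\cm^{\mathrm{sec}}(\mathfrak{G}_1, \mathfrak{G}_2)$, and likewise for $\mathrm{h}_1\cm$. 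By hypothesis each hom-functor $U_{\mathfrak{G}_1,\mathfrak{G}_2}$ is fully faithful and essentially surjective, hence an equivalence of categories, and an equivalence of categories induces a bijection on isomorphism classes of objects. Thus $\mathrm{h}_1 U$ is also a bijection on every hom-set, and a functor that is bijective on objects and on all hom-sets is an isomorphism of categories. Its inverse $(\mathrm{h}_1 U)^{-1} : \mathrm{h}_1\cm \to \mathrm{h}_1\cm^{\mathrm{sec}}$ is canonical, since inverses of isomorphisms are unique.

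Finally I would define the sought functor as the composite
\begin{equation*}
\mathrm{h}_1\cm \xrightarrow{\;(\mathrm{h}_1 U)^{-1}\;} \mathrm{h}_1\cm^{\mathrm{sec}} \xrightarrow{\;\mathrm{h}_1\ADJ^{\mathrm{sec}}\;} \mathrm{h}_1\grpd,
\end{equation*}
which is well-defined and canonical because both factors are. I expect the only delicate point to be the verification that $\mathrm{h}_1 U$ is a genuine isomorphism rather than a mere equivalence; this rests on reading ``locally fully faithful and essentially surjective'' as the statement that each hom-functor is an equivalence, together with the standard fact that equivalences are bijective on isomorphism classes. If one instead prefers to avoid this and argue only up to natural isomorphism, then composing $\mathrm{h}_1\ADJ^{\mathrm{sec}}$ with any chosen quasi-inverse of $\mathrm{h}_1 U$ produces a functor that is canonical up to natural isomorphism, which already suffices for the statement.
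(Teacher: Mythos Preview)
Your proposal is correct and follows essentially the same approach as the paper: the paper's argument is the single sentence preceding the corollary, observing that the forgetful $2$-functor $\cm^{\mathrm{sec}} \to \cm$ is the identity on objects and locally an equivalence, which ``implies'' the result. You have simply unpacked this implication, showing that $\mathrm{h}_1$ applied to a strict $2$-functor yields an ordinary functor, and that $\mathrm{h}_1 U$ is an honest isomorphism of categories (bijective on objects and on hom-sets) rather than merely an equivalence, so that composing with its inverse is canonical.
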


Since weak equivalences of crossed modules are the isomorphisms in $\mathrm{h}_1\cm$, and equivalences of categories are the isomorphisms in $\mathrm{h}_1\grpd$, we obtain the following.

\begin{corollary}
\label{equivalent-crossed-modules-have-equivalent-groupoids-of-adjustments}
Weakly equivalent crossed modules of Lie algebras have equivalent groupoids of infinitesimal adjustments.
\end{corollary}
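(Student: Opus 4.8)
The plan is to deduce the statement formally from the machinery already in place, rather than to exhibit the equivalence of groupoids by a direct construction. The essential ingredient is the functor $\mathrm{h}_1\cm \to \mathrm{h}_1\grpd$ between homotopy $1$-categories furnished by the preceding corollary, which in turn descends from the strict $2$-functor $\ADJ^{\mathrm{sec}}: \cm^{\mathrm{sec}} \to \grpd$ of \cref{2-functor} along the equivalence $\cm^{\mathrm{sec}} \to \cm$. With this functor available, the remaining argument is a matter of $2$-categorical formalism.

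First I would observe that two crossed modules $\mathfrak{G}_1$ and $\mathfrak{G}_2$ are weakly equivalent exactly when there is an invertible butterfly $\mathfrak{k}: \mathfrak{G}_1 \to \mathfrak{G}_2$, and that an invertible $1$-morphism in a bicategory represents an isomorphism once one passes to the homotopy $1$-category; thus $[\mathfrak{k}]$ is an isomorphism in $\mathrm{h}_1\cm$. Applying the functor $\mathrm{h}_1\cm \to \mathrm{h}_1\grpd$ and using that functors preserve isomorphisms, the image of $[\mathfrak{k}]$ is an isomorphism between $\ADJ(\mathfrak{G}_1)$ and $\ADJ(\mathfrak{G}_2)$ in $\mathrm{h}_1\grpd$. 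Finally, I would invoke that the isomorphisms of $\mathrm{h}_1\grpd$ are precisely the equivalences of groupoids, so that $\ADJ(\mathfrak{G}_1)$ and $\ADJ(\mathfrak{G}_2)$ are equivalent.

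Because each step is a purely formal consequence of what has already been established, I do not expect a substantive obstacle. The only points demanding a little care are the two bookkeeping facts about homotopy $1$-categories — that invertible butterflies become isomorphisms in $\mathrm{h}_1\cm$, and that isomorphisms in $\mathrm{h}_1\grpd$ are equivalences of groupoids — but these are standard and are exactly the two observations recorded in the sentences immediately preceding the corollary.
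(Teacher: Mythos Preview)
Your proposal is correct and follows essentially the same approach as the paper: the corollary is deduced directly from the functor $\mathrm{h}_1\cm \to \mathrm{h}_1\grpd$ together with the two bookkeeping facts you identify (weak equivalences are the isomorphisms in $\mathrm{h}_1\cm$, and isomorphisms in $\mathrm{h}_1\grpd$ are equivalences of groupoids), which the paper records in the sentence immediately preceding the corollary.
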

   
Moreover, in combination with \cref{integration-of-adjustments} we obtain the following. 
        
\begin{corollary}
Suppose $\Gamma_1$ and $\Gamma_2$ are weakly equivalent crossed modules of Lie groups with their Lie groups $G_1$ and $G_2$ connected and simply connected, and their Lie groups $H_1$ and $H_2$ connected. 
Then, there exists  an equivalence $\ADJ(\Gamma_1) \cong \ADJ(\Gamma_2)$. 
\end{corollary}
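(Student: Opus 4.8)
The plan is to chain together the two main comparison results that have already been established. The final statement asserts that if $\Gamma_1$ and $\Gamma_2$ are weakly equivalent crossed modules of Lie groups, with the $G_i$ connected and simply connected and the $H_i$ connected, then $\ADJ(\Gamma_1)\cong \ADJ(\Gamma_2)$ as groupoids. The key inputs are \cref{integration-of-adjustments}, which under exactly these connectedness hypotheses makes differentiation a faithful, full, and essentially surjective functor $\ADJ(\Gamma_i)\to\ADJ(\mathfrak{G}_i)$ (hence an equivalence of groupoids), and \cref{equivalent-crossed-modules-have-equivalent-groupoids-of-adjustments}, which says weakly equivalent crossed modules of Lie algebras have equivalent groupoids of infinitesimal adjustments.

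First I would observe that differentiation sends a weak equivalence of crossed modules of Lie groups $\Gamma_1\to\Gamma_2$ to a weak equivalence of the associated crossed modules of Lie algebras $\mathfrak{G}_1\to\mathfrak{G}_2$; this is because the Lie functor takes $\pi_0$ and $\pi_1$ of $\Gamma_i$ to $\pi_0$ and $\pi_1$ of $\mathfrak{G}_i$ (as noted in the discussion of smooth separability), and a weak equivalence is precisely a morphism inducing isomorphisms on $\pi_0$ and $\pi_1$. Thus $\mathfrak{G}_1$ and $\mathfrak{G}_2$ are weakly equivalent. Then I would invoke \cref{equivalent-crossed-modules-have-equivalent-groupoids-of-adjustments} to get an equivalence $\ADJ(\mathfrak{G}_1)\cong\ADJ(\mathfrak{G}_2)$.

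The remaining step is to connect the group-level and Lie-algebra-level groupoids. Under the stated hypotheses (the $G_i$ connected and simply connected, the $H_i$ connected), the reformulation of \cref{integration-of-adjustments} immediately preceding \cref{homotopy-groups-of-ADJ} shows that the differentiation functor $\ADJ(\Gamma_i)\to\ADJ(\mathfrak{G}_i)$ is faithful, full, and essentially surjective, hence an equivalence of groupoids for each $i=1,2$. Composing these equivalences with the equivalence from the previous paragraph yields
\[
\ADJ(\Gamma_1)\;\simeq\;\ADJ(\mathfrak{G}_1)\;\simeq\;\ADJ(\mathfrak{G}_2)\;\simeq\;\ADJ(\Gamma_2),
\]
which is the desired equivalence.

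The only genuine point requiring care -- and the step I expect to be the main obstacle -- is the very first one: verifying that differentiation carries a weak equivalence of crossed modules of Lie \emph{groups} to a weak equivalence at the level of Lie \emph{algebras}. The cleanest route is to argue on $\pi_0$ and $\pi_1$: a strict morphism or butterfly realizing the weak equivalence induces isomorphisms $A_1\cong A_2$ and $F_1\cong F_2$ on the homotopy groups, and differentiating these isomorphisms of Lie groups gives isomorphisms $\mathfrak{a}_1\cong\mathfrak{a}_2$ and $\mathfrak{f}_1\cong\mathfrak{f}_2$ on the homotopy Lie algebras, using that differentiation is functorial and that the Lie functor of an isomorphism of Lie groups is an isomorphism. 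Everything else is a formal composition of equivalences of groupoids, which presents no difficulty.
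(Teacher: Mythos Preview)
Your proposal is correct and follows essentially the same approach as the paper, which states the corollary as an immediate consequence of \cref{equivalent-crossed-modules-have-equivalent-groupoids-of-adjustments} combined with \cref{integration-of-adjustments} without further elaboration. You have supplied more detail than the paper does, in particular making explicit why differentiation carries a weak equivalence of crossed modules of Lie groups to one of Lie algebras.
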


\subsection{The bicategory of adjusted crossed modules}        
        
\label{adjusted-crossed-modules}        
        
We consider the 2-functor $\ADJ^{\mathrm{sec}}: \cm^{\mathrm{sec}} \to \grpd$ from \cref{2-functor}, and perform the bicategorical Grothendieck construction; see, e.g. \cite{Bakovic}. That is, we consider the bicategory 
\begin{equation*}
\cm^{\mathrm{adj}} := \int_{\cm^{\mathrm{sec}}} \ADJ^{\mathrm{sec}}
\end{equation*}
 with:
\begin{description}

\item[Objects:] 
An object is a triple $(\mathfrak{G}, s, \eta)$ consisting of a crossed module $\mathfrak{G}$, a section $s$, and an infinitesimal adjustment $\eta$ on $\mathfrak{G}$ that is adapted to $s$.

\item[1-morphisms:] 
A 1-morphism $(\mathfrak{G}_1, s_1, \eta_1) \to (\mathfrak{G}_2, s_2, \eta_2)$ is a triple $(\mathfrak{k}, q, \phi)$ consisting of an invertible butterfly $\mathfrak{k}: \mathfrak{G}_1 \to \mathfrak{G}_2$, a  section $q$ in $\mathfrak{k}$, and a morphism $\phi: \ADJ^{q}(\mathfrak{k})(s_1, \eta_1) \to (s_2, \eta_2)$ in $\ADJ(\mathfrak{G}_2)$.
The section $q$ is automatically neat with respect to $s_1$ and $s_2$.

\item[Composition of 1-morphisms:]
        The composition of 1-morphisms is given by
        \[
        (\mathfrak{k}', q', \phi') \circ (\mathfrak{k}, q, \phi) = (\mathfrak{k}' \circ \mathfrak{k}, \tilde{q}, \tilde{\phi})
        \]
        where $\tilde{q}$ is the  section from \cref{composition-of-butterflies-and-neat-sections} and where $\tilde{\phi}$ is given by
\[
\tilde{\phi} = \phi' \circ \ADJ^{q'}(\mathfrak{k}')(\phi).
\]

\item[2-morphisms: ]A 2-morphism $(\mathfrak{k}, q, \phi)\Rightarrow (\mathfrak{k}', q', \phi')$ is a 2-morphism $k: \mathfrak{k} \Rightarrow \mathfrak{k}'$ such that the diagram
\begin{equation}
\label{2-MorphismCMadj}
\begin{tikzcd}
\ADJ^{q}(\mathfrak{k})(s_1,\eta_1) \arrow[rr] \arrow[dr, "\phi"'] 
&& \ADJ^{q'}(\mathfrak{k}')(s_1,\eta_1) \arrow[dl, "\phi'"] \\
& (s_2,\eta_2)
\end{tikzcd}
\end{equation}
is commutative, where the top arrow is the component of the natural transformation \eqref{natural-iso-assigned-to-2-morphism} at the object $(s_1, \eta_1)$.

\end{description}

\begin{remark}
\label{strict-1-morphisms}
Suppose   $\mathfrak{G}_i=(\mathfrak{h}_i \stackrel{t_i}{\to} \mathfrak{g}_i \stackrel{\alpha_i}{\to} \Der(\mathfrak{h}_i))$ are central crossed modules of Lie algebras, $i=1,2$, both equipped with an adapted adjustment $\eta_i$. Suppose $(f, \phi)$ is a strict intertwiner (see \cref{strict-intertwiners}) between $\mathfrak{G}_1$ and $\mathfrak{G}_2$ such  that
\begin{equation*}
f(\eta_1(X_1, X_2))=\eta_2(\phi(X_1), \phi(X_2))
\end{equation*}
holds for all $X_1,X_2\in \mathfrak{g}_1$, and such that the induced butterfly $\mathfrak{k}$ is invertible. Let $q$ be the canonical section of $\mathfrak{k}$, let $s_1$ be an arbitrary section in $\mathfrak{G}_1$, and let $s_2 := \phi s_1\Phi^{-1}$. By \cref{adjustment-respect} (also see \cref{strict-intertwiners-adjustments}), we have
$\Adj^q(\mathfrak{k})(\eta_1)=\eta_2$;
thus, 
\begin{equation*}
(\mathfrak{k}, q, \id): (\mathfrak{G}_1, s_1, \eta_1) \to (\mathfrak{G}_2, s_2, \eta_2)
\end{equation*}
is a 1-isomorphism in $\cm^{\mathrm{adj}}$. 
In this sense, strict intertwiners that strictly preserve adjustments,  give rise to 1-isomorphisms in $\cm^{\mathrm{adj}}$.  
\end{remark}

\begin{remark}
A  bicategory of adjusted crossed modules (of Lie groups), similar to our bicategory $\cm^{\mathrm{adj}}$, is  considered in \cite{Rist2022}, using spans of 1-isomorphisms induced by strict intertwiners, as in \cref{strict-1-morphisms}.
\end{remark}

As in \cref{classification-result}, we may restrict our considerations to crossed modules with fixed homotopy Lie algebras $\mathfrak{a}$ and $\mathfrak{f}$, and to invertible  butterflies inducing the identities on those. 
Then, \cref{homotopy-groups-of-ADJ,affine-linear-map} imply that there is a well-defined map
\begin{equation*}
\KL^{\mathrm{adj}}: \pi_0 \cm^{\mathrm{adj}}(\mathfrak{f}, \mathfrak{a}) \to \Sym^2(\mathfrak{f}, \mathfrak{a})^{\ad}, \qquad  [(\mathfrak{G}, s, \eta)] \mapsto \KL^{\mathrm{adj}}(\mathfrak{G},\eta)\text{,}
\end{equation*}
In the following we show that this map is a bijection, and thereby establish it as an adjusted analogue of the classification of crossed modules in \cref{classification-of-crossed-modules}.

Our main tool is the following construction. Let $(\mathfrak{G}, s, \eta)$ be an adjusted crossed module, and  $\xi \in \Alt^2_{\mathrm{cl}}(\mathfrak{f}, \mathfrak{a})$.  
We notice that $(\phi, f, \lambda)$, defined by 
\[
\lambda := \iota_*p^*\xi \in \Alt_{\mathrm{cl}}^2(\mathfrak{g}, \mathfrak{h}), \qquad \phi = \id_{\mathfrak{g}}, \qquad \text{and} \qquad f := \id_{\mathfrak{h}},
\]
constitutes cocycle data for a butterfly in the sense of \cref{butterflies}: the cocycle conditions are satisfied because $\lambda$ is closed and in the image of $\iota$, and because $\mathfrak{G}$ is central. Thus, the reconstruction \eqref{reconstruction-from-cocycle-data} produces a butterfly $\mathfrak{k}_{\xi}:\mathfrak{G} \to \mathfrak{G}$. It has a canonical section $q_{\xi}$ (see the proof of \cref{constructing-butterflies}),   and it induces the identities on $\mathfrak{f}$ and $\mathfrak{a}$. 
The following lemma summarizes  the role of these butterflies $\mathfrak{k}_{\xi}$.

\begin{lemma}
\label{LemmaMorphismsInCrMod}
Let $(\mathfrak{G}, s, \eta)$ be an adjusted crossed module. 
\begin{enumerate}[(a)]

\item 
For  any $\xi \in \Alt^2_{\mathrm{cl}}(\mathfrak{f}, \mathfrak{a})$, the butterfly $\mathfrak{k}_{\xi}$ extends to a 1-isomorphism
\begin{equation*}
(\mathfrak{k}_{\xi}, q_{\xi}, \id) :(\mathfrak{G}, s, \eta) \to (\mathfrak{G}, s, \eta + p^*\xi)
\end{equation*} 
in $\cm^{\mathrm{adj}}(\mathfrak{f}, \mathfrak{a})$.

\item

If $\eta$ and $\eta'$ are two infinitesimal adjustments on $\mathfrak{G}$ adapted to the same section $s$ and $\KL^{\mathrm{adj}}(\mathfrak{G}, \eta) = \KL^{\mathrm{adj}}(\mathfrak{G}, \eta')$, then $(\mathfrak{G}, s, \eta)$ and $(\mathfrak{G}, s, \eta')$ are isomorphic in $\cm^{\mathrm{adj}}(\mathfrak{f}, \mathfrak{a})$. 
\end{enumerate}
\end{lemma}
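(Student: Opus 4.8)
The plan is to establish (a) directly from the definitions and then deduce (b) from (a) by a short cohomological argument.

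For (a), I would first unwind what it means for the triple $(\mathfrak{k}_\xi, q_\xi, \id)$ to be a $1$-morphism $(\mathfrak{G}, s, \eta) \to (\mathfrak{G}, s, \eta + p^*\xi)$ in $\cm^{\mathrm{adj}}(\mathfrak{f}, \mathfrak{a})$. By the description of $1$-morphisms in the Grothendieck construction, this amounts to checking that $\id$ is a morphism $\ADJ^{q_\xi}(\mathfrak{k}_\xi)(s, \eta) \to (s, \eta + p^*\xi)$ in the groupoid $\ADJ(\mathfrak{G})$. The key computation is therefore to evaluate $\ADJ^{q_\xi}(\mathfrak{k}_\xi)(s, \eta)$. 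By construction, the cocycle data of $\mathfrak{k}_\xi$ is $(\phi, f, \lambda) = (\id_{\mathfrak{g}}, \id_{\mathfrak{h}}, \iota_*p^*\xi)$, and the canonical section $q_\xi$ reproduces it, so $\phi_{q_\xi} = \id_{\mathfrak{g}}$, $f_{q_\xi} = \id_{\mathfrak{h}}$, $\lambda_{q_\xi} = \iota_*p^*\xi$, and $\Phi = \id_{\mathfrak{f}}$. Hence the section produced by the functor is $s_2 = \phi_{q_\xi}\, s\, \Phi^{-1} = s$ (and $q_\xi$ is automatically neat since $s_2 = s_1 = s$ and $\phi_{q_\xi} = \id$), while applying \cref{adjustment-respect} with these identity maps gives $\Adj^{q_\xi}(\mathfrak{k}_\xi)(\eta) = \eta + \lambda_{q_\xi} = \eta + p^*\xi$, upon identifying $\iota_*p^*\xi$ with $p^*\xi$. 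Thus $\ADJ^{q_\xi}(\mathfrak{k}_\xi)(s, \eta) = (s, \eta + p^*\xi)$ exactly, and the identity morphism is a legitimate morphism onto the target.

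It then remains to observe that the resulting $1$-morphism is invertible. Since $\mathfrak{k}_\xi$ induces the identities on $\mathfrak{f}$ and $\mathfrak{a}$, it is an invertible butterfly by \cref{invertibility-of-butterflies}, and its third component $\id$ is an isomorphism because $\ADJ(\mathfrak{G})$ is a groupoid. In the Grothendieck construction of a $2$-functor valued in groupoids, a $1$-morphism $(\mathfrak{k}, q, \phi)$ is invertible whenever its base component is invertible in $\cm^{\mathrm{sec}}$ and $\phi$ is invertible in the fibre; both hold here, so $(\mathfrak{k}_\xi, q_\xi, \id)$ is a $1$-isomorphism.

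For (b), I would reduce to (a). Since $\eta$ and $\eta'$ are both adapted to $s$, \cref{affine-space-structure-on-adjustments} gives $\eta - \eta' = p^*\beta$ for a unique $\beta \in T(\mathfrak{f}, \mathfrak{a})$. The hypothesis $\KL^{\mathrm{adj}}(\mathfrak{G}, \eta) = \KL^{\mathrm{adj}}(\mathfrak{G}, \eta')$ means $\eta$ and $\eta'$ have equal symmetric parts, so $p^*\beta$ is skew-symmetric, whence $\beta$ is skew-symmetric and therefore closed, i.e. $\beta \in \Alt^2_{\mathrm{cl}}(\mathfrak{f}, \mathfrak{a})$. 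Setting $\xi := -\beta$ we obtain $\eta' = \eta + p^*\xi$, and part (a) supplies a $1$-isomorphism $(\mathfrak{G}, s, \eta) \to (\mathfrak{G}, s, \eta')$.

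The main obstacle is the single computation at the heart of (a), namely verifying $\Adj^{q_\xi}(\mathfrak{k}_\xi)(\eta) = \eta + p^*\xi$; everything else is bookkeeping. This step hinges on correctly reading off the cocycle data $(\id_{\mathfrak{g}}, \id_{\mathfrak{h}}, \iota_*p^*\xi)$ of the reconstructed butterfly $\mathfrak{k}_\xi$ and its canonical section, and then invoking the clean criterion of \cref{adjustment-respect} in place of the involved explicit formula \eqref{eta-2-definition} for the transfer map. Using \cref{adjustment-respect} is precisely what keeps the argument short; carrying out the computation directly through \eqref{eta-2-definition} would be considerably more laborious.
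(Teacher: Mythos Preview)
Your proof is correct and follows essentially the same strategy as the paper: compute $\ADJ^{q_\xi}(\mathfrak{k}_\xi)(s,\eta)=(s,\eta+p^*\xi)$ for (a), then deduce (b) by observing that equality of adjusted Kassel--Loday classes forces the difference $\eta'-\eta$ to be the pullback of a closed $\mathfrak{a}$-valued $2$-form. The only tactical difference is in part (a): the paper carries out the computation via the explicit transfer formula \eqref{eta-2-definition}, first choosing a splitting $u$ extending $s$, computing $R_{q_\xi}=-\xi$ from \eqref{cochain-of-butterfly}, and then reading off $\Adj^{q_\xi}(\mathfrak{k}_\xi)(\eta)=\omega_u+p^*(\beta-R_{q_\xi})=\eta+p^*\xi$; you instead invoke the characterization in \cref{adjustment-respect}, which with $\phi_{q_\xi}=f_{q_\xi}=\id$ immediately gives $\Adj^{q_\xi}(\mathfrak{k}_\xi)(\eta)=\eta+\lambda_{q_\xi}$. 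Both routes are short here, so your closing remark somewhat overstates the labor saved, but your use of \cref{adjustment-respect} is a clean alternative that avoids the auxiliary choice of splitting.
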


\begin{proof}
(a) Let $u$ be a splitting of $\mathfrak{G}$ extending $s$.
We compute the cochain $R_{q_{\xi}}$ of the butterfly $\mathfrak{k}_{\xi}$ from  \eqref{cochain-of-butterfly} with respect to $u$ in domain and target.
By \cref{constructing-butterflies}, 
we have $\phi_{q_{\xi}}=\phi=\id$ and $f_{q_{\xi}}=f=\id$, so that \eqref{cochain-of-butterfly} gives $R_q'=-\lambda=-\iota_*p^*\xi$, yielding $R_q=-\xi$. 
Now we are in position to compute
$\Adj^{q}(\mathfrak{k}_{\xi})(s, \eta)$. Since $\phi_q=\phi=\id$, the new section is again $s$.
 We consider $\beta: \mathfrak{f}\times \mathfrak{f}\to \mathfrak{h}$ with $p^{*}\beta := \eta - \omega_{u}$. 
 Then,
\begin{equation*}
\Adj^{q}(\mathfrak{k})(\eta)=\omega_{u}+p^{*}(\beta-{R_q})=\eta-p^{*}R_q=\eta+p^{*}\xi\text{.}
\end{equation*}
This shows that $\ADJ^q(\mathfrak{k})(s, \eta)=(s, \eta+p^{*}\xi)$, and proves (a).

(b)
If $\eta, \eta'$ are two $s$-adapted adjustments on $\mathfrak{G}$ with $\KL^{\mathrm{adj}}(\mathfrak{G}, \eta) = \KL^{\mathrm{adj}}(\mathfrak{G}, \eta')$, then the symmetric parts of $\eta$ and $\eta'$ agree, hence 
\[
\eta' - \eta = p^* \xi
\]
for some $\xi \in \Alt^2(\mathfrak{f}, \mathfrak{a})$, which is closed by \eqref{FunnyFourTermSequence}.
Hence, (a) provides the claimed isomorphism $(\mathfrak{G}, s, \eta) \cong (\mathfrak{G}, s, \eta')$.
\end{proof}

The main purpose of the above lemma is to prove the following result.

\begin{theorem}
\label{adjusted-KL}
The map  $\KL^{\mathrm{adj}}$ is a bijection, and fits into the commutative diagram
\begin{equation*}
\xymatrix@C=3em{\pi_0\cm^{\mathrm{adj}}(\mathfrak{f}, \mathfrak{a}) \ar[d] \ar[r]^-{\KL^{\mathrm{adj}}} & \Sym^2(\mathfrak{f}, \mathfrak{a})^{\ad} \ar[d]^{\mathrm{cw}} \\ \pi_0\cm(\mathfrak{f}, \mathfrak{a}) \ar[r]_-{\KL} & H^3(\mathfrak{f}, \mathfrak{a}) }
\end{equation*}
\end{theorem}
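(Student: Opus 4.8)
The plan is to treat the three assertions---commutativity of the square, surjectivity, and injectivity of $\KL^{\mathrm{adj}}$---in turn, leaning throughout on the Kassel--Loday classification \cref{classification-of-crossed-modules}, the existence result \cref{existence-of-infinitesimal-adjustments}, and the transfer results of \cref{affine-linear-map,homotopy-groups-of-ADJ,LemmaMorphismsInCrMod}. Commutativity is immediate: the left vertical (forgetful) map sends $[(\mathfrak{G},s,\eta)]$ to $[\mathfrak{G}]$, whose image under the bottom map is $\KL(\mathfrak{G})$, and \cref{existence-of-infinitesimal-adjustments} states precisely that $[\cw(\KL^{\mathrm{adj}}(\mathfrak{G},\eta))] = \KL(\mathfrak{G})$. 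For surjectivity, given $B \in \Sym^2(\mathfrak{f},\mathfrak{a})^{\ad}$, I would first use surjectivity in \cref{classification-of-crossed-modules} to produce a central crossed module $\mathfrak{G}$ with $\KL(\mathfrak{G}) = [\cw(B)]$, and then apply the converse direction of \cref{existence-of-infinitesimal-adjustments} to obtain, for any chosen section $s$, an $s$-adapted adjustment $\eta$ with $\KL^{\mathrm{adj}}(\mathfrak{G},\eta) = B$; the triple $(\mathfrak{G},s,\eta)$ is then the desired preimage.

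The heart of the proof is injectivity. Suppose $(\mathfrak{G}_1,s_1,\eta_1)$ and $(\mathfrak{G}_2,s_2,\eta_2)$ have $\KL^{\mathrm{adj}}(\mathfrak{G}_1,\eta_1) = \KL^{\mathrm{adj}}(\mathfrak{G}_2,\eta_2) = B$. By commutativity, $\KL(\mathfrak{G}_1) = [\cw(B)] = \KL(\mathfrak{G}_2)$, so injectivity in \cref{classification-of-crossed-modules} furnishes an invertible butterfly $\mathfrak{k}\colon \mathfrak{G}_1 \to \mathfrak{G}_2$ inducing the identities on $\mathfrak{f}$ and $\mathfrak{a}$; I fix a neat section $q$. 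The triple $(\mathfrak{k},q,\id)$ is then a $1$-isomorphism $(\mathfrak{G}_1,s_1,\eta_1) \to (\mathfrak{G}_2,s_2',\eta_2')$ in $\cm^{\mathrm{adj}}(\mathfrak{f},\mathfrak{a})$, where $(s_2',\eta_2') := \ADJ^q(\mathfrak{k})(s_1,\eta_1)$. Because $\mathfrak{k}$ induces identities, $\varphi_{\mathfrak{k}}$ is the identity, so \cref{affine-linear-map} gives $\KL^{\mathrm{adj}}(\mathfrak{G}_2,\eta_2') = B$, and it remains to show that $(\mathfrak{G}_2,s_2',\eta_2')$ and $(\mathfrak{G}_2,s_2,\eta_2)$ are isomorphic in $\cm^{\mathrm{adj}}(\mathfrak{f},\mathfrak{a})$---the case of a single crossed module.

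For this last step I would argue as follows. Since $\eta_2'$ and $\eta_2$ are adapted adjustments on $\mathfrak{G}_2$ with equal symmetric part $-p_2^*B$, both objects lie over $B$ in the map $\pi_0\ADJ(\mathfrak{G}_2) \to \Sym^2(\mathfrak{f},\mathfrak{a})^{\ad}$ of \cref{homotopy-groups-of-ADJ}; by the transitivity of the $H^2(\mathfrak{f},\mathfrak{a})$-action established in its proof, $(s_2,\eta_2) \cong (s_2',\eta_2'+p_2^*\xi)$ in $\ADJ(\mathfrak{G}_2)$ for some closed $\xi \in \Alt^2(\mathfrak{f},\mathfrak{a})$. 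An isomorphism in $\ADJ(\mathfrak{G}_2)$ yields one in $\cm^{\mathrm{adj}}(\mathfrak{f},\mathfrak{a})$ via the identity butterfly, while \cref{LemmaMorphismsInCrMod} (a) supplies the isomorphism $(\mathfrak{G}_2,s_2',\eta_2') \cong (\mathfrak{G}_2,s_2',\eta_2'+p_2^*\xi)$; composing these identifies $(\mathfrak{G}_2,s_2,\eta_2)$ with $(\mathfrak{G}_2,s_2',\eta_2')$, completing the reduction and hence injectivity. When the two sections coincide, this is exactly \cref{LemmaMorphismsInCrMod} (b).

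I expect the main obstacle to be precisely this final single-crossed-module step, and within it the mismatch $s_2' \neq s_2$ of the sections produced by the transport functor $\ADJ^q(\mathfrak{k})$: the literal statement of \cref{LemmaMorphismsInCrMod} (b) assumes a common section, so the residual $H^2$-ambiguity must be absorbed by combining the transitivity of the $H^2$-action from \cref{homotopy-groups-of-ADJ} with its trivialization in $\cm^{\mathrm{adj}}$ coming from the butterflies $\mathfrak{k}_\xi$ of \cref{LemmaMorphismsInCrMod} (a). Commutativity and surjectivity, by contrast, are essentially direct consequences of the already-established \cref{existence-of-infinitesimal-adjustments,classification-of-crossed-modules}.
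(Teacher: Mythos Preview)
Your proof is correct and follows the same overall strategy as the paper: commutativity from \cref{existence-of-infinitesimal-adjustments}, surjectivity from \cref{existence-of-infinitesimal-adjustments} together with \cref{classification-of-crossed-modules}, and injectivity by first transporting along a butterfly and then matching up two adapted adjustments on the same crossed module via \cref{LemmaMorphismsInCrMod}.

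The one place where you work harder than necessary is the section mismatch $s_2' \neq s_2$ that you flag as the main obstacle. The paper sidesteps it entirely: by \cref{splittings-assumption} one can choose the section $q$ of the butterfly $\mathfrak{k}$ to be \emph{neat with respect to the given sections $s_1$ and $s_2$}, which (since $\Phi = \id_{\mathfrak{f}}$ here) means $\phi_q s_1 = s_2$. With this choice the transport $\ADJ^q(\mathfrak{k})(s_1,\eta_1)$ lands directly at $(s_2,\eta_2')$, so \cref{LemmaMorphismsInCrMod}~(b) applies verbatim and the detour through the $H^2(\mathfrak{f},\mathfrak{a})$-transitivity of \cref{homotopy-groups-of-ADJ} combined with part~(a) becomes unnecessary. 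Your route is perfectly valid, just longer; the paper's choice of neat section is the cleaner move.
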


\begin{proof}
The diagram is commutative due to \cref{existence-of-infinitesimal-adjustments}. The bottom horizontal map is a bijection due to \cref{classification-of-crossed-modules}. 
The top horizontal map is surjective again due to  \cref{existence-of-infinitesimal-adjustments}. 
For injectivity of $\KL^{\mathrm{adj}}$, assume $(\mathfrak{G}_1, s_1, \eta_1)$ and $(\mathfrak{G}_2, s_2, \eta_2)$ are objects of $\cm^{\mathrm{adj}}(\mathfrak{f}, \mathfrak{a})$ with $B_{\eta_1}=B_{\eta_2}$. By commutativity, this means that $\KL(\mathfrak{G}_1)=\KL(\mathfrak{G}_2)$; hence, there exists an invertible butterfly $\mathfrak{k}':\mathfrak{G}_1 \to \mathfrak{G}_2$. 
We choose a neat section $q'$ (with respect to $s_1$ and $s_2$) in $\mathfrak{k'}$ and consider $\eta_2' := \Adj^{q'}(\mathfrak{k}')(\eta_1)$. 
Then, we have $(s_2, \eta_2')=\ADJ^{q'}(s_1, \eta_1)$, and  $(\mathfrak{k}', q', \id)$ is an isomorphism in $\cm^{\mathrm{adj}}(\mathfrak{f}, \mathfrak{a})$ from  $(\mathfrak{G}_1, s_1, \eta_1)$ to  $(\mathfrak{G}_2, s_2, \eta_2')$. 
As $\eta_1$ and $\eta_2$ have the same symmetric part by assumption, and $\eta_2'$ has the same symmetric part as $\eta_1$ by construction, objects $(\mathfrak{G}_1, s_1, \eta_1)$ and  $(\mathfrak{G}_2, s_2, \eta_2')$ are isomorphic by \cref{LemmaMorphismsInCrMod}.
\end{proof}

\begin{remark}
\Cref{LemmaMorphismsInCrMod} has a two further consequences, which we describe here. 
Suppose $\mathfrak{G}$ is a central crossed module of Lie algebras, with $\mathfrak{a}:=\pi_1\mathfrak{G}$ and $\mathfrak{f}:=\pi_0\mathfrak{G}$.
\begin{enumerate}[(a)]

\item
If $H^2(\mathfrak{f}, \mathfrak{a})$ is non-trivial and  $\mathfrak{G}$ admits adjustments, it happens that two infinitesimal adjustments $\eta, \eta'$ on  $\mathfrak{G}$, adapted to a section $s$ are not isomorphic in the category $\ADJ^{s}(\mathfrak{G})$ whereas the adjusted crossed modules $(\mathfrak{G}, s, \eta)$ and $(\mathfrak{G}, s, \eta')$ are isomorphic in the bicategory $\cm^{\mathrm{adj}}$. 
This is because is a non-trivial self-butterfly of $\mathfrak{G}$ mapping $\eta$ to $\eta'$, see \cref{LemmaMorphismsInCrMod} (b).

\item
Let $\eta_1$, $\eta_2$ be two infinitesimal adjustments on $\mathfrak{G}$ with $\KL^{\mathrm{adj}}(\mathfrak{G}, \eta_1) = \KL^{\mathrm{adj}}(\mathfrak{G}, \eta_2)$, adapted to two possibly different sections.
Then there exists an invertible butterfly $\mathfrak{k} : \mathfrak{G} \to \mathfrak{G}$ inducing the identity on $\pi_0(\mathfrak{G})$ and $\pi_1(\mathfrak{G})$, together with a neat section $q$ such that $\ADJ^q(\mathfrak{k})$ sends $\eta_1$ to $\eta_2$.
\end{enumerate}
\end{remark}

\section{Constructions of crossed modules with prescribed adjusted Kassel-Loday class}

In this section, we provide  constructions of crossed modules and adjustments, starting from a bilinear form $B \in \Sym^2(\mathfrak{f}, \mathfrak{a})^{\ad}$. In \cref{SectionConstructionCrossedModule1}, $\mathfrak{f}$ and $\mathfrak{a}$ may be any finite-dimensional Lie algebras, with $\mathfrak{a}$ abelian, and our construction produces an adjusted crossed module of Lie algebras with adjusted Kassel-Loday class $B$.
In \cref{SectionConstructionCrossedModule2}, $\mathfrak{f}$ and $\mathfrak{a}$ are the Lie algebras of certain Lie groups $F$ and $A$, and our construction produces a crossed module of Lie groups with homotopy groups $F$ and $A$, together with an adjustment, reducing under differentiation to the structures of \cref{SectionConstructionCrossedModule1}.

\subsection{Construction of a crossed module of Lie algebras}
\label{SectionConstructionCrossedModule1}

Let $\mathfrak{f}$ be a Lie algebra and let $\mathfrak{a}$ an abelian Lie algebra, both finite-dimensional.
In this section, we will construct from the input datum of an $\ad$-invariant symmetric bilinear form $B \in \Sym^2(\mathfrak{f}, \mathfrak{a})^{\ad}$ an adjusted crossed module $(\mathfrak{G}, s, \eta)$ of Lie algebras with $\pi_0(\mathfrak{G}) = \mathfrak{f}$, $\pi_1(\mathfrak{G}) = \mathfrak{a}$ and adjusted Kassel-Loday class 
\[
\KL^{\mathrm{adj}}(\mathfrak{G}, \eta) = B.
\]
In other words, we construct an explicit inverse of the map $\KL^{\mathrm{adj}}$ from  \eqref{adjusted-KL}.

Denote by $P_0 \mathfrak{f}$ the space of smooth paths $f: [0, 1] \to \mathfrak{f}$ such that $f(0) =0$.
It is a Fr\'echet Lie algebra with the pointwise defined Lie bracket.
We define an $\mathfrak{a}$-valued bilinear form $\tilde{\eta}$ on $P_0\mathfrak{f}$ by the formula
\begin{equation}
\label{FormulaEtaTilde}
\tilde{\eta}_B(f, g) = - 2\int_0^1 B\big(f'(t), g(t)\big) dt.
\end{equation}
By $\ad$-invariance of $B$, we have $\tilde\eta_B \in T(P_0\mathfrak{f}, \mathfrak{a})$, with the symmetric part of $\tilde\eta_B$ given by
\begin{align}
\label{SymmetricPartEtaTilde}
\tilde\eta_B^{\sym}(f, g) = \frac{1}{2} \cdot \big(\tilde\eta_B(f, g) + \tilde\eta_B(g, f)\big)
&= - \int_0^1 \frac{d}{dt} B\big(f(t), g(t)\big) dt =  - B\bigl(f(1), g(1)\big).
\end{align}
%

Denote by $L_0 \mathfrak{f} \subseteq P_0\mathfrak{f}$ the subspace of those paths $f$ that also satisfy $f(1) = 0$.
By the above calculation, $\tilde\eta_B$ is anti-symmetric on $L_0\mathfrak{f}$ and by the exact sequence \eqref{FunnyFourTermSequence}, is closed with respect to the Chevalley-Eilenberg differential.
Hence, we obtain a central Lie algebra extension
\[
\widetilde{L_0\mathfrak{f}} := L_0\mathfrak{f} \ltimes \mathfrak{a},
\]
with Lie bracket
\begin{equation}
\label{DefinitionLieBracket}
\big[(f, a), (g, b)\big] = \big( [f, g], \tilde\eta_B(f, g)\big),
\end{equation}
together with an action of $P_0\mathfrak{f}$ on $\widetilde{L_0\mathfrak{f}}$ by a similar formula,
\begin{equation}
\label{ActionByEtaTilde}
\alpha_*\bigl(f, (g, b)\big) = \big([f, g], \tilde\eta_B(f, g)\big).
\end{equation}
Defining $t_*: \smash{\widetilde{L_0\mathfrak{f}}} = L_0\mathfrak{f} \ltimes \mathfrak{a} \to P_0\mathfrak{f}$ as the projection onto the first coordinate followed by the inclusion, we have the Peiffer identities
\begin{align*}
t_*\alpha_*\bigl(f, (g, b)\bigr) &= \bigl[f, t_*(g, b)\bigr]
\\
\alpha_*\bigl(t_*(f, a), (g, b)\bigr) &= \big[(f, a), (g, b)\big].
\end{align*}
Thus,
\begin{equation}
\label{PathLieAlgebraCrossedModule}
\mathfrak{G}_B := \Bigl(\widetilde{L_0\mathfrak{f}} \stackrel{t_*}{\longrightarrow}  P_0\mathfrak{f} \stackrel{\alpha_*}{\longrightarrow} \Der(\widetilde{L_0\mathfrak{f}})\Bigr)  
\end{equation}
is a crossed module of Lie algebras. 
It is central, and we have $\pi_0(\mathfrak{G}_B) = \mathfrak{f}$ with $p_* = \mathrm{ev}_1 : P_0\mathfrak{f} \to \mathfrak{f}$ and $\pi_1(\mathfrak{G}_B) = \mathfrak{a}$.

To construct an adapted adjustment on $\mathfrak{G}_B$, we note that a section is a linear map $s: \mathfrak{f} \to P_0\mathfrak{f}$ such that $s(x)(1) = x$.
For example, $s_0(x)(t) := tx$ is a canonical choice. 
A further class of possible choices is $s_{\psi}(x)(t) := \psi(t) x$, where $\psi: [0,1] \to [0,1]$ is any smooth map with $\psi(0)=0$ and $\psi(1)=1$; this reduces to the canonical section for $\psi=\id$.

\begin{theorem}
\label{adjustment-from-B}
Let $\mathfrak{f}$ and $\mathfrak{a}$ be finite-dimensional Lie algebras, with $\mathfrak{a}$ abelian.
Let $B \in \mathrm{Sym}^2(\mathfrak{f}, \mathfrak{a})^{\mathrm{ad}}$, and let $s$ be a section of the associated crossed module  $\mathfrak{G}_B$. 
Then, the formula
\[
\eta_{B,s}(f, g) := \Bigl( [f, g] - s\big( [f(1), g(1)]\big), \tilde\eta_B(f, g)\Bigr)
\]
defines an infinitesimal adjustment on $\mathfrak{G}_B$ adapted to the section $s$ 
such that
\[
\KL^{\mathrm{adj}}(\mathfrak{G}_B, \eta_{B,s}) = B\text{.}
\]
\end{theorem}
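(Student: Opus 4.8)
The plan is to verify the three defining identities of an infinitesimal adjustment from \cref{DefInfAdj}, together with the adaptedness condition \eqref{AdaptedEquationKappastar}, by exploiting the splitting $\widetilde{L_0\mathfrak{f}} = L_0\mathfrak{f} \ltimes \mathfrak{a}$ and correspondingly decomposing $\eta_{B,s}$ into its $L_0\mathfrak{f}$-component $\mu(f,g) := [f,g] - s([f(1),g(1)])$ and its $\mathfrak{a}$-component $\tilde\eta_B$. First I would record that $\eta_{B,s}$ is genuinely valued in $\widetilde{L_0\mathfrak{f}}$: evaluating $\mu(f,g)$ at $t=1$ gives $[f(1),g(1)] - [f(1),g(1)] = 0$, so $\mu(f,g) \in L_0\mathfrak{f}$, while bilinearity and continuity are immediate from bilinearity of the bracket, linearity of $s$, and continuity of the integral in \eqref{FormulaEtaTilde}.

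For adaptedness, note that $t_*$ is the first projection $\widetilde{L_0\mathfrak{f}} \to P_0\mathfrak{f}$ and that $\rho_s = \id - sp$ with $p = \mathrm{ev}_1$; hence $t_*\eta_{B,s}(f,g) = \mu(f,g) = [f,g] - s([f,g](1)) = \rho_s([f,g])$, which is exactly \eqref{AdaptedEquationKappastar}. For identities \eqref{EtaIdentities-1} and \eqref{EtaIdentities-2}, the key observation is that any element of $t_*\mathfrak{h}$ is a path ending at $0$. When one of the two arguments lies in $L_0\mathfrak{f}$, the endpoint bracket $[f(1),g(1)]$ vanishes, so the $s$-correction drops out since $s(0)=0$; moreover the symmetric part of $\tilde\eta_B$ equals $-B(f(1),g(1))$ by \eqref{SymmetricPartEtaTilde}, which then also vanishes, making $\tilde\eta_B$ antisymmetric in that argument. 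Both identities thereby reduce to the defining formula \eqref{ActionByEtaTilde} for $\alpha_*$, with the expected signs.

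The only identity requiring the full three-variable structure is the Leibniz-type relation \eqref{EtaIdentities-0}, which I would check componentwise. The $\mathfrak{a}$-component is precisely the assertion $\tilde\eta_B \in T(P_0\mathfrak{f},\mathfrak{a})$, which holds by $\ad$-invariance of $B$ as already recorded after \eqref{FormulaEtaTilde}. For the $L_0\mathfrak{f}$-component, the bracket terms of $\mu$ satisfy \eqref{EtaIdentities-0} by the pointwise Jacobi identity in $\mathfrak{f}$, while the three $s$-terms combine, using linearity of $s$ and the Jacobi identity in $\mathfrak{f}$ evaluated at the endpoints $X(1),Y(1),Z(1)$, to cancel in the same pattern. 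Thus \eqref{EtaIdentities-0} holds.

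Finally, to identify the adjusted Kassel-Loday class I would compute the symmetric part $\eta_{B,s}^{\sym}$. The $L_0\mathfrak{f}$-component $\mu$ is antisymmetric, as both $[f,g]$ and $[f(1),g(1)]$ are, so it contributes nothing, while the symmetric part of the $\mathfrak{a}$-component is $-B(f(1),g(1))$ by \eqref{SymmetricPartEtaTilde}. Under the identification $\mathfrak{a} = \ker(t_*)$ and $p = \mathrm{ev}_1$ this reads $\eta_{B,s}^{\sym} = -p^*B$, which is exactly the defining equation of $\KL^{\mathrm{adj}}(\mathfrak{G}_B,\eta_{B,s}) = B$. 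I expect no serious obstacle here: the construction is designed so that every condition localizes to either a pointwise Lie-algebra identity or the single global input $\tilde\eta_B \in T(P_0\mathfrak{f},\mathfrak{a})$, which is assumed; the only care needed is the bookkeeping of which correction terms vanish on $t_*\mathfrak{h}$.
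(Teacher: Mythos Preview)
Your proposal is correct and follows essentially the same approach as the paper: both proofs verify adaptedness directly from $\rho_s(f)=f-s(f(1))$, deduce \eqref{EtaIdentities-1} and \eqref{EtaIdentities-2} from the formula \eqref{ActionByEtaTilde} for $\alpha_*$, check \eqref{EtaIdentities-0} componentwise via the Jacobi identity in the $L_0\mathfrak{f}$-component and the membership $\tilde\eta_B\in T(P_0\mathfrak{f},\mathfrak{a})$ in the $\mathfrak{a}$-component, and read off $\KL^{\mathrm{adj}}=B$ from \eqref{SymmetricPartEtaTilde}. Your version is simply more explicit about the bookkeeping (well-definedness of the first component, the vanishing of the $s$-correction on $t_*\mathfrak{h}$, and the antisymmetry of $\tilde\eta_B$ on loops needed for the sign in \eqref{EtaIdentities-1}).
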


\begin{proof}
The identity \eqref{AdaptedEquationKappastar} is obvious, as $\rho_s(f) = f - s(f(1))$.
The identities \eqref{EtaIdentities-2} and \eqref{EtaIdentities-1} follow immediately from the formula \eqref{ActionByEtaTilde} for the action.
In the $L_0\mathfrak{g}$ component, \eqref{EtaIdentities-0} follows from the Jacobi identity.
In the $\mathfrak{a}$ component, \eqref{EtaIdentities-0} follows from the fact that $\tilde\eta_B$ is contained in $T(P_0\mathfrak{g}, \mathfrak{a})$.

The symmetric part of $\eta$ equals the symmetric part of $\tilde\eta_B$, which by \eqref{SymmetricPartEtaTilde} is precisely the pullback of $- B$ under $\mathrm{ev}_1 : P_0\mathfrak{f} \to \mathfrak{f}$.
Hence $\KL^{\mathrm{adj}}(\mathfrak{G}, \eta) = B$. 
\end{proof}

\begin{remark}
\label{RemarkKLclass}
A consequence of \cref{adjustment-from-B}, following from \cref{existence-of-infinitesimal-adjustments}, is that 
\begin{equation*}
\mathrm{KL}(\mathfrak{G}_B)=[\cw(B)]\text{.}
\end{equation*}
This can also be checked manually:
Every section $s$ of $\mathfrak{G}_B$ has a  canonical extension to a splitting $u_s: P_0\mathfrak{f} \to \smash{\widetilde{L_0\mathfrak{f}}}$,  namely
\[
u_s(f) = \big(f - s(f(1)), 0\big),
\]
and one may calculate that  the 2-form $\omega_{u_s}$ from \eqref{omegaU} is 
\[
\omega_{u_{s}}(f, g) = \Bigl([f, g] - s\big([f(1), g(1)]\big), \tilde\eta_B(f, g) - \tilde\eta_B\big(s(f(1)), s(g(1))\bigr) \Bigr).
\]
%
For the canonical section $s_0$, we get 
\[
\tilde\eta_B\bigl(s_0(f(1)), s_0(g(1))\bigr) = -2\int_0^1 B\bigl(f(1), tg(1)\big)dt = -B\bigl(f(1), g(1)\big) = \tilde\eta_B^{\sym}(f, g),
\]
hence
\[
\omega_{u_{s_0}}(f, g) = \Bigl([f, g] - s\big([f(1), g(1)]\big), \tilde\eta_B^{\anti}(f, g) \Bigr).
\]
Further, the corresponding representative of the Kassel-Loday class of $\mathfrak{G}_B$ is
\begin{equation*}
C_{u_{s_0}}=\cw(B)\text{.}
\end{equation*}
For an arbitrary section $s$, we define
\begin{align*}
\tilde{\theta}_s(f, g)&:=\tilde\eta_B\bigl(s_0(f(1)), s_0(g(1))\bigr) - \tilde\eta_B\bigl(s(f(1)), s(g(1))\bigr)\text{,}
\end{align*}
 which descends to an element $\theta_s\in \Alt^2(\mathfrak{f},\mathfrak{a})$. 
Then, we get
 \begin{equation*}
C_{u_{s}}= C_{u_{s_0}}+\delta\theta_s\text{.}
\end{equation*}
For instance, one can check that $\theta_{\psi}=0$ for any smooth map $\psi:[0,1] \to [0,1]$ with $\psi(0)=0$ and $\psi(1)=1$, so that $C_{u_{s_{\psi}}}=\cw(B)$.  
\end{remark}

\begin{remark}
\label{uniqueness-of-adjustments-on-GB}
Depending on $\mathfrak{f}$ and $\mathfrak{a}$, the adjustments $\eta_{B,s}$ of \cref{adjustment-from-B} are in general not the only adjustments on $\mathfrak{G}_B$. 
By \cref{homotopy-groups-of-ADJ}, the preimage of $B$ under the map  $\pi_0\ADJ(\mathfrak{G}_B) \to \mathrm{Sym}^2(\mathfrak{f}, \mathfrak{a})^{\mathrm{ad}}$ is an affine space over $H^2(\mathfrak{f}, \mathfrak{a})$, and the automorphism group of the object $(s, \eta_{B,s})$ in the groupoid $\ADJ(\mathfrak{G}_B)$ is $H^1(\mathfrak{f}, \mathfrak{a})$. However, if $s$ and $s'$ are arbitrary sections, then $(s, \eta_{B,s})$ and $(s', \eta_{B,s'})$ are canonically isomorphic in $\ADJ(\mathfrak{G}_B)$. 
\end{remark}

\begin{remark}
\label{flat-end-points}
In relation with Lie 2-groups, it is relevant to consider a variation on the above construction where one considers paths that are \emph{flat} at the end points, i.e., have all derivatives vanish at the end points of the interval $[0, 1]$; see \cite{LudewigWaldorf2Group}.
This yields a new crossed module
\begin{equation}
\label{PathLieAlgebraCrossedModuleflat}
\mathfrak{G}_B^{\mathrm{fl}} = \Bigl(\widetilde{L_0^{\mathrm{fl}}\mathfrak{f}} \stackrel{t_*}{\longrightarrow}  P_0^{\mathrm{fl}}\mathfrak{f} \stackrel{\alpha_*}{\longrightarrow} \Der(\widetilde{L_0^{\mathrm{fl}}\mathfrak{f}})\Bigr) \end{equation}
The diagram
\begin{equation*}
\xymatrix{\widetilde{L_0^{\mathrm{fl}}\mathfrak{f}} \ar[d] \ar[r]^{t_*} &  P_0^{\mathrm{fl}}\mathfrak{f} \ar[d]\\ \widetilde{L_0\mathfrak{f}}\ar[r]_{t_{*}} & P_0\mathfrak{f}\text{,}}
\end{equation*}
whose vertical arrows are the inclusion maps, is a strict intertwiner from $\mathfrak{G}^{\mathrm{fl}}_B$ to $\mathfrak{G}_B$, which induces the identity on $\pi_0$ and $\pi_1$. Hence, $\mathfrak{G}_B^{\mathrm{fl}}$ and $\mathfrak{G}_B$ are weakly equivalent crossed modules of Lie algebras, and have -- in particular -- the same Kassel-Loday class.
The disadvantage of $\mathfrak{G}_B^{\mathrm{fl}}$ is that the canonical section $s_0$ of $\mathfrak{G}_B$ does not map into $P_0^{\mathrm{fl}}\mathfrak{f}$, so that $\mathfrak{G}_B^{\mathrm{fl}}$ does not possess a canonical section. 
However, if $\psi : [0, 1] \to [0, 1]$ is a smooth function with $\psi(0) =0$ and $\psi(1) = 1$ that is flat at the end points, then $s_{\psi}$ is a section of $\mathfrak{G}_B^{\mathrm{fl}}$, and the discussion in \cref{RemarkKLclass} shows that $\smash{C_{u_{s_{\psi}}}=\mathrm{cw}(B)}$ is  the corresponding representative of the Kassel-Loday class of $\mathfrak{G}_B^{\mathrm{fl}}$.
\\
Concerning the adjustments, let $s$ be any section of $\mathfrak{G}_B^{\mathrm{fl}}$, which is then also a section of $\mathfrak{G}_B$, and set $\eta^{\mathrm{fl}}_{B,s}:=\eta_{B,s}|_{P_0^{\mathrm{fl}}\mathfrak{f} \times P_0^{\mathrm{fl}}\mathfrak{f}}$.
Then $(\mathfrak{G}_B^{\mathrm{fl}}, s, \eta_{B,s}^{\mathrm{fl}})$ is an object in $\cm^{\mathrm{adj}}(\mathfrak{f}, \mathfrak{a})$ and by \cref{adjustemnts-and-strict-intertwiners}, is isomorphic to $(\mathfrak{G}_B, s, \eta_{B,s})$.
In particular, $\smash{\KL^{\mathrm{adj}}(\mathfrak{G}_B^{\mathrm{fl}}, \eta_{B,s}^{\mathrm{fl}})=B}$. Finally, since the groupoids of adjustments of $\mathfrak{G}_B$ and $\mathfrak{G}_B^{\mathrm{fl}}$ are equivalent by \cref{equivalent-crossed-modules-have-equivalent-groupoids-of-adjustments}, all statements of \cref{uniqueness-of-adjustments-on-GB} about $\ADJ(\mathfrak{G}_B)$ also hold for $\ADJ(\mathfrak{G}_B^{\mathrm{fl}})$.
\end{remark}

\begin{remark}
It is not a coincidence that the Lie algebras $\mathfrak{g}$ and $\mathfrak{h}$ in the  construction of the crossed module $\mathfrak{G}_B$ are infinite-dimensional.
By \cite[Theorem~5]{Hochschild1954}, a 3-class $C \in H^3(\mathfrak{f}, \mathfrak{a})$ is the Kassel-Loday class of a \emph{finite-dimensional} crossed module if and only if its restriction to any semisimple subalgebra $\mathfrak{s} \subseteq\mathfrak{f}$ vanishes. 
Further, the additional requirement that $C$ is in the image of the Chern-Weil homomorphism in order for the crossed module to admit an adjustment rules out many non-trivial classes satisfying these conditions, for example when $\mathfrak{f}$ is abelian.
We do not know if there exists a finite-dimensional crossed module that admits an adjustment but has non-trivial Kassel-Loday class.
\end{remark}

\subsection{Construction of a crossed module of Lie groups}
\label{SectionConstructionCrossedModule2}

Let $F$ and $A$ be connected Lie groups, $A$ abelian, with Lie algebras $\mathfrak{f}$ and $\mathfrak{a}$, and let $B \in \Sym^2(\mathfrak{f}, \mathfrak{a})^{\ad}$ be given. 
We are now looking for an integral version of \cref{adjustment-from-B}: we ask if  there is a crossed module $\Gamma$ of Lie groups, with $\pi_0(\Gamma) = F$ and $\pi_1(\Gamma) = A$, equipped with a section $s$ of its corresponding crossed module $\mathfrak{G}$ of Lie algebras, and further equipped with an  adjustment $\kappa$ adapted to $s$, such that $\KL^{\mathrm{adj}}(\mathfrak{G}, \kappa_{*})=B$.

One cannot expect that the answer is always positive: a necessary condition is that  $[\cw(B)]\in H^3(\mathfrak{f}, \mathfrak{a})$ can be realized as the Kassel-Loday class of (the crossed module of Lie algebras corresponding to) a crossed module of Lie groups. 
In terms of the classification of crossed modules by Lie group cohomology (see \cref{smooth-group-cohomology-classification}), the obstruction is that $[\mathrm{cw}(B)]$ must be the image of the map $H^3(F, A) \to H^3(\mathfrak{f}, \mathfrak{ a})$.

We may identify  $A \cong \mathfrak{a}/\Lambda$, where $\Lambda \subseteq \mathfrak{a}$ is a lattice.
We consider, for  $C \in \Alt^3_{\mathrm{cl}}(\mathfrak{f}, \mathfrak{a})$,  the group of periods 
\[
\mathrm{Per}(C) := \Bigl\{ \int_Z \bar{C} ~\Big|~ Z~\text{is a smooth singular 3-cycle in $F$} \Big\} \subset \mathfrak{a}\text{,}
\]
where $\bar{C}$ is the invariant $3$-form on $F$ obtained from $C$ by left translation.
In this section, we prove the following theorem, showing that it is sufficient to require that the group of periods of $\cw(B)$ is \quot{integral}.

\begin{theorem}
\label{ThmExistenceLieGroups}
Let $F$ be a connected, simply connected, and finite-dimensional Lie group with Lie algebra $\mathfrak{f}$,  and let $A$ be a connected, abelian, and finite-dimensional Lie group with Lie algebra $\mathfrak{a}$. 
Let $\Lambda\subset \mathfrak{a}$ be a lattice such that $A\cong \mathfrak{a}/\Lambda$. 
Let $B \in \Sym^2(\mathfrak{f}, \mathfrak{a})^{\ad}$ such that 
\begin{equation}
\label{PerCondition}
        \mathrm{Per}(\cw(B)) \subseteq \Lambda.
\end{equation}
Then, there exists a crossed module $\Gamma_B$ of Lie groups, with $\pi_0(\Gamma_B) = F$ and $\pi_1(\Gamma_B) = A$, with the following property: for every section $s$ of its corresponding crossed module $\mathfrak{G}$ of Lie algebras, there exists an  adjustment $\kappa_{B,s}$ on $\Gamma_B$ adapted to $s$, such that $\KL^{\mathrm{adj}}(\mathfrak{G}, \kappa_{*})=B$.
\end{theorem}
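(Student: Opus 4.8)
The plan is to lift the path-space construction of \cref{SectionConstructionCrossedModule1} from Lie algebras to Lie groups, mirroring the construction of the string group in \cite{LudewigWaldorf2Group}. First I would form the based path group $P_0F$ of smooth maps $\gamma:[0,1]\to F$ with $\gamma(0)=e$ under pointwise multiplication, together with its normal subgroup $L_0F$ of based loops (those $\gamma$ with also $\gamma(1)=e$); these are locally exponential Lie groups with Lie algebras $P_0\mathfrak{f}$ and $L_0\mathfrak{f}$, the evaluation homomorphism $\mathrm{ev}_1:P_0F\to F$ exhibits $F$ as the quotient $P_0F/L_0F$, and $P_0F$ is contractible, hence connected and simply connected. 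Since $\pi_0(L_0F)=\pi_1(F)=0$ and $\pi_1(L_0F)=\pi_2(F)=0$ for the Lie group $F$, the loop group $L_0F$ is connected and simply connected. For technical regularity one may work throughout with the flat variants $P_0^{\mathrm{fl}}F$ and $L_0^{\mathrm{fl}}F$ as in \cref{flat-end-points}, whose associated Lie-algebra crossed module $\mathfrak{G}_B^{\mathrm{fl}}$ is weakly equivalent to $\mathfrak{G}_B$.

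The central step is to integrate the central Lie algebra extension $\widetilde{L_0\mathfrak{f}}$ of $L_0\mathfrak{f}$ determined by the cocycle $\tilde\eta_B$ to a central Lie group extension $\widetilde{L_0F}$ of $L_0F$ by $A=\mathfrak{a}/\Lambda$. Here I would invoke the integration theory for central extensions of infinite-dimensional Lie groups: for the simply connected group $L_0F$, such an extension exists precisely when the image of the period homomorphism associated to the $2$-cocycle $\tilde\eta_B^{\anti}$ is contained in the lattice $\Lambda$. The crux of the argument is a transgression computation identifying this period homomorphism on $\pi_2(L_0F)\cong\pi_3(F)$ with the integration of the left-invariant $3$-form obtained from $\cw(B)$ over $3$-cycles in $F$; granting this, the hypothesis $\mathrm{Per}(\cw(B))\subseteq\Lambda$ is exactly the required integrality condition, and one obtains a connected central extension $\widetilde{L_0F}$. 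This is the step I expect to be the main obstacle, as it is where finite-dimensionality of $F$, simple connectedness, and the period condition all enter, and it rests on the transgression identification rather than on a formal argument.

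Next I would equip $\widetilde{L_0F}$ with the action of $P_0F$ integrating $\alpha_*$: the pointwise conjugation action of $P_0F$ on $L_0F$ lifts canonically to the central extension, and one checks that the lifted action is smooth and by automorphisms. Setting $t:\widetilde{L_0F}\to P_0F$ to be the projection onto $L_0F$ followed by the inclusion then yields a crossed module $\Gamma_B=(\widetilde{L_0F}\to P_0F\to\Aut(\widetilde{L_0F}))$, and I would verify the Peiffer identity and equivariance, that it is central and smoothly separable, and that $\pi_0(\Gamma_B)=F$ and $\pi_1(\Gamma_B)=A$. By construction its associated crossed module of Lie algebras is $\mathfrak{G}_B$ (respectively $\mathfrak{G}_B^{\mathrm{fl}}$), so that a section $s$ of the corresponding $\mathfrak{G}$ is precisely a section in the sense of \cref{SectionConstructionCrossedModule1}.

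Finally, the adjustment is produced by the integration theorem. Since $P_0F$ is locally exponential, connected and simply connected, and $\widetilde{L_0F}$ is connected, \cref{integration-of-adjustments} applies: the infinitesimal adjustment $\eta_{B,s}$ of \cref{adjustment-from-B} integrates to a unique adjustment $\kappa_{B,s}$ on $\Gamma_B$ adapted to $s$, with $(\kappa_{B,s})_*=\eta_{B,s}$. Consequently
\[
\KL^{\mathrm{adj}}(\mathfrak{G},(\kappa_{B,s})_*)=\KL^{\mathrm{adj}}(\mathfrak{G},\eta_{B,s})=B
\]
by \cref{adjustment-from-B}, which completes the proof.
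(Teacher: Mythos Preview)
Your proposal is essentially correct and follows the same overall strategy as the paper: build a path/loop-group model for $\Gamma_B$, integrate the Lie-algebra central extension $\widetilde{L_0\mathfrak{f}}$ to a Lie-group central extension using the period condition via transgression, assemble the crossed module, and then invoke \cref{integration-of-adjustments} together with \cref{adjustment-from-B} to produce $\kappa_{B,s}$. You also correctly anticipate that the flat-endpoint variant is what one actually uses, and that the transgression identification of $\mathrm{Per}(\tilde\eta_B)$ with $\mathrm{Per}(\cw(B))$ is the technical heart.

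The one place where the paper does noticeably more work than your sketch is the construction of the $P_0F$-action on $\widetilde{L_0F}$ and the verification of the Peiffer identity. You write that the conjugation action ``lifts canonically'' to the central extension; but conjugation by $\gamma\in P_0F$ is not inner on $L_0F$, so lifting it requires knowing that the extension class is invariant and then choosing a coherent family of lifts varying smoothly in $\gamma$. The paper sidesteps this entirely by a trick: it embeds $L_{[0,1]}F$ into a larger loop group $L_{[0,2]}F$, sends $\gamma\in P_{[0,1]}F$ to the genuine loop $\gamma\cup\gamma\in L_{[0,2]}F$, and defines the action as conjugation by any lift $\widetilde{\gamma\cup\gamma}$ inside $\widetilde{L_{[0,2]}F}$, which is manifestly well-defined and smooth because the extension is central. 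The Peiffer identity is then reduced to a separate lemma showing that $\widetilde{L_{[0,1]}F}$ is \emph{disjoint-commutative} (lifts of loops with disjoint support commute), proved by differentiating to the cocycle $\tilde\eta_{B}$. Your more abstract route---integrate the Lie-algebra action using simple connectedness of $P_0F$ and then deduce Peiffer from connectedness---should also go through, but you should be aware that this is where nontrivial verification is hiding, not just a formality.
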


To construct the crossed module $\Gamma$, we follow the method of \cite{LudewigWaldorf2Group}, which we recall now.
For a subinterval $I \subseteq \R$, we denote by $L_I F$ the set of smooth maps $\gamma : \R \to F$ supported in $I$, meaning that $\gamma(t) = e$ whenever $t \notin I$. 
This is an infinite-dimensional Lie group with Lie algebra $L_I\mathfrak{f}$, defined analogously.
We consider a adapted version of the 2-cocycle $\tilde\eta_B$ of \cref{FormulaEtaTilde}, given by
\[
\tilde\eta_{B,I}(f, g) := - 2 \int_\R B\bigl(f'(t), g(t)\bigr) dt\text{.}
\]
We remark that $\tilde\eta_{B,[0,1]}=\iota^{*}\tilde\eta_B$, with $\iota: L_{[0,1]}\mathfrak{f} \to P_0\mathfrak{f}$ the inclusion. 

\begin{lemma}
\label{LemmaUniqueCentralExtension}
Let $I \subseteq \R$ be a subinterval.
Under the assumptions of \cref{ThmExistenceLieGroups}, there exists a  central extension 
\begin{equation}
\label{LieGroupCentralAExtension}
0 \longrightarrow A \longmapsto \widetilde{L_I F} \longrightarrow L_I F \longrightarrow 0
\end{equation}
such that $\tilde\eta_{B,I}$ is a classifying cocycle of the corresponding Lie algebra extension.
Moreover, this central extension is unique up to  isomorphism.
\end{lemma}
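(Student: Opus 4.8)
The plan is to integrate the continuous Lie algebra $2$-cocycle $\tilde\eta_{B,I}$ on $L_I\mathfrak{f} = \mathrm{Lie}(L_I F)$ to a central extension of $L_I F$ by $A = \mathfrak{a}/\Lambda$, using the integrability theory for Lie algebra cocycles on infinite-dimensional Lie groups \cite{Neeb2005}. First I would record that $L_I F$ is a connected, regular Fréchet--Lie group with Lie algebra $L_I\mathfrak{f}$, and that $\tilde\eta_{B,I}$ is a genuine continuous $\mathfrak{a}$-valued $2$-cocycle: it lies in $T(L_I\mathfrak{f},\mathfrak{a})$ and is antisymmetric there, hence closed, by the same computation as for $\tilde\eta_B$ in \eqref{FormulaEtaTilde}. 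By Neeb's integrability criterion, such a cocycle integrates to a central extension by $A$ if and only if (a) the period homomorphism $\mathrm{per}:\pi_2(L_I F) \to \mathfrak{a}$ attached to the left-invariant closed $2$-form $\bar{\tilde\eta}_{B,I}$ has image inside the lattice $\Lambda$, and (b) the flux homomorphism on $\pi_1(L_I F)$ vanishes; moreover, when these hold, the integrating extension is unique up to the ambiguity $\mathrm{Hom}(\pi_1(L_I F), A)$.

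Second, I would compute the relevant homotopy groups of $L_I F$. Since smooth maps $\mathbb{R} \to F$ supported in $I$ deformation retract accordingly, $L_I F$ is contractible when $I$ is unbounded or a half-line, and homotopy equivalent to the based loop space $\Omega F$ when $I$ is a bounded interval. In every case $\pi_1(L_I F) = 0$: this is automatic in the contractible case, and in the bounded case follows from $\pi_1(\Omega F) \cong \pi_2(F) = 0$, using the classical vanishing of the second homotopy group of a Lie group together with $\pi_1(F) = 0$. Consequently the flux condition (b) is vacuous, and the uniqueness ambiguity $\mathrm{Hom}(\pi_1(L_I F), A)$ vanishes, which yields both the flux part of existence and the asserted uniqueness up to isomorphism.

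Third, and this is the technical heart, I would verify the period condition (a). For unbounded $I$ we have $\pi_2(L_I F) = 0$ and nothing to prove, so the essential case is a bounded interval, where $\pi_2(L_I F) \cong \pi_3(F)$. The left-invariant $2$-form $\bar{\tilde\eta}_{B,I}$ on $L_I F \simeq \Omega F$ is the transgression of the left-invariant $3$-form $\overline{\cw(B)}$ on $F$, and the factor $-2$ in \eqref{FormulaEtaTilde} is normalised precisely so that, under the adjunction $\pi_2(\Omega F) \cong \pi_3(F)$ sending $\sigma:S^2 \to \Omega F$ to its adjoint $\hat\sigma:S^3 \to F$, one has $\int_{S^2}\sigma^*\bar{\tilde\eta}_{B,I} = \int_{S^3}\hat\sigma^*\overline{\cw(B)}$. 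Hence $\mathrm{im}(\mathrm{per})$ equals the image of the spherical period homomorphism $\pi_3(F)\to\mathfrak{a}$ of $\cw(B)$, which is contained in $\mathrm{Per}(\cw(B))$ because every class in $\pi_3(F)$ is represented by a smooth singular $3$-cycle in $F$. By hypothesis \eqref{PerCondition}, $\mathrm{Per}(\cw(B)) \subseteq \Lambda$, so $\mathrm{im}(\mathrm{per}) \subseteq \Lambda$ as required. The hard part will be establishing this transgression identity with the correct normalisation and orientation conventions, so that the loop-group periods coincide exactly (not merely up to a scalar) with the $3$-form periods; I would carry this out directly via Stokes' theorem for the cylinder $S^2 \times [0,1] \to F$ realising the adjunction, or import it from the analogous computation in \cite{LudewigWaldorf2Group}.

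Finally, assembling the verified conditions (a) and (b) together with the vanishing of $\mathrm{Hom}(\pi_1(L_I F), A)$ produces the central extension \eqref{LieGroupCentralAExtension} whose associated Lie algebra extension is classified by $\tilde\eta_{B,I}$, and this extension is unique up to isomorphism, completing the proof.
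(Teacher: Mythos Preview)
Your proposal is correct and follows essentially the same strategy as the paper: both invoke Neeb's integrability criterion for Lie algebra $2$-cocycles, use $\pi_1(L_IF)\cong\pi_2(F)=0$ to kill the flux obstruction and the uniqueness ambiguity, and verify the period condition by identifying the periods of $\bar{\tilde\eta}_{B,I}$ on $L_IF$ with those of $\overline{\cw(B)}$ on $F$ via transgression. The paper packages Neeb's criterion as the exact sequence $\Hom(\pi_1(L),A)\to\mathrm{Ext}(L,A)\to H^2(\mathfrak{l},\mathfrak{a})\to\Hom(\pi_2(L),A)\times\Hom(\pi_1(L),\mathrm{Lin}(\mathfrak{l},\mathfrak{a}))$ and cites Pressley--Segal (with a sign correction) for the transgression identity rather than redoing it by Stokes, but the substance is the same; your case distinction between bounded and unbounded $I$ is a bit more careful than the paper's blanket assertion $\pi_k(L_IF)\cong\pi_{k+1}(F)$.
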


\begin{proof}
For every (possibly infinite-dimensional) connected Lie group $L$ with Lie algebra $\mathfrak{l}$, the group $\mathrm{Ext}(L, A)$ of central extensions of $L$ by $A$ fits into an exact sequence
\begin{equation*}
\Hom(\pi_1(L), A) \longrightarrow \mathrm{Ext}(L, A) \longrightarrow H^2(\mathfrak{l}, \mathfrak{a}) \longrightarrow \Hom(\pi_2(L), A) \times \Hom(\pi_1(L), \mathrm{Lin}(\mathfrak{l}, \mathfrak{a})),
\end{equation*}
see \cite{Ne02}.
Here, the second map sends a central extension to the class of its corresponding Lie algebra extension, and the first component of the third map, $H^2(\mathfrak{l}, \mathfrak{a}) \to \Hom(\pi_2(L), A)$, sends a cocycle $\omega$ to the homomorphism $\pi_2(L) \to A$ obtained by integrating the invariant 2-form $\overline{\omega}$ associated to $\omega$ over smooth representatives and then applying the quotient map $\mathfrak{a} \to \mathfrak{a}/\Lambda \cong A$.

We apply this result to $L = L_I F$. 
By standard arguments, we have isomorphisms $\pi_k(L_IF) \cong \pi_{k+1}(F)$ for all $k\geq 0$.
 Thus, $L$ is connected since $F$ is simply connected. 
As $F$ is finite-dimensional, we have $ \pi_1(L_IF) \cong\pi_2(F)= 0$; hence, the first map in the exact sequence is zero, making the second map injective and thus showing the claimed uniqueness. 
Moreover, the range of the third map is just $\Hom(\pi_2(L), A)$.

We claim that the condition \eqref{PerCondition} implies that the third map in the sequence sends $[\tilde\eta_B]$ to the zero element of $\Hom(\pi_2(L_IF), A)\cong \mathrm{Hom}(\pi_3(F), A) $; 
equivalently, that we have
\[
\int_{S^2} \gamma^* \overline{\tilde{\eta}_B} \in \Lambda
\]
for each smooth map $\gamma : S^2 \to L_I F$.
By the above discussion, this claim implies the existence of the desired central extension.
To verify the claim, we use that the left invariant form $\overline{\tilde{\eta}_B}$ defined by $\tilde{\eta}_B$ defines the same class in $H^2(L_I F, \R)$ as $-\tau(\overline{\cw(B)})$, where $\tau$ is the transgression map
\begin{equation*}
\xymatrix{
H^3(F, \R) \ar[r]^-{\ev^*} &
 H^3(L_IF \times S^1, \R) 
 \ar[r]^-{\int_{S^1}}
 &
H^2(L_IF, \R).
}
\end{equation*}
This is stated in \cite[Prop.~4.4.4]{PressleySegal} but with incorrect constants; see \cite[Lemma~A.3]{LudewigClifford}.
%
%
If now $\gamma : S^2 \to L_IF$ is smooth, we get by definition of the transgression map that
\begin{align}
\int_{S^2} \gamma^* \overline{\tilde{\eta}_B} 
&= -\int_{S^2} \gamma^*\tau(\overline{\cw(B)})\nonumber
\\\nonumber
&= -\int_{S^2} \int_{S^1}(\gamma \times \id)^* \ev^*\overline{\cw(B)}
\\
&= -\int_{S^2 \times S^1} (\gamma^\vee)^*\overline{\cw(B)},
\label{claim-integrality}
\end{align}
where $\gamma^\vee : S^2 \times S^1 \to F$ is the currying of $\gamma : S^2 \to L_I F$, given by 
\[
\gamma^\vee(x, t) = \gamma(x)(t) = \ev(\gamma(x), t) = (\ev \circ (\gamma \times \id))(x, t). 
\]
By condition \eqref{PerCondition}, the last expression of \cref{claim-integrality} gives an element of $\Lambda$. This proves the claim. 
\end{proof}

From now on, $\widetilde{L_IF}$ always denotes the unique central extension specified by \cref{LemmaUniqueCentralExtension}.
We denote by $P_I F$ the set of smooth paths $\gamma : \R \to F$ that are locally constant outside $I$ and satisfy $\gamma(t) = e$ for $t$ to the left of $I$.
The crossed module $\Gamma$ consists of the group homomorphism
\begin{equation}
\label{Lie4TermSeq}
 \widetilde{L_{[0, 1]} F} \stackrel{t}{\longrightarrow} P_{[0, 1]} F 
\end{equation}
obtained as the foot point projection $\smash{\widetilde{L_{[0, 1]}F}} \to L_{[0, 1]}F$ followed by the inclusion $L_{[0, 1]}F \hookrightarrow P_{[0, 1]}F$.

We use the following observation to define a crossed module action: for $I \subseteq J$ an inclusion of subintervals, inclusion yields a Lie group homomorphism $\iota : L_I F \to L_J F$, and pullback along this map yields a central extension $\iota^*\smash{\widetilde{L_JF}}$ of $L_I F$.
However, as $\iota^{*}\tilde\eta_{B,J}=\tilde\eta_{B,I}$, hence the uniqueness of \cref{LemmaUniqueCentralExtension} implies that
\[
\iota^*\widetilde{L_JF} \cong \widetilde{L_IF}\text{.}
\]
There is a Lie group homomorphism
\begin{equation*}
P_{[0, 1]} F \longrightarrow L_{[0, 2]}F,
\qquad
\gamma \longmapsto \gamma \cup \gamma, \quad \text{with}\quad (\gamma\cup\gamma)(t) := \begin{cases}
        \gamma(t) & t \in [0, 1]
        \\
        \gamma(2 - t) & t \in [1, 2]
        \\
        e & t \notin [0, 2],
 \end{cases} 
\end{equation*}
which yields an action of $P_{[0, 1]} F$ on $\widetilde{L_{[0, 2]}F}$ by conjugation,
\begin{equation}
\label{DefinitionOfCrossedModuleAction}
\alpha(\gamma, \Phi) := \widetilde{\gamma \cup \gamma} \cdot \Phi \cdot \widetilde{\gamma \cup \gamma}^{-1},    
\end{equation}
where $\smash{\widetilde{\gamma \cup \gamma}}$ is any lift of $\gamma \cup \gamma$ to the central extension, and \cref{DefinitionOfCrossedModuleAction} does not depend on that choice since the extension is central.
Since conjugation with $\gamma \cup \gamma$ preserves the subgroup $L_{[0, 1]} F \subseteq L_{[0, 2]} F$, the action restricts to an action on the central extension $\iota^*\smash{\widetilde{L_{[0, 2]}F}} \cong \smash{\widetilde{L_{[0, 1]}F}}$.
It is obvious that the map $t$ from \eqref{Lie4TermSeq} intertwines this action with the conjugation action of $P_{[0, 1]} F$ on $L_{[0, 1]}F$.
However, as observed in \cite[Lemma 3.2.2]{LudewigWaldorf2Group}, for the action to be a crossed module action, we need the central extension $\smash{\widetilde{L_{[0, 1]}F}}$ to be \emph{disjoint commutative}.
Luckily, we have the following lemma.

\begin{lemma}
The central extension $\widetilde{L_{[0, 1]} F}$ is disjoint commutative.
\end{lemma}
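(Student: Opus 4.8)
The plan is to deduce disjoint commutativity directly from the fact that the defining cocycle $\tilde\eta_{B,[0,1]}$ vanishes on disjointly supported elements, combined with a connectedness argument. Recall (following \cite{LudewigWaldorf2Group}) that $\widetilde{L_{[0,1]}F}$ is \emph{disjoint commutative} if any lifts of $\gamma_1,\gamma_2\in L_{[0,1]}F$ with disjoint supports commute. Since disjointly supported loops commute pointwise (at each $t$ at most one of $\gamma_1(t),\gamma_2(t)$ differs from $e$), they commute in $L_{[0,1]}F$, so for arbitrary lifts $\tilde\gamma_1,\tilde\gamma_2$ the commutator $c(\gamma_1,\gamma_2):=\tilde\gamma_1\tilde\gamma_2\tilde\gamma_1^{-1}\tilde\gamma_2^{-1}$ projects to the identity. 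Hence $c(\gamma_1,\gamma_2)$ lies in the central subgroup $A$ and, because $A$ is central, is independent of the chosen lifts. The entire task is to show $c(\gamma_1,\gamma_2)=e$.

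The two ingredients are the following. First, choosing disjoint open sets $U_1,U_2\subseteq\R$ with disjoint closures containing the (compact) supports of $\gamma_1$ and $\gamma_2$, the cocycle cross terms vanish: for $f_1\in L_{U_1}\mathfrak{f}$ and $f_2\in L_{U_2}\mathfrak{f}$ we have $\mathrm{supp}(f_1')\subseteq\overline{U_1}$ and $\mathrm{supp}(f_2)\subseteq\overline{U_2}$, so the integrand in \eqref{FormulaEtaTilde} vanishes identically and $\tilde\eta_{B,[0,1]}(f_1,f_2)=0$ (and likewise $[f_1,f_2]=0$ pointwise). Second, by the commutator identity and centrality of $A$, the map $c$ is \emph{bimultiplicative}, i.e.\ $c(\gamma_1\gamma_1',\gamma_2)=c(\gamma_1,\gamma_2)\,c(\gamma_1',\gamma_2)$ and similarly in the second argument, and it is smooth because local lifts to the central extension are smooth. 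Choosing the canonical exponential lifts, the second mixed derivative of $c$ at the identity computes the $\mathfrak{a}$-component of the Lie bracket in $\widetilde{L_{[0,1]}\mathfrak{f}}$,
\[
\frac{\partial^2}{\partial s\,\partial r}\Big|_{0} c\big(\exp(sf_1),\exp(rf_2)\big)=\tilde\eta_{B,[0,1]}(f_1,f_2)=0 .
\]

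To upgrade this to the vanishing of $c$ itself, I would run the usual homomorphism-rigidity argument twice. For fixed $\gamma_1$, the map $c(\gamma_1,\cdot)\colon L_{U_2}F\to A$ is a smooth homomorphism, and bimultiplicativity makes $\gamma_1\mapsto c(\gamma_1,\cdot)_{*}\in\mathrm{Lin}(L_{U_2}\mathfrak{f},\mathfrak{a})$ a smooth homomorphism $P$ of the group $L_{U_1}F$ into the vector space $\mathrm{Lin}(L_{U_2}\mathfrak{f},\mathfrak{a})$. Its differential sends $f_1$ to the linear form $f_2\mapsto\tilde\eta_{B,[0,1]}(f_1,f_2)=0$, so $P_*=0$. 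Since $L_{U_i}F$ is connected (as in the proof of \cref{LemmaUniqueCentralExtension}, its homotopy groups are governed by $\pi_{k+1}(F)$, and $\pi_1(F)=0$), and homomorphisms out of a connected locally exponential Lie group are determined by their differential (exactly as invoked in \cref{integration-of-adjustments}), we get $P\equiv 0$; thus $c(\gamma_1,\cdot)_{*}=0$ for every $\gamma_1$, and applying rigidity once more in the connected group $L_{U_2}F$ yields $c(\gamma_1,\cdot)\equiv e$. Hence $c\equiv e$ and the lifts commute.

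The main obstacle is entirely in the infinite-dimensional Lie theory that underlies each of these steps: one must ensure the loop groups $L_{U_i}F$ and the central extension $\widetilde{L_{[0,1]}F}$ are locally exponential Lie groups for which homomorphisms are rigid, that local lifts (hence $c$) are genuinely smooth, and that the second-derivative/bracket identity holds in this setting, along with the connectivity statements. All of these are standard within the framework of \cite{Ne02,Neeb2005} already used elsewhere in the paper, so the argument should go through once the loop groups are placed in that framework; the only genuinely new input is the pointwise vanishing of the integrand of $\tilde\eta_{B,[0,1]}$ on disjoint supports.
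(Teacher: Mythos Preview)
Your proof is correct and follows essentially the same route as the paper: define the commutator of lifts as an $A$-valued bihomomorphism on disjointly supported loops, differentiate to reduce to the Lie algebra level, observe that the resulting bilinear map is (the restriction of) $\tilde\eta_{B,[0,1]}$, which vanishes on disjoint supports, and conclude via connectedness of the relevant loop groups. The only cosmetic differences are that the paper packages the two rigidity steps into a single statement (``a bihomomorphism on connected groups is determined by its Lie algebra map'') and restricts to disjoint sub\emph{intervals} $I,J\subseteq[0,1]$ rather than general open neighbourhoods $U_1,U_2$ of the supports; your formulation is marginally more general but otherwise the arguments coincide.
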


\begin{proof}
We need to show that if two loops $\gamma_1, \gamma_2 \in L_{[0, 1]}F$ have disjoint supports, then any two lifts $\tilde{\gamma}_1, \tilde{\gamma}_2$  to the central extension commute in $\smash{\widetilde{L_{[0, 1]}F}}$.
Let $I, J \subseteq [0, 1]$ be two disjoint subintervals.
We define a map 
\[
b : L_IF \times L_J F \longrightarrow A, \qquad b(\gamma_1, \gamma_2) = \tilde{\gamma}_1 \tilde{\gamma}_2 \tilde{\gamma}_1^{-1} \tilde{\gamma}_2^{-1},
\]
where $\tilde{\gamma}_1$, $\tilde{\gamma}_2$ are arbitrary lifts of $\gamma_1$, respectively $\gamma_2$ to the central extension. 
It is easy to see that this is independent of the choice of lift and in fact a bihomomorphism (see \cite[Proof of Lemma 2.4.2]{LudewigWaldorf2Group}).
In the following we  show that this bihomomorphism is trivial; this implies disjoint commutativity. 

Since $L_IF$ and $L_J F$ are connected, any  bihomomorphism is determined uniquely by the corresponding Lie algebra map $b_* : L_I \mathfrak{f} \times L_J \mathfrak{f} \to \mathfrak{a}$, which in this case is given by
\[
 b_*(X_1, X_2)  = [\tilde{X}_1, \tilde{X}_2].
\]
Here $\tilde{X}_1, \tilde{X}_2$ are lifts of $X_1$, respectively $X_2$ to the central extension.
%
%
Since $\tilde\eta_{B,[0,1]}$ is the classifying cocycle of $\smash{\widetilde{L_{[0, 1]}F}}$, the corresponding Lie algebra central extension $\smash{\widetilde{L_{[0, 1]} \mathfrak{f}}}$ may be identified with the semidirect product $L_{[0, 1]} \mathfrak{f} \ltimes \mathfrak{a}$ in such a way that the Lie bracket is  given by the formula \eqref{DefinitionLieBracket}.
Under this identification, the Lie algebra map $b_*$ is precisely the restriction of $\tilde\eta_{B,[0,1]}$ to $L_I \mathfrak{f} \times L_J \mathfrak{f}$.
However, since $I \cap J = \emptyset$, this restriction is zero.
This implies that the bihomomorphism $b$ is trivial.
\end{proof}

By the above discussion, using the action \eqref{DefinitionOfCrossedModuleAction}, we obtain a crossed module of Lie groups
\[
\Gamma_B = \Bigl( \widetilde{L_{[0, 1]} F} \stackrel{t}{\longrightarrow} P_{[0, 1]} F \stackrel{\alpha}{\longrightarrow} \Aut(\widetilde{L_{[0, 1]} F})\Bigr)
\]
We observe that restriction to the interval $[0, 1]$ identifies $L_{[0, 1]} \mathfrak{f}$ and $P_{[0, 1]} \mathfrak{f}$ with the Lie algebras $L_0^{\mathrm{fl}} \mathfrak{f}$ and $P_0^{\mathrm{fl}}\mathfrak{f}$ from \cref{flat-end-points}.
Moreover, if $\iota : L_{[0, 1]} \mathfrak{f} \to L_{[0, 2]} \mathfrak{f}$ is the inclusion map, then then we have $\iota^*\tilde{\eta}_{B, [0, 2]} = \tilde{\eta}_{B, [0, 1]}$, which in turn is the restriction of the form $\tilde{\eta}_B$ from \eqref{FormulaEtaTilde} on $P_0\mathfrak{f}$ to $L_{[0, 1]} \mathfrak{f}$ under the identification $L_{[0, 1]} \mathfrak{f} \cong L_0^{\mathrm{fl}}\mathfrak{f}$.
We conclude that, under this identification, the Lie algebra crossed module obtained from $\Gamma_B$ by differentiation is isomorphic to the Lie algebra crossed module $\mathfrak{G}_B^{\mathrm{fl}}$ given in \eqref{PathLieAlgebraCrossedModuleflat}.

In \cref{SectionConstructionCrossedModule1} and \cref{flat-end-points}, we constructed -- to every section $s$ of $\mathfrak{G}_B^{\mathrm{fl}}$, an  infinitesimal adjustment $\eta^{\mathrm{fl}}_{B,s}$ with adjusted Kassel-Loday class $B$ on $\mathfrak{G}_B^{\mathrm{fl}}$.
Since $P_{[0, 1]}F$ is connected and simply connected (actually contractible) and $\smash{\widetilde{L_{[0, 1]}F}}$ is connected, \cref{theorem-1} proves that they all integrate uniquely  to  adjustments $\kappa_{B,s}$ on ${\Gamma_B}$.
This finishes the proof of \cref{ThmExistenceLieGroups}.

\begin{remark}
\label{groupoid-of-adjustments-of-GammaB}
By \cref{theorem-3}, differentiation of adjustments is an equivalence $\ADJ(\Gamma_B) \cong \ADJ(\mathfrak{G}_B^{\mathrm{fl}})$. Thus, the statements of \cref{uniqueness-of-adjustments-on-GB} continue to hold for $\ADJ(\Gamma_B)$: the preimage of $B$ under the map  
\begin{equation*}
\pi_0\ADJ(\Gamma_B) \to \mathrm{Sym}^2(\mathfrak{f}, \mathfrak{a})^{\mathrm{ad}}: [(s, \kappa)] \mapsto \KL^{\mathrm{adj}}(\kappa_{*})
\end{equation*}
is an affine space over $H^2(\mathfrak{f}, \mathfrak{a})$, and the automorphism group of the object $(s, \kappa_{B,s})$ in the groupoid $\ADJ(\Gamma_B)$ is $H^1(\mathfrak{f}, \mathfrak{a})$. Moreover, if $s$ and $s'$ are arbitrary sections of $\mathfrak{G}_B^{\mathrm{fl}}$, then $(s, \kappa_{B,s})$ and $(s', \kappa_{B,s'})$ are canonically isomorphic in $\ADJ(\Gamma_B)$.  
\end{remark}

\section{Examples}

\subsection{Product crossed modules}

\label{Products}

Let $A$ be a finite-dimensional abelian Lie group and let $F$ be an arbitrary finite-dimensional Lie group. 
Let $\mathfrak{a}$ and $\mathfrak{f}$ be their Lie algebras.
Consider the crossed module
\[
BA \times F_{\mathrm{dis}} = (A \stackrel{t}{\to} F \stackrel{\alpha}{\to} \Aut(A)),
\]
where both the map $t$ and the action $\alpha$ are trivial.  Thus, $\pi_0(BA \times F_{\mathrm{dis}})=F$ and $\pi_1(BA \times F_{\mathrm{dis}})=A$. 
This is called a \emph{product crossed module} because it corresponds to the trivial central categorical group extension of $F_{dis}$ by $BA$, representing the trivial element in $H^3(F, A)$ under the classification of \cref{smooth-group-cohomology-classification}.
 Let 
\begin{equation*}
B\mathfrak{a} \times \mathfrak{f}_{\mathrm{dis}}=(\mathfrak{a} \to \mathfrak{f} \to \Der(\mathfrak{a}))
\end{equation*}
be the associated crossed module of Lie algebras.
Since any splitting is zero, we have
\begin{equation*}
\KL(B\mathfrak{a} \times \mathfrak{f}_{\mathrm{dis}}) = 0\text{.}
\end{equation*}

It follows from \cref{theorem-2} that infinitesimal adjustments on $\mathfrak{G}$ exist and form an affine space over $T(\mathfrak{f}, \mathfrak{a})$.
In fact, one checks directly that each element of $T(\mathfrak{f}, \mathfrak{a})$ is an adjustment, and hence
\[
\Adj(\mathfrak{G}) = T(\mathfrak{f}, \mathfrak{a})
\]
Since there is only the zero splitting, any adjustment is adapted.

Each adjustment $\eta \in T(\mathfrak{f}, \mathfrak{a})$ determines an object in the groupoid $\ADJ(\mathfrak{G})$ of adjustments. 
By \cref{theorem-3}, two such objects are isomorphic in this groupoid if and only if they differ by an element of $\delta \Alt^1(\mathfrak{f}, \mathfrak{a}) \subseteq T(\mathfrak{f}, \mathfrak{a})$, hence
\[
\pi_0 \ADJ(\mathfrak{G}) = T(\mathfrak{f}, \mathfrak{a})/\delta \Alt^1(\mathfrak{f}, \mathfrak{a}).
\]
Moreover, the automorphism group in $\ADJ(\mathfrak{G})$ of each adjustment is precisely $H^1(\mathfrak{f}, \mathfrak{a})$.

If $F$ is connected, then for each $\eta \in T(\mathfrak{f}, \mathfrak{a})$, there exists at most one adjustment $\kappa$ on $BA \times F_{\mathrm{dis}}$ with $\kappa_* =\eta$ and if $F$ is additionally simply connected and $A$ is connected, such a $\kappa$ always exists.

\begin{example}
For each abelian Lie group $A$, there exists a unique adjustment on $BA$.       
\end{example}

\begin{example}
The zero element of $T(\mathfrak{f}, \mathfrak{a})$ is an infinitesimal adjustment which always integrates to an adjustment.
More generally, if $u : \mathfrak{f} \to \mathfrak{a}$ is an arbitrary linear map, then
\[
\kappa(g, X) = u(\Ad_g(X) - X)
\]
is an adjustment on $BA \times F_{\mathrm{dis}}$ with corresponding infinitesimal adjustment
\[
\eta(X, Y) = u([X, Y]) = - \delta u(X, Y),
\]
which is isomorphic to zero in $\ADJ(\mathfrak{G})$.
\end{example}

\subsection{The string 2-group}

\label{string-2-group}

The string group $\String(n)$, $n \geq 5$, arises from the construction in \cref{SectionConstructionCrossedModule2} by setting $F = \Spin(n)$, $A = \U(1)$ and 
\[
B(x, y) = -\frac{1}{8\pi^2} \tr (xy)\text{.}
\]
We identify $A \cong \R/\Z$, and have 
\begin{equation*}
\cw(B)(x,y,z) = -\frac{1}{8\pi^2}\mathrm{tr}(x, [y, z]) \in \Alt^3(\mathfrak{spin}(n), \R)\text{;}
\end{equation*}
the normalization is chosen such that the corresponding invariant 3-form $\overline{\cw(B)}$ on $\Spin(n)$ is the image of a generator of $H^3(\Spin(n), \Z) \cong \Z$ under the map $H^3(\Spin(n), \Z) \to H^3(\Spin(n), \R)$. 
This guarantees that
$\cw(B)$ satisfies the periodicity condition of \cref{ThmExistenceLieGroups}.
The crossed module $\String(n):=\Gamma_B$ provided by \cref{ThmExistenceLieGroups} is the string 2-group in the version considered in  \cite{LudewigWaldorf2Group}, and $\mathfrak{string}(n) := \mathfrak{G}_B^{\mathrm{fl}}$ is the corresponding Lie 2-algebra.

\begin{proposition}
The string 2-group $\String(n)$ admits, for each section $s$ of $\mathfrak{string}(n)$, an  adjustment  adapted to $s$. The groupoid of adjustments is, for arbitrary $s$, equivalent to  the trivial groupoid on the single object $(s, \kappa_{B,s})$:
\begin{equation*}
\ADJ(\String(n)) \cong \{(s, \kappa_{B,s})\}_{dis}\text{.}
\end{equation*}  
\end{proposition}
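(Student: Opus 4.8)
The plan is to compute the homotopy groups of the groupoid $\ADJ(\String(n))$ and show that both $\pi_0$ and the automorphism group of the object $(s,\kappa_{B,s})$ are trivial, so that the groupoid is the trivial (discrete) groupoid on one object. First I would invoke \cref{theorem-3} (or \cref{homotopy-groups-of-ADJ} together with the integration \cref{integration-of-adjustments}), which reduces everything to Lie algebra cohomology: the automorphism group of any object is $H^1(\mathfrak{f},\mathfrak{a})$, and the fibre of $\pi_0\ADJ$ over $B\in\Sym^2(\mathfrak{f},\mathfrak{a})^{\ad}$ is an affine space over $H^2(\mathfrak{f},\mathfrak{a})$ (nonempty exactly when $\cw(B)=\KL$, which holds by \cref{adjustment-from-B} / \cref{RemarkKLclass}). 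Since $P_{[0,1]}F$ is connected and simply connected and $\smash{\widetilde{L_{[0,1]}F}}$ is connected, \cref{integration-of-adjustments} (equivalently \cref{theorem-3}(1) and \cref{groupoid-of-adjustments-of-GammaB}) gives an equivalence $\ADJ(\String(n))\cong\ADJ(\mathfrak{string}(n))$, so it suffices to work entirely infinitesimally with $\mathfrak{f}=\mathfrak{spin}(n)$ and $\mathfrak{a}=\R$.

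The key computational inputs are $H^1(\mathfrak{spin}(n),\R)=0$ and $H^2(\mathfrak{spin}(n),\R)=0$. Both follow from Whitehead's lemmas, since $\mathfrak{spin}(n)$ is semisimple for $n\geq 5$ (indeed simple): the first and second Chevalley--Eilenberg cohomology of a finite-dimensional semisimple Lie algebra with trivial coefficients vanish. The coefficients here are $\mathfrak{a}=\R$ with the trivial $\mathfrak{f}$-action, which is exactly the setting of Whitehead's lemmas, so I would cite these directly. From $H^1=0$ we get $\Aut(s,\kappa_{B,s})=0$, i.e.\ the only automorphism of each object is the identity. From $H^2=0$ we get that the fibre of $\pi_0\ADJ$ over $B$ is a single point, so any two objects with the same symmetric part $B$ are isomorphic.

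It then remains to check that \emph{every} object of $\ADJ(\String(n))$ lies in the fibre over $B$, i.e.\ has adjusted Kassel-Loday class equal to $B$. Here I would argue that the map $\KL^{\mathrm{adj}}\colon\pi_0\ADJ(\mathfrak{string}(n))\to\Sym^2(\mathfrak{f},\mathfrak{a})^{\ad}$ need not be surjective onto all of $\Sym^2$, but its image consists precisely of those $B'$ with $\cw(B')=\KL(\mathfrak{string}(n))=[\cw(B)]$; and for $\mathfrak{f}=\mathfrak{spin}(n)$ the Chern-Weil map $\cw\colon\Sym^2(\mathfrak{spin}(n),\R)^{\ad}\to H^3(\mathfrak{spin}(n),\R)$ is injective with one-dimensional source (the invariant form is unique up to scale), so $\cw(B')=\cw(B)$ forces $B'=B$. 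Combining: $\pi_0\ADJ$ has exactly one class, namely that of $(s,\kappa_{B,s})$, and its automorphism group is trivial, giving the claimed equivalence $\ADJ(\String(n))\cong\{(s,\kappa_{B,s})\}_{dis}$.

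The main obstacle I anticipate is the surjectivity/injectivity bookkeeping of $\cw$ in the last paragraph: one must be careful that the relevant fibre over $B$ is the \emph{only} nonempty fibre, which rests on the one-dimensionality of $\Sym^2(\mathfrak{spin}(n),\R)^{\ad}$ and injectivity of $\cw$ on it. The vanishing of $H^1$ and $H^2$ is standard (Whitehead), so the cohomological steps are routine; the genuinely geometric content is already packaged in \cref{ThmExistenceLieGroups} and the period integrality. Thus the proof is essentially an assembly of \cref{theorem-3}, Whitehead's lemmas, and the representation theory of $\mathfrak{spin}(n)$, with the only real care needed in identifying the image of $\KL^{\mathrm{adj}}$.
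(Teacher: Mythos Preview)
Your proposal is correct and follows essentially the same route as the paper's proof: invoke the classification of $\pi_0\ADJ$ and $\pi_1\ADJ$ from \cref{homotopy-groups-of-ADJ} (via the equivalence $\ADJ(\String(n))\cong\ADJ(\mathfrak{string}(n))$ from \cref{integration-of-adjustments}/\cref{groupoid-of-adjustments-of-GammaB}), then use that $\mathfrak{spin}(n)$ is simple to kill $H^1$ and $H^2$ and to force the Chern-Weil map to have a unique preimage of $\KL(\mathfrak{string}(n))$. The only cosmetic difference is that the paper states directly that the Chern-Weil homomorphism is an \emph{isomorphism} for simple $\mathfrak{f}$ (citing Koszul and Chevalley--Eilenberg), whereas you argue injectivity plus one-dimensionality of the source; these are equivalent here.
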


\begin{proof}
\Cref{ThmExistenceLieGroups} constructs the required adjustment $\kappa_{B,s}$ showing the first claim.
Since $\Spin(n)$ is simple, the Chern-Weil homomorphism is actually an isomorphism (combine \cite[Lemme~11.1]{Koszul} with \cite[Thm.~21.1]{ChevalleyEilenberg}), so that there exist no other preimages of $\KL(\mathfrak{string}(n))$ other than $B$.  
Thus, by  \cref{homotopy-groups-of-ADJ}, $\pi_0\ADJ(\String(n))$ is an affine space over $H^2(\mathfrak{spin}(n) , \R)$, which, again by simplicity of $\mathrm{Spin}(n)$, vanishes. Since also $H^1(\mathfrak{spin}(n), \R) = 0$, we have $\mathrm{Aut}(s, \kappa_{B,s})=0$. 
\end{proof}

\subsection{Categorical tori}
\label{ex:TD}
\label{ex:adj:TD}

We consider $n\in \N$ and a bilinear form $J: \R^{n} \times \R^{n} \to \R$, which is \emph{integral} in the sense that it restricts to a bilinear form $\Z^{n} \times \Z^{n} \to \Z$.
From this data, we set up the crossed module
\begin{equation*}
\EuScript{T}_J := (\mathbb{T} \times \Z^{n} \stackrel t\to \R^{n}\stackrel\alpha\to\Aut(\mathbb{T} \times \Z^{n}))\text{,}
\end{equation*}
where $\mathbb{T}:= \R/\Z$, and 
\begin{align*}
t(s, m) &:= m
\\
\alpha_a(s, m) &:= (s+[J(a, m)], m)\text{.}
\end{align*}
It is a central crossed module with $\pi_0\EuScript{T}_J =\T^{n}$ and $\pi_1\EuScript{T}_J =\mathbb{T}$; it may be viewed as a central extension
\begin{equation*}
1 \to B\T \to \EuScript{T}_J \to \T^{n}\to 1
\end{equation*} 
and has been introduced and studied by Ganter \cite{Ganter2014}, who proved that it is classified, in the sense of \cref{smooth-group-cohomology-classification}, by the class in $H^3(\T^{n}, \U(1))$ obtained as the image of  the \emph{integral} symmetric bilinear form $I:= J + J^{tr}$ under the (usual) Chern-Weil homomorphism
\begin{equation*}
\Sym(\R^{n},\R)^{\mathrm{ad}} \to  H^4(B\T^{n},\Z) \cong H^3(\T^{n}, \U(1))\text{.}
\end{equation*}

 The corresponding crossed module of Lie algebras is
\begin{equation*}
\mathfrak{t}_J=(\R \stackrel {0}\to \R^{n} \stackrel{0}\to \Der(\R))\text{,}
\end{equation*}
and the four term-sequence is
\begin{equation*}
0 \to \R \stackrel\id\to \R \stackrel 0 \to \R^{n} \stackrel\id\to \R^{n} \to 0\text{.} 
\end{equation*}
Non-trivial are the map   $(\alpha_{a})_{*}=\id_{\R}$ (the differential of the action with a fixed group element $a\in \R^{n}$) and the map  $(\tilde \alpha_{(s, m)})_{*}(a) \mapsto ([J(a, m)], 0) = (\iota_* J)(a, m)$ of \cref{alpha-tilde}.
Here $\iota : \R \to \T \times \Z^n$ is the map $\iota(a) :=([a], 0)$.

There is exactly one section, $s=\id_{\R^{n}}$,  and one (half) splitting, $u=0$. Thus, all categorical tori have trivial Kassel-Loday class, 
\begin{equation*}
\mathrm{KL}(\mathfrak{t}_J)=0\text{.}
\end{equation*}
Thus, by \cref{theorem-2}, $\mathfrak{t}_J$ admits infinitesimal adjustments. Moreover, both $\Adj(\mathfrak{t}_J)$ and $\Adj^s(\mathfrak{t}_J)$ are affine spaces over the vector space $\mathrm{Bil}(\R^{n}, \R)$ of bilinear forms on $\R^{n}$; hence, we have
\begin{equation*}
\Adj^{s}(\mathfrak{t}_J) =\Adj(\mathfrak{t}_J)\text{.}
\end{equation*}
Indeed, it can be seen explicitly that the adaptedness condition is trivially satisfied for all (infinitesimal) adjustments. In fact, $\eta=0$ is an infinitesimal adjustment, and hence we even have
\begin{equation*}
\Adj(\mathfrak{t}_J) = \mathrm{Bil}(\R^{n}, \R)\text{.}
\end{equation*}
In particular, the bilinear form $J$ is an infinitesimal adjustment.

The adjusted Kassel-Loday class of $\eta \in \Adj(\mathfrak{t}_J)$ is its symmetrization, 
\begin{equation*}
\KL^{\mathrm{adj}}(\mathfrak{t}_J, \eta) =-\eta^{\sym}:= -\frac{1}{2}(\eta + \eta^{tr}) \in \mathrm{Sym}(\R^{n}, \R)^{\ad}=\mathrm{Sym}(\R^{n}, \R)\text{.}
\end{equation*}
We remark that the Chern-Weil homomorphism is the zero map. The fibre over a fixed $B\in \mathrm{Sym}(\R^{n}, \R)$ is an affine space over $\Alt^2(\R^{n}, \R)$.

The triviality of the adaptedness condition also shows $\Adj^s(\EuScript{T}_J)=\Adj(\EuScript{T}_J)$, so that we only have to discuss the non-adapted situation. 
Since $\R^{n}$ is connected, we have an injective map
\begin{equation*}
\Adj(\EuScript{T}_J)\hookrightarrow \Adj(\mathfrak{t}_J)
\end{equation*} 
by \cref{theorem-1}.
Since $\R^{n}$ is also simply-connected, the only obstruction to integrating an infinitesimal adjustment lies in the non-connectedness of $\mathbb{T} \times \Z^{n}$, and thus in the fulfillment of the condition \eqref{adjustment-condition-2}. We note that the adjustment obtained by integration of an infinitesimal adjustment $\eta \in \mathrm{Bil}(\R^{n}, \R)$ is given by the same formula again. \cref{adjustment-condition-2} reads
\begin{equation*}
\kappa\big(t(s, m), a\big) = (\tilde{\alpha}_{-(s, m)})_* (a)= (\iota_* J)(a, -m)=-(\iota_*J^{tr})(m, a)\text{,}
\end{equation*}    
which fixes $\kappa = -\iota_* J^{tr}$ as the only possible integrated adjustment. 
Thus,
\begin{equation*}
\Adj(\EuScript{T}_J) = \{-\iota_* J^{tr}\}\text{.}
\end{equation*}
The adjusted Kassel-Loday class of the integrating adjustment is
\begin{equation*}
\KL^{\mathrm{adj}}( \mathfrak{t}_J, -\iota_*J^{tr}) := \frac{1}{2}(J + J^{tr}) \in \mathrm{Sym}(\R^{n}, \R)^{\ad}\text{.}
\end{equation*}

Finally, let us look at the groupoid approach to adjustments. 
By \cref{theorem-3}, the differentiation functor $\ADJ(\EuScript{T}_J) \to \ADJ(\mathfrak{t}_J)$ is full and faithful, and we have 
\begin{equation*}
\pi_0\ADJ(\mathfrak{t}_J) = \mathrm{Bil}(\R^{n}, \R)
\quad\text{ and }\quad
\pi_1\ADJ(\mathfrak{t}_J) =(\R^{n})^{\vee}\text{.}  
\end{equation*}
By the above discussion $\ADJ(\EuScript{T}_J)$ is a groupoid with a single object, $(\id,-\iota_*J^{tr})$, and hence 
\begin{equation*}
\ADJ(\EuScript{T}_J) \cong B(\R^{n})^{\vee}\text{.}
\end{equation*}

\subsection{Automorphism 2-groups of algebras}
\label{ex:AUTA}

We consider a unital, associative, finite-dimensional algebra $A$ over $k=\R, \C$, and its \emph{automorphism 2-group}, represented by the crossed module
\begin{equation*}
\AUT(A) = (A^{\times} \stackrel t\to  \Aut(A) \stackrel \id\to \Aut(A))\text{, }
\end{equation*}
where $A^{\times}$ is the group of units of $A$, $t(u)$ is the inner automorphism corresponding to a unit $u$, and $\Aut(A)$ is the group of automorphisms of $A$.
We have $\pi_0\AUT(A)=\Out(A)$, the outer automorphism group, and $\pi_1\AUT(A)=Z(A)^{\times}$, the group of central units.
As all groups involved are finite-dimensional Lie groups, $\AUT(A)$ is always smoothly separable in this case.
To be central, $\Out(A)$ must to act trivially on $Z(A)^{\times}$; this is the case, for instance, when $A$ is a central algebra (so that $Z(A)^{\times}=k^{\times}$).

The induced crossed module of Lie algebras is
\begin{equation*}
\mathfrak{aut}(A) = (A \stackrel {t_{*}}\to  \Der(A) \stackrel \id\to \Der(A))\text{, }
\end{equation*}
where $t_{*}$ is the assignment of inner derivations, i.e., $t_{*}(a)(b):=ab-ba$. We remark that the differential of the map $\tilde\alpha_u$ of \eqref{alpha-tilde}, for $u\in A^{\times}$, is 
\[
(\tilde\alpha_u)_{*}(\delta) = u^{-1}\delta(u).
\]  
We have  $\pi_0(\mathfrak{aut}(A))=\mathfrak{der}(A)/A$ and $\pi_1(\mathfrak{aut}(A))=Z(A)$, and the four-term exact sequence is
\begin{equation*}
0 \to Z(A) \to A \to \mathfrak{der}(A)\to \mathfrak{der}(A)/A \to 0\text{.}
\end{equation*}

There is not much we can say in generality here, and so we proceed with assuming that $A$ is central and simple. 
Then, for $k=\C$ we have $A=\C^{n \times n}$ and for $k=\R$ we have either $A=\R^{n \times n}$ or $A=\mathbb{H}^{n \times n}$. By the Skolem-Noether theorem, we have  $\Out(A)=1$ in all cases, so that   $\pi_0(\mathfrak{aut}(A))=0$, and the four-term-sequence is
\begin{equation*}
0 \to k \to A \to \mathfrak{der}(A)\to 0 \to 0\text{.}
\end{equation*}      
There is a unique section, $s=0$, and a splitting $u$ is the same as a linear map $j: A \to k$ such that $j(a)\cdot 1=a$ for all $a\in A$. All relevant Lie algebra cohomology groups
are zero, as well as the Kassel-Loday class and the Chern-Weil homomorphism. This shows, by \cref{theorem-2}, that
\begin{equation*}
\Adj(\mathfrak{aut}(A))=\Adj^{s}(\mathfrak{aut}(A))=\{\eta\}\text{,}
\end{equation*}
i.e., there is a unique infinitesimal adjustment $\eta$, which is adapted to $s$. Since  $t_{*}:A \to \Der(A)$ is surjective, the unique infinitesimal adjustment $\eta$ is determined by \cref{EtaIdentities-2}, which says
\begin{equation*}
\eta(t_{*}(a),t_{*}(b))=\alpha_{*}(t_{*}(a),b)=t_{*}(a)(b)=ab-ba\text{,}
\end{equation*} 
for all $a, b\in A$.
One can check that this formula indeed defines an infinitesimal adjustment.
One can also check that $\eta$ integrates to an adjustment $\kappa\in \Adj^{s}(\AUT(A))$, determined by
\begin{equation*}
\kappa(t(a), t_{*}(b))= a t_*(b)(a^{-1}) = aba^{-1}-b\text{.}
\end{equation*}
More can be said separately in each case:
\begin{itemize}
\item
For $k=\C$,  we have $\Aut(A)=\mathrm{PGL}_n(\C)$, which is connected, so that the map $\Adj(\AUT(A)) \to \Adj(\mathfrak{aut}(A))$ is injective, by \cref{theorem-1}. This shows that 
\begin{equation*}
\Adj(\AUT(A))=\Adj^{s}(\AUT(A))=\{\kappa\}\text{.}
\end{equation*}

\item
For $k=\R$ and $A=\R^{n\times n}$, we have $\Aut(A)=\mathrm{PGL}_n(\R)$ which is not connected, and so there could be more adjustments on $\AUT(A)$ than $\kappa$.

\item
In the case $k=\R$ and $A=\mathbb{H}^{n\times n}$ is similar: Here $\Aut(A) = \mathrm{PGL}_n(\mathbb{H}) = \mathrm{GL}_n(\mathbb{H})/\R^\times$, which is connected.
This shows that also in this case, there is a unique adjustment on $\AUT(A)$. 
\end{itemize}

\appendix

\section{Butterflies}

\label{butterflies}

Crossed modules of Lie groups and Lie algebras form bicategories, whose 1-morphisms are called \quot{butterflies}; see \cite{Noohi2005, Noohi2007, Aldrovandi2009}.

\begin{definition}
Let $\Gamma_1=(H_1 \stackrel{t_1}{\to} G_1 \stackrel{\alpha_1}{\to} \Aut(H_1))$ and  $\Gamma_2=(H_2 \stackrel{t_2}{\to} G_2 \stackrel{\alpha_2}{\to} \Aut(H_2))$  be    crossed modules of Lie groups. A butterfly consists of a Lie group $K$ together with Lie group homomorphisms  that make up a commutative diagram 
\begin{equation}
\label{eq:butterfly}
\begin{aligned}
\xymatrix{H_1 \ar[dd]_{t_1} \ar[dr]^{i_1} && H_2 \ar[dd]^{t_2}\ar[dl]_{i_2} \\ & K \ar[dl]^{r_1}\ar[dr]_{r_2} \\ G_1 && G_2\text{, }}
\end{aligned}
\end{equation}
such that both diagonal sequences are complexes, the NE-SW  sequence is a short exact sequence of Lie groups, and the equations
\begin{equation} \label{RelationsButterflyLieGroups}
i_1(\alpha_1(r_1(x), h_1)) = xi_1(h_1)x^{-1}
\quad\text{ and }\quad
i_2(\alpha_2(r_2(x), h_2)) = xi_2(h_2)x^{-1}
\end{equation} 
hold for all $h_1\in H_1$, $h_2\in H_2$ and $x\in K$. 
\end{definition}

A morphism between two butterflies
\begin{equation*}
\begin{aligned}
\xymatrix{H_1 \ar[dd]_{t_1} \ar[dr]^{i_1} && H_2 \ar[dd]^{t_2}\ar[dl]_{i_2} \\ & K \ar[dl]^{r_1}\ar[dr]_{r_2} \\ G_1 && G_2\text{, }}
\quad\text{ and }\quad
\xymatrix{H_1 \ar[dd]_{t_1} \ar[dr]^{i_1'} && H_2 \ar[dd]^{t_2}\ar[dl]_{i_2'} \\ & K' \ar[dl]^{r_1'}\ar[dr]_{r_2'} \\ G_1 && G_2}
\end{aligned}
\end{equation*}
is a group homomorphism $k: K \to K'$ that commutes with all other maps in the obvious way.  
Since $k$ is, in particular, a morphism between Lie group extensions, it is automatically invertible.
Butterflies between two crossed modules form a groupoid $\BUT(\Gamma_1, \Gamma_2)$.

The identity butterfly of a crossed module  $\Gamma=(H \stackrel{t}{\to} G \stackrel{\alpha}{\to} \Aut(H))$ is given by $K:= H \rtimes_\alpha G$, with
\begin{align*}
i_1(h) &:= (h^{-1}, t(h) ) & i_2(h) &:= ( h, 1)
\\
r_1(h, g) &:=g & r_2(h, g) &:= t(h)g\text{.} 
\end{align*}
%
The composition of butterflies 
\begin{equation*}
\begin{aligned}
\xymatrix{H_1 \ar[dd]_{t_1} \ar[dr]^{i_1} && H_2 \ar[dd]^{t_2}\ar[dl]_{i_2} \\ & K \ar[dl]^{r_1}\ar[dr]_{r_2} \\ G_1 && G_2\text{, }}
\end{aligned}
\quad\text{ and }\quad
\begin{aligned}
\xymatrix{H_2 \ar[dd]_{t_2} \ar[dr]^{i_2'} && H_3 \ar[dd]^{t_3}\ar[dl]_{i'_3} \\ & K' \ar[dl]^{r_2'}\ar[dr]_{r'_3} \\ G_2 && G_3\text{, }}
\end{aligned}
\end{equation*}
is given by the Lie group
\begin{equation*}
\tilde K:=(K \times_{G_2} K')/\tilde i(H_2)\text{, }
\end{equation*}
where $\tilde i:H_2 \to \tilde K$ is given by $\tilde i(h_2):=(i_2(h_2), i_2'(h_2))$, which is a normal subgroup embedding, and the following maps:
\begin{align*}
&\tilde i_1: H_1 \to \tilde K & \tilde i_1(h_1) &:= [i_1(h_1), 1]
\\
&\tilde i_3: H_3 \to \tilde K & \tilde i_3(h_3) &:= [1, i_3'(h_3)]
\\
&\tilde r_1: \tilde K \to  G_1 & \tilde r_1([k, k']) &:= r_1(k)
\\
&\tilde r_3: \tilde K \to G_3 & \tilde r_3([k, k']) &:= r_3'(k')\text{.}
\end{align*}
Crossed modules of Lie groups form a bicategory $\CM$, with $\mathrm{Hom}_{\CM}(\Gamma_1, \Gamma_2) = \BUT(\Gamma_1, \Gamma_2)$ \cite{Aldrovandi2009}. Within this bicategory, 
a butterfly is invertible  if and only if its NW-SE sequence is also exact, in which case an inverse butterfly is obtained by vertical reflection of the butterfly \cite{Aldrovandi2009}. If $K: \Gamma_1 \to \Gamma_2$ is an invertible butterfly, then the 2-isomorphisms $K^{-1}\circ K \Rightarrow \id_{\Gamma_1}$ and $K \circ K^{-1}\Rightarrow \id_{\Gamma_2}$ are given by
\begin{equation*}
[k, k'] \mapsto (r_1(k), i_1^{-1}(k^{-1}k'^{}))
\quad\text{ and }\quad
[k, k'] \mapsto (r_2(k), i_2^{-1}(k^{-1}k'^{}))\text{, }
\end{equation*}
respectively. 

\begin{remark}
On the level of the homotopy groups $A_i := \mathrm{ker}(t_i) \subset H_i$ and $F_i :=G_i/t_i(H_i)$, a butterfly $K: \Gamma_1 \to \Gamma_2$ induces group homomorphisms $\pi_0 K: F_1 \to F_2$ and $\pi_1 K: A_1 \to A_2$ such that
\begin{equation}
\label{maps-on-homotopy-groupsLieGroups}
\pi_0K([r_1(k)])= [r_2(k)]
\quad\text{ and }\quad
i_2(\pi_1K(a))=i_1(a)^{-1} 
\end{equation}
hold for all $k\in K$ and $a\in A_1$.
\end{remark}

Applying the Lie functor to a butterfly $K$ between crossed modules of Lie groups yields a butterfly $\mathfrak{k}$ of crossed modules of Lie algebras
\begin{equation}
\label{LieAlgebraButterfly}
\begin{aligned}
\xymatrix{\mathfrak{h}_1 \ar[dd]_{t_1} \ar[dr]^{i_1} && \mathfrak{h}_2 \ar[dd]^{t_2}\ar[dl]_{i_2} \\ & \mathfrak{k} \ar[dl]^{r_1}\ar[dr]_{r_2} \\ \mathfrak{g}_1 && \mathfrak{g}_2\text{. }}
\end{aligned}
\end{equation}
Here, both diagonal sequences are complexes, the NE-SW  sequence is a short exact sequence of Lie algebras, and the equations
\begin{equation} 
\label{RelationsButterfly}
i_1(\alpha_1(r_1(X), y_1)) =     [X, i_1(y_1)]
\quad\text{ and }\quad
i_2(\alpha_2(r_2(X), y_2)) = [X, i_2(y_2)]
\end{equation} 
hold for all $y_1\in \mathfrak{h}_1$, $y_2\in \mathfrak{h}_2$ and $X\in \mathfrak{k}$.

Butterflies between crossed modules of Lie algebras form another groupoid $\but(\mathfrak{G}_1, \mathfrak{G}_2)$, and there is a bicategory $\cm$ with $\mathrm{Hom}_{\cm}(\mathfrak{G}_1, \mathfrak{G}_2) = \but(\mathfrak{G}_1, \mathfrak{G}_2)$. 

On the level of the homotopy Lie algebras $\mathfrak{a}_i := \mathrm{ker}(t_i) \subset \mathfrak{h}_i$ and $\mathfrak{f}_i :=\mathfrak{g}_i/t_i(\mathfrak{h}_i)$, a butterfly $\mathfrak{k}\in \but(\mathfrak{G}_1, \mathfrak{G}_2)$ induces Lie algebra homomorphisms $\pi_0\mathfrak{k}: \mathfrak{f}_1 \to \mathfrak{f}_2$ and $\pi_1\mathfrak{k}: \mathfrak{a}_1 \to \mathfrak{a}_2$ such that
\begin{equation}
\label{maps-on-homotopy-groups}
\pi_0\mathfrak{k}([r_1(X)])= [r_2(X)]
\quad\text{ and }\quad
i_2(\pi_1\mathfrak{k}(y))=-i_1(y) 
\end{equation}
hold for all $X\in \mathfrak{k}$ and $y\in \mathfrak{a}_1$.
We provide the following result.
\begin{lemma}
\label{invertibility-of-butterflies}
A butterfly $\mathfrak{k}$ between crossed modules of Lie algebras is invertible if and only if $\pi_0 \mathfrak{k}$ and $\pi_1\mathfrak{k}$ are isomorphisms.
\end{lemma}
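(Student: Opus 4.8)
The plan is to reduce the statement to a purely homological equivalence. Recall that a butterfly is invertible exactly when its \emph{second} diagonal, the NW--SE sequence $0 \to \mathfrak{h}_1 \xrightarrow{i_1} \mathfrak{k} \xrightarrow{r_2} \mathfrak{g}_2 \to 0$, is short exact; this is the Lie algebra analogue of the criterion recalled above for Lie groups \cite{Aldrovandi2009}, the inverse being obtained by vertically reflecting the butterfly. By the butterfly axioms the NE--SW sequence $0 \to \mathfrak{h}_2 \xrightarrow{i_2} \mathfrak{k} \xrightarrow{r_1} \mathfrak{g}_1 \to 0$ is already short exact, the triangles $r_1 i_1 = t_1$ and $r_2 i_2 = t_2$ commute, and the complex conditions $r_2 i_1 = 0$ and $r_1 i_2 = 0$ hold. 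Thus the whole lemma becomes the assertion: the NW--SE sequence is exact if and only if both $\pi_0\mathfrak{k}$ and $\pi_1\mathfrak{k}$ are isomorphisms. I would prove this by elementary diagram chases through $\mathfrak{k}$, using the defining relations \eqref{maps-on-homotopy-groups} for the induced maps throughout.

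For the direction assuming exactness of NW--SE, I would check the four isomorphism properties directly and note that they come in symmetric pairs. Surjectivity of $\pi_0\mathfrak{k}$ follows from surjectivity of $r_2$, since every class in $\mathfrak{f}_2$ has the form $[r_2(X)] = \pi_0\mathfrak{k}([r_1(X)])$; injectivity of $\pi_0\mathfrak{k}$ follows because $[r_2(X)] = 0$ lets one correct $X$ by some $i_2(z)$ into $\ker r_2 = \im i_1$, which forces $r_1(X) \in t_1\mathfrak{h}_1$. Dually, injectivity of $\pi_1\mathfrak{k}$ is immediate from injectivity of $i_1$ (via $i_2(\pi_1\mathfrak{k}(y)) = -i_1(y)$), and surjectivity of $\pi_1\mathfrak{k}$ uses that any $z \in \mathfrak{a}_2$ satisfies $i_2(z) \in \ker r_2 = \im i_1$.

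For the converse, assuming $\pi_0\mathfrak{k}$ and $\pi_1\mathfrak{k}$ are isomorphisms, I would establish injectivity of $i_1$, surjectivity of $r_2$, and the equality $\ker r_2 = \im i_1$ separately. Injectivity of $i_1$ uses that $i_1(y) = 0$ forces $y \in \mathfrak{a}_1$ and $\pi_1\mathfrak{k}(y) = 0$, hence $y = 0$; surjectivity of $r_2$ lifts a given $g_2 \in \mathfrak{g}_2$ by choosing $X$ with $\pi_0\mathfrak{k}([r_1(X)]) = [g_2]$ and correcting by an $i_2(z)$. The one step needing all four properties at once is $\ker r_2 = \im i_1$: for $X \in \ker r_2$, injectivity of $\pi_0\mathfrak{k}$ pushes $r_1(X)$ into $t_1\mathfrak{h}_1$, exactness of NE--SW then writes the corrected element as $i_2(z)$ with $z \in \mathfrak{a}_2$, and surjectivity of $\pi_1\mathfrak{k}$ finally reexpresses $i_2(z)$ through $i_1$, yielding $X \in \im i_1$.

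The diagram chases themselves are routine; the genuine content sits in the invertibility criterion and in the topological setting. The hard part will be confirming that, for locally convex and possibly infinite-dimensional Lie algebras, an invertible butterfly is equivalent to the NW--SE sequence being short exact in the admissible (topologically split) sense, and that the reflected butterfly together with the explicit $2$-isomorphisms remains a butterfly with continuous structure maps. An alternative, self-contained route for the ``only if'' implication is to invoke functoriality of $\pi_0$ and $\pi_1$ and their invariance under $2$-isomorphism, so that an inverse butterfly immediately yields inverses to $\pi_0\mathfrak{k}$ and $\pi_1\mathfrak{k}$; this, however, still relies on the same structural criterion to handle the ``if'' implication.
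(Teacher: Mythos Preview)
Your proposal is correct and follows essentially the same diagram-chase approach as the paper; in fact, your argument is slightly more thorough, since the paper's proof only explicitly verifies the middle exactness $\ker r_2 \subset \im i_1$ for the ``if'' direction (glossing over injectivity of $i_1$ and surjectivity of $r_2$) and dispatches the ``only if'' direction in one line via functoriality. Your worries about the topological (split-exact) aspects in the infinite-dimensional setting are legitimate but are not addressed by the paper either, so you are not missing anything present there.
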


\begin{proof}
The only if-part follows from the functoriality of the constructions. 
Conversely, suppose   $\pi_0\mathfrak{k}$ and $\pi_1\mathfrak{k}$ are isomorphisms.
We need to show that the NW-SE sequence in \eqref{LieAlgebraButterfly} is exact.
To this end, let $X\in \mathfrak{k}$ with $r_2(X)=0$, in particular  $\pi_0\mathfrak{k}([r_1(X)])=[r_2(X)]=0$.
As $\pi_0\mathfrak{k}$ is invertible, we get $[r_1(X)]=0$, so there exists $y\in \mathfrak{h}_1$ such that $t_1(y)=r_1(X)$. 
Hence, $r_1(i_1(y)-X)=0$, so by exactness of the NE-SW sequence, there exists $y'\in \mathfrak{h}_2$ such that $i_2(Y')=i_1(y)-X$. 
We have $t_2(y')=r_2(i_2(y'))=r_2(i_1(y))-r_2(X)=0$ and hence $y'\in \mathfrak{a}_2$. 
Now consider $\tilde y := y+(\pi_1\mathfrak{k})^{-1}(y')$. 
From \eqref{maps-on-homotopy-groups}, we get satisfies $i_1(\tilde y)=i_1yY)-i_2(y')=X$. 
This shows that $\mathrm{ker}(r_2)\subset \mathrm{im}(i_1)$; hence, the NW-SE sequence is exact. 
\end{proof}

In the remainder of this appendix we provide a method to classify butterflies between crossed modules of Lie algebras by cocycle data.
We consider two crossed modules  $\smash{\mathfrak{G}_i=(\mathfrak{h}_i \stackrel{t_i}{\to} \mathfrak{g}_i \stackrel{\alpha_i}{\to} \Der(\mathfrak{h}_i))}$ of Lie algebras, $i=1,2$, and a butterfly $\mathfrak{k}:\mathfrak{G}_1\to \mathfrak{G}_2$ as in \cref{LieAlgebraButterfly}.  The main tool to extract cocycle data is a section
 $q: \mathfrak{g}_1 \to \mathfrak{k}$ of the short exact NE-SW sequence of $\mathfrak{k}$, i.e. a linear map such that $r_1q=\id_{\mathfrak{g}_2}$. 
 We recall that $q$ contains the same information as a linear map $j: \mathfrak{k} \to \mathfrak{h}_1$ with $ji_2=\id_{\mathfrak{k}}$; the relation between $j$ and $q$ is 
\begin{equation}
\label{Relationqj}
i_2  j+q  r_1=\id_{\mathfrak{k}}.
\end{equation}
We consider   $\lambda_q\in \Alt^2(\mathfrak{g}_1, \mathfrak{h}_2)$ defined by 
\begin{equation}
\label{definition-of-lambda}
\lambda_q(X, Y) := j([q(X), q(Y)])\text{.}
\end{equation}
Applying $i_2$ to this definition und using \eqref{Relationqj} yields
\begin{equation}
\label{i2ofLambda}
[q(X), q(Y)] -q([X, Y])=i_2(\lambda_q(X, Y))\text{, }
\end{equation}
and so provides an expression that captures the failure of $q$ to be a Lie algebra homomorphism.
%
%
We consider the related linear maps
\begin{equation}
\label{phiqfq}
\phi_q := r_2  q: \mathfrak{g}_1 \to \mathfrak{g}_2
\quad\text{ and }\quad
f_q :=- j  i_1: \mathfrak{h}_1 \to \mathfrak{h}_2
\end{equation}
satisfying $\pi_0\mathfrak{k}([X])=[\phi_q(X)]$ and $f_q|_{\mathfrak{a}_1}=\pi_1\mathfrak{k}$, where $ \pi_0\mathfrak{k}$ and $\pi_1\mathfrak{k}$ are the homomorphisms induced by the butterfly $\mathfrak{k}$ on the level of homotopy groups. 
\[
\hspace{5.5cm}
\begin{tikzcd}
\mathfrak{h}_1 \ar[dd, "t_1"'] \ar[dr, "i_1"', near start] 
\ar[rr, bend right=45, shift left=3.5, dashed, "f_q", shorten >=5pt, shorten <=5pt]&& \mathfrak{h}_2 \ar[dd, "t_2"] \ar[dl] \\ & 
\mathfrak{k} 
\ar[ur, dashed, bend right =25, "j"']
\ar[dl, "r_1", near start] \ar[dr, "r_2", near end]  
\\ 
\mathfrak{g}_1 
\ar[ur, dashed, bend left=25, "q"]
\ar[rr, shorten >=6pt, shorten <=6pt, bend left=45, dashed, shift right=4, "\phi_q"']
&& \mathfrak{g}_2\text{, }
\end{tikzcd}
\qquad 
\begin{aligned}
\text{(triangles involving dashed}\\
\text{arrows do not commute)~}
\end{aligned}   
\]
Moreover, the diagram
\begin{equation}
\label{PhiFIntertinest}
\begin{tikzcd}
  \mathfrak{h}_1 
    \arrow[r, "f_q"] 
    \arrow[d, "t_1"'] 
  & \mathfrak{h}_2 
    \arrow[d, "t_2"] \\
  \mathfrak{g}_1 
    \arrow[r, "\phi_q"'] 
  & \mathfrak{g}_2
\end{tikzcd}
\end{equation}
commutes, and we have a rule for interchanging the actions of the two crossed modules:
\begin{align}
\label{exchange-of-actions}
\alpha_2(\phi_q(X), f_q(y))=  f_q(\alpha_1(X, y))+\lambda_q(X, t_1(y))\text{.}
\end{align}
In other words, $\lambda_q$ also encodes the failure of $(\phi_q, f_q)$ to intertwine the crossed module actions.
%
We remark that $\phi_q$ and $f_q$ are not a Lie algebra homomorphisms; for instance, applying $r_2$ to \eqref{i2ofLambda} we get 
\begin{equation}
\label{tofLambda}
\big[\phi_q(X), \phi_q(Y)\big] - \phi_q([X, Y]) = t_2\big(\lambda_q(X, Y)\big)\text{.}
\end{equation} 
Finally, we compute
\begin{align*}
\alpha_2\big(\phi_q(X), \lambda_q(Y, Z)\big) &= ji_2\big(\alpha_2(r_2(q(X)), \lambda_q(Y, Z))\big)
\\&=j\big([q(X), i_2(\lambda_q(Y, Z))]\big) & & \text{from \eqref{RelationsButterfly}}
\\&=j\big(\big[q(X), [q(Y), q(Z)]-q([Y, Z])\big]\big) & & \text{from \eqref{i2ofLambda}}
\\&= j\big(\big[q(X), [q(Y), q(Z)]\big]\big)-\lambda_q(X, [Y, Z]).
\end{align*}
Cyclically permuting the entries in the previous identity, we get
\begin{align}
&\hspace{-2em}\alpha_2\big(\phi_q(X), \lambda_q(Y, Z)\big)+\alpha_2\big(\phi_q(Y), \lambda_q(Z, X)\big)+\alpha_2\big(\phi_q(Z), \lambda_q(X, Y)\big)\nonumber
\\&= j\big(\big[q(Z), [q(X), q(Y)]\big]\big)+j\big(\big[q(Y), [q(Z), q(X)]\big]\big) + j\big(\big[q(X), [q(Y), q(Z)]\big]\big)\nonumber
\\&\quad-\lambda_q(Z, [X, Y])-\lambda_q(Y, [Z, X])-\lambda_q(X, [Y, Z])\nonumber
\\
&= - (\delta \lambda_q)(X, Y, Z)
\label{cyclic-identity}
\end{align}
where the second line vanishes because of the Jacobi identity. 

Wrapping up, we regard  triples $(\phi, f, \lambda)$ with linear maps $\phi: \mathfrak{g}_1 \to \mathfrak{g}_2$ and $f: \mathfrak{h}_1 \to \mathfrak{h}_2$, and $\lambda\in \Alt^2(\mathfrak{g}_1,\mathfrak{h}_2)$ satisfying  \cref{tofLambda,exchange-of-actions,PhiFIntertinest,cyclic-identity} as  \emph{cocycle data} for butterflies between $\mathfrak{G}_1$ and $\mathfrak{G}_2$.
If we change $q$ to $q':=q+i_2\gamma$ for  a linear map $\gamma: \mathfrak{g}_1 \to \mathfrak{h}_2$, then 
 the accordant change of the cocycle data is given by 
 \begin{equation}
 \label{equivalence-of-cocycle-data}
\begin{aligned}
\phi_{q'}(X) &=\phi_q(X)+t_2\gamma(X)
\\
f_{q'}(y) &=f_q(y)+\gamma t_1(y)
\\
\lambda_{q'}(X, Y)&=\lambda_q(X, Y)+\alpha_2(\phi_q(X), \gamma(Y))-\alpha_2(\phi_q(Y), \gamma(X))
\\
&\qquad +[\gamma(X), \gamma(Y)]
-\gamma ([X, Y]).
\end{aligned}
\end{equation}
%
Thus, we consider cocycle data $(\phi, f, \lambda)$ and $(\phi', f', \lambda')$ \emph{equivalent} if there exists a linear map $\gamma: \mathfrak{g}_1 \to \mathfrak{h}_2$ such that the three relations \eqref{equivalence-of-cocycle-data} are satisfied. 
If $k: \mathfrak{k} \to \mathfrak{k}'$ is a morphism between butterflies, and $q$ is a section in $\mathfrak{k}$, then $kq$ is a section in $\mathfrak{k}'$, both producing the same cocycle data. Thus, we obtain a well-defined map
\begin{equation}
\label{extraction-of-local-data}
\pi_0 \but(\mathfrak{G}_1, \mathfrak{G}_2) \to \left \lbrace \text{equivalence classes of cocycle data} \right \rbrace\text{.}
\end{equation} 
We prove below that this map is a bijection, and start with constructing an inverse map, a \quot{reconstruction} of butterflies from cocycle data. Let $(\phi, f, \lambda)$ be cocycle data. 
Then, the formula
\begin{equation*}
[(x, X), (y, Y)] := ([x, y]+\alpha_2(\phi(X), y)-\alpha_2(\phi(Y), x)+\lambda(X, Y), [X, Y])
\end{equation*}
defines a Lie algebra structure on $\mathfrak{k} := \mathfrak{h}_2 \oplus \mathfrak{g}_1$.
Indeed, \cref{tofLambda,cyclic-identity} ensure that the pair $(\psi, \lambda)$, where $\psi: \mathfrak{g}_1 \to \Der(\mathfrak{h}_2)$ is defined by $\psi(X)(y) := \alpha_2(\phi(X), y)$, is a  \emph{Lie algebra factor system}, from which it is known that it defines a non-abelian Lie algebra extension $\mathfrak{h}_2 \to \mathfrak{k} \to \mathfrak{g}_1$ in the specified way \cite{Hochschild1954}. 
The maps
\begin{align*}
i_1&: \mathfrak{h}_1 \to \mathfrak{k}, &&& i_1(y) &:= (-f(y), t_1(y))
\\
i_2&: \mathfrak{h}_2 \to \mathfrak{k}, &&& i_2(y) &:= (y, 0)
\\
r_1&: \mathfrak{k} \to \mathfrak{g}_1, &&& r_1(X, y) &:= X 
\\
r_2&: \mathfrak{k} \to \mathfrak{g}_2, &&& r_2(X, y) &:= \phi(X)+t_2(y) 
\end{align*}
complete $\mathfrak{k}$ to a butterfly $\mathfrak{k}: \mathfrak{G}_1 \to \mathfrak{G}_2$.
Indeed, the NW-SE sequence is a complex because of \cref{PhiFIntertinest}, and the wings of the butterfly commute obviously.
The relations \eqref{RelationsButterfly} can be proved easily using \eqref{PhiFIntertinest} and \eqref{exchange-of-actions}.

\begin{remark}
\label{strict-intertwiners}
Cocycle data of the form $(\phi, f, 0)$ is also known as a ,,strict intertwiner`` from $\mathfrak{G}_1$ to $\mathfrak{G}_2$. We note that  \cref{tofLambda} implies that $\phi$ is a  Lie algebra homomorphism, and \cref{exchange-of-actions,PhiFIntertinest} imply that $f$ is a Lie algebra homomorphism. Remaining are only the cocycle conditions \cref{PhiFIntertinest}, saying $t_2f=\phi t_1$, and \cref{exchange-of-actions}, which simplyfies to $\alpha_2(\phi_q(X), f_q(y))=  f_q(\alpha_1(X, y))$. Above construction of a butterfly from cocycle data shows, in this case, how strict intertwiners give rise to butterflies.    
\end{remark}

If we start with equivalence cocycle data $(\phi, f, \lambda)$ and $(\phi', f', \lambda')$, and the equivalence is expressed by a linear map $\gamma: \mathfrak{g}_1 \to \mathfrak{h}_2$, then it is straightforward to check that 
\begin{equation*}
\mathfrak{h}_2 \oplus \mathfrak{g}_1  \to  \mathfrak{h}_2 \oplus \mathfrak{g}_1; \quad (y, X) \mapsto (y-\gamma(X), X)
\end{equation*} 
establishes an isomorphism of Lie algebras and moreover extends to an isomorphism between the reconstructed butterflies. This shows that we have a well-defined map 
\begin{equation}
\label{reconstruction-from-cocycle-data}
\left \lbrace \text{equivalence classes of cocycle data} \right \rbrace \to \pi_0 \but(\mathfrak{G}_1, \mathfrak{G}_2)\text{.}
\end{equation}

\begin{lemma}
\label{constructing-butterflies}
The extraction of cocycle data via \cref{extraction-of-local-data}, and the reconstruction from cocycle data via \cref{reconstruction-from-cocycle-data} are inverse to each other, and establish a bijection
\begin{equation*}
\pi_0 \but(\mathfrak{G}_1, \mathfrak{G}_2) \cong \left \lbrace \text{equivalence classes of cocycle data } \right \rbrace\text{.}
\end{equation*}
\end{lemma}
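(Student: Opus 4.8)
The plan is to prove that the two maps \eqref{extraction-of-local-data} and \eqref{reconstruction-from-cocycle-data} are mutually inverse bijections, by checking both composites equal the respective identities. I would organize the proof into two directions.

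First I would verify that reconstruction followed by extraction is the identity on equivalence classes of cocycle data. Starting with cocycle data $(\phi, f, \lambda)$, the reconstructed butterfly has middle Lie algebra $\mathfrak{k} = \mathfrak{h}_2 \oplus \mathfrak{g}_1$ with the explicit structure maps $i_1, i_2, r_1, r_2$ given in the excerpt. The key observation is that $\mathfrak{k}$ carries a \emph{canonical} section $q: \mathfrak{g}_1 \to \mathfrak{k}$ against $r_1$, namely $q(X) := (0, X)$, whose associated retract is $j(y, X) = y$. I would then simply compute the cocycle data extracted from this $q$: one checks directly from the definitions \eqref{phiqfq} and \eqref{definition-of-lambda} that $\phi_q = r_2 q = \phi$, that $f_q = -j i_1 = f$ (using $i_1(y) = (-f(y), t_1(y))$), and that $\lambda_q(X, Y) = j([q(X), q(Y)]) = \lambda(X, Y)$, the last following immediately from the first component of the bracket $[(0,X),(0,Y)] = (\lambda(X,Y), [X,Y])$. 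Hence extraction recovers exactly $(\phi, f, \lambda)$, so this composite is the identity \emph{on the nose}, not merely up to equivalence.

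Next I would verify that extraction followed by reconstruction is the identity on $\pi_0\but(\mathfrak{G}_1, \mathfrak{G}_2)$. Given a butterfly $\mathfrak{k}$ with a chosen section $q$, producing cocycle data $(\phi_q, f_q, \lambda_q)$, I would construct an explicit isomorphism from the reconstructed butterfly $\mathfrak{h}_2 \oplus \mathfrak{g}_1$ to the original $\mathfrak{k}$. The natural candidate is
\[
\Psi: \mathfrak{h}_2 \oplus \mathfrak{g}_1 \to \mathfrak{k}, \qquad \Psi(y, X) := i_2(y) + q(X),
\]
which is a linear isomorphism because of the splitting relation \eqref{Relationqj}. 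I would check that $\Psi$ is a Lie algebra homomorphism by expanding $[\,i_2(y) + q(X), i_2(y') + q(X')\,]$ in $\mathfrak{k}$, using \eqref{RelationsButterfly} to rewrite the mixed brackets $[q(X), i_2(y')]$ as $i_2(\alpha_2(\phi_q(X), y'))$ and using \eqref{i2ofLambda} to extract the $i_2(\lambda_q(X,Y))$ term from $[q(X), q(X')]$; comparing with the reconstructed bracket shows they match. Finally, $\Psi$ commutes with all four structure maps $i_1, i_2, r_1, r_2$, which one verifies term by term (for $i_1$ one uses the defining relation $f_q = -j i_1$ together with $\Psi(-f_q(y), t_1(y)) = -i_2(f_q(y)) + q(t_1(y)) = i_1(y)$ via \eqref{Relationqj}). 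Since morphisms of butterflies are automatically isomorphisms, $\Psi$ witnesses that the reconstructed butterfly lies in the same component as $\mathfrak{k}$.

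The only genuine subtlety, and the step I expect to require the most care, is well-definedness of the two maps on the relevant quotients: that extraction \eqref{extraction-of-local-data} is independent of the chosen section $q$ up to equivalence of cocycle data (handled by the transformation rule \eqref{equivalence-of-cocycle-data} under $q \mapsto q + i_2\gamma$, together with invariance under butterfly morphisms $k \mapsto kq$), and that reconstruction \eqref{reconstruction-from-cocycle-data} descends to equivalence classes (the isomorphism $(y,X) \mapsto (y - \gamma(X), X)$ stated just before the lemma). Both of these were essentially established in the run-up to the statement, so I would invoke them and concentrate the written argument on the two explicit computations above, each of which is a finite and mechanical, if slightly lengthy, verification using \eqref{Relationqj}, \eqref{definition-of-lambda}, \eqref{i2ofLambda}, \eqref{phiqfq}, and \eqref{RelationsButterfly}.
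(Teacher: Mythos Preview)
Your proposal is correct and follows essentially the same approach as the paper: the canonical section $q(X)=(0,X)$ for the first direction, and the explicit isomorphism $(y,X)\mapsto i_2(y)+q(X)$ for the second. You simply spell out more of the verifications (that $\Psi$ is a Lie algebra homomorphism and intertwines the four structure maps) than the paper does.
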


\begin{proof}
The butterfly reconstructed from cocycle data $(\phi, f, \lambda)$ has a canonical section,  $q_0: \mathfrak{g}_1 \to \mathfrak{k}$, $q_0(X):=(0, X)$, and it is easy to see that the cocycle data obtained from this section is precisely the given one, $(\phi, f, \lambda)=(\phi_{q_0}, f_{q_0}, \lambda_{q_0})$. 

Conversely, suppose $\mathfrak{k}:\mathfrak{G}_1 \to \mathfrak{G}_2$ is a butterfly, $q$ is a section,  $(\phi_q, f_q, \lambda_q)$ is the corresponding cocycle data, and $\mathfrak{k}'$ is the butterfly reconstructed from $(\phi_q, f_q, \lambda_q)$ in the above way, then the map $(y, X) \mapsto q(X)+i_2(Y)$ establishes an isomorphism $\mathfrak{k}' \to \mathfrak{k}$ of butterflies. 
\end{proof}

\renewbibmacro*{in:}{%
  \ifboolexpr{
    test {\iffieldundef{journaltitle}} %
    and test {\iffieldundef{booktitle}} %
    and test {\iffieldundef{maintitle}} %
    and test {\iffieldundef{eventtitle}} %
  }
    {}
    {\printtext{\bibstring{in}\intitlepunct}}
}
\DeclareFieldFormat{eprint}{%
  \iffieldundef{url}
    {\texttt{\thefield{eprint}}}
    {\href{\thefield{url}}{\texttt{[\thefield{eprint}]}}}
}

\defbibenvironment{bibliography}
  {\list
     {\printfield[labelnumberwidth]{labelnumber}}%
     {\setlength{\labelwidth}{2em}%
      \setlength{\leftmargin}{3em}%
      \setlength{\labelsep}{1em}
      \setlength{\itemindent}{0pt}%
      \setlength{\itemsep}{\bibitemsep}%
      \setlength{\parsep}{\bibparsep}}%
   \renewcommand*{\makelabel}[1]{##1}} 
  {\endlist}
  {\item}

\tocsection{\refname}
\raggedright
\printbibliography[heading=none]

\end{document}